\DeclareSymbolFont{bbold}{U}{bbold}{m}{n}
\DeclareSymbolFontAlphabet{\mathbbm}{bbold}
\title{Category forcing and generic absoluteness}
\theoremstyle{plain}
	\newtheorem{theorem}{Theorem}[section]
	\newtheorem{proposition}[theorem]{Proposition}
	\newtheorem{lemma}[theorem]{Lemma}
	\newtheorem{corollary}[theorem]{Corollary}
	\newtheorem{fact}[theorem]{Fact}
	\newtheorem{claim}{Claim}
\theoremstyle{definition}
	\newtheorem{definition}[theorem]{Definition}
	\newtheorem{notation}[theorem]{Notation}
	\newtheorem{notation*}{Notation}
\theoremstyle{remark}
	\newtheorem{remark}[theorem]{Remark}
\newcommand{\id}{\ensuremath{\mathrm{Id}}}
\newcommand{\Ord}{\ensuremath{\mathrm{Ord}}}
\newcommand{\ZFC}{\ensuremath{\mathsf{ZFC}}}
\newcommand{\ZF}{\ensuremath{\mathsf{ZF}}}
\newcommand{\MK}{\ensuremath{\mathsf{MK}}}
\DeclareMathOperator{\dom}{dom}
\DeclareMathOperator{\cof}{cof}
\DeclareMathOperator{\coker}{coker}
\DeclareMathOperator{\supp}{supp}
\DeclareMathOperator{\St}{St}
\DeclareMathOperator{\Coll}{Coll}
\DeclareMathOperator{\RO}{\mathsf{RO}}
\newcommand{\NS}{\ensuremath{\mathbf{NS}}} 
\newcommand{\bool}[1]{\mathsf{#1}}
\newcommand{\FFF}{\mathcal{F}}
\newcommand{\pow}[1]{\mathcal{P}\left(#1\right)}
\newcommand{\Reg}[1]{\text{Reg}\left(#1\right)}
\newcommand{\Qp}[1]{\left\llbracket #1 \right\rrbracket}
\newcommand{\ap}[1]{\langle #1 \rangle}
\newcommand{\bp}[1]{\left\lbrace #1 \right\rbrace}
\newcommand{\AC}{\ensuremath{\text{{\sf AC}}}} 
\newcommand{\SP}{\ensuremath{\text{{\sf SP}}}}
\newcommand{\SSP}{\ensuremath{\text{{\sf SSP}}}}
\newcommand{\PR}{\ensuremath{\text{{\sf PR}}}}
\newcommand{\SPFA}{\ensuremath{\text{{\sf SPFA}}}}
\newcommand{\FA}{\ensuremath{\text{{\sf FA}}}}
\newcommand{\CFA}{{\sf CFA}}
\newcommand{\2}{\mathsf{2}}
\theoremstyle{definition}
	\newtheorem{question}[theorem]{Question}
\newcommand{\UnderTilde}[1]{{\setbox1=\hbox{$#1$}\baselineskip=0pt\vtop{\hbox{$#1$}\hbox to\wd1{\hfil$\sim$\hfil}}}{}}
\newcommand{\MA}{\ensuremath{\text{{\sf MA}}}}
\DeclareMathOperator{\Add}{Add}
\newcommand{\dirlim}{\varinjlim}
\newcommand{\invlim}{\varprojlim}
\newcommand{\rcslim}{\lim\limits_{\sf rcs}}
\newcommand{\SRP}{\ensuremath{\text{{\sf SRP}}}}
\let \restr = \upharpoonright
\let \into = \longrightarrow
\let \sub = \subseteq
\let \elsub = \preccurlyeq
\let \av = \arrowvert
\let \a = \alpha
\let \b = \beta
\let \g = \gamma
\let \d = \delta
\let \l = \lambda
\let \k = \kappa
\let \m = \mu
\let \n = \nu
\let \p = \pi
\let \t = \theta
\let \s = \sigma
\let \x = \xi
\let \o = \omega
\let \P = \Pi
\let \S = \Sigma
\let \al = \aleph
\let \la = \langle
\let \ra = \rangle
\let \mtcl = \mathcal
\let \mtbb = \mathbb
\let \it = \item
\DeclareMathOperator{\range}{range}
\DeclareMathOperator{\cf}{cf}
\DeclareMathOperator{\ot}{ot}
\DeclareMathOperator{\Lim}{Lim}
\DeclareMathOperator{\MRP}{\textsf{MRP}}
\DeclareMathOperator{\CH}{\textsf{CH}}
\DeclareMathOperator{\BPFA}{\textsf{BPFA}}
\DeclareMathOperator{\STP}{\textsf{STP}}
\DeclareMathOperator{\TWCG}{\textsf{TWCG}}
\DeclareMathOperator{\TCG}{\textsf{TCG}}
\DeclareMathOperator{\LC}{\textsf{LC}}
\title{Incompatible category forcing axioms}
\author[D.\ Asper\'o]{David Asper\'o}
\address{David Asper\'{o}, School of Mathematics, University of East Anglia, Norwich NR4 7TJ, UK}
\email{d.aspero@uea.ac.uk}
\author[M.\ Viale]{Matteo Viale}
\address{Matteo Viale, Department of Mathematics ``Giuseppe Peano'', Univ.\ Torino, via Carlo Alberto 10, 10125, Torino, Italy}
\email{matteo.viale@unito.it}
\date{}
\begin{document}

\thanks{The first author acknowledges support of EPSRC Grant EP/N032160/1.
The second author acknowledges support of GNSAGA. 
Parts of this research was partially done whilst the authors were visiting fellows at the Isaac Newton Institute for Mathematical Sciences in the programme `Mathematical, Foundational and Computational Aspects of the Higher Infinite' (HIF)}

\subjclass[2010]{03E57, 03E35, 03E47, 03E55}

\maketitle
\pagestyle{myheadings}\markright{Incompatible category forcing axioms}

\begin{abstract} Given a cardinal $\l$, category forcing axioms for $\l$--suitable classes $\Gamma$ are strong forcing axioms which completely decide 
the theory of the Chang model $\mtcl C_\l$, modulo generic extensions via forcing notions from $\Gamma$. 
$\mathsf{MM}^{+++}$ was the first category forcing axiom to be isolated (by the second author). In this paper we present, without proofs, a general theory of category forcings, and prove the existence of $\al_1$--many pairwise incompatible category forcing axioms for $\o_1$--suitable classes. \end{abstract}

	
	\tableofcontents


\section{Introduction}
It is a matter of fact that forcing axioms are successful in settling a wide range of problems undecidable on the basis of the commonly accepted axioms for set theory. 
Some explanation for this success should exist.

This paper 
provides such an explanation in line with a series of results appearing 
in~\cite{VIAAUDSTEBOOK,AUDVIA,VACVIA,VIAMM+++,VIAMMREV,VIAUSAX}.
One of the key observations we make is that forcing 
axioms come in pairs with generic absoluteness properties for their models: briefly, we show that
 there are natural strengthenings $\CFA(\Gamma)$ of well--known forcing axioms such as $\mathsf{PFA}$, 
$\mathsf{MM}$, and so on
-- where $\Gamma$ is a suitably chosen class of forcings, e.g.\ the class of forcings which are proper, semiproper, stationary set preserving, etc.\ --
with the property that forcings in $\Gamma$ which preserve $\CFA(\Gamma)$
do not change the theory of a large fragment of the universe of sets. We refer to these statements $\CFA(\Gamma)$ as \emph{category forcing axioms}.

The particular fragments of the universe we are referring to are the Chang models. Given an infinite cardinal  $\lambda$, \emph{the $\lambda$--Chang model}, denoted by $\mathcal C_\lambda$, is the $\subseteq$--minimal transitive model of $\ZF$ containing all ordinals and closed under $\lambda$--sequences. It can be construed as $\mathcal C_\lambda=L(\Ord^\lambda)$.\footnote{Where $L(\Ord^\lambda)=\bigcup_{\alpha\in\Ord}L(\Ord^\lambda\cap V_\a)=\bigcup_{\a\in \Ord}L[\Ord^{\lambda}\cap V_\alpha]$.}  In particular, $L(\mathcal P(\lambda))$ is a definable inner model of $\mathcal C_\lambda$, and $H_{\lambda^+}\subseteq \mathcal C_\lambda$ is definable in $\mathcal C_\lambda$ from $\lambda$. As is well--known, $\mathcal C_\lambda$ need not satisfy $\AC$. For instance, by a result of Kunen \cite[Thm. 1.1.6 and Rmk. 1.1.28]{LARSONBOOK}, if there are $\lambda^+$--many measurable cardinals, then $\mathcal C_\lambda\models\lnot\AC$. 

One natural starting point of our investigation is the following classical result of Woodin, which shows that, in the presence of large cardinals, the theory of the $\o$--Chang model is completely immune to set--forcing.

\begin{theorem}\label{gen-abs-chang-woodin} (Woodin) Suppose there is  a proper class of Woodin cardinals, $\mathcal P$ is a forcing notion, and $G$ is $\mathcal P$--generic over $V$. Then $$(\mathcal C_\omega^V; \in, r)_{r\in\mathcal P(\omega)^V}$$ and    $$(\mathcal C_\omega^{V[G]}; \in, r)_{r\in\mathcal P(\omega)^V}$$ are elementarily equivalent structures.
\end{theorem}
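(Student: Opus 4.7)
The plan is to apply Woodin's stationary tower technology at a Woodin cardinal $\delta>|\mathcal{P}|$, whose existence is guaranteed by the proper class assumption. Since $\mathcal{C}_\omega=L(\Ord^\omega)$ is uniformly definable, elementary equivalence with parameters from $\mathcal{P}(\omega)^V$ reduces to proving, for every formula $\phi$ and every $r\in\mathcal{P}(\omega)^V$, that $\mathcal{C}_\omega^V\models\phi(r)$ iff $\mathcal{C}_\omega^{V[G]}\models\phi(r)$; since the statement is also available with $V[G]$ in place of $V$ (applied to any further generic extension), only one implication needs to be addressed.

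Working in $V[G]$, let $\mathbb{Q}_{<\delta}$ denote Woodin's countable stationary tower up to $\delta$, and let $H$ be $\mathbb{Q}_{<\delta}^{V[G]}$-generic over $V[G]$. Standard facts about the countable stationary tower yield a generic elementary embedding $j:V[G]\to M\subseteq V[G][H]$ with $M$ transitive, closed under countable sequences in $V[G][H]$, $\crit(j)=\omega_1^{V[G]}$, and $j(\omega_1^{V[G]})>\delta>|\mathcal{P}|$; in particular, $j(r)=r$ for every $r\in\mathcal{P}(\omega)^V$. The closure property of $M$ gives $\Ord^\omega\cap M=\Ord^\omega\cap V[G][H]$, whence $\mathcal{C}_\omega^M=\mathcal{C}_\omega^{V[G][H]}$.

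Because $|\mathcal{P}|<\delta$ and $\mathbb{Q}_{<\delta}^V$ is universal among forcings of size less than $\delta$ at a Woodin $\delta$, the two-step iteration $\mathcal{P}\ast\mathbb{Q}_{<\delta}^{V[G]}$ can be absorbed into $\mathbb{Q}_{<\delta}^V$. Hence there is $K$ that is $\mathbb{Q}_{<\delta}^V$-generic over $V$ with $V[K]=V[G][H]$, together with a generic elementary embedding $i:V\to N\subseteq V[K]$ satisfying $\crit(i)=\omega_1^V$, $i(r)=r$, and $\Ord^\omega\cap N=\Ord^\omega\cap V[K]$; in particular $\mathcal{C}_\omega^N=\mathcal{C}_\omega^M$. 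Combining the elementarity of $i$ and $j$ (applied to the formula defining $\mathcal{C}_\omega$) with this identity yields
\[\mathcal{C}_\omega^V\models\phi(r)\,\Leftrightarrow\,\mathcal{C}_\omega^N\models\phi(r)\,\Leftrightarrow\,\mathcal{C}_\omega^M\models\phi(r)\,\Leftrightarrow\,\mathcal{C}_\omega^{V[G]}\models\phi(r),\]
as desired.

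The principal obstacle is the absorption step linking $\mathcal{P}\ast\mathbb{Q}_{<\delta}^{V[G]}$ to $\mathbb{Q}_{<\delta}^V$ and then synchronizing the two generic embeddings so that the target models $M$ and $N$ share the same $\omega$-sequences of ordinals. This is the core of Woodin's argument: it rests on the universality properties of the countable stationary tower at a Woodin cardinal, together with the tree representation of sets in $\mathcal{C}_\omega$ furnished by Woodins above, which is what makes $\Ord^\omega\cap M$ a definable feature of the generic extension rather than of the particular embedding chosen.
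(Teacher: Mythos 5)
Your overall strategy --- producing two generic elementary embeddings of the two Chang models into a common Chang model, both with critical point the relevant $\omega_1$ and hence fixing $\mathcal{P}(\omega)^V$ pointwise --- is exactly the strategy the paper recalls in its outline of Woodin's argument. The gap is in the step you yourself single out as the principal obstacle: the claim that, because $|\mathcal{P}|<\delta$, the iteration $\mathcal{P}\ast\mathbb{Q}_{<\delta}^{V[G]}$ ``can be absorbed into $\mathbb{Q}_{<\delta}^V$'', yielding a $\mathbb{Q}_{<\delta}^V$--generic $K$ over $V$ with $V[K]=V[G][H]$. The universality of the countable tower at a Woodin cardinal says only that for each individual $Q\in V_\delta$ the set of conditions forcing the existence of a $V$--generic filter for $Q$ is dense; it does not give a dense embedding of $\mathcal{P}\ast\mathbb{Q}_{<\delta}^{V[G]}$ (a forcing of size $\delta$, not ${<}\delta$) into $\mathbb{Q}_{<\delta}^V$, and there is no analogue for the tower of the Levy--collapse absorption theorem. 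Indeed $\mathbb{Q}_{<\delta}^{V[G]}$ is a genuinely different poset from $\mathbb{Q}_{<\delta}^V$: conditions of $\mathbb{Q}_{<\delta}^V$ may cease to be stationary in $V[G]$, so there is not even a natural map between the two towers, let alone one identifying the generic extensions. Hence the existence of $K$ with $V[K]=V[G][H]$ --- or even with $\Ord^\omega\cap V[K]=\Ord^\omega\cap V[G][H]$, which is all your argument actually needs --- is asserted rather than proved, and asserting it begs precisely the synchronization problem that is the content of the theorem.

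The standard way around this, and the route the paper's own sketch takes, is to use $\Coll(\omega,{<}\delta)$ as the ambient universal forcing: one isolates as a black box the statement that for $G$ a $V$--generic filter on $\Coll(\omega,{<}\delta)$ there is in $V[G]$ an elementary $j:L(\Ord^\omega)^V\to L(\Ord^\omega)^{V[G]}$ with critical point $\omega_1^V$ (this is where the stationary tower enters, once), and then exploits the facts that $\Coll(\omega,{<}\delta)$ is literally the same poset in $V$ and in $V[H]$ for $H$ generic for $\mathcal{P}\in V_\delta$, that $\delta$ stays Woodin in $V[H]$, and that the Levy collapse absorbs $\mathcal{P}$, to obtain two embeddings of $L(\Ord^\omega)^V$ and $L(\Ord^\omega)^{V[H]}$ into a common $L(\Ord^\omega)$. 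If you insist on having both embeddings come directly from tower forcing, you must actually prove that $\Ord^\omega\cap V[G][H]$ equals $\Ord^\omega\cap N$ for $N$ a generic ultrapower of $V$ by some tower generic --- for instance by characterizing the $\omega$--sequences of ordinals of a $\mathbb{Q}_{<\delta}$--extension as exactly those appearing in ${<}\delta$--sized generic extensions of the ground model and then coordinating the two generics. That argument is what is missing.
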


It is of course not possible to extend the generic absoluteness contained in Theorem \ref{gen-abs-chang-woodin} to higher Chang models, even if we restrict to forcings preserving stationary subsets of $\omega_1$ (or even to, say, proper forcings). The reason is simply that both $\CH$ and $\lnot\CH$ are statements than can always be forced by a proper forcing, and which are expressible in $H_{\omega_2}$.\footnote{There are of course many other pairs of such statements.} 

One natural retreat at this point is to aim for a higher analogue of Theorem \ref{gen-abs-chang-woodin} conditioned both to a suitable class of forcing notion \emph{and} to a reasonable theory $T$, i.e., a statement of the following form.
\emph{
\begin{quote}\label{general-gen-abs} 
(Large cardinals + $T$) Suppose $\mathcal P$ is a forcing notion in  $\Gamma$, $G$ is $\mathcal P$--generic over $V$, and $V[G]\models T$. Then     $$(\mathcal C_\lambda^V; \in, r)_{r\in\mathcal P(\lambda)^V}$$ and    $$(\mathcal C_\lambda^{V[G]}; \in, r)_{r\in\mathcal P(\lambda)^V}$$ are elementarily equivalent structures.
\end{quote}
}
This is indeed our approach. We show that our category forcing axioms $\CFA(\Gamma)$ provide such theories. Specifically, we prove the following.

\begin{theorem}\label{gen-abs-cfa} Suppose there is a proper class of supercompact cardinals. Let $\lambda$ be an infinite regular cardinal. Suppose $\Gamma$ is a $\lambda$--suitable\footnote{We will define all relevant notions in due course.} class and $\CFA(\Gamma)$ holds. Suppose $\mathcal P\in\Gamma$, $G$ is $\mathcal P$--generic over $V$, and $V[G]\models \CFA(\Gamma)$. Then     $$(\mathcal C_\lambda^V; \in, r)_{r\in\mathcal P(\lambda)^V}$$ and    $$(\mathcal C_\lambda^{V[G]}; \in, r)_{r\in\mathcal P(\lambda)^V}$$ are elementarily equivalent structures.
\end{theorem}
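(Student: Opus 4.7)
The plan is to follow the template used by Viale in~\cite{VIAMM+++} for generic absoluteness of $\MM^{+++}$, suitably generalized to $\lambda$-suitable classes $\Gamma$. The central object is the \emph{category forcing} $\mathcal{U}_\Gamma$, a class-sized partial order whose conditions are (representatives of) forcings in $\Gamma$, ordered by complete embeddability. First I would verify that $\mathcal{U}_\Gamma$ is itself, in the class-sized sense, a member of $\Gamma$ --- this is where $\lambda$-suitability and the proper class of supercompacts enter --- and that $\CFA(\Gamma)$ endows $\mathcal{U}_\Gamma$ with a \emph{freezeability} property: whenever $\mathcal{P}\in\Gamma$ and $V[G]\models\CFA(\Gamma)$ for some $V$-generic $G\subseteq\mathcal{P}$, any $\mathcal{U}_\Gamma$-generic filter $H$ over $V[G]$ containing a condition refining $\mathcal{P}$ is already $\mathcal{U}_\Gamma$-generic over $V$, with $V[H]=V[G][H]$.

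Next, fix $\mathcal{P}$ and $G$ as in the theorem and let $H$ be such a freezing generic. One obtains a tower $V\subseteq V[G]\subseteq V[H]$ in which all three models share the parameter set $\mathcal{P}(\lambda)^V$.

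The elementary equivalence then reduces to a general fact, to be applied with $W=V$ and then with $W=V[G]$: whenever $W\subseteq V[H]$ satisfies $\CFA(\Gamma)$ and $H$ is $\mathcal{U}_\Gamma^W$-generic over $W$, the structures $(\mathcal{C}_\lambda^W;\in,\vec{r})$ and $(\mathcal{C}_\lambda^{V[H]};\in,\vec{r})$ are elementarily equivalent for every $\vec{r}\in\mathcal{P}(\lambda)^W$. This is established by induction on formula complexity, using (i)~a homogeneity lemma ensuring that the Boolean value $\Qp{\varphi(\vec{r})^{\mathcal{C}_\lambda}}^{\mathcal{U}_\Gamma}$ computed in $W$ equals $\1$ or $\0$ --- obtained through $W$-definable automorphisms of $\mathcal{U}_\Gamma$ arising from isomorphisms of forcings in $\Gamma$ --- and (ii)~the fact that $\CFA(\Gamma)^W$ together with the supercompacts supplies $W$-generics in $V[H]$ below any given condition of $\mathcal{U}_\Gamma^W$, so that every nonzero Boolean value is actually realized. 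Chaining the two instances yields $(\mathcal{C}_\lambda^V;\in,\vec{r})\equiv(\mathcal{C}_\lambda^{V[H]};\in,\vec{r})\equiv(\mathcal{C}_\lambda^{V[G]};\in,\vec{r})$ for $\vec{r}\in\mathcal{P}(\lambda)^V$, as required.

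The main obstacle will be establishing freezeability for arbitrary $\lambda$-suitable $\Gamma$. In Viale's $\omega_1$-$\SSP$ case, this argument uses presaturated ideals on $\omega_1$ and the stationary structure on $[H_\theta]^\omega$; the generalization demands the correct $\lambda$-analogues --- in particular, a notion of generically $\lambda$-closed quotient stable under iterations in $\Gamma$ --- and their derivation from $\CFA(\Gamma)$ plus a proper class of supercompacts. This is precisely what the axiomatization of $\lambda$-suitability should be designed to deliver. A secondary subtlety lies in the homogeneity lemma for $\mathcal{C}_\lambda$-formulas with parameters in $\mathcal{P}(\lambda)^V$: $\lambda$-sequences from $V$ need not remain $\lambda$-sequences in $V[H]$, so pinning down a canonical $V$-definable class of parameters inside $\mathcal{C}_\lambda^{V[H]}$ requires more care than in the $\lambda=\omega$ theorem of Woodin recalled as Theorem~\ref{gen-abs-chang-woodin}.
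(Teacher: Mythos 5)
Your overall architecture matches the paper's: absorb $\mtcl P$ into the category forcing $\bool{U}^\Gamma_\delta$ so that a generic $H$ over $V[G]$ yields a filter $G\ast H$ that is also generic over $V$ (this is the content of Theorem~\ref{thm:quo-univ}, task~(\ref{keytask1.2})), and then compare each of $\mtcl C_\lambda^V$ and $\mtcl C_\lambda^{V[G]}$ with $\mtcl C_\lambda$ of the common extension. But the step where you carry out that comparison is wrong. The paper's mechanism is the generic elementary embedding: by Theorem~\ref{cfagamma+supercompact}, $\CFA(\Gamma)$ in a model $W$ is precisely the statement that forcing with $\bool{U}^\Gamma_\delta$ over $W$ (for $\delta$ supercompact) yields a definable $j:W\to M$ with $M$ closed under $\lambda$--sequences in the extension; then $L(\Ord^\lambda)^M=L(\Ord^\lambda)^{W[H]}$ and $j\restriction L(\Ord^\lambda)^W$ is an elementary embedding of $\mtcl C_\lambda^W$ into $\mtcl C_\lambda^{W[H]}$ fixing every $r\in\pow{\lambda}^W$. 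That single fact, applied with $W=V$ and $W=V[G]$, finishes the proof. You replace this with an induction on formula complexity driven by a homogeneity lemma for $\bool{U}^\Gamma_\Gamma$, and this fails for two independent reasons.

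First, the category forcing is provably \emph{not} weakly homogeneous: the entire point of $\Gamma$--rigidity and the freezeability machinery is that there is a dense class of conditions $\bool{B}$ for which $\bool{B}\restriction b_0\perp_\Gamma\bool{B}\restriction b_1$ whenever $b_0\wedge b_1=\0_{\bool{B}}$, and automorphisms induced by isomorphisms of forcings in $\Gamma$ cannot identify, say, Namba forcing with $\Coll(\omega_1,\omega_2)$ (which the paper shows are incompatible conditions in $(\SSP,\leq_\SSP)$); below these two conditions the quotient forcings behave in genuinely different ways. Second, even granting that every Boolean value $\Qp{\varphi(\vec r)^{\mtcl C_\lambda}}$ were $\0$ or $\1$, this would only show that the theory of $\mtcl C_\lambda$ of the $\bool{U}^\Gamma_\delta$--extension is independent of the choice of generic; it supplies no link between that theory and the theory of $\mtcl C_\lambda^W$ itself, which is exactly what the theorem asserts. (The same gap would appear in Woodin's Theorem~\ref{gen-abs-chang-woodin}: $\Coll(\omega,{<}\delta)$ \emph{is} homogeneous, yet the identification of the ground--model and extension Chang model theories still requires the stationary--tower embedding.) Finally, a terminological point: what you call ``freezeability'' of $\mathcal U_\Gamma$ is the paper's absorption/quotient property; in the paper freezeability is the distinct, stronger uniqueness property of $\Gamma$--correct homomorphisms used to obtain the dense class of rigid conditions that makes absorption possible.
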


We also prove that the result is non--vacuous, in the sense that, modulo the existence of a $2$--superhuge cardinal, $\CFA(\Gamma)$ can always be forced if $\Gamma$ is $\lambda$--suitable.\footnote{Also, the axioms $\CFA(\Gamma)$ are not some sort of artificial contrivance designed to make Theorem \ref{gen-abs-cfa} true (e.g., designed to explicitly provide a coding of the theory of the ground model Chang model), but are relatively simple -- and this includes syntactical simplicity -- and natural strengthenings of existent forcing axioms (s.\ below).}

In view of Theorem \ref{gen-abs-cfa}, category forcing axioms turn forcing from
a tool useful to prove undecidability results into a tool useful to prove theorems: Suppose, for simplicity of exposition, than $\l$ is a definable cardinal in the ambient set theory, e.g.\ $\l=\o_1$. Then, in order to show that $\CFA(\Gamma)$, together with the ambient set theory, implies $\phi^{L(\Ord^\lambda)}$ for a given sentence $\phi$, it suffices to show that $\CFA(\Gamma)$, together with the ambient set theory, implies the existence of a forcing notion $\bool{B}$ in $\Gamma$ preserving $\CFA(\Gamma)$ and forcing $\phi^{L(\Ord^\lambda)}$.

On the other hand, and this is the second main point we make in this paper, the complete pictures about the relevant Chang model that we get from these strong axioms are provably incompatible, \emph{even} when the  classes of forcing notions they refer to are comparable under inclusion; for example we prove that, letting $\Gamma$  be
the class of proper forcings and $\Gamma'$ that of forcings preserving stationary subsets of $\omega_1$,
$\CFA(\Gamma)$ and $\CFA(\Gamma')$ give logically incompatible axiomatizations of set theory,
even if $\CFA(\Gamma)$ implies $\bool{PFA}$, $\CFA(\Gamma')$ implies $\bool{MM}$,
and $\bool{MM}$ implies $\bool{PFA}$. Actually we produce $\aleph_1$--many classes $\Gamma$
with the property that, modulo large cardinals, 
\begin{itemize}
\item $\CFA(\Gamma)$ is consistent
and 
\item $\CFA(\Gamma)$ and $\CFA(\Gamma')$ are pairwise incompatible axioms
if $\Gamma\neq\Gamma'$.
\end{itemize}
\medskip

We will next briefly introduce category forcing, and will give some intuition behind the corresponding notion of category forcing axiom. Given a class of forcings $\Gamma$ closed under two--step iterations, 
we analyze the generic multiverse
\[
V(\Gamma)=\bp{V^{P}:P\in \Gamma^V}
\]
computed over a model of set theory $V$.
We stipulate that the accessibility relation between the elements of this multiverse is given by the requirement
that $V^Q$ be accessible from $V^P$ if and only if $V^Q$ can be obtained as a generic extension of $V^P$
by a forcing in $\Gamma^{V^P}$.
There are a number of good reasons to adopt this approach; we refer the reader to~\cite{VIAUSAX} for more details.
%
Once we adopt this strategy to analyze the generic multiverse,
our purpose becomes to develop a general theory of category forcings, i.e., forcings 
whose conditions are partial orders in a given class $\Gamma$ and in which the order relation $Q\leq_\Gamma P$ is given
by the requirement that the corresponding forcing extensions $V^P$, $V^Q$ be such that $V^Q$ is reachable
from $V^P$ using a forcing in $\Gamma^{V^P}$.
In the presence of strong enough background assumptions, we will be able to link our analysis of these category forcings to generic absoluteness.

Let us be more specific in our definition of a category forcing.
By a category forcing we understand a class--forcing whose conditions are forcing notions, ordered by (a refinement of) the 
notion of absorption: a partial order $Q$
will refine a partial order $P$
if whenever $H$ is $V$--generic for $Q$ there is some $G\in V[H]$ which is $V$--generic for $P$ 
(often we will require $G$ to have further nice properties in $V[H]$ besides $V$--genericity).
It will soon become transparent that it is more convenient to develop this theory resorting to a Boolean algebraic formulation 
of the relevant properties; in any case, given that posets  with isomorphic Boolean completions produce the 
same generic extensions, an immediate outcome of our definition of order between posets brings to our 
attention that there is no loss of information in taking our relevant forcing conditions to be complete Boolean algebras, rather than the variety of
posets which 
are contained in them as dense suborders.
The ordering relation between posets we outlined above translates in this context in the algebraic notion
asserting that $V^Q$ is accessible from $V^P$ 
only if there is a complete (possibly non--injective) homomorphism $i:\RO(P)\to\RO(Q)$,
where $\RO(R)$ denotes the Boolean completion of $R$.
Hence, we will focus in this paper on categories 
$(\Gamma,\to^\Theta)$, with $\Gamma$ a class of complete Boolean algebras
and $\to^\Theta$ a class of complete homomorphisms between them;
the ordering is given by setting
$\bool{C}\leq_\Theta\bool{B}$ if there is a homomorphism $i:\bool{B}\to\bool{C}$ in $\to^\Theta$.

\begin{remark}
$(\Gamma, \leq_\Theta)$ can be forcing equivalent to the trivial partial order.
For example, suppose $\Gamma=\Omega_{\aleph_0}$ is the class of all complete Boolean algebras and 
$\Theta$ is the class of all complete embeddings. Then any
two conditions in $(\Gamma,\leq_\Theta)$ are compatible, giving that
$(\Gamma,\leq_\Theta)$ is forcing equivalent to the trivial partial order.
This is the case since for any pair of partial orders $P$, $Q$ and any set $X$ of size at least
$2^{|P|+|Q|}$ there are 
complete injective homomorphisms of $\RO(P)$ and $\RO(Q)$ into the Boolean completion of
$\Coll(\omega,X)$ (see~\cite[Thm A.0.7]{LARSONBOOK} and the remark thereafter). 
These embeddings witness the compatibility of $\RO(P)$ and $\RO(Q)$ in $(\Gamma, \leq_\Theta)$.

On the other hand, letting $\SSP$ be the class of all complete Boolean algebras preserving stationary subsets of $\omega_1$ and, again, letting  $\to^\Theta$ be given by the class of all complete homomorphisms with a generic quotient in $\Gamma$ or even the class of all complete homomorphisms, it follows that
$(\SSP, \leq_\Theta)$ is non--trivial.
For this, observe that if $P$ is Namba forcing on $\aleph_2$ and $Q$ is
 $\Coll(\omega_1,\omega_2)$, then $\RO(P)$ and $\RO(Q)$ are incompatible conditions in 
$(\SSP, \leq_\Omega)$:
 If $\bool{R}\leq\RO(P)$, $\RO(Q)$, we would have that if $H$ is $V$--generic for 
 $\bool{R}$, then $\omega_1^{V[H]}=\omega_1$ (since $\bool{R}\in\SSP$)
 and there are $G$, $K\in V[H]$, $V$--generic filters for 
 $P$ and $Q$, respectively (since $\bool{R}\leq\RO(P)$, $\RO(Q)$).
 $G$ would give rise in $V[H]$ to a sequence cofinal in $\omega_2^V$ of order type $\omega$, while $K$
would allow one to define in $V[H]$ a sequence cofinal in $\omega_2^V$ of order type $\omega_1^V$.
These two facts together would entail that $\cof(\omega_1^V)=\omega$ in $V[H]$,
contradicting the assumption that $\omega_1^{V[H]}=\omega_1^V$.
\end{remark}


The basic theory of category forcings requires that the category forcing $(\Gamma, \to^\Theta)$ at hand 
satisfy 
the following properties:\footnote{
The first two properties on $\Gamma$ are reminiscent, modulo the requirement of restricting 
to \emph{complete} homomorphisms (and \emph{complete} 
Boolean algebras) rather than arbitrary homomorphism and Boolean algebras, to  
Birkhoff's characterization of categories of varieties.
It is well possible that (following Birkhoff's type of results) 
in our list of properties for $\Gamma$,
(some natural strengthening of) the last requirement, asking that $\Gamma$ be simply definable in logical terms, is a 
direct outcome of the previous requests we impose on $\Gamma$. We have not explored this topic further, though.}
\begin{enumerate}
\item $\Gamma$ is closed under preimages by complete injective homomorphisms.\footnote{In the sense that if $\bool{B}$ and $\bool{C}$ are complete Boolean algebras, $\bool{C}\in\Gamma$ and there is complete injective homomorphism from $\bool{B}$ into $\bool{C}$, then $\bool{B}\in\Gamma$.}
\item $\Gamma$ is closed under lottery sums.
\item $\Gamma$ is closed under two--step iterations, and $\to^\Theta$ (in this case it is more appropriate to write 
$\to^\Gamma$) is the class
of complete homomorphisms with a generic quotient in $\Gamma$.
\item $\Gamma$ is closed under many types of (co)limit constructions 
(essentially, there is an iteration theory granting that $\Gamma$ be closed under limits of iterations built using a given rule of a reasonably general form).
\item $\to^\Gamma$ has a certain type of rigidity (which means that for a dense subclass $\bool{Rig}^\Gamma$ 
of $\Gamma$, with respect
to $\leq_\Gamma$, there is at most one homomorphism in $\to^\Gamma$ between elements of $\bool{Rig}^\Gamma$).
\item $\Gamma$ can be defined by a property of low logical complexity.
\end{enumerate}

The first main outcome of our analysis is the following:

\begin{quote}
\emph{Assume $\Gamma$ is a class of complete Boolean algebras which
satisfies the above properties, let $\to^\Gamma$ denote the class of complete homomorphisms
with a generic quotient in $\Gamma$, and let $\leq_\Gamma$ be the corresponding partial order on $\Gamma$.
Then there is a cardinal $\lambda_\Gamma$, uniquely associated to $\Gamma$, 
and an axiom $\CFA(\Gamma)$ (which, as we will see next with a bit more detail, can be formulated as the assertion that
a certain class of posets is dense in $(\Gamma,\leq_\Gamma)$) such that the following holds:}
\begin{enumerate}
\item 
\emph{
$\FA_{\lambda^+_\Gamma}(\bool{B})$ provably fails for some $\bool{B}\in\Gamma$.\footnote{$\FA_\kappa(\bool{B})$ holds if and only if any 
$\kappa$--sized family of dense open subsets of $\St(\bool{B})$ has non--empty intersection, where 
$\St(\bool{B})$ is the Stone space of ultrafilters on $\bool{B}$; in other words, if and only if for every $\kappa$--sized family $\mathcal D$ of dense subsets of $\bool{B}$ there is a filter of $\bool{B}$ intersecting all members of $\mathcal D$.}}
\item 
\emph{
$\CFA(\Gamma)$ entails that $\FA_{\lambda_\Gamma}(\bool{B})$ holds if $\bool{B}\in \Gamma$.}
\item
\emph{
In the presence of sufficiently strong large cardinals, the first order theory of $L(\Ord^{\lambda_\Gamma})$ is invariant with respect to forcings in $\Gamma$ which preserve
$\CFA(\Gamma)$ (this is Theorem \ref{gen-abs-cfa}).}
\item
\emph{
$\CFA(\Gamma)$ is consistent relative to strong large cardinal axioms. In fact, if $\delta$ is a $2$--superhuge cardinal, then the intersection of the category $(\Gamma, \leq_\Gamma)$ (using the above notation) with $V_\delta$ forces $\CFA(\Gamma)$.}
\end{enumerate}
\end{quote}

\begin{remark} Strictly speaking, we should have written something like $\CFA_{\lambda_\Gamma}(\Gamma)$ instead of $\CFA(\Gamma)$, as the formulation of the axiom is certainly dependent on $\lambda_\Gamma$. Nevertheless, we will always be able to read off $\lambda_\Gamma$ from $\Gamma$, and so there will be no ambiguity in just writing $\CFA(\Gamma)$. \end{remark}

Items 1 and 2 above give a maximality  property of the axiom $\CFA(\Gamma)$ 
naturally formulated in topological terms 
(specifying the exact amount of a strong form of Baire's category theorem that holds for all Boolean algebras in $\Gamma$), 
while item 3 outlines a maximality property of the axioms $\CFA(\Gamma)$ formulated in logical terms
(the theory of a large fragment of the universe cannot be changed using forcings in $\Gamma$ which preserve the axiom).

Before saying more about the axioms $\CFA(\Gamma)$ in general, let us recall the following natural and well--known strengthening $\bool{MM}^{++}$ of the forcing axiom $\FA_{\omega_1}(\SSP)$, also known as \emph{Martin's Maximum}  and denoted by $\bool{MM}$ (see \cite{FOREMAGISHELAH}): $\bool{MM}^{++}$ holds if and only if for every $\mtcl P\in \SSP$, every collection $\mtcl D$ of $\al_1$--many dense subsets of $\o_1$, and every sequence $(\dot S_i)_{i<\o_1}$ of $\mtcl P$--names for stationary subsets of $\o_1$ there is a filter $G\sub\mtcl P$ such that
\begin{itemize}
\item $G\cap D\neq\emptyset$ for all $D\in\mtcl D$, and such that 
\item $\{\nu<\o_1\,:\, p\Vdash_{\mtcl P}\nu\in \dot S_i\mbox{ for some }p\in G\}$ is a stationary subset of $\o_1$ for each $i<\o_1$.
\end{itemize}

As proved in \cite{FOREMAGISHELAH}, $\bool{MM}^{++}$ can be forced to be true, starting from the existence of a supercompact cardinal. 
The second author of the present article observed in \cite{VIAMMREV} that 
$\bool{MM}^{++}$ can be characterized, in the presence of a proper class of Woodin cardinals, as the following density principle for the $\SSP$ category.

\begin{proposition}\label{viale0} Suppose there is a proper class of Woodin cardinals. Then the following are equivalent.

\begin{enumerate}
\item $\bool{MM}^{++}$
\item For every $\mtcl P\in\SSP$ there is a pre-saturated $\o_2$--tower $\mtcl T$ and a $\mtcl P$--name $\dot{\mtcl Q}$ such that 
\begin{enumerate}
\item $\Vdash_{\mtcl P}\dot{\mtcl Q}\in\SSP$ and
\item $\mtcl T$ is forcing equivalent to $\mtcl P\ast\dot{\mtcl Q}$.
\end{enumerate}
Furthermore, we may take $\mtcl T$ to be of cardinality the least Woodin cardinal above $\arrowvert\mtcl P\arrowvert$.

\end{enumerate}
\end{proposition}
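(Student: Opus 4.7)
The plan is to prove the two implications separately, with $(2) \Rightarrow (1)$ a reflection argument through the generic ultrapower of the tower, and $(1) \Rightarrow (2)$ carrying the real content via an absorption theorem for $\SSP$ forcings into the stationary tower.

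For $(2) \Rightarrow (1)$: fix $\mtcl P \in \SSP$, a family $\mtcl D = \{D_i : i < \o_1\}$ of dense subsets of $\mtcl P$, and a sequence $(\dot S_i)_{i<\o_1}$ of $\mtcl P$-names for stationary subsets of $\o_1$. Apply (2) to obtain a pre-saturated $\o_2$-tower $\mtcl T$ and a $\mtcl P$-name $\dot{\mtcl Q}$ with $\Vdash_{\mtcl P} \dot{\mtcl Q} \in \SSP$ and $\mtcl T$ forcing-equivalent to $\mtcl P \ast \dot{\mtcl Q}$. In an auxiliary $\mtcl T$-generic extension $V[K]$ extract the induced $G \ast H$ generic for $\mtcl P \ast \dot{\mtcl Q}$. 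Pre-saturation produces a generic elementary embedding $j_K : V \to N \sub V[K]$ with critical point $\o_2^V$ and with $N$ closed under $\o_1^V$-sequences in $V[K]$; this preserves $\o_1^V$ and makes stationarity of subsets of $\o_1^V$ absolute among $V$, $V[G]$, and $V[K]$. Since $G \in V[K]$ is $V$-generic, it meets every $D \in \mtcl D$, and each $T_i^G := \{\nu < \o_1 : \exists p \in G,\, p \Vdash \nu \in \dot S_i\}$ is stationary in $V[K]$ by $\SSP$ of $\dot{\mtcl Q}^G$; the existence in $V$ of a filter $G'$ witnessing $\bool{MM}^{++}$ for $(\mtcl P, \mtcl D, (\dot S_i)_i)$ then follows by reflecting the corresponding $\Sigma_1$ statement inside a suitable $H_\t^V$, using the closure of $N$ and the fact that $\mtcl P,\mtcl D,(\dot S_i)_i$ are fixed by $j_K$.

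For $(1) \Rightarrow (2)$: assume $\bool{MM}^{++}$ and a proper class of Woodin cardinals, fix $\mtcl P \in \SSP$, and let $\d$ be the least Woodin above $|\mtcl P|$. Take $\mtcl T := \mtcl T^\d_{<\d}$, the full stationary tower of height $\d$; by Woodin's theorem it is pre-saturated and lives in $V_\d$, matching the desired size bound. I would produce a condition $T_0 \in \mtcl T$ and a complete embedding $i : \RO(\mtcl P) \hookrightarrow \RO(\mtcl T) \upharpoonright T_0$ whose quotient is forced by $\mtcl P$ to be in $\SSP$. For each $p \in \mtcl P$, define $i(p)$ as the set of countable $X \prec V_\l$ (for a fixed large $\l < \d$ with $\mtcl P \in X$) admitting an $(X, \mtcl P)$-correct semigeneric filter through $p$ which reflects stationarity for every $\mtcl P$-name in $X$ for a stationary subset of $\o_1$. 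The role of $\bool{MM}^{++}$ is exactly to make each $i(p)$ stationary --- hence a genuine tower condition --- and to make $i$ preserve maximal antichains; the quotient is $\SSP$ because both $\mtcl T$ and $\mtcl P$ are, and the generic ultrapower of $\mtcl T$ factors through the one by $\mtcl P$.

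The main obstacle is the verification that $\{i(p) : p \in \mtcl P\}$ is dense below $T_0 := i(\1_{\mtcl P})$ in the $(1) \Rightarrow (2)$ direction. This density is precisely where $\bool{MM}^{++}$ is deployed in full strength, via its equivalent formulation asserting stationarity of the set of countable substructures admitting $\bool{MM}^{++}$-correct semigeneric filters for $\mtcl P$; once density is granted, completeness of $i$ and $\SSP$-ness of the quotient follow from standard closure properties of the generic ultrapower by $\mtcl T$ and the preservation theorems for pre-saturated towers.
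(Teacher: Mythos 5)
The paper itself contains no proof of Proposition \ref{viale0}; it is quoted from \cite{VIAMMREV}, so the comparison below is against the argument given there. Your overall architecture (a reflection argument through the generic ultrapower for $(2)\Rightarrow(1)$, and an absorption of $\mtcl P$ into a stationary tower via conditions of the form ``structures admitting correct generic filters'' for $(1)\Rightarrow(2)$) is the right one, and your $(2)\Rightarrow(1)$ direction is essentially sound. One inaccuracy there: $\mtcl P$, $\mtcl D$ and $(\dot S_i)_i$ are \emph{not} fixed by $j_K$ in general, since $\mtcl P$ may have size $\geq\o_2=\crit(j_K)$. The standard repair is to observe that $j_K[G]$ generates a filter on $j_K(\mtcl P)$ meeting each $j_K(D)$ and interpreting each $j_K(\dot S_i)$ as a superset of $\dot S_i^G$ (which is stationary in $V[K]$, hence in $N$ by closure), so that $N$ models the witnessing statement for $(j_K(\mtcl P), j_K(\mtcl D), j_K(\dot S_i)_i)$ and one concludes in $V$ by elementarity of $j_K$ --- not by reflection inside some $H_\t^V$.

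The $(1)\Rightarrow(2)$ direction as written fails. You define $i(p)$ as a set of \emph{countable} $X\prec V_\l$ and take $\mtcl T$ to be the full stationary tower; but a condition concentrating on countable structures yields a generic ultrapower with critical point $\o_1$ and $j(\o_1)=\d$, so that $\o_1^V$ becomes countable in the extension. Such a tower is not in $\SSP$ and is not an $\o_2$--tower, so it cannot be forcing equivalent to $\mtcl P\ast\dot{\mtcl Q}$ with $\Vdash_{\mtcl P}\dot{\mtcl Q}\in\SSP$ (as $\SSP$ is closed under two--step iterations). The correct construction uses the tower $\mathbb{T}^\d_{\o_2}$ whose conditions are stationary sets of structures $M$ of size $\al_1$ with $\o_1\sub M$ and $M\cap\o_2\in\o_2$, and sets $i(p)$ equal to the (stationary, by $\bool{MM}^{++}$) collection of such $M\prec H_\t$ admitting an $M$--generic filter through $p$ that correctly interprets every $\mtcl P$--name in $M$ for a stationary subset of $\o_1$; this gives critical point $\o_2$ and an $\SSP$ tower. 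Relatedly, your claim that ``the quotient is $\SSP$ because both $\mtcl T$ and $\mtcl P$ are'' is not a valid inference ($\mtcl P$ may introduce new stationary sets that a badly chosen quotient could destroy while all ground--model stationary sets survive); the $\SSP$--ness of the quotient is exactly what the correctness clause in the definition of $i(p)$ is for, and it must be argued from that clause.
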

It is out of the scope of the present paper to introduce and develop the basic theory of pre-saturated $\lambda$--towers.
It is sufficient for us to know that the conjunction of requirements 2(a) and 2(b) above exactly corresponds to the statement 
$\mtcl T\leq_\SSP\mtcl \RO(\mtcl P)$.
In particular, if there are class--many Woodin cardinals,  then
$\bool{MM}^{++}$ holds if and only if there is a dense class of 
pre-saturated $\o_2$--towers in the category forcing $(\SSP,\leq_\SSP)$.\footnote{Incidentally, note that  $\bool{MM}^{++}$, in its original formulation, is a  $\Pi_2$ sentence, and that so is its characterization in (2) of Proposition \ref{viale0}  with the `furthermore' clause. Also, note that the existence of a proper class of Woodin cardinals is a $\Pi_3$ sentence.}
Now, whenever $\mtcl T$ is a $\lambda$--pre-saturated tower of height $\delta$ and $G$ is
$V$--generic for $\mtcl T$, in $V[G]$ one can define en elementary embedding 
$j:V\to M$ with critical point $\lambda$, and such that
$j(\lambda)=\delta$ and $M^{<\delta}\subseteq M$. This suffices to have that
$L(\Ord^{{<}\lambda})^{V[G]}=L(\Ord^{{<}\lambda})^M$, and therefore also that
$j\restriction L(\Ord^{{<}\lambda})^V$
defines an elementary embedding of $L(\Ord^{{<}\lambda})^{V}$ into $L(\Ord^{{<}\lambda})^{V[G]}$
with critical point $\lambda$ and such that $j(\lambda)=\delta$.
The interested reader can consult~\cite{FOREMAN,LARSONBOOK} or the forthcoming book~\cite{VIAAUDSTEBOOK}
for a development of the theory of generic ultraprowers induced by pre-saturated tower forcings.

In \cite{VIAMM+++}, the second author defines a certain strengthening of the notion of pre-saturated 
$\o_2$--tower, which he calls \emph{$\SSP$--super rigid tower} in \cite{VIAAUDSTEBOOK} and strongly presaturated towers in \cite{VIAMM+++}, together with the corresponding strengthening of 
$\bool{MM}^{++}$ (as given by the characterisation in Proposition \ref{viale0}).  

\begin{definition} $\bool{MM}^{+++}$ holds if and only if for every $\mtcl P\in\SSP$ there is an $\SSP$-super rigid tower $\mtcl T$ and a $\mtcl P$--name $\dot{\mtcl Q}$ such that 
\begin{enumerate}
\item $\Vdash_{\mtcl P}\dot{\mtcl Q}\in\SSP$, and
\item $\mtcl T$ is forcing equivalent to $\mtcl P\ast\dot{\mtcl Q}$.
\end{enumerate}
I.e., $\bool{MM}^{+++}$ holds if and only if there is a dense class of 
$\SSP$-super rigid towers in the category forcing $(\SSP,\leq_\SSP)$.
\end{definition}

In particular $\bool{MM}^{+++}$ is a natural strengthening of $\bool{MM}^{++}$.
He also proves the following theorems.

\begin{theorem}\label{con-mm+++} If there is an almost super--huge cardinal $\delta$,\footnote{We do not need to get into the definition of this large cardinal notion, but will only say that consistency-wise it is somewhat weaker than the existence of a huge cardinal.} then there is a forcing notion $\mtcl P\sub V_\delta$ such that $\Vdash_{\mtcl P}\bool{MM}^{+++}$. In fact, the intersection of $(\SSP, \leq_{\SSP})$ with $V_\delta$ forces $\bool{MM}^{+++}$.
\end{theorem}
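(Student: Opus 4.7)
The plan is to set $\mtcl P := (\SSP,\leq_\SSP)\cap V_\delta$ and verify the two things the theorem requires: (i) $\mtcl P \in \SSP$, so it preserves stationary subsets of $\omega_1$ and is a legitimate condition in the category; and (ii) $\Vdash_{\mtcl P} \bool{MM}^{+++}$, which by definition reduces to showing that in $V[G]$ (with $G$ a $V$-generic filter for $\mtcl P$) the class of $\SSP$-super rigid towers is $\leq_\SSP$-dense in $(\SSP, \leq_\SSP)^{V[G]}$.

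For (i), I would analyse $\mtcl P$ as forcing equivalent to a generalised RCS iteration of $\SSP$-forcings of length $\delta$, obtained by always taking a generic one-step extension. Closure of the class $\SSP$ under such colimits --- one of the iteration-theoretic axioms required of a suitable class $\Gamma$ --- then gives $\mtcl P \in \SSP$ directly. A standard $\Delta$-system argument using the inaccessibility of $\delta$ (much weaker than the assumed almost super-hugeness) yields the $\delta$-cc for $\mtcl P$, with $\delta$ becoming $\omega_2$ in $V[G]$.

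For (ii), fix a $\mtcl P$-name $\dot{\bool Q}$ for an arbitrary $\bool Q \in \SSP^{V[G]}$ and choose $\lambda > \delta$ bounding its hereditary size. Using the almost super-hugeness of $\delta$, take $j : V \to M$ with $\crit(j) = \delta$, $j(\delta) > \lambda$, and $M^{<j(\delta)} \subseteq M$. Then $j(\mtcl P) = V_{j(\delta)} \cap (\SSP, \leq_\SSP)^M$ contains $\mtcl P \ast \dot{\bool Q}$ as a condition below $\mtcl P$. Lifting $G$ to a $V$-generic $\hat G$ for $j(\mtcl P)$ with $\mtcl P \ast \dot{\bool Q} \in \hat G$, the generic ultrapower embedding $V[G] \to M[\hat G]$ provided by $\hat G$ allows one to read off, inside $V[G]$, an $\SSP$-super rigid $\omega_2$-tower $\mtcl T$ sitting below $\bool Q$ in $\leq_\SSP$, which establishes the required density.

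The principal obstacle will be verifying that the tower $\mtcl T$ so produced is actually $\SSP$-super rigid rather than merely pre-saturated (the latter would yield only $\bool{MM}^{++}$). Super rigidity demands a $\leq_\SSP$-dense subclass on which the complete homomorphisms in $\to^\SSP$ are uniquely determined, a significantly stronger uniqueness property than presaturation. The argument must exploit both the low-complexity definability of $\SSP$ and the canonicity of the category forcing construction, so that the generic embedding derived from $\hat G$, and hence $\mtcl T$ itself, is pinned down by the generic together with standard parameters; this is, following \cite{VIAMM+++}, the technical core of the proof.
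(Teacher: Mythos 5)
A preliminary caveat: the paper does not actually prove Theorem \ref{con-mm+++}; it quotes it from \cite{VIAMM+++}, and the general form (Theorem \ref{consistency-of-CFA}) is likewise stated without proof and deferred to \cite{VIAAUDSTEBOOK}. The only in-paper material to measure you against is therefore the strategy sketch of Subsection \ref{subsec:genabsforax}. Against that sketch your outline has the right skeleton for the density argument: reflect via an almost super--huge $j$ with critical point $\delta$, note that $j(\mtcl P)=\bool{U}^{\SSP}_{j(\delta)}$ contains $\mtcl P\ast\dot{\bool{Q}}$ as a condition refining $\mtcl P$, and argue that the quotient of $\bool{U}^{\SSP}_{j(\delta)}\restriction(\mtcl P\ast\dot{\bool{Q}})$ by $G$ is, in $V[G]$, an $\SSP$--super rigid presaturated tower below $\bool{Q}$. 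However, your part (i) contains a step that would fail. There is no $\Delta$--system argument available: conditions of $\mtcl P$ are complete Boolean algebras, with no root/support structure, and the preservation of the regularity of $\delta$ is obtained in the paper's sketch by an entirely different mechanism --- given a name $\dot f:\alpha\to\delta$, one builds a $\leq^*_{\SSP}$--decreasing $\alpha$--sequence of $\SSP$--rigid conditions deciding $\dot f$ pointwise, using lottery sums of rigid conditions at successor stages and the iteration theorem at limits. The cardinal $\delta$ becomes $\omega_2$ because conditions of the form $\bool{B}\times\Coll(\omega_1,\gamma)$ are dense, not because of any chain condition. Likewise, $\bool{U}^{\SSP}_\delta\in\SSP$ is itself one of the substantive theorems of Subsection \ref{subsec:mainresultsM} and is not obtained by exhibiting $\mtcl P$ as literally (forcing equivalent to) an RCS iteration.

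In part (ii) you assert that $\hat G$ ``allows one to read off'' the required tower in $V[G]$ and flag super rigidity as the residual difficulty. Both halves of the conclusion are in fact nontrivial, and the emphasis is misplaced. That the quotient is a \emph{presaturated tower} at all does not follow from producing one elementary embedding $V[G]\to M[\hat G]$ from one particular $\hat G$: presaturation quantifies over all generics for the quotient forcing, and this is precisely what the Foreman--duality analysis invoked in Subsection \ref{subsec:genabsforax}, combined with the quotient identification of Theorem \ref{thm:quo-univ} (which itself requires first $\SSP$--freezing $\mtcl P\ast\dot{\bool{Q}}$ so that the embedding into the category is incompatibility--preserving), is needed for. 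The super rigidity is, by contrast, the softer half: it comes from the density of $\bool{Rig}^{\SSP}$ in $\bool{U}^{\SSP}_{j(\delta)}$ (Theorem \ref{thm:freztotrigGamma}, via freezeability and Lemma \ref{lem:freezelemGamma}), which supplies the dense suborder of $\bool{Rig}^{\SSP}\cap V_{j(\delta)}$ demanded by the definition of $\SSP$--super rigid tower. So the proposal identifies the correct architecture, but the load--bearing steps --- $\bool{U}^{\SSP}_\delta\in\SSP$, preservation of the regularity of $\delta$, and presaturation of the quotient --- are left unproved, and the one concrete mechanism offered for the first two ($\Delta$--system / $\delta$--cc) is not viable.
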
   

\begin{theorem}\label{gen-abs-chang0} Suppose there is a proper class of almost super--huge cardinals and $\bool{MM}^{+++}$ holds. If $\mtcl P\in\SSP$, $\Vdash_{\mtcl P}\bool{MM}^{+++}$, and $G$ is $\mtcl P$--generic over $V$, then $$(\mtcl C_{\o_1}^V; \in, r)_{r\in\mtcl P(\o_1)^V}$$ and $$(\mtcl C_{\o_1}^{V[G]}; \in, r)_{r\in\mtcl P(\o_1)^V}$$ are elementarily equivalent structures.
\end{theorem}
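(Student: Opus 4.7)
Fix a formula $\phi(x)$ and a set $r \in \mathcal P(\omega_1)^V$. The plan is to produce a common generic extension $V[K]$ of $V$ and $V[G]$ carrying, inside $V[K]$, two generic elementary embeddings $j_V \colon V \to M$ and $j_{V[G]} \colon V[G] \to M'$ that respectively restrict to elementary embeddings $\mathcal C_{\omega_1}^V \to \mathcal C_{\omega_1}^{V[K]}$ and $\mathcal C_{\omega_1}^{V[G]} \to \mathcal C_{\omega_1}^{V[K]}$, both fixing the parameter $r$. Given such a configuration, the two-step transfer
\[ \mathcal C_{\omega_1}^V \models \phi(r) \iff \mathcal C_{\omega_1}^{V[K]} \models \phi(r) \iff \mathcal C_{\omega_1}^{V[G]} \models \phi(r) \]
yields the desired elementary equivalence.

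To build $V[K]$, I use $\bool{MM}^{+++}$ in $V$, in its density formulation, to pick an $\SSP$-super rigid tower $\mathcal T \in V$ of some height $\delta$ with $\mathcal T \leq_\SSP \RO(\mathcal P)$, so that $\mathcal T$ is forcing equivalent to $\mathcal P \ast \dot{\mathcal Q}$ for a $\mathcal P$-name $\dot{\mathcal Q}$ with $\Vdash_{\mathcal P} \dot{\mathcal Q} \in \SSP$. The key technical point is that $\dot{\mathcal Q}$ is, in fact, forced by $\mathcal P$ to be an $\SSP$-super rigid tower in $V[G]$; this is the factorization/quotient property of super-rigid towers, and it rests on the iteration theory alluded to in the axiomatization of category forcings (properties 3--5 of the list of required properties of $\Gamma$) together with the hypothesis $\Vdash_{\mathcal P} \bool{MM}^{+++}$, which ensures that $V[G]$ has its own ample supply of super-rigid towers with which to identify $\dot{\mathcal Q}$. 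Let $K$ be $V$-generic for $\mathcal T$ and decompose $K = G \ast H$ with $H$ being $V[G]$-generic for $\dot{\mathcal Q}_G$, so that $V[K] = V[G][H]$.

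From $K$ being generic over $V$ for the presaturated tower $\mathcal T$ of height $\delta$, the generic-ultrapower machinery for pre-saturated towers (cf.\ the discussion after Proposition \ref{viale0}) produces in $V[K]$ an elementary embedding $j_V \colon V \to M$ with $\crit(j_V) = \omega_2^V$, $j_V(\omega_2^V) = \delta$, and $M^{<\delta} \subseteq M$ in $V[K]$. The closure property gives $(\mathrm{Ord}^{\omega_1})^M = (\mathrm{Ord}^{\omega_1})^{V[K]}$, hence $\mathcal C_{\omega_1}^M = \mathcal C_{\omega_1}^{V[K]}$, so $j_V$ restricts to an elementary embedding $\mathcal C_{\omega_1}^V \to \mathcal C_{\omega_1}^{V[K]}$; and since $r \subseteq \omega_1^V$ is coded by an element of $V_{\omega_2^V}$ while $\crit(j_V) = \omega_2^V$, we have $j_V(r) = r$. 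The identical mechanism, applied inside $V[G]$ to $H$ being $V[G]$-generic for the super-rigid tower $\dot{\mathcal Q}_G$, yields $j_{V[G]} \colon V[G] \to M'$ living in $V[G][H] = V[K]$ with $\crit(j_{V[G]}) = \omega_2^{V[G]}$, $\mathcal C_{\omega_1}^{M'} = \mathcal C_{\omega_1}^{V[K]}$, and $j_{V[G]}(r) = r$. This closes the argument modulo the factorization lemma, which is the principal technical obstacle: verifying that the quotient of an $\SSP$-super rigid tower by an intermediate $\SSP$-forcing preserving $\bool{MM}^{+++}$ is itself forced to be an $\SSP$-super rigid tower in the intermediate model. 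The large-cardinal hypothesis (a proper class of almost super huge cardinals) is what supplies the generic embeddings at arbitrarily high ranks needed to realize these factorizations.
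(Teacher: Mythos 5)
Your overall skeleton is the right one, and it is the same two--embedding scheme the paper outlines in Subsection~\ref{subsec:genabsforax}: build a common extension $V[K]$ of $V$ and $V[G]$, extract generic elementary embeddings of $V$ and of $V[G]$ into models whose $\o_1$--Chang model equals that of $V[K]$, with critical points above $\o_1$, and compose. The choice of critical point $\o_2$, the closure $M^{{<}\delta}\subseteq M$ giving $\mtcl C_{\o_1}^M=\mtcl C_{\o_1}^{V[K]}$, and the fixing of parameters $r\in\mtcl P(\o_1)^V$ are all correct.

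The gap is in the step you yourself flag as the principal obstacle, and the justification you offer for it does not work. You take a \emph{single} $\SSP$--super rigid tower $\mtcl T\leq_{\SSP}\RO(\mtcl P)$, factor it as $\mtcl P\ast\dot{\mtcl Q}$, and claim that $\dot{\mtcl Q}_G$ is itself a presaturated ($\SSP$--super rigid) tower in $V[G]$, arguing that $V[G]\models\bool{MM}^{+++}$ supplies ``an ample supply of super-rigid towers with which to identify $\dot{\mtcl Q}$.'' But density of super-rigid towers in $(\SSP,\leq_{\SSP})^{V[G]}$ only yields a super-rigid tower \emph{below} $\dot{\mtcl Q}_G$, not that $\dot{\mtcl Q}_G$ is one; and if you pass to that lower tower you change the extension and lose the single common model $V[K]$ into which both embeddings must land. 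Presaturation of the quotient is exactly the hard content and cannot be read off from the axiom in $V[G]$ applied to an arbitrary tower below $\mtcl P$. The way the argument is actually closed is to take the absorbing forcing to be the category forcing itself: one chooses $\delta$ large, absorbs $\mtcl P$ into $\bool{U}^{\SSP}_\delta$ via a freezing/rigid condition, and invokes the quotient theorem (Theorem~\ref{thm:quo-univ}) to identify $(\bool{U}^{\SSP}_\delta)^V/_{H}$ with $(\bool{U}^{\SSP}_\delta)^{V[H]}$ below the appropriate condition. Then $\bool{MM}^{+++}$ in $V$ and in $V[H]$, in the form of the ultrapower characterization of Theorem~\ref{cfagamma+supercompact} (this is where the proper class of large cardinals is consumed, guaranteeing that the category forcings up to suitable $\delta$ are presaturated towers), produces the two embeddings $j_0:V\to M_0$ and $j_1:V[H]\to M_1$ inside the one extension $V[H\ast G]$. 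Your proof becomes correct if you replace the arbitrary tower $\mtcl T$ by (a rigid restriction of) $\bool{U}^{\SSP}_\delta$ and replace the unproved factorization claim by the quotient theorem plus the application of the axiom in $V[G]$ to its own category forcing.
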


The more general theory we will present generalizes these results to arbitrary suitable classes $\Gamma$ of forcings other than $\SSP$. The formulation of the corresponding category forcing axiom becomes thus the following density property for $\Gamma$.

\begin{definition}\label{definition-of-cfa(gamma)} Given a $\lambda$--suitable class $\Gamma$ of forcing notions, \emph{the category forcing axiom for $\Gamma$}, $\CFA(\Gamma)$, is the following statement: The class of $\Gamma$-super rigid towers $\mtcl T\in\Gamma$ is dense in the category $(\Gamma, \to^\Gamma)$ given by $\Gamma$. \end{definition} 

We will give the definition of the notion of $\Gamma$-super rigid  tower in the next subsection, but will rather refer the reader to ~\cite{VIAAUDSTEBOOK} for a thorough development of the main properties of these towers. We do point out that this is a natural, in the context of the present analysis of category forcing, strengthening of the classical notion of pre-saturated tower. Furthermore, the definition of $\Gamma$-super rigid tower is both $\Sigma_2$ and $\Pi_2$.\footnote{Given that all our classes will be $\Sigma_2$ definable (possibly from some parameter), it follows from the above that all our axioms $\CFA(\Gamma)$ will be $\P_3$ statements, possibly in some parameter defining $\Gamma$.} It also follows, in light of the characterization in Proposition \ref{viale0} of $\bool{MM}^{++}$, that the resulting category forcing axioms are naturally seen as strong forms of the forcing axiom for the corresponding class of forcings. 
Moreover, in the next subsection we will also give a (second--order) equivalent 
(in the presence of a proper class of supercompact cardinals) formulation of 
$\CFA(\Gamma)$ 
which does not mention towers.

The second main outcome of our analysis gives the following:
\emph{
\begin{quote}
There are uncountably many classes $\Gamma$ with $\lambda_\Gamma=\omega_1$ for which $\CFA(\Gamma)$ is consistent. These $\Gamma$ 
produce pairwise incompatible first order theories for $H_{\omega_2}$ which are generically invariant
with respect to forcings in $\Gamma$ preserving $\CFA(\Gamma)$.
Among these $\Gamma$ we mention: the class of stationary set preserving forcings, 
the class of proper forcings, the class of (semi)proper forcings which are also $\omega^\omega$--bounding, and so on.\footnote{If we pay attention to the form of these category forcing axioms, we see that the conclusion that $\CFA(\Gamma_0)$ and $\CFA(\Gamma_1)$ be incompatible axioms even when $\Gamma_0\subseteq\Gamma_1$ is not as counterintuitive as it could at first seem. These axioms say that a certain subclass $\Gamma^\ast$ of $\Gamma$ is dense in the category corresponding to $\Gamma$ \emph{via the corresponding accessibility relation $\leq_\Gamma$}, i.e., for every $\bool{B}\in\Gamma$ there is some $\bool{C}\in\Gamma^\ast$ such that $\bool{C}\leq_\Gamma\bool{B}$. Even if $\Gamma_0\subseteq\Gamma_1$ and $\Gamma_0^\ast\subseteq\Gamma_1^\ast$, it does not follow that $\CFA(\Gamma_1)$ should imply $\CFA(\Gamma_0)$  (since $\leq_{\Gamma_1}$ need not be contained in $\leq_{\Gamma_0}$).}
\end{quote}
}

It follows from the above that any claim of naturalness of these axioms made on the face of the amount of logical completeness they account for -- as given by Theorem \ref{gen-abs-cfa} 
-- is undermined by their mutual incompatibility, 
at least if we expect natural axioms to be mutually compatible. On this conception of naturalness, maximality considerations based on topological properties provide better justifications. From this point of view, $\bool{MM}$ is more natural than, say, $\bool{PFA}$, and hence, due to the fact that $\CFA(\Gamma)$ implies $\FA_{\lambda_\Gamma}(\Gamma)$, a similar comparison applies to the corresponding category forcing axioms. 

One more general conclusion of the above is that considerations of logical completeness do not by themselves constitute a feasible criterion towards isolating natural axioms for set theory, even when this completeness is derived from non--contrived natural strengthenings of forcing axioms, as is the case for category forcing axioms.\footnote{At least if, again, we expect natural axioms to be mutually compatible.} Maximality considerations based on topological properties, 
as the ones motivating forcing axioms, are preferable in this respect. On the other hand, as our first set of results for category forcing axioms shows, we may have that logical completeness can be a pleasant added feature of axioms satisfying a maximality property with respect to topological considerations. 

Finally, it is worth pointing out that our category forcing axioms may be seen as \emph{conditional} maximality principles aiming at describing, given a suitable class $\Gamma$ of forcing notions, what the universe will look like after `saturating' it, in the presence of sufficiently strong large cardinals, via the formation of forcing extensions coming \emph{only} from $\Gamma$. One moral of our results is that, so long as our classes have a reasonable iteration theory, as well as our other requirements, the corresponding saturated pictures of the relevant Chang model that we obtain are incompatible, even when the classes  themselves are comparable under inclusion.

We now give a brief description of the structure of the paper. In the first part we 
outline the general theory of our category forcings, leading to the formulation of $\CFA(\Gamma)$
for a class of forcings $\Gamma$,
to the listing of the desired consequences of this axiom, and to a simple set of properties on $\Gamma$ 
sufficient to make $\CFA(\Gamma)$ a consistent axiom for which the intended form of generic absoluteness holds (where both outcomes are conditioned to the presence of sufficiently strong large cardinals).
Nonetheless we will omit almost all proofs, since this would make the paper far too long. 
The interested reader will find all these things in~\cite{VIAAUDSTEBOOK}. 
The second part of the paper gives a detailed list of examples of classes $\Gamma$ which fulfill the requests outlined in the first part.

From now on, we will work in the Morse--Kelley axiomatization of set theory $\MK$. It will be convenient for us to assume that
the universe of sets $V$ is a model of the theory
\begin{equation}
\MK^\ast=\MK+
\emph{there are \emph{stationarily} many
inaccessible cardinals,} 
\end{equation}
since we will often need to handle proper classes and relavitizations of these to initial segments of the universe. 

We will also need to talk about specific elements of $V$, among others: 
\begin{itemize}
\item
the first uncountable cardinal $\omega_1$, 
\item 
a fixed countable indecomposable ordinal $\rho$,
\item
a regular uncountable cardinal $\lambda$.
\end{itemize} 
We will need to keep track of precisely how the properties of these objects affect the properties of certain 
classes of forcings $\Gamma$ defined in $V$ using them as parameters 
(such as the class of semiproper forcings, the class of 
$\rho$--proper forcings, the class of ${<}\lambda$--closed forcings, etc). 
It will be important to know which axioms of set theory holding in $V$ and which properties of the objects 
used to define $\Gamma$
are used to establish certain properties of this $\Gamma$. 
To properly address this issue we proceed a follows:

\begin{itemize}
\item We expand the language of set theory to a language $\mathcal L_\in^+$ with new constant symbols whose interpretation
in the standard model of set theory is clear. For this reason, the name of each of these constant symbols will be that
of their interpretation in $V$ (e.g.\ $\omega_1$, $\rho$, etc).
\item We add to the axioms of $\MK^\ast$
certain first order sentences that we will need in order to carry out the relevant arguments in $V$; these will be statements like ``\emph{$\omega_1$ is the first uncountable cardinal}'',
``\emph{$\rho$ is a countable indecomposable ordinal}'', ``\emph{$\lambda$ is a regular uncountable cardinal}'' 
(and possibly a few others). The particular statements we are considering are always made explicit, or else will be clear from the context. 
\item 
Given a theory $T$ in the language $\mathcal L_\in^+$ extending  $\MK^\ast$ and given a class of forcings $\Gamma$ defined by a formula
in $\mathcal L_\in^+$, we will introduce a notion of canonicity relating $\Gamma$ and $T$
which holds if the following two conditions are satisfied: 
\begin{itemize}
\item
The desired properties of $\Gamma$ follow from the axioms of $T$; 
\item 
For any axiom $\sigma$ of 
$T$, it is provable from $T$ that $\sigma$ holds in all forcing 
extensions obtained by forcings in $\Gamma$.\footnote{Where of course every relevant constant symbol is interpreted in the generic extension as the same object as in the ground model.} 
\end{itemize}
\end{itemize}

We also adopt the following notational conventions:
\begin{itemize}
\item 
$\MK^\ast$ is formalized in a language $\bp{\in, =, \subseteq, \text{Set}}$ with $\text{Set}$ a unary predicate and
the axiom stating $\forall x(\text{Set}(x)\leftrightarrow\exists y\,x\in y)$.
\item $V$ denotes the universe of sets \emph{and classes}. This is the standard model of $\MK^\ast$
and contains a proper class of inaccessible cardinals.
\item
We say that $\phi$ is a $\Sigma_n$--property over some theory $T\supseteq\MK^\ast$ 
if it is provably equivalent in $T$ to a $\Sigma_n$--formula according to the Levy hierarchy \emph{whose
quantifiers range just over sets}. We say that $\phi$ is $\Sigma^1_n$ over $T$
if it is provably equivalent in $T$ to a $\Sigma_n$ formula according to the Levy hierarchy
\emph{whose
quantifiers can range over sets and classes}.
\item
If we are interested just in $\Sigma_n$--properties, we often consider the $\ZFC$--models $V_\delta$ rather than the 
$\MK$--models
$V_{\delta+1}$ for inaccessible $\delta$.

\end{itemize}

Note that $\MK^*$ is obtained by adding to $\MK$ a given $\Pi^1_1$--statement.

From now on, when we state that a certain property holds for a poset $\mtcl P$, 
we automatically infer that the property holds for its Boolean completion $\RO(\mtcl P)$.
Conversely, when we assert that a complete Boolean algebra $\bool{B}$ has a property defined for posets,
we mean that the poset $\bool{B}^+=\bool{B}\setminus\bp{0_{\bool{B}}}$ with the order inherited from 
$\bool{B}$ has this property. This is not problematic, since all the properties of posets we define 
are forcing invariant, i.e. stable with respect to the equivalence relation on posets given 
by $P\equiv Q$ if and only if $P$ and $Q$ have isomorphic Boolean completions.

	

\section{$\CFA(\Gamma)$ and generic absoluteness for $L(\Ord^{\lambda})$.}

In this section we introduce the key definitions and results on category forcings. We proceed as follows:
\begin{itemize}
\item In Subsection~\ref{subsec:genabsforax}, and building on the stationary tower proof of Woodin's  generic absoluteness theorem for the $\omega$--Chang  model in the presence of large cardinals, we give an informal outline of our general strategy to link forcing axioms to 
generic absoluteness through category forcings.
\item This will bring us to isolate certain key features that category forcings must have. 
These features are collected in the notion of  
$\lambda$--suitable class in Subsection~\ref{subsec:suitclassforc}.
\item In Subsection~\ref{subsec:mainresultsM} we give the definition of $\CFA(\Gamma)$ 
for a $\lambda$--suitable class of forcings $\Gamma$
and state the two main results for such a $\Gamma$: on the one hand that $\CFA(\Gamma)$ makes  the theory of
$L(\Ord^{\lambda})$ generically invariant with respect to forcings in $\Gamma$ which preserve
$\CFA(\Gamma)$, and on the other hand that $\CFA(\Gamma)$ is consistent relative to large cardinal axioms.
However (due to their length) we completely omit the proofs of these results and refer
the reader to the forthcoming~\cite{VIAAUDSTEBOOK}. 
\item In Subsection~\ref{subsec:freezetotrig} we give a detailed description of $\Gamma$--rigidity and $\Gamma$--freezea\-bi\-li\-ty, which are two of the key provisions
a $\lambda$--suitable class $\Gamma$ must satisfy. In Section \ref{suitable classes} 
we will show that there are a great variety of
$\omega_1$--suitable classes $\Gamma$. We felt it was worth focusing our attention on these notions given that, as we will see in next section, the main difficulty when establishing the $\omega_1$--suitability of a  class $\Gamma$ will be in showing that it has the $\Gamma$--freezeability property. 

\end{itemize}
The key definitions and concepts needed to follow the results in other parts of the paper are given 
in Subsections~\ref{subsec:suitclassforc} and ~\ref{subsec:mainresultsM}. The other subsections, \ref{subsec:genabsforax} and \ref{subsec:freezetotrig}, are mainly meant to give some more information
useful to grasp the content of ~\ref{subsec:suitclassforc} and ~\ref{subsec:mainresultsM}.

\subsection{Generic absoluteness and forcing axioms}\label{subsec:genabsforax}
Since the definition of $\lambda$--suitable class of forcing $\Gamma$ is rather technical, it is
useful to understand step by step why we introduce each of its provisions. This is what we do
in this section. The reader may skip it entirely and still be able to follow all the remaining parts of the paper.


We assume the reader is more or less familiar with Woodin's proof (by means of stationary towers)
of the generic absoluteness results (see Chapter 3 of \cite{LARSONBOOK})
for $L(\mathbb{R})$ and/or for $L(\Ord^\omega)$; let us briefly recall its salient steps.
\begin{itemize}
\item 
Consider the forcing $\Coll(\omega, {<}\delta)$ given by finite functions $s:\omega\times\delta\to\delta$ such that
$s(n,\alpha)\in\alpha$ for all $(n,\alpha)\in\dom(s)$ and ordered by reverse inclusion.
For a large enough cardinal $\delta$ in $V$
(e.g.\ supercompact, but $\delta$ being a Woodin cardinal is enough), $\Coll(\omega, {<}\delta)$ is such that: 
\begin{itemize}
\item
Whenever $G$ is $V$--generic for $\Coll(\omega, {<}\delta)$:
\begin{itemize}
\item
$\delta=\omega_1^{V[G]}$, 
\item there is an elementary map
$j:L(\Ord^\omega)^V\to L(\Ord^\omega)^{V[G]}$  definable in $V[G]$ with critical point 
$\omega_1^V$ and such that
$j(\omega_1^V)=\delta$.
\end{itemize}
\item For all $P\in V_\delta$, $P$ is a complete subforcing of $\Coll(\omega, {<}\delta)$; moreover, whenever
$H$ is $V$--generic for $P$, $\Coll(\omega, {<}\delta)^{V[H]}=\Coll(\omega, {<}\delta)^V$ and $\delta$ remains
large enough in $V[H]$ (i.e., it remains Woodin, supercompact, etc, in $V[H]$).
\end{itemize}

\item
Now assume $H$ is $V$--generic for some $P\in V_\delta$ and $G$ is $V[H]$--generic for 
$\Coll(\omega, {<}\delta)$ for some suffficiently large $\delta$ in $V$ (again, $\delta$ being a Woodin cardinal is enough).
Then $\delta$ remains sufficiently large in $V[H]$ and $G$ is also $V$--generic for $\Coll(\omega, {<}\delta)$.
Therefore we have $j_0:L(\Ord^\omega)^V\to L(\Ord^\omega)^{V[G]}$ and
$j_1:L(\Ord^\omega)^{V[H]}\to L(\Ord^\omega)^{V[G]}$, both elementary and with critical point
$\omega_1^V$ and $\omega_1^{V[H]}$, respectively, and with image of the critical points in both cases being $\delta$.
This gives that 
\[
\ap{L(\Ord^\omega)^{V[H]},\in,\pow{\omega}^V}\equiv\ap{L(\Ord^\omega)^{V[G]},\in,\pow{\omega}^V}\equiv
\ap{L(\Ord^\omega)^V,\in,\pow{\omega}^V}.
\]
\end{itemize}

Our purpose is to rerun the same proof scheme with the following modifications:
\begin{itemize}
\item We fix a $\lambda$--suitable class of forcings $\Gamma$.
\item We replace $\omega_1$ all over with $\lambda^+$. 
\item We replace all over the role of $\Coll(\omega, {<}\delta)$ with that of
$\Gamma\cap V_\delta$. Therefore we will require that for all large enough cardinals $\delta$:
\begin{enumerate} \label{keytask1}
\item $\Gamma\cap V_\delta$ preserves the regularity of $\delta$. \label{keytask1.1}
\item $\Gamma\cap V_\delta$ absorbs any forcing $\bool{B}\in\Gamma\cap V_\delta$
in such a way that: whenever $H$ is $V$--generic for $\bool{B}$,
 in $V[H]$ we have that $(\Gamma\cap V_\delta)^{V[H]}$ is the quotient of $(\Gamma\cap V_\delta)^V$
 by $H$. \label{keytask1.2}
\item If a certain axiom $\mathsf{CFA}(\Gamma)$ holds in $V$, then whenever $G$ is $V$--generic for $\Gamma\cap V_\delta$, 
we have that there
is an elementary embedding $j:V\to M$ definable in $V[G]$ with critical point $(\lambda^+)^V$, $j((\lambda^+)^V)=\delta$, 
and such that $M^\lambda\subseteq M$
holds in $V[G]$. \label{keytask1.3} \label{page:tasks}
\end{enumerate} 
\end{itemize}
Suppose we are successful with all the above tasks. Then we can run the last step of Woodin's proof as follows:
\begin{quote}
Assume $\CFA(\Gamma)$ holds in $V$ and 
$H$ is $V$--generic for some $P\in \Gamma$ such that $V^P$ forces $\CFA(\Gamma)$.
Find $\delta$ large enough with $P\in V_\delta\cap \Gamma$.

Let $G$ be $V[H]$--generic for 
$(\Gamma\cap V_\delta)^{V[H]}$.
Then $\delta$ remains large enough in $V[H]$ and $H\ast G$ is also $V$--generic for $(\Gamma\cap V_\delta)^{V}$.
Therefore (since $V$ and $V[H]$ are both models of $\mathsf{CFA}(\Gamma)$)
we have elementary maps $j_0:V\to M_0\subseteq V[H\ast G]$ and
$j_1:V[H]\to M_1\subseteq V[H\ast G]$ such that:
\begin{itemize}
\item
The critical point of $j_0$ is $(\lambda^+)^V$, the critical point of $j_1$ is $(\lambda^+)^{V[H]}$,
and both critical points are mapped to $\delta$.
\item
$L(\Ord^\lambda)^{M_i}=L(\Ord^\lambda)^{V[H\ast G]}$  for both $i=0,1$, 
since $M_i^\lambda\subseteq M_i$ for both $i=0,1$.
\end{itemize}
This gives that $\ap{L(\Ord^\lambda)^{V[H]},\in,\pow{\lambda}^V}\equiv\ap{L(\Ord^\lambda)^V,\in,\pow{\lambda}^V}$.
\end{quote}

\medskip

To get started, we require that $\Gamma$ be closed under two steps iterations and under preimages by complete injective homomorphisms; these are natural closure properties on $\Gamma$ without which most of the above arguments
cannot be run smoothly.

Let us first of all address task~(\ref{keytask1.2}). We start with the following weaker request:
\begin{enumerate}[(2')]
\item $\Gamma$ absorbs any forcing $\bool{B}\in\Gamma$.
\end{enumerate}
To achieve this, we have a very simple strategy:
\begin{quote}
Given $\bool{B}\in\Gamma$, let us consider the map $i_{\bool{B}}:\bool{B}\into \Gamma$ given by  $$i_{\bool{B}}:b\mapsto \bool{B}\restriction b$$
If $c\leq_{\bool{B}}b$, the map $k:a\mapsto a\wedge c$ defines a complete surjective homomorphism
of $\bool{B}\restriction b$ into $\bool{B}\restriction c$ with a generic quotient in 
$\Gamma$;\footnote{More precisely, this means that if $G$ is $V$--generic for $\bool{B}\restriction b$, then  the quotient $\bool{B}\restriction c /  k(G)$ is in $\Gamma$ with Boolean value at least $\coker(k)$. This is of course the case in the present situation since $\coker(k)=c$ forces that $\bool{B}\restriction c /  k(G)$
is isomorphic to the trivial Boolean algebra
$\bp{0,1}$, and the latter
is trivially in $\Gamma$.}
hence $\bool{B}\restriction c\leq_\Gamma \bool{B}\restriction b$, i.e., $i_{\bool{B}}$ is order preserving.

With some more effort we can also check that $i_{\bool{B}}$ preserves suprema: i.e., if
$a=\bigvee_{\bool{B}}\bp{a_i:i\in I}$, then $\bool{B}\restriction a$ is the supremum of the set
$\bp{\bool{B}\restriction a_i :i\in I}$ in $(\Gamma,\leq_\Gamma)$.

The critical issue is the preservation of the incompatibility relation.
Consider any homogeneous forcing $\bool{B}\in\Gamma$; its homogeneity grants that
 $\bool{B}\restriction s$ and $\bool{B}\restriction t$ are isomorphic for all
$s$, $t\in \bool{B}$, hence the map $i_{\bool{B}}$ does not preserves the incompatibility relation.
This is an unavoidable and critical issue of the map $i_{\bool{B}}$
we must address; for example the forcing $\Coll(\omega_1, \mu)$ (given by injective functions with
domain a countable ordinal and range contained in $\mu$) is homogeneous
and belong to the class $\Gamma$ of proper forcings for all  $\mu\geq\omega_1$.
\end{quote}

To overcome this issue, we introduce the notion of $\Gamma$--rigidity: a forcing $\bool{B}\in \Gamma$
is $\Gamma$--rigid if the map $i_{\bool{B}}$ defined above is incompatibility--preserving.

Now assume $(\Gamma, \leq_\Gamma)$ has a dense set of $\Gamma$--rigid elements.
We can then embed any $\bool{B}\in\Gamma$ into $\Gamma$
as follows: We first find a $\Gamma$--rigid $\bool{C}\leq_\Gamma\bool{B}$ in $V_\delta$
and $i:\bool{B}\to\bool{C}$, a complete injective homomorphism with a generic quotient in $\Gamma$. Then
the map $i_{\bool{C}}\circ i$ is a complete embedding of $\bool{B}$ into $\Gamma\restriction\bool{C}$.

It can also be shown that:

\emph{
If $\Gamma$ is well behaved
and $\bool{B}\in\Gamma$, whenever $H$ is $V$-generic for $\bool{B}$, we have that 
$\Gamma^{V[H]}$ is forcing equivalent to 
the generic quotient of $\Gamma^V$ by $H$.}

In any case, the first key observation is the following:

\begin{quote} 
In order to achieve task~(\ref{keytask1.2}) for $\Gamma$, a key requirement is for 
$(\Gamma,\leq_\Gamma)$ to have
a dense class of $\Gamma$--rigid elements.
\end{quote}

Now let us address task~(\ref{keytask1.1}).
Assume $\Gamma$ has an iteration theory guaranteeing that all iterations of its posets 
constructed according
to some (reasonable) given rule have a lower bound in $\Gamma$ 
(this is the case, for example, for proper and semiproper forcings). 
Then we can also show that $\Gamma\cap V_\delta$
preserves the regularity of $\delta$. Roughly speaking, the proof goes as follows:
Assume $\dot{f}:\alpha\to\delta$ is a $\Gamma\cap V_\delta$--name for a function.
Given $\bool{B}\in\Gamma$, define $\bp{\bool{B}_{\xi}:\xi\leq\alpha}$, letting $\bool{B}_0$
be some $\Gamma$--rigid element refining $\bool{B}$,
and for every $\xi$ letting $\bool{B}_{\xi+1}\leq_\Gamma\bool{B}_\xi$ be such that:
\begin{itemize} 
\item
there is an \emph{injective} map $i_\xi:\bool{B}_\xi\to\bool{B}_{\xi+1}$ witnessing 
$\bool{B}_{\xi+1}\leq_\Gamma\bool{B}_\xi$,
\item
there is
a maximal antichain $A_\xi$ in $\bool{B}_{\xi+1}$ such that $\bool{B}_{\xi+1}\restriction a$ decides
in $\Gamma\cap V_\delta$ the value of $\dot{f}(\xi)$ for all $a\in A_\xi$.
\end{itemize}
At limit stages $\beta\leq\alpha$, take $\bool{B}'$ as some limit in $\Gamma$ of the iteration
$\bp{\bool{B}_{\xi}:\xi<\beta}$ and let $\bool{B}_\xi\leq_\Gamma \bool{B}'$ be $\Gamma$--rigid.
One can check that 
\begin{itemize}
\item The recursive construction can be successfully carried out: 
\begin{itemize}
\item
At successor stages, we use the fact that the
lottery sums of $\Gamma$--rigid forcings is itself $\Gamma$--rigid. 
We let $\bool{B}_{\xi+1}$ be the lottery
sum of $\Gamma$--rigid conditions 
$\bool{C}_a\leq\bool{B}_\xi$ 
deciding the value of $\dot{f}(\xi)$. One needs
at most $|\bool{B}_\xi|$ many such conditions. 
\item For the limit stages one uses 
the iteration theorem for forcings
in $\Gamma$.
\end{itemize}
\item
It can then be shown that $\bool{B}_\alpha\leq_\Gamma\bool{B}$ forces in $\Gamma\cap V_\delta$
that $\dot{f}$ has its range bounded by some $\gamma<\delta$.
\end{itemize}
This takes care also of task~(\ref{keytask1.1}). 

So far, in order to achieve tasks~(\ref{keytask1.1}) and ~(\ref{keytask1.2}) 
we have isolated the following requests on $\Gamma$:
\begin{itemize}
\item $\Gamma$ is closed under preimages by complete injective homomorphisms.
\item The set of $\Gamma$--rigid elements is dense in $(\Gamma,\leq_\Gamma)$.
\item There is an iteration theorem granting all iterations of members from $\Gamma$ 
carried out according to some rule have a lower bound in $\Gamma$.
\item $\Gamma$ is closed under lottery sums.
\end{itemize}
We are left with a strategy to fulfill task~(\ref{keytask1.3}) on page \pageref{page:tasks}. 
For this, we define the category forcing axiom $\mathsf{CFA}(\Gamma)$ as, roughly, 
the statement that the class of $\Gamma$--rigid forcings
 which induce generic ultrapowers with strong closure properties is dense in 
 $(\Gamma,\leq_\Gamma)$. The precise formulation, which we will repeat in 
 Subsection \ref{subsec:mainresultsM}, was already given in 
 Definition \ref{definition-of-cfa(gamma)}.


In order to handle this axiom, we resort to (variations of) Foreman's duality theorem~\cite{FORDUATHM}, 
which essentially amounts to the following:

\begin{quote}
Given a suitably large cardinal $\delta$
and a nicely defined
forcing $P_\delta$ which preserves the regularity of $\delta$, assume $G$ is $V$--generic 
for $P_\delta$ and $j:V\to M$ is elementary with critical point $\delta$ and with $M$ sufficiently closed.
Then $P_{j(\delta)}/_G$ is in $V[G]$ a forcing with the following property:
Whenever $K$ is $V[G]$--generic for $P_{j(\delta)}/_G$, the map
$\bar{j}:V[G]\to M[K]$ given by $\bar{j}(\tau_G)=j(\tau)_{G\ast K}$
is a generic ultrapower embedding. Moreover $\bar{j}$ retains in $V[G\ast K]$ most 
of the closure properties which $j$ has in $V$.
\end{quote}
With further work and elaborating on Foreman's duality theorem
one can prove that: 
\begin{quote}
Whenever $\Gamma$ is $\lambda$--suitable, $\kappa$ is a large enough cardinal in $V$, and 
$H$ is $V$--generic for $\Gamma\cap V_\kappa$, $\kappa=(\lambda^+)^{V[H]}$ and
for all sufficiently large cardinals $\delta>\kappa$ of $V[H]$,
$(\Gamma\cap V_\delta)^{V[H]}$ satisfies in $V[H]$ all requirements set forth 
in~(\ref{keytask1.1}),~(\ref{keytask1.2}) and~(\ref{keytask1.3}). 
\end{quote} 
 
 The consistency proof of  of $\mathsf{CFA}(\Gamma)$ proceeds by showing that if $\delta$ is a sufficiently large cardinal and $H$ is generic for $\Gamma\cap V_\delta$, then $V[H]$ is a model of 
 $\mathsf{CFA}(\Gamma)$.
 
 Finally, further work (which follows from Woodin's results on stationary tower forcings) shows
 also that:
 \begin{quote}
Whenever $\Gamma$ is $\lambda$--suitable, $\mathsf{CFA}(\Gamma)$  implies that 
$\mathsf{FA}_\lambda(\bool{B})$ holds for all $\bool{B}\in\Gamma$.
  \end{quote}

\subsection{$\lambda$--suitable class forcings}\label{subsec:suitclassforc}

Let us now come to the rigorous definitions.
This section introduces the key properties of a class of forcings 
$\Gamma$ we are  interested in.
All the classes $\Gamma$ we consider from now on are defined as the extension of a formula 
$\phi_\Gamma(x,a_\Gamma)$ in the language of set theory enriched with a constant symbol for a set $a_\Gamma$;
$a_\Gamma$ is a set parameter, all the quantifiers in $\phi_\Gamma$ range over sets, and
the free variable $x$ ranges over sets.

\begin{notation}
Given a class $\Gamma$ defined by a formula with 
quantifiers and parameters ranging over sets, $\phi_\Gamma(x,a_\Gamma)$ and $a_\Gamma$ will always denote 
the formula and the parameter used to define it.
\end{notation}

\begin{remark}
Our official definition of a class forcing $\Gamma$ assumes that 
$\Gamma$ consists of complete Boolean algebras.
This is the case since most of our definitions and calculations on such class forcings $\Gamma$ 
are much easier
 to state and carry out if $\Gamma$ consists solely of complete Boolean algebras.
On the other hand, in many cases there are posets $Q$ of interest (for example the posets
$(\Gamma\cap V_\delta,\leq_\Gamma\cap V_\delta)$ for $\delta$ a large enough cardinal), 
which are not even separative, for which it is important to establish that their Boolean completion 
$\RO(Q)$ is in $\Gamma$. As is often the case in forcing arguments, we have a clear grasp of what 
$Q$ is and how its combinatorial properties work, while this is much less transparent when we pass to 
$\RO(Q)$. It will be convenient in these situations to assume 
$Q\in\Gamma$ even if 
this actually holds just  
for $\RO(Q)$. So we feel free in many cases to assume 
that the extension of a class forcing
$\Gamma$ consists of all the posets $Q$ whose Boolean completions 
satisfy the defining property of $\Gamma$.
If we feel that this can generate misunderstandings, we will be explicitly 
more careful in those situations.
\end{remark}

\begin{definition}
Let $\Gamma$ 
be a definable class of forcing notions.

Let $\bool{B}$, $\bool{C}$ be complete Boolean algebras.
\begin{itemize}
\item
A complete homomorphism $i:\bool{B}\to\bool{C}$ is $\Gamma$--correct 
if\footnote{Notice that a priori we do not require either 
$\bool{B}$ or $\bool{C}$ to be in $\Gamma$, 
even if in what follows we shall mostly be interested in the case
in which this is the case for both of them.}
\[
\Qp{\bool{C}/i[\dot{G}_{\bool{B}}]\in\Gamma}_{\bool{B}}=
\Qp{\phi_\Gamma(\bool{C}/i[\dot{G}_{\bool{B}}],\check{a}_\Gamma)}_{\bool{B}}\geq_{\bool{B}}\coker(i),
\]
where $\coker(i)=\neg\bigvee\bp{a\in\bool{B}: i(a)=0_{\bool{C}}}$ and 
$\dot{G}_{\bool{B}}=\bp{\ap{\check{b},b}:b\in\bool{B}}$ is the canonical $\bool{B}$--name for the $V$--generic filter.
\item
$\bool{C}\leq_\Gamma\bool{B}$ if there is a
$\Gamma$--correct 
\[
i:\bool{B}\to\bool{C}.
\]

\item
$\bool{C}\leq^*_\Gamma\bool{B}$ if there is an \emph{injective}
$\Gamma$--correct complete homomorphism
\[
i:\bool{B}\to\bool{C}.
\]
\item
Assume further that $k:\bool{B}\to\bool{C}$ is $\Gamma$--correct and $\bool{B}$, $\bool{C}\in\Gamma$.

$k$ $\Gamma$--freezes $\bool{B}$ if for all \emph{$\Gamma$--correct} 
$i_0$, $i_1:\bool{C}\to\bool{D}$ we have that $i_0\circ k=i_1\circ k$.

\item
$\bool{B}$ is $\Gamma$--rigid if the identity map $\id:\bool{B}\to\bool{B}$ $\Gamma$--freezes $\bool{B}$.

\item
 Assume $G$ is $V$--generic for $\bool{C}$.
 Let $H\in V[G]$ be $V$--generic for $\bool{B}$ and $\breve{H}$ its dual prime ideal on $\bool{B}$.
 Let $\dot{H}\in V^{\bool{C}}$ be such that $\dot{H}_G=H$ and 
 $q=\Qp{\dot{H}\text{ is $V$--generic for }\bool{B}}\in G$.
  
 For $r\leq_{\bool{C}}q$ define:
 \begin{align*}
 i_{r,\dot{H}}:& \bool{B}\to\bool{C}\restriction r\\
 & b\mapsto\ \Qp{\check{b}\in\dot{H}}_{\bool{C}}\wedge r.
 \end{align*}
 Then $i_{r,\dot{H}}$ is a complete homomorphism such that $H=i_{r,\dot{H}}^{-1}[G]$. 
 $H$ is $\Gamma$--correct for $\bool{B}$ in $V[G]$ if for some $r\in G$, $r\leq_{\bool{C}} q$,
 letting $J$ be the ideal $\downarrow (i_{r,\dot{H}}[\breve{H}])$, we have that
 \[
V[H]\models \bool{C}/_J\in\Gamma^{V[H]}.
\]

\end{itemize}
\end{definition}

\begin{definition}
Let $\Gamma\subseteq V$ be a definable class of posets.
\begin{itemize}
\item
$\Gamma$ is \emph{closed under preimages by complete injective homomorphisms} if given any $\bool{C}\in\Gamma$ and any complete Boolean algebra $\bool{B}$,
if $i:\bool{B}\to\bool{C}$ is a complete injective homomorphism, then we also have that $\bool{B}\in\Gamma$.\footnote{Note that if $\Gamma$ is closed under preimages by complete injective homomorphism, then is is closed under complete subalgebras in the obvious sense.}
\item
$\Gamma$ is \emph{closed under two--step iterations} if for all $\bool{B}\in\Gamma$ and 
all $\dot{\bool{C}}\in V^\bool{B}$ such that $\Qp{\dot{\bool{C}}\in\Gamma}_\bool{B}=1_\bool{B}$ we have that
$\bool{B}*\dot{\bool{C}}\in \Gamma$.
\item
$\Gamma$ is \emph{closed under lottery sums} if every set $A\subset\Gamma$ has 
as exact upper bound
$\bigvee_\Gamma A$ in $\leq_\Gamma$ ($\bigvee_{\Gamma}A$ is the lottery sum of the posets in $A$, 
equivalently -- given that $A$ consists of complete Boolean algebras --
the product of the Boolean algebras in $A$).
\item
$\Gamma$ 
has the $\Gamma$--\emph{freezeability property}
if for every $\bool{B}\in \Gamma$ there
is $k:\bool{B}\to\bool{C}$  
$\Gamma$--freezing $\bool{B}$.
\item
$\Gamma$ is \emph{weakly $\lambda$--iterable} if 
for each ordinal 
$\alpha$ Player $II$ has a winning strategy in the game
$\mathcal{G}_\alpha(\Gamma)$ of length $\alpha+1$, between players $I$ and $II$, defined as follows: 
\begin{itemize}
\item
at successor stages $\alpha$, players $I$ and $II$  play $\Gamma$--correct \emph{injective}
homomorphism
$i_{\alpha,\alpha+1}:\bool{B}_\alpha\to\bool{B}_{\alpha+1}$;
\item
Player $I$ plays at odd stages, player $II$ at even stages ($0$ and all limit ordinals are even);
\item
at stage $0$, $II$ plays a $\Gamma$--correct injective embedding 
$i_{0,1}:\2\to\bool{B}_{1}$ (i.e. a $\bool{B}_1\in\Gamma$);
\item
at limit stages $\sigma$, $II$ must play\footnote{$\varinjlim\{\bool{B}_\alpha:\alpha<\sigma\}$ is the direct 
limit
of the iteration system given by the maps $i_{\gamma\beta}:\bool{B}_\gamma\to\bool{B}_\beta$ which are built along the
play of $\mathcal{G}_\alpha(\Gamma)$.} $\varinjlim\{\bool{B}_\alpha:\alpha<\sigma\}$
if $\cof(\sigma)=\lambda$ or if $\sigma$ is regular and all Boolean algebras in $\{\bool{B}_\alpha:\alpha<\sigma\}$ 
have size less than $\sigma$;
\item 
$II$ wins $\mathcal{G}_\alpha(\Gamma)$ if she can play at all stages up and including $\alpha$.
\end{itemize}
\end{itemize}
\end{definition}

There is a tight interaction between the properties of a class of forcings $\Gamma$ and the theory 
$T\supseteq\MK$
in which we analyze this class. 
For example, in our analysis of $\Gamma$ we are naturally led to work with
theories $T$ which extend $\mathsf{MK}$ but which are not preserved by \emph{all} set sized forcings.
For example this occurs for $T=\mathsf{MK}+\text{`}\omega_1\text{ is a regular cardinal'}$, which is not preserved
by $\Coll(\omega, \omega_1)$, but is preserved by all stationary set preserving forcings.

The following definition outlines the key correlations between a theory $T\supseteq\mathsf{MK}$ and 
a class of forcings $\Gamma$ we want to bring forward, and allows us to prove, within $T$, that $\Gamma$ is a well
behaved class forcing. The reader may keep in mind while reading the definition below
that semiproperness and properness will be the
simplest examples of $\omega_1$--suitable classes $\Gamma$, and that for these classes $\Gamma$ a useful 
$\Gamma$--canonical
theory is any enlargement of $\mathsf{MK}+\text{`}\omega_1\emph{ is a regular cardinal'}$ by large cardinal
axioms.

\begin{definition}
We say that $P(x)$ is an \emph{absolutely $\Sigma_2$ property} for $T\supseteq\MK$ 
if there is a $\Sigma_0$--formula $\phi(x,y,z)$ 
with quantifier ranging just over sets such that in any model $V$ of $T$,
for all inaccessible $\delta$ and $A\in V_\delta$ we have that
\[
V_\delta\models \exists y\forall z\phi(A,y,z)\text{ if and only if $P(A)$ holds in $V$.}
\]
\end{definition}

\begin{fact}
The statements
\emph{`$\bool{B}$ is a complete Boolean algebra'} and
\emph{`$i:\bool{B}\to\bool{C}$ is a complete homomorphism'} are absolutely 
$\Sigma_2$ in the relevant parameters and also $\Pi_2$ for the theory $\MK$.

Assume $T\supseteq \mathsf{MK}$ proves that $\phi_\Gamma(x,a_\Gamma)$ is an absolutely 
$\Sigma_2$ property 
in the parameter $a_\Gamma$.
Then $T$ proves that for all inaccessible cardinals $\delta$, 
\[
V_\delta\models \bool{B}\in \Gamma^{V_\delta}\text{ if and only if } V\models \bool{B}\in\Gamma,
\]
and
\[
V_\delta\models \bool{C}\leq_\Gamma\bool{B}\text{ if and only if } V\models \bool{C}\leq_\Gamma\bool{B}
\]
\end{fact}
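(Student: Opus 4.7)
The plan is to prove the two assertions of the Fact separately. For the first assertion, I would observe that ``$\bool{B}$ is a complete Boolean algebra'' is the conjunction of ``$\bool{B}$ is a Boolean algebra'' (bounded in $\bool{B}$, hence $\Sigma_0$) and the completeness clause $\forall X \sub \bool{B}\,\exists y \in \bool{B}$ such that $y$ is the supremum of $X$ in $\bool{B}$. This latter clause is manifestly $\Pi_2$ with bounded matrix, yielding the $\Pi_2$ formulation. For the absolutely $\Sigma_2$ formulation, I would witness completeness by a single supremum function $\sigma$ with $\dom(\sigma) = \pow{\bool{B}}$ assigning to each subset of $\bool{B}$ its supremum; the assertion that $\sigma$ is such a function unfolds as $\forall X \forall z\,\psi(\bool{B}, \sigma, X, z)$ with $\psi$ a $\Sigma_0$ formula. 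For inaccessible $\delta$ with $\bool{B} \in V_\delta$ one has $|\bool{B}| < \delta$, hence $\pow{\bool{B}} \sub V_\delta$, and any such witness $\sigma$ lies in $V_\delta$ as well; this gives the equivalence between $V_\delta \models \exists \sigma \forall X \forall z\,\psi$ and $V \models \exists \sigma \forall X \forall z\,\psi$, which is the required absoluteness. The treatment of ``$i : \bool{B} \to \bool{C}$ is a complete homomorphism'' is entirely parallel, with the existential witness being $i$ itself together with evidence of supremum preservation.

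For the second assertion, the statement ``$\bool{B} \in \Gamma$'' is by definition $\phi_\Gamma(\bool{B}, a_\Gamma)$, which by hypothesis is absolutely $\Sigma_2$ in $\bool{B}$ and $a_\Gamma$. So for any inaccessible $\delta$ with $\bool{B}, a_\Gamma \in V_\delta$, $V_\delta \models \phi_\Gamma(\bool{B}, a_\Gamma)$ holds iff $V \models \phi_\Gamma(\bool{B}, a_\Gamma)$, which is exactly the desired absoluteness for $\Gamma^{V_\delta}$ versus $\Gamma^V$.

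The less routine step is the corresponding absoluteness for $\leq_\Gamma$. The existence of a complete homomorphism $i : \bool{B} \to \bool{C}$ is already absolute between $V_\delta$ and $V$ by the first part, so it suffices to verify that, for such $i \in V_\delta$, the $\Gamma$-correctness condition $\Qp{\phi_\Gamma(\bool{C}/i[\dot G_\bool{B}], \check{a}_\Gamma)}_\bool{B} \geq \coker(i)$ is computed identically in $V_\delta$ and in $V$. Unfolding the Boolean value as $\bigvee\{p \in \bool{B} : p \Vdash \phi_\Gamma(\ldots)\}$, two points need checking: first, that $\bool{B}$-genericity is the same notion in $V_\delta$ and $V$, which follows from inaccessibility of $\delta$ since all maximal antichains of $\bool{B}$ lie in $V_\delta$; second, that for every $\bool{B}$-name $\tau \in V_\delta$ and every generic $G$, one has $V_\delta[G] \models \phi_\Gamma(\tau_G, a_\Gamma)$ iff $V[G] \models \phi_\Gamma(\tau_G, a_\Gamma)$. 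The main obstacle will be this second point: one needs $\delta$ to remain inaccessible in $V[G]$ and $V_\delta[G] = (V[G])_\delta$, and then to reapply the absolute $\Sigma_2$-ness of $\phi_\Gamma$ inside $V[G]$, which is precisely where the hypothesis of \emph{absolute} (rather than merely lightface) $\Sigma_2$-ness of $\phi_\Gamma$ becomes essential.
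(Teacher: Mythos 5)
First, a caveat: the paper does not actually prove this Fact; immediately after its statement it says ``This is not hard to verify; for a proof see~\cite{VIAAUDSTEBOOK}''. So there is no in-paper argument to compare yours against, and I can only assess your proposal on its own terms. Your treatment of the first assertion is fine: witnessing completeness by a single supremum function $\sigma$ on $\pow{\bool{B}}$ gives the $\Sigma_2$ form, the $\Pi_2$ form is immediate, and for inaccessible $\delta$ with $\bool{B}\in V_\delta$ both the witnesses and all relevant subsets of $\bool{B}$ lie in $V_\delta$, so the two levels agree. The absoluteness of $\bool{B}\in\Gamma$ is indeed just the definition of absolutely $\Sigma_2$ read off directly.

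The gap is in your final step for $\leq_\Gamma$. You reduce matters to showing that, for a $\bool{B}$--name $\tau\in V_\delta$ and a $V$--generic $G$, one has $V_\delta[G]\models\phi_\Gamma(\tau_G,a_\Gamma)$ iff $V[G]\models\phi_\Gamma(\tau_G,a_\Gamma)$, and you propose to get this by ``reapplying the absolute $\Sigma_2$--ness of $\phi_\Gamma$ inside $V[G]$''. But the paper's notion of absolutely $\Sigma_2$ is relative to the theory $T$: the equivalence between $V_\delta\models\exists y\forall z\phi(A,y,z)$ and $P(A)$ is only guaranteed \emph{in models of $T$}. The hypothesis of the Fact is only that $T\supseteq\MK$, and $T$ will in general contain axioms (e.g.\ ``$\omega_1$ is the first uncountable cardinal'', as in all the concrete cases in Section 3) that an arbitrary $\bool{B}\in V_\delta$ can destroy; nothing in the Fact restricts $\bool{B}$ to $\Gamma$, and $\Gamma$--correctness of $i$ is exactly what is being tested, so you cannot assume it to conclude $V[G]\models T$. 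To close this you must work at the level of the $\Sigma_0$ matrix $\phi$ itself: using that $\delta$ remains inaccessible in $V[G]$ and $(V[G])_\delta=V_\delta[G]$, argue directly that the existential witness $y$ and the universal counterexamples $z$ for $\exists y\forall z\,\phi(\tau_G,y,z,a_\Gamma)$ can always be found inside $V_\delta[G]$ (this is how the paper's own Fact \ref{fac:sigma2proper} operates, via the $H_\theta$--local characterisation with $\theta<\delta$), or else explicitly add the hypothesis that $T$ is preserved by the forcings under consideration, which is what Definition \ref{def:kappasuit}(\ref{def:kappasuit-1}) later supplies. As written, the appeal to ``absolute rather than merely lightface $\Sigma_2$--ness'' does not by itself license the transfer into $V[G]$.
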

This is not hard to verify; for a proof see~\cite{VIAAUDSTEBOOK}. Many classes of forcings are defined by an absolutely 
$\Sigma_2$ property which is also $\Pi_2$ for $\MK$; 
among others, the classes of forcings which are proper, semiproper, stationary set preserving, etc.

\begin{definition}[$\lambda$--suitable class forcings]\label{def:kappasuit}
Consider the language of set theory enriched with two constant symbols $a_\Gamma$ and $\lambda$.
Let $\Gamma$ be a definable class of forcings by means of the formula
$\phi_\Gamma(x,a_\Gamma)$ in parameter $a_\Gamma\in H_{\lambda^+}$.

\begin{itemize}
\item
$T\supseteq\MK$
 is \emph{$\lambda$--canonical} if it extends $\mathsf{MK}$ by a finite list of axioms
expressible without quantifiers ranging over classes (but with no bound on the number of quantifiers
ranging over sets), among which is the axiom \emph{`$\lambda$ is a regular cardinal'}.
\item
$\Gamma$ is \emph{$\lambda$--suitable} for a $\lambda$--canonical theory $T$ if:
\begin{enumerate} 
\item
$T$ proves that $\phi_\Gamma(x,a_\Gamma)$ is equivalent to an absolutely $\Sigma_2$ property in the parameter 
$a_\Gamma\in H_{\lambda^+}$. 
\item\label{def:kappasuit-1}
$T$ proves that all forcing notions in $\Gamma$ preserve all axioms of $T$.
\item\label{def:kappasuit-2}
$T$ proves that $\Gamma$ is closed under preimages by complete injective homomorphisms, two--steps iterations and lottery sums.
\item\label{def:kappasuit-3}
$T$ proves that $\Gamma$ contains all ${<}\lambda$--closed posets.
\item\label{def:kappasuit-5}
$T$ proves that $\Gamma$ 
is weakly $\lambda$--iterable.
\item\label{def:kappasuit-4}
$T$ proves that $\Gamma$ has the $\Gamma$--freezeability property,
\end{enumerate}
\end{itemize}
$T$ is $\Gamma$--canonical if it is $\lambda$--canonical and $\Gamma$ is $\lambda$--suitable for $T$.
\end{definition}

\begin{remark} \label{rmk:canth}
We observe the following:
\begin{itemize}
\item
Theories $T$ of the form $\mathsf{MK}+$ the statement that there a exists a proper class of large cardinals
of a certain kind (supercompact, Woodin, huge, etc) are $\Omega$--canonical, where
$\Omega$ is the class of all set--forcings.
\item
A key feature of a $\lambda$--canonical theory $T$ we need to
exploit is that once it holds in $V_{\delta+1}$ for some inaccessible $\delta$
it holds also in $W_{\delta+1}$ for any $W\supseteq V$ such that:
\begin{itemize}
\item
$\delta$ remains inaccessible in $W$,
\item 
$W_{\delta+1}$ is a model of
$\mathsf{MK}$,
\item
$W_\delta=V_\delta$.
\end{itemize}
This is the case since the extra axioms in $T\setminus\mathsf{MK}$ are defined by properties
which do not take into consideration (in order to evaluate their truth) the new proper classes
appearing in $W_{\delta+1}\setminus V_{\delta+1}$. 
\end{itemize}
\end{remark}

\begin{notation}
Given a category forcing $(\Gamma,\leq_\Gamma)$ with $\Gamma$ a definable class of complete Boolean algebras and 
$\leq_\Gamma$ the order induced on $\Gamma$
by the $\Gamma$--correct homomorphisms between elements of $\Gamma$,
we denote the incompatibility relation with respect to $\leq_\Gamma$ by $\bot_\Gamma$, 
and the subclass of $\Gamma$
given by its
$\Gamma$--rigid elements by $\bool{Rig}^\Gamma$.
\end{notation}

Notice the following.

\begin{fact}\label{fac:keyrmkfreeze}
Assume $\Gamma\,\subseteq\,\Delta$ are definable classes of forcings. 
Then $\leq_\Gamma\subseteq\leq_\Delta$
and $\bot_\Delta\subseteq\bot_\Gamma$.
Hence, if $i:\bool{B}\to\bool{C}$ is $\Gamma$--correct and $\Delta$--freezes $\bool{B}$, we also have that
$i$ is $\Delta$--correct and $\Gamma$--freezes $\bool{B}$. 
\end{fact}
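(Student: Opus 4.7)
The plan is to unwind the definitions and exploit two elementary monotonicity principles: $\Gamma$--correctness is an \emph{existential} condition on the generic quotient (it asks that the quotient lie in $\Gamma$), so it becomes easier to satisfy as $\Gamma$ enlarges; and $\Gamma$--freezing is a \emph{universal} condition quantifying over $\Gamma$--correct homomorphisms out of $\bool{C}$, so it becomes easier to satisfy as $\Gamma$ shrinks. All three assertions in the fact follow from these two observations, and I expect no real obstacle — the statement is essentially a packaging of the monotonicity principles to be invoked later.

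First I would establish $\leq_\Gamma\,\subseteq\,\leq_\Delta$. Given $\bool{C}\leq_\Gamma \bool{B}$, pick a witnessing $\Gamma$--correct $i:\bool{B}\to\bool{C}$, so that
\[
\Qp{\bool{C}/i[\dot G_\bool{B}]\in\Gamma}_\bool{B}\geq_\bool{B}\coker(i).
\]
Since $\Gamma\subseteq\Delta$, for every $b\in\bool{B}$ the forcing relation $b\Vdash_{\bool{B}}\bool{C}/i[\dot G_\bool{B}]\in\Gamma$ implies $b\Vdash_\bool{B}\bool{C}/i[\dot G_\bool{B}]\in\Delta$, hence
\[
\Qp{\bool{C}/i[\dot G_\bool{B}]\in\Delta}_\bool{B}\geq_\bool{B}\Qp{\bool{C}/i[\dot G_\bool{B}]\in\Gamma}_\bool{B}\geq_\bool{B}\coker(i),
\]
so the same $i$ is $\Delta$--correct. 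Thus $\bool{C}\leq_\Delta\bool{B}$.

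Next, for $\bot_\Delta\subseteq\bot_\Gamma$ I would argue by contraposition: if $\bool{B}$ and $\bool{C}$ are $\leq_\Gamma$--compatible, witnessed by some $\bool{D}\in\Gamma$ with $\bool{D}\leq_\Gamma\bool{B}$ and $\bool{D}\leq_\Gamma\bool{C}$, then $\bool{D}\in\Delta$ by the hypothesis $\Gamma\subseteq\Delta$, and the previous step yields $\bool{D}\leq_\Delta\bool{B}$ and $\bool{D}\leq_\Delta\bool{C}$, so $\bool{B}$ and $\bool{C}$ are $\leq_\Delta$--compatible.

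Finally, for the freezing claim assume $i:\bool{B}\to\bool{C}$ is $\Gamma$--correct and $\Delta$--freezes $\bool{B}$. The first step shows at once that $i$ is $\Delta$--correct. To see that $i$ also $\Gamma$--freezes $\bool{B}$, let $i_0,i_1:\bool{C}\to\bool{D}$ be arbitrary $\Gamma$--correct homomorphisms; by the first step they are $\Delta$--correct, so by the assumption that $i$ $\Delta$--freezes $\bool{B}$ we conclude $i_0\circ i=i_1\circ i$. This covers every $\Gamma$--correct pair, which is exactly $\Gamma$--freezing for $i$.
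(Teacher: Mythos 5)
Your proof is correct and is precisely the definitional unwinding the paper intends: the paper states this as a Fact with no proof at all (it is prefaced only by ``Notice the following''). The one point worth making explicit is that the inequality $\Qp{\bool{C}/i[\dot G_{\bool{B}}]\in\Gamma}_{\bool{B}}\leq\Qp{\bool{C}/i[\dot G_{\bool{B}}]\in\Delta}_{\bool{B}}$ uses the containment $\Gamma\subseteq\Delta$ \emph{inside the $\bool{B}$--generic extension}, i.e.\ that $\phi_\Gamma(x,a_\Gamma)\rightarrow\phi_\Delta(x,a_\Delta)$ is forced — true for all the classes the paper considers, but formally a bit more than the literal hypothesis ``$\Gamma\subseteq\Delta$ in $V$''.
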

This fact will be repeatedly used in the second part of this paper to show that various classes of forcings $\Delta$ 
have the
$\Delta$--freezeability property by providing for some $\Gamma\subseteq\Delta$ 
an $i:\bool{B}\to\bool{C}$ which is $\Gamma$--correct and $\Delta$--freezes $\bool{B}$. As we will see, all our freezeability results proceed  by proving the existence, given $\bool{B}\in\Gamma$, of a $\bool{B}$--name $\dot{\bool{Q}}$ for a forcing in $\Gamma$ such that $\bool{C}=\bool{B}\ast\dot{\bool{Q}}$ codes the generic filter $\dot{G_{\bool{B}}}$ for $\bool{B}$ as a subset $A_{\dot G_{\bool{B}}}$ of $\omega_1$ in some absolute manner, in the sense that in every outer model $M$ of $V^{\bool{C}}$ preserving stationary subsets of $\omega_1$, $A_{\dot G_{\bool{B}}}$ is the unique object of $\omega_1$ satisfying some given property.  It will thus follow that $\bool{C}$ will $\SSP$--freeze $\bool{B}$, which will be an instance of the above since we will always have $\SSP\supseteq\Gamma$ for the $\Gamma$ of interest to us.

\subsection{Main results}\label{subsec:mainresultsM} Given a class $\Gamma$ of forcing notions and a cardinal $\delta$, we will write $\bool{U}^\Gamma_\delta$ to denote $\Gamma\cap V_\delta$ with the inherited order $\leq_\Gamma\cap V_\delta$.
We can now list the main theorems regarding category forcings.

Our first theorem in this subsection takes care of task~(\ref{keytask1.1}).

\begin{theorem}
Assume $\Gamma$ is $\lambda$--suitable for a $\lambda$--canonical $T\supseteq\MK$.
Let $\delta$ be inaccessible and such that $V_{\delta+1}\models T$.

Then:
\begin{itemize}
\item
$\bool{U}^\Gamma_\delta$
is a forcing notion in $\Gamma$,
\item
$\bool{U}^\Gamma_\delta$ preserves the regularity of $\delta$ and makes it
the successor of $\lambda$.
\end{itemize}
\end{theorem}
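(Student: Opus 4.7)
The plan is to split the theorem into three claims -- $\bool{U}^\Gamma_\delta \in \Gamma$, preservation of the regularity of $\delta$, and $\delta = (\lambda^+)^{V[G]}$ for $\bool{U}^\Gamma_\delta$-generic $G$ -- and attack each using the corresponding closure properties collected in Definition~\ref{def:kappasuit}. The common preliminary is that $\bool{Rig}^\Gamma \cap V_\delta$ is dense in $\bool{U}^\Gamma_\delta$: freezeability provides, for each $\bool{B} \in \Gamma$, a $\bool{C}$ together with a $\Gamma$-freezing $k: \bool{B} \to \bool{C}$, from which a standard argument deduces that $\bool{C}$ itself is $\Gamma$-rigid (as outlined in Subsection~\ref{subsec:genabsforax} and developed in detail in~\cite{VIAAUDSTEBOOK}); absoluteness of the $\Sigma_2$-definition of $\Gamma$ together with inaccessibility of $\delta$ let this construction be carried out inside $V_\delta$.

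For membership, take a maximal antichain $A \subseteq \bool{Rig}^\Gamma \cap V_\delta$ in $\bool{U}^\Gamma_\delta$. For each $\bool{B} \in A$, $\Gamma$-rigidity encodes precisely that the canonical map $b \mapsto \bool{B} \restriction b$ embeds $\bool{B}$ as an incompatibility-preserving dense suborder of $\{\bool{C} \in \bool{U}^\Gamma_\delta : \bool{C} \leq_\Gamma \bool{B}\}$, so the Boolean completion $\RO(\bool{U}^\Gamma_\delta)$ is forcing-equivalent to $\bigvee_\Gamma A$. Since $A \subseteq \Gamma$, closure under lottery sums places $\bigvee_\Gamma A$, and hence $\bool{U}^\Gamma_\delta$, in $\Gamma$.

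For the regularity of $\delta$, follow the recipe in Subsection~\ref{subsec:genabsforax}. Fix $\alpha < \delta$, a $\bool{U}^\Gamma_\delta$-name $\dot f$ for a function $\alpha \to \delta$, and $\bool{B} \in \bool{U}^\Gamma_\delta$. Using weak $\lambda$-iterability, construct by recursion on $\xi \leq \alpha$ a weak iteration $\langle \bool{B}_\xi : \xi \leq \alpha \rangle$ of $\Gamma$-rigid elements of $\Gamma \cap V_\delta$ with $\bool{B}_0 \leq_\Gamma \bool{B}$: at a successor step, pick a maximal antichain $A_\xi \subseteq \bool{B}_\xi$ and, for each $a \in A_\xi$, a $\Gamma$-rigid $\bool{C}_a \leq_\Gamma \bool{B}_\xi \restriction a$ deciding $\dot f(\xi) = \check\gamma_a$, and set $\bool{B}_{\xi+1} := \bigvee_\Gamma \{\bool{C}_a : a \in A_\xi\}$, which is $\Gamma$-rigid (lottery sums of $\Gamma$-rigid elements are) and carries an injective $\Gamma$-correct map from $\bool{B}_\xi$ via $b \mapsto (i_a(b \wedge a))_a$; at a limit $\beta$, play the move dictated by Player II's winning strategy in $\mathcal G_\beta(\Gamma)$ and then refine to a rigid element. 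Inaccessibility of $\delta$ keeps $|\bool{B}_\xi| < \delta$ at every step, so $\bool{B}_\alpha \in \Gamma \cap V_\delta$ refines $\bool{B}$ in $\bool{U}^\Gamma_\delta$ and forces $\mathrm{ran}(\dot f) \subseteq \sup_{\xi<\alpha}\sup_{a \in A_\xi}\gamma_a + 1 < \delta$.

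For $\delta = \lambda^+$ in the extension, note that each Levy collapse $\Coll(\lambda, \gamma)$ with $\gamma < \delta$ is ${<}\lambda$-closed, hence in $\Gamma$ by (\ref{def:kappasuit-3}) and in $V_\delta$ by inaccessibility; for any $\bool{B} \in \bool{U}^\Gamma_\delta$ the two-step iteration $\bool{B} \ast \Coll(\lambda, \gamma)$ lies in $\Gamma \cap V_\delta$ by (\ref{def:kappasuit-2}), refines $\bool{B}$, and forces $|\gamma| \leq \lambda$; density then makes every $\gamma < \delta$ collapse to size $\leq \lambda$. Combining this with preservation of ``$\lambda$ is regular'' (guaranteed by (\ref{def:kappasuit-1}) and the $\lambda$-canonicity of $T$) and the regularity of $\delta$ established above, we conclude $\delta = (\lambda^+)^{V[G]}$. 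The central obstacle is the successor step of the regularity argument, where one must simultaneously make $\bool{B}_{\xi+1}$ a $\Gamma$-rigid element of $V_\delta$ equipped with an injective $\Gamma$-correct embedding of $\bool{B}_\xi$ deciding $\dot f(\xi)$; this requires combining the $\Gamma$-freezeability property (for $\Gamma$-rigid refinements), closure under lottery sums, and the size bound from the inaccessibility of $\delta$.
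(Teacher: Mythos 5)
Your treatment of the second and third bullets follows the same route the paper sketches in Subsection~\ref{subsec:genabsforax} (the paper defers the full details to~\cite{VIAAUDSTEBOOK}): lottery sums of $\Gamma$--rigid conditions deciding $\dot f(\xi)$ at successor steps, the weak $\lambda$--iterability strategy at limits, and $\Coll(\lambda,\gamma)\in\Gamma$ via clause~(\ref{def:kappasuit-3}) for the collapse to $\lambda^+$. That part is fine.

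The first bullet, however, contains a genuine error. You claim that for a $\Gamma$--rigid $\bool{B}$ the map $k_{\bool{B}}:b\mapsto\bool{B}\restriction b$ embeds $\bool{B}$ as a \emph{dense} suborder of $\bool{U}^\Gamma_\delta\restriction\bool{B}$, and hence that $\RO(\bool{U}^\Gamma_\delta)$ is forcing equivalent to $\bigvee_\Gamma A$ for a maximal antichain $A$ of rigid conditions. What rigidity gives (Lemma~\ref{lem:freezemb}) is only that $k_{\bool{B}}$ is a \emph{complete} embedding; its range is very far from dense. For instance, $\bool{C}=\bool{B}\ast\dot{\Coll}(\lambda,\mu)$ with $\mu$ a sufficiently large cardinal below $\delta$ is a condition in $\bool{U}^\Gamma_\delta\restriction\bool{B}$, and no $\bool{B}\restriction b$ can satisfy $\bool{B}\restriction b\leq_\Gamma\bool{C}$, since a forcing of size $|\bool{B}|$ cannot absorb a generic collapsing $\mu$ to $\lambda$. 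Moreover, even granting the forcing equivalence, closure under preimages by complete injective homomorphisms runs in the wrong direction: knowing that $\bool{B}\in\Gamma$ embeds completely into $\RO(\bool{U}^\Gamma_\delta\restriction\bool{B})$ does not place the latter in $\Gamma$. Your conclusion is in fact inconsistent with the third bullet: below any fixed component the lottery sum $\bigvee_\Gamma A$ is just some $\bool{B}\in V_\delta$, which cannot collapse $\delta$ to $\lambda^+$, whereas $\bool{U}^\Gamma_\delta$ must. Establishing $\bool{U}^\Gamma_\delta\in\Gamma$ genuinely requires the absolutely $\Sigma_2$ definability of $\Gamma$ together with the quotient analysis of Theorem~\ref{thm:quo-univ} (one shows that below a dense set of conditions $\bool{B}$, $\RO(\bool{U}^\Gamma_\delta\restriction\bool{B})$ is a two--step iteration $\bool{B}\ast(\bool{U}^\Gamma_\delta\restriction\bool{B}/_{k[\dot G]})$ whose second factor is forced to satisfy $\phi_\Gamma$, and then invokes closure under two--step iterations and lottery sums); a purely order--theoretic reduction to a set--sized lottery sum cannot work.
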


We deal next with  task~(\ref{keytask1.2}).

\begin{notation}
Assume $\Gamma$ is $\lambda$--suitable for a $\lambda$--canonical theory $T$.
For each $\bool{R}\in\bool{Rig}^\Gamma$ let
\[
k_{\bool{R}}:\bool{R}\to\Gamma\restriction\bool{R}
\]
be given by $r\mapsto\bool{R}\restriction r$.
Then $k_{\bool{R}}$ is an order and incompatibility preserving embedding of $\bool{R}$ 
in the class forcing
$\Gamma\restriction\bool{R}$ which maps maximal antichains to maximal antichains.
Moreover, for every $\bool{B}\geq_\Gamma\bool{C}$ with $\bool{B}\in \bool{Rig}^\Gamma$, let
\[
i_{\bool{B},\bool{C}}:\bool{B}\to\bool{C} 
\]
denote the unique $\Gamma$--correct homomorphism
from $\bool{B}$ into $\bool{C}$. 
\end{notation}

\begin{definition}\label{def:notcong}
Assume $\Gamma$ is $\lambda$--suitable for a $\lambda$--canonical theory $T$.
Given $\bool{B}_0\in \Gamma$, fix $k_0:\bool{B}_0\to \bool{B}$
$\Gamma$--freezing $\bool{B}_0$ and such that $\bool{B}\in \bool{Rig}^\Gamma$.
Let $i_{\bool{C}}=i_{\bool{B},\bool{C}}\circ k_0$ and
\begin{align*}
k=k_{\bool{B}}\circ k_0:& \bool{B}_0\to \Gamma\restriction\bool{B}\\
& b\mapsto\bool{B}\restriction k_0(b)
\end{align*}
Given $G$, a $V$--generic filter for $\bool{B}_0$, define in $V[G]$ the class quotient forcing
\[
P_{\bool{B}_0}=((\bool{Rig}^\Gamma\restriction\bool{B})^V/_{k[G]},\leq_\Gamma/_{k[G]})
\] 
as follows:
\[
\bool{C}\in P_{\bool{B}_0}
\]
if and only if $\bool{C}\in(\bool{Rig}^\Gamma\restriction\bool{B})^V$ and
letting $J$ be the dual ideal of $G$ we have that
$1_{\bool{C}}\not\in i_{\bool{C}}[J]$ (or equivalently if and only if $\coker(i_{\bool{C}})\in G$).

We let 
\[
\bool{C}\leq_{\Gamma}/_{k[G]}\bool{R}
\] 
if $\bool{C}\leq_\Gamma\bool{R}$ holds in $V$.
\end{definition}

The following theorem takes care of task~(\ref{keytask1.2}).

\begin{theorem}\label{thm:quo-univ}
Assume $\Gamma$ is $\lambda$--suitable for a $\lambda$--canonical theory $T$,
$\bool{B}_0\in \Gamma$, and let $k_0:\bool{B}_0\to\bool{B}$ be a $\Gamma$--freezing
homomorphism for $\bool{B}_0$ with $\bool{B}\in\bool{Rig}^\Gamma$.
Set $k=k_{\bool{B}}\circ k_0$ and $i_{\bool{C}}=i_{\bool{B},\bool{C}}\circ k_0$ for 
all $\bool{C}\leq_\Gamma \bool{B}$ in $\Gamma$.
Let $G$ be $V$--generic for $\bool{B}_0$.

Then:
\begin{enumerate} 
\item \label{thm:equivGamma1}
The class forcing
\[
P_{\bool{B}_0}=((\bool{Rig}^\Gamma\restriction\bool{B})^V/_{k[G]},\leq_\Gamma/_{k[G]})
\] 
is in $V[G]$ forcing equivalent to 
the class forcing
\[
Q_{\bool{B}}=(\Gamma\restriction(\bool{B}/_{k_0[G]}))^{V[G]}
\]
via the map
\begin{align*}
i^*:&P_{\bool{B}} \to Q_{\bool{B}}\\
& \bool{C} \mapsto \bool{C}/_{i_{\bool{C}}[G]}.
\end{align*}
\item \label{thm:equivGamma2}
Moreover, 
let $\delta>|\bool{B}|$ be inaccessible and such that
$V_{\delta+1}$ models $T$.
Then:
\begin{enumerate} 
\item \label{thm:equivGamma2.1}
$(\bool{U}^\Gamma_\delta\restriction(\bool{B}/_{k_0[G]}))^{V[G]}$ is forcing equivalent in $V[G]$ 
to  $(\bool{U}^\Gamma_\delta\restriction\bool{B})^V/_{k[G]}$ via the same map.
\item \label{thm:equivGamma2.2}
$V$ models that $k_\bool{B}:\bool{B}\to \bool{U}^\Gamma_\delta\restriction\bool{B}$ is
$\Gamma$--correct.
\end{enumerate}
\end{enumerate}
\end{theorem}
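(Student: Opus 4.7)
The plan is to establish part (1) by checking that $i^*$ is a well-defined order- and incompatibility-preserving embedding with dense image in $Q_{\bool{B}}$, so that it is a dense embedding witnessing forcing equivalence. Part (2) then follows by localising the same argument to $V_\delta$, with (2b) arising as a specialisation of (2a).

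For well-definedness and order/incompatibility preservation: given $\bool{C} \in P_{\bool{B}_0}$, the map $i_{\bool{C}} = i_{\bool{B}, \bool{C}} \circ k_0: \bool{B}_0 \to \bool{C}$ is a composition of $\Gamma$-correct homomorphisms ($k_0$ is $\Gamma$-freezing and hence in particular $\Gamma$-correct, while $i_{\bool{B}, \bool{C}}$ is the unique $\Gamma$-correct homomorphism obtained from $\bool{C} \leq_\Gamma \bool{B}$ together with $\bool{B} \in \bool{Rig}^\Gamma$). Since $\coker(i_{\bool{C}}) \in G$ by the defining condition of $P_{\bool{B}_0}$, $\Gamma$-correctness forces $\bool{C}/_{i_{\bool{C}}[G]} \in \Gamma^{V[G]}$, and $i_{\bool{B}, \bool{C}}$ descends to an injective $\Gamma$-correct homomorphism $\bool{B}/_{k_0[G]} \to \bool{C}/_{i_{\bool{C}}[G]}$ in $V[G]$, placing $i^*(\bool{C})$ in $Q_{\bool{B}}$. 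Order preservation then follows from the factorisation $i_{\bool{B}, \bool{C}} = i_{\bool{R}, \bool{C}} \circ i_{\bool{B}, \bool{R}}$, valid for $\bool{C} \leq_\Gamma \bool{R} \leq_\Gamma \bool{B}$ in $\bool{Rig}^\Gamma$ by the uniqueness clause of rigidity, passing to quotients. For incompatibility preservation, a common $V[G]$-refinement $\bool{D}$ of $i^*(\bool{C})$ and $i^*(\bool{R})$ in $Q_{\bool{B}}$ would yield, via a $\bool{B}_0$-name for $\bool{D}$ and the canonical identification of iterations, a forcing in $\Gamma^V$ below both $\bool{C}$ and $\bool{R}$, contradicting $\bool{C} \bot_\Gamma \bool{R}$.

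The main technical step, and what I expect to be the hardest, is showing that $i^*$ has dense image. Given $\bool{D} \in Q_{\bool{B}}$, pick a $\bool{B}_0$-name $\dot{\bool{D}}$ for $\bool{D}$ together with a name for the witnessing $\Gamma$-correct homomorphism $\bool{B}/_{k_0[\dot G_{\bool{B}_0}]} \to \dot{\bool{D}}$. Using the canonical identification $\bool{B}_0 \ast (\bool{B}/_{k_0[\dot G_{\bool{B}_0}]}) \simeq \bool{B}$, rewrite $\bool{B}_0 \ast \dot{\bool{D}}$ as $\bool{B} \ast \dot{\bool{E}}$ for some $\bool{B}$-name $\dot{\bool{E}}$; this forcing lies in $\Gamma$ by closure under two-step iterations. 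Apply $\Gamma$-freezeability and then pass to a $\Gamma$-rigid refinement (using that $\bool{Rig}^\Gamma$ is dense in $(\Gamma, \leq_\Gamma)$, extracted from freezeability by a standard argument) to obtain $\bool{C} \in \bool{Rig}^\Gamma$ with $\bool{C} \leq_\Gamma \bool{B} \ast \dot{\bool{E}}$. The composite $\bool{B}_0 \to \bool{B} \to \bool{B} \ast \dot{\bool{E}} \to \bool{C}$ then coincides with $i_{\bool{C}} = i_{\bool{B}, \bool{C}} \circ k_0$ by the uniqueness clause of rigidity, from which $\coker(i_{\bool{C}}) \in G$ (so $\bool{C} \in P_{\bool{B}_0}$) and $i^*(\bool{C}) \leq_\Gamma \bool{D}$ in $V[G]$.

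Part (2a) runs the same argument inside $V_\delta$: the absolutely $\Sigma_2$ definition of $\Gamma$, together with $V_{\delta+1} \models T$ and $|\bool{B}| < \delta$, guarantees that the freezeability and rigid refinements for the relevant names can be witnessed inside $V_\delta$, so that the dense embedding restricts to the one given by $\bool{U}^\Gamma_\delta$. Part (2b) is then the specialisation of (2a) with $\bool{B}_0 = \bool{B}$ and $k_0 = \id_{\bool{B}}$ (which $\Gamma$-freezes $\bool{B}$ since $\bool{B} \in \bool{Rig}^\Gamma$): then $\bool{B}/_{k_0[G]}$ is trivial in $V[G]$, so the forcing equivalence gives $(\bool{U}^\Gamma_\delta \restriction \bool{B})^V/_{k_{\bool{B}}[G]} \simeq (\bool{U}^\Gamma_\delta)^{V[G]}$, which lies in $\Gamma^{V[G]}$ by the preceding theorem applied in $V[G]$ (noting that $\delta$ remains inaccessible in $V[G]$ and $V_{\delta+1}^{V[G]} \models T$ by $\Gamma$-canonicity of $T$). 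Since $\coker(k_{\bool{B}}) = 1_{\bool{B}}$, this is exactly the assertion that $k_{\bool{B}}: \bool{B} \to \bool{U}^\Gamma_\delta \restriction \bool{B}$ is $\Gamma$-correct.
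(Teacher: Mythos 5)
The paper contains no proof of Theorem~\ref{thm:quo-univ} to compare against: as announced at the beginning of Section~2, all proofs of the results in Subsection~\ref{subsec:mainresultsM} are omitted and deferred to the forthcoming~\cite{VIAAUDSTEBOOK}. So I can only assess your argument on its own terms.

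On those terms your outline follows the route one would expect and is essentially correct. The quotient identification of Theorem~\ref{qIso} together with closure of $\Gamma$ under two--step iterations handles well--definedness and incompatibility preservation; uniqueness of $\Gamma$--correct homomorphisms out of $\Gamma$--rigid algebras gives the coherence $i_{\bool{B},\bool{C}}=i_{\bool{R},\bool{C}}\circ i_{\bool{B},\bool{R}}$ needed for order preservation; freezeability plus Theorem~\ref{thm:freztotrigGamma} supply the rigid refinements needed for density of the image; and your derivation of~(2b) from~(2a) with $k_0=\id_{\bool{B}}$, combined with the fact (applied in $V[G]$, which is legitimate by Remark~\ref{rmk:canth}) that $\bool{U}^\Gamma_\delta\in\Gamma$, is exactly the intended reading of the footnote to Lemma~\ref{lem:freezemb}. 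Two points need more care than you give them. First, in the density argument the conclusion ``$\coker(i_{\bool{C}})\in G$'' is not automatic for the $\bool{C}$ you build from an arbitrary name $\dot{\bool{D}}$: the rigid refinement of $\bool{B}\ast\dot{\bool{E}}$ may concentrate below a condition of $\bool{B}_0$ not in $G$. The argument must be run as a genericity argument in $V$: for each name $\dot{\bool{D}}$, the set of $b\in\bool{B}_0$ which either force $\dot{\bool{D}}\notin Q_{\bool{B}}$ or are of the form $\coker(i_{\bool{C}})$ for a $\bool{C}$ constructed as you describe working below $b$ throughout, is dense, and one then uses that $G$ meets it. Second, Theorem~\ref{qIso} as stated requires injectivity, whereas $i_{\bool{C}}=i_{\bool{B},\bool{C}}\circ k_0$ need not be injective; the identifications you invoke must be taken below the relevant cokernels. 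Both are routine repairs rather than genuine gaps.
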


We can finally handle task~(\ref{keytask1.3}). Recall our definition of the axiom $\CFA(\Gamma)$ (Definition \ref{definition-of-cfa(gamma)}).
 
 \begin{definition} Given a $\lambda$--suitable class $\Gamma$ of forcing notions, \emph{the category forcing axiom for $\Gamma$}, $\CFA(\Gamma)$, is the following statement: The class of 
 $\Gamma$--super rigid pre-saturated $\lambda^+$--towers $\mtcl T\in\Gamma$ is dense in the category $(\Gamma, \to^\Gamma)$ given by $\Gamma$. \end{definition} 
 
 
 Let us define for the sake of completeness the notion of 
 $\Gamma$--super rigid tower,  
 even if for the remainder of this paper we will not have any use of this notion since we will resort to a formulation of $\CFA(\Gamma)$
 which does not mention such towers:
 
 \begin{definition}
 Let $T\supseteq\MK$ be  $\lambda$-canonical theory for a 
 $\lambda$--suitable class of forcings $\Gamma$.

$\mtcl T$ is a $\Gamma$--super rigid tower if:
\begin{itemize}
\item $\mtcl T\in \Gamma$ is 
$\lambda^+$--presaturated tower of height $\delta$,
\item there is a dense embedding $i:\mtcl D\to \mtcl T$ with
$\mtcl D$ a suborder of\footnote{The simplest case is when $\mtcl D$ is 
$\bool{Rig}^\Gamma\cap V_\delta$, yielding that $\mtcl T$ is forcing 
equivalent to $\bool{U}^\Gamma_\delta$.
In fact, the simplest consistency proof of $\CFA(\Gamma)$ produces pre-densely many
$\Gamma$--super rigid towers of this form. But it is also consistent 
that there are $\Gamma$--super rigid towers of height $\delta$ which are not forcing equivalent to
$\bool{U}^\Gamma_\delta$.} $\bool{Rig}^\Gamma\cap V_\delta$.
\end{itemize} 
\end{definition}
 
In the presence of a proper class of cspercompact cardinals it is possible to give a different characterization of $\CFA(\Gamma)$, which is the one we will be using.

\begin{theorem}\label{cfagamma+supercompact}
Let $\Gamma$ be a $\lambda$--suitable class of forcings for a $\lambda$--canonical theory $T\supseteq\MK^*+$\emph{there are class many supercompact cardinals}.

$\mathsf{CFA}(\Gamma)$ holds if and only if for every $V$ model of $T$, every
supercompact cardinal $\delta$ in $V$
and every $V$--generic filter $H$ for
$\bool{U}^\Gamma_\delta$ there is in $V[H]$ a definable ultrapower embedding
$j:V\to M$ with critical point $\delta$ and such that $M^{{<}\delta}\subseteq M$.
\end{theorem}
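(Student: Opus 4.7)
The plan is to exploit the characterization of pre-saturated $\lambda^+$-towers of height $\delta$ in terms of the generic ultrapowers they induce, together with the absorption properties of $\bool{U}^\Gamma_\delta$ from Theorem~\ref{thm:quo-univ}. The key observation is that because $\bool{U}^\Gamma_\delta$ forces $\delta=(\lambda^+)^{V[H]}$, an ultrapower $j:V\to M$ definable in $V[H]$ with critical point $\delta$ and with $M^{<\delta}\subseteq M$ is precisely the data that a pre-saturated $\lambda^+$-tower of height $\delta$ produces in its generic extension.

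For the forward direction, suppose $\CFA(\Gamma)$ holds. I would fix a supercompact $\delta$ and a $V$-generic filter $H$ for $\bool{U}^\Gamma_\delta$. By density of $\Gamma$-super rigid pre-saturated $\lambda^+$-towers in $(\Gamma,\to^\Gamma)$ and by supercompactness of $\delta$, I would first reflect to find such a tower $\bool{T}\in H$ lying in $V_\delta$. The super rigidity of $\bool{T}$ together with Theorem~\ref{thm:quo-univ} ensures that $\bool{T}$ is absorbed into $\bool{U}^\Gamma_\delta$ via the canonical embedding $k_{\bool{T}}$, and that $H$ uniquely determines a $V$-generic filter for $\bool{T}$. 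The pre-saturation of $\bool{T}$ then yields in $V[H]$ a definable ultrapower $j:V\to M$ with the required closure; the critical point of $j$ is $(\lambda^+)^{V[H]}=\delta$, as needed.

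For the converse, assume the ultrapower hypothesis and let $\bool{B}\in \Gamma$; I want to produce a $\Gamma$-super rigid pre-saturated $\lambda^+$-tower below $\bool{B}$. Fix a supercompact $\delta>|\bool{B}|$. By Theorem~\ref{thm:quo-univ}, $\bool{U}^\Gamma_\delta$ absorbs $\bool{B}$ below an appropriate condition, so it suffices to realize $\bool{U}^\Gamma_\delta$ (below that condition) as a $\Gamma$-super rigid pre-saturated tower of height $\delta$. Given any $V$-generic $H\subseteq\bool{U}^\Gamma_\delta$, the hypothesis supplies a definable ultrapower $j:V\to M$ in $V[H]$ with $\crit(j)=\delta$ and $M^{<\delta}\subseteq M$. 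Reading off from $j$ a $V$-normal generic ultrafilter on the appropriate quotient of $\mathcal{P}_\delta(V_\delta)$, a Foreman-style duality computation shows that this ultrafilter recovers $H$, so that $\bool{U}^\Gamma_\delta$ is forcing equivalent to a pre-saturated $\lambda^+$-tower of height $\delta$. The density of $\bool{Rig}^\Gamma\cap V_\delta$ in $\bool{U}^\Gamma_\delta$ then supplies the dense suborder required by super rigidity.

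The main obstacle is the converse, and specifically the duality computation needed to reconstruct a tower structure on $\bool{U}^\Gamma_\delta$ from the bare existence of the generic ultrapower $j$. The delicate point will be to verify that the ultrafilter read off from $j$ is $V$-normal and lives in the correct Boolean subalgebra for a suitable version of Foreman's duality theorem to apply in this class-forcing setting; this is precisely where the weak $\lambda$-iterability and $\Gamma$-freezeability provisions of $\lambda$-suitability should come into play.
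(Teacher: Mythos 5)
The paper itself does not prove this theorem: it is one of the results whose proofs are ``completely omitted'' and deferred to the forthcoming~\cite{VIAAUDSTEBOOK}, so there is no in-paper argument to compare yours against. Judged on its own terms, your sketch assembles the right ingredients (towers~$\leftrightarrow$~generic ultrapowers, absorption via Theorem~\ref{thm:quo-univ}, reflection at a supercompact, Foreman duality), but the forward direction as written has a genuine gap. A single $\Gamma$--super rigid tower $\mtcl T\in H\cap V_\delta$ has some height $\delta'<\delta$, and the generic ultrapower it induces is an embedding whose critical point is $(\lambda^+)^V$ (a $V$--ordinal determined by the tower, not by what $\lambda^+$ becomes in $V[H]$ -- your identification ``$\crit(j)=(\lambda^+)^{V[H]}=\delta$'' conflates these) and whose closure is only $M^{{<}\delta'}\subseteq M$. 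No tower living in $V_\delta$ can by itself produce an embedding with critical point $\delta$ and $M^{{<}\delta}\subseteq M$. What is actually needed is that $\bool{U}^\Gamma_\delta$ \emph{as a whole} behaves like a presaturated tower of height $\delta$: one must reflect $\CFA(\Gamma)$ to $V_\delta$ (using the supercompactness of $\delta$ and the bounded logical complexity of $\Gamma$) to get that super rigid towers of cofinally many heights $\gamma<\delta$ are dense in $\bool{U}^\Gamma_\delta$, and then glue the coherent system of towers below conditions of $H$ into a single tower of height $\delta$ whose presaturation at $\delta$ yields the closure $M^{{<}\delta}\subseteq M$. This gluing/coherence step -- which is where super rigidity (uniqueness of the $\Gamma$--correct homomorphisms) and Theorem~\ref{thm:quo-univ} are really used -- is absent from your sketch, and its absence is not a presentational matter: without it the conclusion simply does not follow from the presaturation of one small tower.

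For the converse you have correctly located the crux, namely the duality computation showing that the $V$--normal ultrafilter read off from $j$ turns $\bool{U}^\Gamma_\delta$ (below the condition absorbing $\bool{B}$) into a $\lambda^+$--presaturated tower of height $\delta$ with a dense suborder inside $\bool{Rig}^\Gamma\cap V_\delta$; but you have only named this step, not carried it out, and it is the entire content of that direction. In particular you still owe an argument that the derived tower is a $\lambda^+$--tower in the sense required by the definition of $\Gamma$--super rigidity (the critical-point bookkeeping issue from the forward direction resurfaces here), and that the map $H\mapsto j$ is induced by an isomorphism of Boolean completions rather than merely existing generic-by-generic. As it stands the proposal is a reasonable road map but not a proof of either implication.
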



The consistency proof of $\CFA(\Gamma)$ makes use of the following large cardinal notion. 


\begin{definition}
An elementary $j:V\to M$ is $2$--huge if $M\subseteq V$ and $M^{j^2(\delta)}\subseteq M$, where $\delta$ is the critical point of $j$.

$\delta$ is $2$--superhuge if for all $\gamma$ there is a $2$--huge elementary $j:V\to M$ with $j(\delta)>\gamma$ and $\delta$ the critical point of $j$.
\end{definition}

\begin{theorem}\label{consistency-of-CFA}
Let $\Gamma$ be a $\lambda$--suitable class of forcings for a $\lambda$--canonical theory 
$T\supseteq\MK$.
Assume $T+`$\emph{there exists a $2$--superhuge cardinal}' is consistent.
Then so is $\CFA(\Gamma)+T$. In fact, if $\d$ is a $2$--superhuge cardinal, then $\bool{U}^\Gamma_\delta$ forces $\CFA(\Gamma)$. 
\end{theorem}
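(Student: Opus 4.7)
The plan is to verify that $V[H] \models \CFA(\Gamma)$ whenever $H$ is $V$--generic for $\bool{U}^\Gamma_\delta$, by means of the equivalent characterization given in Theorem~\ref{cfagamma+supercompact}. Concretely, I would show: in $V[H]$, for every supercompact cardinal $\delta^*$ and every $V[H]$--generic filter $H^*$ for $(\bool{U}^\Gamma_{\delta^*})^{V[H]}$, there is in $V[H][H^*]$ a definable ultrapower embedding $j^* : V[H] \to M^*$ with critical point $\delta^*$ and $(M^*)^{<\delta^*} \subseteq M^*$. Note that $V[H] \models T$ by clause~(\ref{def:kappasuit-1}) of $\lambda$--suitability, so the characterization is applicable.

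Since $\bool{U}^\Gamma_\delta \subseteq V_\delta$, any cardinal above $\delta$ that is supercompact (resp.\ $2$--huge) in $V$ remains so in $V[H]$. Furthermore, $2$--superhugeness of $\delta$ combined with the closure $N^{j^2(\delta)} \subseteq N$ of any witnessing $2$--huge embedding $j : V \to N$ implies, via elementarity and standard arguments, that each $j(\delta)$ is itself $2$--huge in $V$; hence there are cofinally many $2$--huge cardinals of $V$ above $\delta$. When verifying the characterization at a supercompact $\delta^*$ of $V[H]$, we may thus assume $\delta^*$ is $2$--huge in $V$, witnessed by some $k : V \to N$ with critical point $\delta^*$, image $\lambda^* = k(\delta^*)$, and closure $N^{k(\lambda^*)} \subseteq N$. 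Since $k$ fixes $V_{\delta^*}$ pointwise, it fixes $\bool{U}^\Gamma_\delta$ and $H$, hence lifts trivially to $\bar k : V[H] \to N[H]$ with $\bar k((\bool{U}^\Gamma_{\delta^*})^{V[H]}) = (\bool{U}^\Gamma_{\lambda^*})^{N[H]}$.

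The heart of the argument is then to use Theorem~\ref{thm:quo-univ}---together with the weak $\lambda$--iterability and the $\Gamma$--freezeability of $\Gamma$---to realize, inside $V[H][H^*]$, an $N[H]$--generic filter $G \supseteq \bar k[H^*]$ for $(\bool{U}^\Gamma_{\lambda^*})^{N[H]}$. Theorem~\ref{thm:quo-univ} identifies the relevant quotient $(\bool{U}^\Gamma_{\lambda^*})^{N[H]}/\bar k[H^*]$ with a category forcing in $N[H][H^*]$, and a master--condition argument, powered by the closure $N^{k(\lambda^*)} \subseteq N$, yields the desired $G$. Setting $j^* = \bar{\bar k} \restriction V[H]$, where $\bar{\bar k} : V[H][H^*] \to N[H][G]$ is the further lift, produces an ultrapower embedding with critical point $\delta^*$; the closure of $N$ and the $\lambda^+$--presaturation features of the towers dense in $\bool{U}^\Gamma_{\lambda^*}$ together yield $(M^*)^{<\delta^*} \subseteq M^*$.

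The main obstacle, in my view, is ensuring that $j^*$ is genuinely \emph{definable in $V[H][H^*]$}, rather than only in some further auxiliary extension where $G$ happens to be chosen. This is exactly where the $\Gamma$--rigidity apparatus intrinsic to $\lambda$--suitable classes becomes essential: $\Gamma$--freezeability forces the $\Gamma$--correct homomorphisms built by the lifting to be unique, so that $G$, and with it $j^*$, is canonically coded by $H^*$ alone. An equivalent packaging of the argument is to produce directly, inside $V[H]$, a dense class of $\Gamma$--super rigid pre-saturated $\lambda^+$--towers below every $\bool{B} \in \Gamma^{V[H]}$, which is the original tower formulation of $\CFA(\Gamma)$ from Definition~\ref{definition-of-cfa(gamma)}.
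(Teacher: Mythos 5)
First, a caveat: the paper does not actually contain a proof of Theorem~\ref{consistency-of-CFA}; the proofs of the results in Subsection~\ref{subsec:mainresultsM} are explicitly omitted and deferred to \cite{VIAAUDSTEBOOK}, and the only guidance given is the sketch in Subsection~\ref{subsec:genabsforax}, which indicates a route through Foreman's duality theorem. Measured against that sketch, your proposal aims at the right target but has a genuine gap at its central step. You propose to obtain the embedding $j^*:V[H]\to M^*$ by lifting a $2$--huge $k:V\to N$ with critical point $\delta^*$, which requires producing, \emph{inside} $V[H][H^*]$, a filter $G\supseteq H^*$ that is $N[H]$--generic for $(\bool{U}^\Gamma_{\lambda^*})^{N[H]}$. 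No master--condition argument delivers this, and the closure $N^{k(\lambda^*)}\subseteq N$ that you invoke works \emph{against} you rather than for you: since $\bool{U}^\Gamma_{\lambda^*}$ has size $\lambda^*<k(\lambda^*)$, that closure implies $N$ (hence $N[H][H^*]$) contains essentially every dense subset of the relevant quotient that $V[H][H^*]$ contains, so a filter generic over $N[H][H^*]$ would be generic over $V[H][H^*]$ itself --- impossible inside $V[H][H^*]$ for a nontrivial atomless forcing ($\bool{U}^\Gamma_{\lambda^*}$ collapses $\lambda^*$, which $\bool{U}^\Gamma_{\delta^*}$ preserves, so no such generic is added). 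Master conditions only show the lift exists after forcing \emph{further} with the quotient. The intended mechanism is different: one uses $k$, before any lifting, together with Foreman's duality theorem, to prove that $\bool{U}^\Gamma_{\delta^*}$ as computed in $V[H]$ \emph{is} a $\Gamma$--super rigid presaturated $\lambda^+$--tower; $j^*$ is then the generic ultrapower \emph{of the tower} by $H^*$ --- a definable class of $V[H][H^*]$ not of the form $N[G]$ for any actual generic $G$ --- and presaturation of the tower, not closure of $N$, gives $(M^*)^{<\delta^*}\subseteq M^*$. This also dissolves your definability worry without any appeal to rigidity.

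There are also problems with the reduction itself. Theorem~\ref{cfagamma+supercompact} is stated only over $T\supseteq\MK^*+$ a proper class of supercompact cardinals, which is not among the hypotheses of Theorem~\ref{consistency-of-CFA}, and a single $2$--superhuge cardinal does not obviously yield a proper class of fully supercompact cardinals above itself. Moreover that characterization quantifies over \emph{every} supercompact $\delta^*$ of $V[H]$, so you may not restrict attention to those that are $2$--huge in $V$; and your supporting claim that each target $j(\delta)$ of a $2$--huge embedding is itself $2$--huge in $V$ does not follow from $N^{j^2(\delta)}\subseteq N$, since the witnesses to $2$--hugeness of $j(\delta)$ in $N$ have rank far above $j^2(\delta)$ and need not be genuine witnesses in $V$. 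The correct organization is the one you mention only in passing at the end, and which the footnote to the definition of $\Gamma$--super rigid tower points to: work directly with Definition~\ref{definition-of-cfa(gamma)} and show that for cofinally many targets $\gamma=k(\delta)$ of $2$--huge embeddings with critical point $\delta$, the quotient of $\bool{U}^\Gamma_{\gamma}$ by $H$ is, in $V[H]$, a $\Gamma$--super rigid presaturated tower refining any prescribed $\bool{B}\in\Gamma^{V[H]}$.
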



The following can now be proved by an implementation of the analysis in Subsection \ref{subsec:genabsforax} using the other theorems stated so far.

\begin{theorem}\label{finaltheorem-abs}
Let $\Gamma$ be a $\lambda$-suitable class of forcings for a $\lambda$--canonical theory 
$T\supseteq\MK$.
Assume $V$ models $T+$`\emph{there are stationarily many inaccessible cardinals'}+`\emph{There is a proper class of supercompact cardinals}'.

Then:
\begin{itemize}
\item 
$T+\CFA(\Gamma)$ makes the theory of $L(\Ord^\lambda)$ with parameters in $\pow{\lambda}$
invariant with respect to forcings in $\Gamma$ which preserve $\CFA(\Gamma)$.
\item $T+\CFA(\Gamma)$ implies $\FA_\lambda(\bool{B})$ for all $\bool{B}\in\Gamma$.
\end{itemize}
\end{theorem}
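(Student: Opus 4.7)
The plan is to mimic, at the level of a $\lambda$--suitable class $\Gamma$ and the Chang model $L(\Ord^\lambda)$, the two--generic argument sketched in Subsection~\ref{subsec:genabsforax} for Woodin's $L(\Ord^\omega)$ theorem, using the supercompact characterisation of $\CFA(\Gamma)$ given by Theorem~\ref{cfagamma+supercompact} and the absorption of Theorem~\ref{thm:quo-univ}. For the first bullet, let $P\in\Gamma$ with $V^P\models T+\CFA(\Gamma)$, let $H$ be $V$--generic for $P$, and fix a supercompact cardinal $\delta>|P|$ of $V$. Since $|P|<\delta$ and $P$ preserves the axioms of $T$ (by clause~\ref{def:kappasuit-1} of Definition~\ref{def:kappasuit} together with Remark~\ref{rmk:canth}), $\delta$ remains supercompact in $V[H]$ and $V[H]\models T$. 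Applying Theorem~\ref{thm:quo-univ} to a $\Gamma$--freezing $k_0:P\to\bool{B}$ with $\bool{B}\in\bool{Rig}^\Gamma$, and setting $k=k_\bool{B}\circ k_0$, we identify $(\bool{U}^\Gamma_\delta)^{V[H]}$ with $(\bool{U}^\Gamma_\delta\restriction\bool{B})^V/_{k[H]}$, so that for any $V[H]$--generic filter $G$ for $(\bool{U}^\Gamma_\delta)^{V[H]}$ the pair $(H,G)$ is coded by a single $V$--generic filter $K$ for $\bool{U}^\Gamma_\delta$ with $V[K]=V[H][G]$.

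Applying Theorem~\ref{cfagamma+supercompact} in $V$ and in $V[H]$ yields, inside $V[K]$, two definable elementary embeddings
\[
j_0:V\to M_0,\qquad j_1:V[H]\to M_1,
\]
both with critical point $\delta$ and with $M_i^{{<}\delta}\subseteq M_i$ in $V[K]$. Since $\delta>\lambda$, each $j_i$ satisfies $j_i\restriction V_\lambda=\id$ and hence pointwise fixes $\pow{\lambda}^V$; moreover $M_i^\lambda\subseteq M_i$ implies $L(\Ord^\lambda)^{M_i}=L(\Ord^\lambda)^{V[K]}$ for $i=0,1$. Chaining the elementarity of $j_0$ and $j_1$ through these equalities gives
\[
\ap{L(\Ord^\lambda)^V,\in,A}_{A\in\pow{\lambda}^V}\equiv\ap{L(\Ord^\lambda)^{V[H]},\in,A}_{A\in\pow{\lambda}^V},
\]
which is the first bullet.

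For the second bullet, fix $\bool{B}\in\Gamma$ and a family $\{D_\alpha:\alpha<\lambda\}$ of dense subsets of $\bool{B}$. Choose a supercompact cardinal $\delta>|\bool{B}|$ of $V$ and let $K$ be $V$--generic for $\bool{U}^\Gamma_\delta$ below a condition absorbing $\bool{B}$, as supplied by Theorem~\ref{thm:quo-univ} applied to a $\Gamma$--freezing map for $\bool{B}$. By Theorem~\ref{cfagamma+supercompact} there is in $V[K]$ a definable elementary $j:V\to M$ with $\crit(j)=\delta$ and $M^{{<}\delta}\subseteq M$. The absorption provides in $V[K]$ a $V$--generic filter $H\subseteq\bool{B}$; since $|H|\leq|\bool{B}|<\delta$ and $M^{{<}\delta}\subseteq M$, we have $H\in M$. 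As $j\restriction V_\delta=\id$, the parameters $\bool{B}$ and $\{D_\alpha:\alpha<\lambda\}$ are fixed by $j$, so $M$ recognises $H$ as a filter on $\bool{B}$ meeting every $D_\alpha$. Hence $M\models\exists F\,(F\text{ is a filter on }\bool{B}\text{ meeting every }D_\alpha)$, and by elementarity of $j$ the same is true in $V$, yielding $\FA_\lambda(\bool{B})$.

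The hardest step is ensuring that the two embeddings $j_0$ and $j_1$ of the first bullet are produced from a single generic $K$ over $V$, so that they live in the \emph{same} outer model $V[K]$ and can be chained through it; this is exactly what the quotient identification of Theorem~\ref{thm:quo-univ} delivers, by factoring $\bool{U}^\Gamma_\delta$ canonically as $P\ast(\bool{U}^\Gamma_\delta)^{V[H]}$ after a $\Gamma$--freezing refinement. The $\Gamma$--freezeability and $\Gamma$--rigidity properties of Subsection~\ref{subsec:freezetotrig}, together with the iteration theory built into $\lambda$--suitability, are precisely what makes this factorisation available; everything else reduces to elementary--embedding bookkeeping and the standard observation that $<\delta$--closure of each $M_i$ transfers truths about $L(\Ord^\lambda)$ from the ambient $V[K]$ into $M_i$.
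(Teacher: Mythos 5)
Your proof is correct and follows exactly the strategy the paper itself indicates for Theorem \ref{finaltheorem-abs}: the two--embedding argument of Subsection \ref{subsec:genabsforax}, implemented via Theorem \ref{cfagamma+supercompact} for the embeddings $j_0,j_1$ in the common extension $V[K]$, Theorem \ref{thm:quo-univ} for the factorisation of $\bool{U}^\Gamma_\delta$ through $P$, and the standard reflection argument for $\FA_\lambda(\bool{B})$. One cosmetic point: to conclude that $j_i$ fixes $\pow{\lambda}^V$ pointwise you should invoke $j_i\restriction V_\delta=\id$ (immediate from $\crit(j_i)=\delta>\lambda$) rather than $j_i\restriction V_\lambda=\id$, since subsets of $\lambda$ live in $V_{\lambda+1}$, not $V_\lambda$.
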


It is worth observing that $\CFA(\Gamma)$, for a $\lambda$--suitable class $\Gamma$, is a $\Pi_3$ statement (possibly in some parameter defining $\Gamma$) which decides, modulo forcing in $\Gamma$, a collection of statements of unbounded complexity (namely the entire theory of the Chang model $\mtcl C_\l$).\footnote{Also, note that the existence of a proper class of supercompact cardinals is a $\Pi_4$ sentence.} The same observation of course applies already to Woodin's absoluteness for the $\o$--Chang model with respect to arbitrary set--forcing, where  the relevant theory, viz.\ the existence of unboundedly many Woodin cardinals, is a $\Pi_3$ sentence as already mentioned in the introduction.

\subsection{Freezeability and total rigidity}\label{subsec:freezetotrig}

The hardest technical results in the next section
 are 
several proofs of the freezeability property for a variety of classes of forcings.
This section aims to make the reader familiar with the key facts regarding rigidity and freezeability,
and details how the combination of the freezeability property and of the weak iterability property 
for $\Gamma$ 
yields the density of the class 
of $\Gamma$--rigid forcings.
All over this section we assume that $\Gamma$ is $\lambda$--suitable for the $\lambda$--canonical theory
 $T$.
 

\begin{theorem}\label{thm:freztotrigGamma}
Assume $\Gamma$ is $\lambda$--suitable for the $\lambda$--canonical theory
 $T$.
Then $T$ proves that the class of $\Gamma$--rigid
partial orders is dense in $(\Gamma,\leq^*_\Gamma)$.
\end{theorem}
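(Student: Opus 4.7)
Given $\bool{B}_0\in\Gamma$, the plan is to iterate the $\Gamma$--freezeability property $\lambda$--many times, using weak $\lambda$--iterability to stitch the pieces together, and to show that the resulting direct limit is $\Gamma$--rigid and sits below $\bool{B}_0$ in $\leq^*_\Gamma$. More precisely, I will recursively construct an iteration system $\{\bool{B}_\alpha, k_{\alpha,\beta}:\alpha\leq\beta\leq\lambda\}$, with all $\bool{B}_\alpha\in\Gamma$ and all $k_{\alpha,\beta}$ injective $\Gamma$--correct and coherent, so that at every successor stage $k_\alpha=k_{\alpha,\alpha+1}:\bool{B}_\alpha\to\bool{B}_{\alpha+1}$ is a $\Gamma$--freezing homomorphism for $\bool{B}_\alpha$. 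At successor stages I appeal to the $\Gamma$--freezeability property (and, as indicated in the remark after Fact \ref{fac:keyrmkfreeze}, I take $\bool{B}_{\alpha+1}$ of the form $\bool{B}_\alpha\ast\dot{\bool{Q}}_\alpha$ so that $k_\alpha$ is automatically injective and $\bool{B}_{\alpha+1}\in\Gamma$ by closure under two--step iterations); at limit stages of cofinality $\lambda$, or regular limits bounding the sizes of the previous iterates, I take the direct limit. The weak $\lambda$--iterability of $\Gamma$ supplies Player~$II$ with a winning strategy, which guarantees that such a plan can be carried out and that all $\bool{B}_\alpha$ remain in $\Gamma$; in particular the direct limit $\bool{C}=\bool{B}_\lambda$ lies in $\Gamma$.

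The key claim is that $\bool{C}$ is $\Gamma$--rigid. Let $i_0,i_1:\bool{C}\to\bool{D}$ be any two $\Gamma$--correct homomorphisms. For each $\alpha<\lambda$, both $i_0\circ k_{\alpha+1,\lambda}$ and $i_1\circ k_{\alpha+1,\lambda}$ are $\Gamma$--correct homomorphisms from $\bool{B}_{\alpha+1}$ into $\bool{D}$, since composing $\Gamma$--correct maps yields a $\Gamma$--correct map. Because $k_\alpha$ $\Gamma$--freezes $\bool{B}_\alpha$,
\[
(i_0\circ k_{\alpha+1,\lambda})\circ k_\alpha=(i_1\circ k_{\alpha+1,\lambda})\circ k_\alpha,
\]
and coherence of the iteration gives $i_0\circ k_{\alpha,\lambda}=i_1\circ k_{\alpha,\lambda}$. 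Hence $i_0$ and $i_1$ agree on $k_{\alpha,\lambda}[\bool{B}_\alpha]$ for every $\alpha<\lambda$, and hence on $\bigcup_{\alpha<\lambda}k_{\alpha,\lambda}[\bool{B}_\alpha]$. Since $\bool{C}$ is a direct limit of this system, that union is dense in $\bool{C}$; but complete homomorphisms between complete Boolean algebras are determined by their values on any dense subset, so $i_0=i_1$. This yields the $\Gamma$--rigidity of $\bool{C}$. Finally, $k_{0,\lambda}:\bool{B}_0\to\bool{C}$ is a composition of injective $\Gamma$--correct homomorphisms and is therefore itself injective and $\Gamma$--correct, witnessing $\bool{C}\leq^*_\Gamma\bool{B}_0$.

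I expect the main obstacle to be the bookkeeping at limit stages: one has to check that the direct limits $\varinjlim_{\alpha<\sigma}\bool{B}_\alpha$ really do belong to $\Gamma$ and really do carry a dense subset consisting of the images of the iterates. The first issue is precisely what weak $\lambda$--iterability provides via Player~$II$'s winning strategy, combined with closure under preimages by complete injective homomorphisms (so the strategy's moves can be interleaved with the freezing moves one wishes to make); the second is a standard fact about direct limits of complete Boolean algebras. A subsidiary technical point is ensuring that the freezing map can be taken injective and $\Gamma$--correct, which is handled by using the standard factorization $\bool{B}\to\bool{B}\ast\dot{\bool{Q}}$ pointed out in Subsection~\ref{subsec:freezetotrig}. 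Once these points are verified, the rigidity argument above is essentially immediate from the definitions of $\Gamma$--correctness, $\Gamma$--freezing, and direct limit.
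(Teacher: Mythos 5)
Your overall strategy coincides with the paper's: iterate freezing homomorphisms $\lambda$ many times, use weak $\lambda$--iterability to keep the iteration inside $\Gamma$ and to form the direct limit, and argue that the limit is $\Gamma$--rigid and refines $\bool{B}_0$ in $\leq^*_\Gamma$. There is, however, one genuine gap: the injectivity of the freezing maps. The abstract hypothesis is only that $\Gamma$ has the $\Gamma$--freezeability property, i.e.\ that for each $\bool{B}\in\Gamma$ \emph{some} $\Gamma$--correct $k:\bool{B}\to\bool{C}$ freezes $\bool{B}$; nothing in that definition makes $k$ injective, and a non--injective $k$ kills the part of $\bool{B}$ below $\neg\coker(k)$, so it cannot be fed into an iteration system of injective maps nor witness $\leq^*_\Gamma$. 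Your proposed fix --- taking $\bool{B}_{\alpha+1}$ of the form $\bool{B}_\alpha\ast\dot{\bool{Q}}_\alpha$ --- imports information from the concrete freezing posets of Section \ref{suitable classes} that is not part of the hypotheses of Theorem \ref{thm:freztotrigGamma}. The paper spends the first half of its proof on exactly this point: it chooses a maximal antichain $A\subseteq\bool{B}$ such that each $b\in A$ carries a freezing map $k_b$ with $\coker(k_b)=b$, and glues these via the lottery sum, $k=\bigvee_A k_b:\bool{B}\to\bigvee_\Gamma\{\bool{C}_b:b\in A\}$, to obtain a single \emph{injective} freezing map. This step uses closure under lottery sums and is not dispensable; you should add it (or an equivalent argument) before the iteration can start.

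The rigidity argument for the limit is where you genuinely diverge from the paper, and your route is valid. The paper invokes Lemma \ref{lem:freezelemGamma}, working with the incompatibility--preservation characterization of rigidity (Lemma \ref{lem:eqtrGamma}(\ref{lem:eqtrGamma1})): incompatible threads have support at some $\alpha<\lambda$, and freezing at a later stage forces the corresponding restrictions of $\bool{B}_\lambda$ to be $\leq_\Gamma$--incompatible. You instead verify the definition of rigidity directly: any two $\Gamma$--correct $i_0,i_1:\bool{C}\to\bool{D}$ agree on each $k_{\alpha,\lambda}[\bool{B}_\alpha]$ by the freezing property at stage $\alpha$, hence on the dense subalgebra $\bigcup_{\alpha<\lambda}k_{\alpha,\lambda}[\bool{B}_\alpha]$, hence everywhere. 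The two arguments are equivalent in view of Lemma \ref{lem:eqtrGamma}, and yours is arguably more transparent; but note that it leans on two facts you should make explicit: that compositions of $\Gamma$--correct homomorphisms are $\Gamma$--correct (this is where closure under two--step iterations enters), and that the limit maps $k_{\alpha,\lambda}$ themselves are $\Gamma$--correct, which is part of what Player $II$'s winning strategy in $\mathcal{G}_\lambda(\Gamma)$ must deliver. With the injectivity step repaired and these two facts recorded, your proof goes through.
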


\begin{lemma}\label{lem:eqtrGamma}
The following are equivalent characterizations of 
$\Gamma$--rigidity for an algebra $\bool{B}\in\Gamma$:
\begin{enumerate}
\item\label{lem:eqtrGamma1}
for all $b_0$, $b_1\in \bool{B}$ such that
$b_0\wedge_{\bool{B}}b_1=0_{\bool{B}}$ we have that
$\bool{B}\restriction b_0$ is incompatible with $\bool{B}\restriction b_1$ in $(\Gamma,\leq_\Gamma)$. 
\item\label{lem:eqtrGamma2}
For all $\bool{C}\leq_\Gamma\bool{B}$ and all $H$, $V$--generic filter for 
$\bool{C}$, there is just one $\Gamma$--correct $V$--generic filter $G\in V[H]$ 
 for $\bool{B}$.
\item\label{lem:eqtrGamma3}
For all $\bool{C}\leq_\Gamma\bool{B}$ in $\Gamma$ there is only one $\Gamma$--correct
homomorphism
$i:\bool{B}\to\bool{C}$.
\end{enumerate}
\end{lemma}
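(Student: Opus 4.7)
The plan is to observe first that (3) is essentially the definition of $\Gamma$-rigidity specialized to homomorphisms with codomain in $\Gamma$, since existence of \emph{some} $\Gamma$-correct $i:\bool{B}\to\bool{C}$ is exactly what witnesses $\bool{C}\leq_\Gamma\bool{B}$, and uniqueness of such $i$ is what rigidity of $\id:\bool{B}\to\bool{B}$ asserts. So the real content is to prove $(1)\Leftrightarrow(3)$ and $(2)\Leftrightarrow(3)$ by direct contrapositive. Throughout I would use the standard factorization $\bool{B}\cong(\bool{B}\restriction b)\times(\bool{B}\restriction\neg b)$ and the fact, implicit in the closure conditions imposed on a $\lambda$-suitable $\Gamma$, that $\bool{B}\restriction b\in\Gamma$ whenever $\bool{B}\in\Gamma$.

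For $(1)\Leftrightarrow(3)$ I would argue contrapositively in both directions. If (3) fails, pick distinct $\Gamma$-correct $i_0,i_1:\bool{B}\to\bool{C}$; choose $b\in\bool{B}$ with $i_0(b)\neq i_1(b)$ and a nonzero $c\in\bool{C}$ with $c\leq i_0(b)\wedge\neg i_1(b)$ (swapping indices if needed). Setting $b_0=b$ and $b_1=\neg b$ gives $b_0\wedge b_1=0$ and $c\leq i_0(b_0)\wedge i_1(b_1)$; composing each $i_j\restriction(\bool{B}\restriction b_j)$ with the projection $\bool{C}\to\bool{C}\restriction c$ then yields $\Gamma$-correct homomorphisms into the common refinement $\bool{C}\restriction c\in\Gamma$, refuting (1). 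Conversely, if (1) fails with witnessing $\Gamma$-correct $\pi_j:\bool{B}\restriction b_j\to\bool{D}$, I would lift to $i_j:\bool{B}\to\bool{D}$ by $i_j(a):=\pi_j(a\wedge b_j)$; a direct check shows each $i_j$ is a $\Gamma$-correct complete homomorphism (the key point being that forcing with $\bool{B}$ below $b_j$ is the same as forcing with $\bool{B}\restriction b_j$, so below $\coker(i_j)$ the quotient $\bool{D}/i_j[\dot G_\bool{B}]$ agrees with $\bool{D}/\pi_j[\dot G_{\bool{B}\restriction b_j}]$), while $i_0(b_0)=1_\bool{D}\neq 0_\bool{D}=i_1(b_0)$, refuting (3).

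For $(2)\Leftrightarrow(3)$ the bridge is the standard correspondence between $\Gamma$-correct homomorphisms $\bool{B}\to\bool{C}$ and $\bool{C}$-names for $\Gamma$-correct $V$-generic filters on $\bool{B}$, inverted by the construction $\dot H\mapsto i_{r,\dot H}$ of the preceding definition. If (3) fails via distinct $\Gamma$-correct $i_0,i_1:\bool{B}\to\bool{C}$, then any $V$-generic $H$ on $\bool{C}$ containing a nonzero element that witnesses disagreement of $i_0,i_1$ on some $\check b$ produces two distinct $\Gamma$-correct $V$-generic filters $G_j:=i_j^{-1}[H]$ for $\bool{B}$ in $V[H]$, refuting (2). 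Conversely, if $G_0\neq G_1$ are $\Gamma$-correct $V$-generic for $\bool{B}$ in some $V[H]$, I would pick names $\dot G_j$ and $r\in H$ below the Boolean value asserting that both $\dot G_j$ are $V$-generic and $\Gamma$-correct, chosen moreover so that $r$ decides some $\check b$ separating $G_0$ and $G_1$; then $i_{r,\dot G_0}$ and $i_{r,\dot G_1}$ are two distinct $\Gamma$-correct homomorphisms $\bool{B}\to\bool{C}\restriction r$, refuting (3) (with $\bool{C}\restriction r$ in place of $\bool{C}$).

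The main obstacle is purely bookkeeping: one has to systematically verify that restriction to $\bool{B}\restriction b$, post-composition with canonical projections onto $\bool{C}\restriction c$, and the passage between homomorphisms and names for generic filters all respect $\Gamma$-correctness, in particular keeping track of cokernels when the maps involved are not injective. Each such verification is a routine consequence of the closure properties of a $\lambda$-suitable $\Gamma$ (under preimages by complete injective homomorphisms, under two-step iterations, and under lottery sums), combined with the standard identification of $\bool{B}$ as $(\bool{B}\restriction b)\times(\bool{B}\restriction\neg b)$; none of it is conceptually deep, but the pieces must be threaded together carefully so that the cokernel conditions appearing in the definition of $\Gamma$-correctness are respected throughout.
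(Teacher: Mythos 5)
Your proof is correct and uses essentially the same arguments as the paper's: contrapositives exploiting the restricted maps $a\mapsto i(a)\wedge r$ to witness compatibility in $(\Gamma,\leq_\Gamma)$, and the correspondence $\dot H\mapsto i_{r,\dot H}$ between names for $\Gamma$--correct generic filters and $\Gamma$--correct homomorphisms. The only difference is organizational --- the paper closes the three--implication cycle $(2)\Rightarrow(1)\Rightarrow(3)\Rightarrow(2)$, whereas you prove the two biconditionals $(1)\Leftrightarrow(3)$ and $(2)\Leftrightarrow(3)$ at the cost of one redundant implication.
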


\begin{remark}
It is conceivable that even if $\bool{B}$ is $\Gamma$--rigid, there could be a complete (and non-surjective) homomorphism 
$k:\bool{B}\to \bool{B}\restriction b$  
which is not $\Gamma$--correct. If $H$ is $V$--generic for $\bool{B}$ with $b\in H$,
$k^{-1}[H]=G\in V[H]$ is also $V$--generic for $\bool{B}$.
Hence in $V[H]$ there could be distinct $V$--generic filters for $\bool{B}$ even if $\bool{B}$ is 
$\Gamma$--rigid. This is not in conflict with~\ref{lem:eqtrGamma}(\ref{lem:eqtrGamma2}),
since $G\in V[H]$ would not be $\Gamma$--correct for $\bool{B}$ in $V[H]$. 
\end{remark}  

\begin{proof} 
We prove these equivalences by contraposition as follows:
\begin{description}
\item[~\ref{lem:eqtrGamma2} implies~\ref{lem:eqtrGamma1}] 
Assume~\ref{lem:eqtrGamma1} 
fails as witnessed by $i_j:\bool{B}\restriction b_j\to\bool{Q}$ for $j=0,1$ with
$b_0$ incompatible with $b_1$ in $\bool{B}$.
Pick $H$, a $V$--generic filter for $\bool{Q}$. Then $G_j=i_j^{-1}[H]\in V[H]$ (for $j=0$, $1$) are distinct  and $\Gamma$--correct
$V$--generic filters for $\bool{B}$ in $V[H]$,
since $b_j\in G_j\setminus G_{1-j}$.  

\item[~\ref{lem:eqtrGamma1} implies~\ref{lem:eqtrGamma3}]
Assume~\ref{lem:eqtrGamma3} fails for $\bool{B}$ as witnessed by $i_0\neq i_1:\bool{B}\to\bool{C}$.
Let $b$ be such that $i_0(b)\neq i_1(b)$. 
W.l.o.g.\ we can suppose that $r=i_0(b)\wedge i_1(\neg b)>0_{\bool{C}}$.
Then $j_0:\bool{B}\restriction b\to \bool{C}\restriction r$ and
$j_1:\bool{B}\restriction \neg b\to \bool{C}\restriction r$ given by $j_k(a)=i_k(a)\wedge r$ for $k=0,1$ and
$a$ in the appropriate domain witness that $\bool{B}\restriction \neg b$ and $\bool{B}\restriction b$
are compatible in $(\Gamma,\leq_\Gamma)$, i.e.\ that~\ref{lem:eqtrGamma1} fails.

\item[~\ref{lem:eqtrGamma3} implies~\ref{lem:eqtrGamma2}]
Assume~\ref{lem:eqtrGamma2} fails for $\bool{B}$ as witnessed by some 
$\bool{C}\leq_{\Gamma}\bool{B}$, a
$V$--generic filter $H$ for $\bool{C}$, and  
$\Gamma$--correct $V$-generic filters $G_1\neq G_2\in V[H]$ for $\bool{B}$.
Let $\dot{G}_1,\dot{G}_2\in V^{\bool{C}}$ 
be such that $(\dot{G}_1)_H=G_1\neq (\dot{G}_2)_H=G_2$
are $\Gamma$--correct $V$--generic filters for $\bool{B}$ in
$V[H]$ for both $j=1,2$.
Find $q\in G$ forcing that $b\in G_1\setminus G_2$ for some fixed $b\in\bool{B}$.
Then 
for some $r\in H$ refining $q$, we have that both homomorphisms
$i_j=i_{\dot{G}_j,r}:\bool{B}\to\bool{C}$ defined by 
$a\mapsto\Qp{\check{a}\in\dot{G}_j}_{\bool{C}}\wedge r$
are $\Gamma$--correct. However $i_1(b)=r=i_2(\neg b)$, and hence $i_1\neq i_2$
witness that~\ref{lem:eqtrGamma3} fails for $\bool{B}$.
\end{description}
\end{proof}

We can also give the following characterizations of $\Gamma$--freezeability, the proof of which is along the same lines of the proof of the previous lemma.

\begin{lemma}\label{lem:eqfrGamma}
Let $k:\bool{B}\to\bool{Q}$ be a $\Gamma$--correct homomorphism.
The following are equivalent:
\begin{enumerate}
\item\label{lem:eqfrGamma1}
For all $b_0,b_1\in \bool{B}$ such that
$b_0\wedge_{\bool{B}}b_1=0_{\bool{B}}$ we have that
$\bool{Q}\restriction k(b_0)$ is incompatible with $\bool{Q}\restriction k(b_1)$ in 
$(\Gamma,\leq_\Gamma)$. 
\item\label{lem:eqfrGamma2}
For every $\bool{R}\leq_\Gamma\bool{Q}$ and every
$V$--generic filter $H$ for $\bool{R}$, there is just one  
$\Gamma$--correct $V$--generic filter $G\in V[H]$ for $\bool{B}$ such that $G=k^{-1}[K]$ for all $\Gamma$--correct $V$--generic 
filters $K\in V[H]$ for $\bool{Q}$.
\item\label{lem:eqfrGamma3}
For all $\bool{R}\leq_\Gamma\bool{Q}$ in $\Gamma$ 
and $i_0,i_1:\bool{Q}\to\bool{R}$ witnessing that $\bool{R}\leq_\Gamma\bool{Q}$ we have that
$i_0\circ k=i_1\circ k$.
\end{enumerate}
\end{lemma}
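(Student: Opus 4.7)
The plan is to mirror the cyclic contraposition argument used for Lemma~\ref{lem:eqtrGamma}, establishing $(2) \Rightarrow (1) \Rightarrow (3) \Rightarrow (2)$. The two background facts I will rely on throughout are that $\Gamma$-correctness is preserved by composition (since the generic quotient of a composition is forced to be a two-step iteration of $\Gamma$-quotients, which is again in $\Gamma$ by closure under two-step iterations) and by restriction of the target: for a $\Gamma$-correct $i : \bool{B} \to \bool{C}$ and $c \in \bool{C}$, the map $b \mapsto i(b) \wedge c$ is a $\Gamma$-correct homomorphism $\bool{B} \to \bool{C}\restriction c$, because the projection $\bool{C} \to \bool{C}\restriction c$ has trivial generic quotient below $c$ and is itself $\Gamma$-correct.

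For $(2) \Rightarrow (1)$ by contraposition, suppose $b_0, b_1 \in \bool{B}$ satisfy $b_0 \wedge_{\bool{B}} b_1 = 0_{\bool{B}}$ and $\Gamma$-correct homomorphisms $i_j : \bool{Q}\restriction k(b_j) \to \bool{R}$ witness compatibility. Composing with the $\Gamma$-correct projection $\pi_j : \bool{Q} \to \bool{Q}\restriction k(b_j)$, $q \mapsto q \wedge k(b_j)$, I obtain $\Gamma$-correct maps $\tilde{i}_j : \bool{Q} \to \bool{R}$. For $V$-generic $H$ for $\bool{R}$, the filters $K_j := \tilde{i}_j^{-1}[H]$ are $\Gamma$-correct $V$-generic filters for $\bool{Q}$ in $V[H]$ containing $k(b_j)$, so $k^{-1}[K_0] \ni b_0$ and $k^{-1}[K_1] \ni b_1$ are distinct (by incompatibility of $b_0, b_1$ in $\bool{B}$), refuting (2). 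For $(1) \Rightarrow (3)$, suppose distinct $\Gamma$-correct $i_0, i_1 : \bool{Q} \to \bool{R}$ satisfy $i_0 \circ k \neq i_1 \circ k$ and pick $b \in \bool{B}$ with $i_0(k(b)) \neq i_1(k(b))$. Setting $r := i_0(k(b)) \wedge i_1(k(\neg b))$, which is nonzero without loss of generality, the restricted maps $q \mapsto i_0(q) \wedge r$ and $q \mapsto i_1(q) \wedge r$ are $\Gamma$-correct homomorphisms from $\bool{Q}\restriction k(b)$ and $\bool{Q}\restriction k(\neg b)$ into $\bool{R}\restriction r$, witnessing compatibility of $\bool{Q}\restriction k(b)$ and $\bool{Q}\restriction k(\neg b)$ even though $b \wedge_{\bool{B}} \neg b = 0_{\bool{B}}$, so (1) fails.

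For $(3) \Rightarrow (2)$, if (2) fails, fix $\bool{R} \leq_\Gamma \bool{Q}$, a $V$-generic $H$ for $\bool{R}$, and $\Gamma$-correct $V$-generic filters $K_1, K_2 \in V[H]$ for $\bool{Q}$ with $k^{-1}[K_1] \neq k^{-1}[K_2]$. Fix $\bool{R}$-names $\dot K_j$ for the $K_j$ and let $r \in H$ force the $\Gamma$-correctness of each $\dot K_j$ together with $k(b) \in \dot K_1$ and $\neg k(b) \in \dot K_2$ for some $b \in \bool{B}$. The canonical homomorphisms $i_j : \bool{Q} \to \bool{R}$ defined by $q \mapsto \Qp{\check q \in \dot K_j}_{\bool{R}} \wedge r$ are $\Gamma$-correct, yet $i_1(k(b)) = r$ while $i_2(k(b)) \wedge r = 0_{\bool{R}}$, so $i_1 \circ k \neq i_2 \circ k$, refuting (3). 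The main point to verify in each direction is the $\Gamma$-correctness of the maps being constructed; this is routine once preservation of $\Gamma$-correctness under composition and restriction is in hand, so I do not expect any serious obstacle beyond bookkeeping.
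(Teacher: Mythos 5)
Your proposal is correct and follows exactly the route the paper intends: the paper omits the proof of this lemma, stating only that it is "along the same lines" as the cyclic contraposition $(2)\Rightarrow(1)\Rightarrow(3)\Rightarrow(2)$ given for Lemma~\ref{lem:eqtrGamma}, and your argument is precisely that adaptation, inserting $k$ (and the $\Gamma$--correct projections $q\mapsto q\wedge k(b_j)$) in the appropriate places. The two background facts you isolate (preservation of $\Gamma$--correctness under composition, via closure under two--step iterations, and under restriction of the target) are exactly what the paper uses implicitly, so there is nothing to add.
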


A $\Gamma$--freezeable $\bool{B}\in\Gamma$ can be embedded in $\Gamma\restriction\bool{C}$
for some $k:\bool{B}\to\bool{C}$ $\Gamma$--freezing $\bool{B}$
as follows:
\begin{lemma}\label{lem:freezemb}
Assume 
$\Gamma$ is a class of posets having the $\Gamma$--freezebility property.
Let $k:\bool{B}\to\bool{C}$ be a $\Gamma$--correct freezing homomorphism of
$\bool{B}$ into $\bool{C}$.
Then the map
$k_{\bool{B}}:\bool{B}\to\Gamma\restriction\bool{C}$ 
which maps $b\mapsto \bool{C}\restriction k(b)$
defines a complete embedding\footnote{The lemma does not (as yet) assert that 
$k_{\bool{B}}:\bool{B}\to\bool{U}^\Gamma_\delta$ is $\Gamma$--correct, whenever $\delta$ is large enough.
This is true but the proof will not be given here.} of 
the partial order $(\bool{B}^+,\leq_{\bool{B}})$ into
$(\Gamma\restriction\bool{C},\leq_\Gamma)$.
\end{lemma}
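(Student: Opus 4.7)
\emph{Proof plan.} Writing $i_{\bool{C}}:c\mapsto\bool{C}\restriction c$ as in the discussion preceding Definition~\ref{def:notcong}, we have $k_{\bool{B}}=i_{\bool{C}}\circ k$. So the plan is to exploit that $k$ is a complete homomorphism of Boolean algebras (hence preserves $\leq$, suprema and $1$), while $i_{\bool{C}}$ preserves order and suprema (as already noted in the informal discussion of the map $i_{\bool{B}}$), and to read off the remaining clauses from the $\Gamma$-freezing hypothesis on $k$. A complete embedding of $(\bool{B}^+,\leq_{\bool{B}})$ into the poset $(\Gamma\restriction\bool{C},\leq_\Gamma)$ amounts to verifying three things: order preservation, incompatibility preservation, and the fact that maximal antichains of $\bool{B}^+$ are mapped to predense classes in $\Gamma\restriction\bool{C}$.

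First I would handle order preservation: if $b_0\leq_{\bool{B}}b_1$, then $k(b_0)\leq_{\bool{C}}k(b_1)$, and the map $a\mapsto a\wedge k(b_0)$ gives a surjective complete homomorphism $\bool{C}\restriction k(b_1)\to\bool{C}\restriction k(b_0)$ with trivial generic quotient, hence $\Gamma$-correct; this witnesses $k_{\bool{B}}(b_0)\leq_\Gamma k_{\bool{B}}(b_1)$. Incompatibility preservation is then immediate: if $b_0\wedge_{\bool{B}}b_1=0_{\bool{B}}$, then by clause~(\ref{lem:eqfrGamma1}) of Lemma~\ref{lem:eqfrGamma}, applied to the freezing homomorphism $k$, the algebras $\bool{C}\restriction k(b_0)$ and $\bool{C}\restriction k(b_1)$ are incompatible in $(\Gamma,\leq_\Gamma)$, which is exactly $k_{\bool{B}}(b_0)\bot_\Gamma k_{\bool{B}}(b_1)$.

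The step I expect to require the most care is the preservation of predense sets. Let $A$ be a maximal antichain in $\bool{B}^+$, so $\bigvee_{\bool{B}}A=1_{\bool{B}}$, and let $\bool{D}\in\Gamma\restriction\bool{C}$ be arbitrary, witnessed by some $\Gamma$-correct $j:\bool{C}\to\bool{D}$. Since $k$ and $j$ are complete homomorphisms,
\[
\bigvee\nolimits_{\bool{D}}\{\,j(k(a)):a\in A\,\}\;=\;j(k(1_{\bool{B}}))\;=\;1_{\bool{D}},
\]
so there is some $a\in A$ with $r:=j(k(a))>0_{\bool{D}}$. Set $\bool{E}:=\bool{D}\restriction r$. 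By closure of $\Gamma$ under preimages of complete injective homomorphisms (condition (\ref{def:kappasuit-2}) in Definition~\ref{def:kappasuit}), $\bool{E}\in\Gamma$, and the restriction map $d\mapsto d\wedge r$ witnesses $\bool{E}\leq_\Gamma\bool{D}$ as above. It remains to verify $\bool{E}\leq_\Gamma\bool{C}\restriction k(a)=k_{\bool{B}}(a)$, for which the natural candidate is the composition $c\mapsto j(c)\wedge r$ from $\bool{C}\restriction k(a)$ into $\bool{E}$.

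The main obstacle is precisely the $\Gamma$-correctness of this last composition: one has to check that conditioning the generic of $\bool{C}$ on $k(a)$ and restricting $\bool{D}$ to $r$ still produces a generic quotient in $\Gamma$. I would derive this from the $\Gamma$-correctness of $j$ together with closure of $\Gamma$ under two-step iterations: if $G$ is $V$-generic for $\bool{C}\restriction k(a)$, extending trivially to a $V$-generic $\tilde G$ for $\bool{C}$ with $k(a)\in\tilde G$, then the quotient $\bool{D}\restriction r / (j\restriction\bool{C}\restriction k(a))[G]$ computed in $V[\tilde G]=V[G]$ is a complete subalgebra of the $\Gamma$-algebra $\bool{D}/j[\tilde G]$, so lies in $\Gamma^{V[G]}$ by clause (\ref{def:kappasuit-2}) of $\lambda$-suitability. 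This gives $\bool{E}\leq_\Gamma k_{\bool{B}}(a)$, and hence $\bool{E}$ is a common $\leq_\Gamma$-extension of $\bool{D}$ and $k_{\bool{B}}(a)$ in $\Gamma\restriction\bool{C}$, completing the proof that $k_{\bool{B}}[A]$ is predense below $\bool{C}$.
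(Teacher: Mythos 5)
Your proof is correct and follows the same route the paper has in mind (the paper literally leaves this lemma to the reader, noting that order and predense-set preservation are immediate and that incompatibility preservation is exactly what $\Gamma$--freezing is designed for, via Lemma~\ref{lem:eqfrGamma}(\ref{lem:eqfrGamma1})); your predensity argument, including the identification of $\bool{D}\restriction r/_{j[G]}$ with $\bool{D}/_{j[\tilde G]}$, is the intended one. One small mis-citation: $\bool{E}=\bool{D}\restriction r$ is a complete \emph{quotient} of $\bool{D}$, not (in general) a complete subalgebra of it, so its membership in $\Gamma$ does not follow from closure under preimages by complete injective homomorphisms; closure of $\Gamma$ under restriction to a condition is a background fact the paper itself uses without comment (already in asserting that $b\mapsto\bool{B}\restriction b$ maps into $\Gamma$), and holds trivially for every concrete class considered, but it is not one of the listed closure properties.
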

\begin{proof}
Left to the reader. It is immediate to check that $k_{\bool{B}}$ preserve predense sets and the 
$\leq_\bool{B}$--order relation.
The $\Gamma$--freezeability property of $k$ is designed exactly in order to get that $k_{\bool{B}}$ preserves 
also the incompatibility relation on $\bool{B}$.
\end{proof}

\begin{lemma}\label{lem:freezelemGamma}
 Assume 
 \[
 \{i_{\alpha\beta}:\bool{B}_\alpha\to\bool{B}_\beta:\alpha<\beta\leq\delta\}
 \]
 is a complete iteration system such that
 for each $\alpha$ there is $\beta>\alpha$ such that
 \begin{itemize}
 \item
 $i_{\alpha,\beta}$ $\Gamma$--freezes 
 $\bool{B}_\alpha$. 
 \item
 $\bool{B}_\delta$ is the direct limit  of the iteration system and is in $\Gamma$.
\end{itemize}
 Then
 $\bool{B}_\delta$ is $\Gamma$--rigid. 
 \end{lemma}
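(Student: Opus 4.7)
The plan is to prove $\Gamma$-rigidity of $\bool{B}_\delta$ by verifying condition \ref{lem:eqtrGamma}(\ref{lem:eqtrGamma3}) of Lemma \ref{lem:eqtrGamma}: I will show that for any $\bool{C}\leq_\Gamma \bool{B}_\delta$ there is at most one $\Gamma$--correct homomorphism $j:\bool{B}_\delta\to\bool{C}$. So fix $\bool{C}\leq_\Gamma\bool{B}_\delta$ and two $\Gamma$--correct homomorphisms $j_0,j_1:\bool{B}_\delta\to\bool{C}$; the aim is to conclude $j_0=j_1$.

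Fix an arbitrary $\alpha<\delta$. By hypothesis, there is some $\beta=\beta(\alpha)>\alpha$ with $\beta\le\delta$ such that $i_{\alpha,\beta}:\bool{B}_\alpha\to\bool{B}_\beta$ $\Gamma$--freezes $\bool{B}_\alpha$. Consider the compositions
\[
h_0 = j_0\circ i_{\beta,\delta}, \qquad h_1 = j_1\circ i_{\beta,\delta},
\]
both from $\bool{B}_\beta$ to $\bool{C}$. Since $\Gamma$ is closed under two-step iterations, $\Gamma$--correctness is preserved under composition, so both $h_0$ and $h_1$ are $\Gamma$--correct. The freezing property of $i_{\alpha,\beta}$ (Lemma \ref{lem:eqfrGamma}(\ref{lem:eqfrGamma3})) then gives $h_0\circ i_{\alpha,\beta}=h_1\circ i_{\alpha,\beta}$. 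Using the coherence of the iteration system, $i_{\beta,\delta}\circ i_{\alpha,\beta}=i_{\alpha,\delta}$, this yields
\[
j_0\circ i_{\alpha,\delta} = j_1\circ i_{\alpha,\delta}.
\]
Hence $j_0$ and $j_1$ agree on $\ran(i_{\alpha,\delta})$ for every $\alpha<\delta$.

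Since $\bool{B}_\delta$ is the direct limit of the system $\{i_{\alpha\beta}:\alpha<\beta\leq\delta\}$, the set $D=\bigcup_{\alpha<\delta}\ran(i_{\alpha,\delta})$ is join-dense in $\bool{B}_\delta$: every element of $\bool{B}_\delta$ is a supremum of elements of $D$. As $j_0$ and $j_1$ are complete homomorphisms which coincide on $D$, they commute with these suprema identically, and therefore $j_0=j_1$. By Lemma \ref{lem:eqtrGamma}, this establishes that $\bool{B}_\delta$ is $\Gamma$--rigid.

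The main technical points to be checked are the two used without detailed justification above: first, that the composition of two $\Gamma$--correct homomorphisms between algebras in $\Gamma$ is again $\Gamma$--correct (which follows from closure of $\Gamma$ under two-step iterations applied to the relevant generic quotients); second, that in a direct limit of complete Boolean algebras the union of the ranges of the injection maps is join-dense. Both are standard within the framework already set up, so the argument reduces to the clean syntactic manipulation of freezing displayed above, and I expect no further obstacle.
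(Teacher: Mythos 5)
Your proof is correct, but it runs through a different (dual) characterization of $\Gamma$--rigidity than the paper's. The paper argues by contraposition via Lemma \ref{lem:eqtrGamma}(\ref{lem:eqtrGamma1}): it supposes two incompatible threads $f_0,f_1$ whose restrictions $\bool{B}_\delta\restriction f_0$, $\bool{B}_\delta\restriction f_1$ are compatible in $(\Gamma,\leq_\Gamma)$, uses the fact that threads in a direct limit have bounded support to pull the incompatibility down to a coordinate $\alpha$, and then invokes the incompatibility--preservation form of freezing (Lemma \ref{lem:eqfrGamma}(\ref{lem:eqfrGamma1})) to reach a contradiction. You instead verify Lemma \ref{lem:eqtrGamma}(\ref{lem:eqtrGamma3}) directly, using the functional form of freezing ($i_0\circ k=i_1\circ k$) together with join--density of $\bigcup_\alpha\ran(i_{\alpha,\delta})$ in the direct limit. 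The two arguments are mirror images of one another under the equivalences of Lemmas \ref{lem:eqtrGamma} and \ref{lem:eqfrGamma}; yours is arguably cleaner in that it avoids the contraposition and makes the role of completeness of the homomorphisms explicit, while the paper's stays closer to the combinatorics of threads that is used elsewhere (e.g.\ in the proof of Theorem \ref{thm:freztotrigGamma}).

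One point deserves flagging, though it affects the paper's own proof equally: both arguments need the connecting maps $i_{\beta,\delta}:\bool{B}_\beta\to\bool{B}_\delta$ into the limit to be $\Gamma$--correct — you need this so that $h_\epsilon=j_\epsilon\circ i_{\beta,\delta}$ is $\Gamma$--correct (via closure under two--step iterations), and the paper needs it to pass from compatibility of $\bool{B}_\delta\restriction f_0$, $\bool{B}_\delta\restriction f_1$ to compatibility of the corresponding restrictions of $\bool{B}_\beta$. This is not literally among the stated hypotheses of the lemma, but it holds in the only context where the lemma is applied (the iteration systems arising in Theorem \ref{thm:freztotrigGamma} are built from $\Gamma$--correct injective maps, and weak $\lambda$--iterability guarantees the limit is reached through such maps), so your implicit use of it is on the same footing as the paper's.
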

 \begin{proof}
 Assume the lemma fails.
 Then there are  incompatible threads $f_0$, $f_1$ in $\bool{B}_\delta$ such that
 $\bool{B}_\delta\restriction f_0$ is compatible with  
 $\bool{B}_\delta\restriction f_1$ in $(\Gamma,\leq_\Gamma)$.
 Now $\bool{B}_\delta\in \Gamma$ is a direct limit, 
hence
 $f_0$, $f_1$ have support in some  
 $\alpha<\delta$. Thus 
 $f_0(\beta)$, $f_1(\beta)$ are incompatible in 
$\bool{B}_\beta$ for all $\alpha<\beta<\delta$.
Now, for eventually all $\beta>\alpha$, $\bool{B}_\beta$ $\Gamma$--freezes 
$\bool{B}_\alpha$
as witnessed by $i_{\alpha,\beta}$.
In particular, since $f_i=(i_{\alpha,\delta}\circ f_i)(\alpha)$
for $i=0,1$, we get that $\bool{B}_\delta\restriction f_0$ cannot be compatible with  
 $\bool{B}_\delta\restriction f_1$ in $(\Gamma,\leq_\Gamma)$, 
contradicting our assumption.
\end{proof}

We can now prove Theorem~\ref{thm:freztotrigGamma}:
\begin{proof}
Given $\bool{B}\in\Gamma$ let $A\subseteq\bool{B}$ be a maximal antichain such that for all
$b\in A$ there is $k_b:\bool{B}\to\bool{C}_b$ $\Gamma$--freezing 
$\bool{B}$ 
with
$\coker(k_b)=b$.
Let 
\begin{align*}
k=\bigvee_A k_b:& \bool{B}\to \bool{C}=\bigvee_\Gamma\{\bool{C}_b:b\in A\}\\
& a\mapsto (k_b(a):b\in A)
\end{align*}
Then $k:\bool{B}\to\bool{C}$ $\Gamma$--freezes $\bool{B}$
 and is injective.

Now, given $\bool{B}_0$ let 
\[
\mathcal{F}=\{k_{ij}:\bool{B}_i\to\bool{B}_j:i\leq j<\lambda\}
\]
witness that $\bp{\bool{B}_i:i<\lambda}$ is a decreasing sequence in $\leq^*_\Gamma$ such that
$k_{ii+1}$ $\Gamma$--freezes 
$\bool{B}_i$.
Then $\varinjlim \mathcal{F}\in\Gamma$ is $\Gamma$--rigid 
and refines $\bool{B}_0$ in
$\leq^*_\Gamma$.
\end{proof}



\section{$\omega_1$--suitable classes}\label{suitable classes}

We organize this part of the paper as follows:
\begin{itemize}
\item We start giving the necessary definitions in \ref{subsec:defdavid}.
\item We state our main results in \ref{david:mainresults}. Specifically we assert that there are uncountably many $\omega_1$--suitable classes for the theory $\MK$ +`\emph{$\omega_1$ is the first uncountable cardinal}',  whose category forcing axioms yield pairwise incompatible theories for $H_{\omega_2}$ (this is incompatibility in first order logic).
\item In~\ref{david:proofs} we give the proofs, specifically:
\begin{itemize} 
\item
In \ref{4-freezing-posets}
we isolate four types of freezing posets which will be used to establish the freezeability property.
\item
In \ref{iterability} we present the iteration lemmas 
that will be used to establish the weak iterability property (all of which were already known). 
\item 
In \ref{omega1suitability} we give  the proof that there are $\aleph_1$--many classes of forcing notions which are
$\omega_1$--suitable for the appropriate theories 
$T\supseteq \MK$  + `\emph{$\omega_1$ is the first uncountable cardinal}'.
\item In \ref{incompatible-category-forcing-axioms} we prove that category forcing axioms for the uncountably many $\omega_1$--suitable classes we produced in \ref{omega1suitability} yield pairwise 
incompatible theories for $H_{\omega_2}$.
\end{itemize}
\end{itemize}

\subsection{Forcing classes.} \label{subsec:defdavid}

In this section our background theory $T$ is $\MK$ + `\emph{$\omega_1$ is the first uncountable cardinal}'.

We will now define the main classes of forcing notions considered in this paper. Most of these classes are 
completely standard, but we nevertheless include their definition here for the benefit of some readers. 

\begin{definition} 
A poset has the \emph{countable chain condition} (is \emph{c.c.c.}, for short) if and only it has no uncountable antichains. 
\end{definition}

Given an ordinal $\rho$, we will call a sequence $(X_i)_{i\leq\rho}$ a \emph{continuous chain} (or a \emph{continuous $\rho$--chain}, if we want to bring in the length) if 
\begin{itemize}
\item $(X_k)_{k\leq i}\in X_{i+1}$ whenever $i+1\leq \rho$ and 
\item $X_i=\bigcup_{k<i}X_k$ for every nonzero limit ordinal $i \leq\rho$.
\end{itemize}

An ordinal $\rho$ is said to be \emph{indecomposable} if $\rho=\o^\tau$ for some ordinal $\tau$.\footnote{Here, and elsewhere in the remainder of the paper, $\o^\tau$ denotes ordinal exponentiation.} 
Equivalently, $\rho$ is indecomposable if $\ot(\rho\setminus\eta)=\rho$ for every $\eta<\rho$.  $1$ is of course the 
first indecomposable ordinal. 

\begin{definition}
Given a countable indecomposable ordinal $\rho$, a poset $\mtcl P$ is \emph{$\rho$--proper} if and only if there is a cardinal 
$\t$ such that $\mtcl P\in H_\t$ and there is a club 
$D\sub [H_\t]^{\al_0}$ with the property that for every continuous chain $(N_i)_{i\leq \rho}$ of countable 
elementary submodels of $H_\t$ containing $\mtcl P$ and every $p\in N_0\cap \mtcl P$, there is an extension 
$q$ of $p$ such that $q$ is \emph{$(N_i, \mtcl P)$--generic} for all $i\leq\rho$, i.e., for every $i\leq\rho$ and every 
dense subset $D$ of $\mtcl P$, $D\in N_i$, $q\Vdash_{\mtcl P}D\cap \dot G\cap N_i\neq\emptyset$. 
\end{definition}

\begin{remark} $\mtcl P$ is $\rho$--proper if and only if for every cardinal $\t$ such that $\mtcl P\in H_\t$ there is such a club $D\sub [H_\t]^{\al_0}$ as in the above definition.\end{remark}

 $\rho$--$\PR$ denotes the class of $\rho$--proper posets. 
We write ${<}\o_1$--$\PR$ to denote the class of those posets that are in $\rho$--$\PR$ for every indecomposable 
$\rho<\o_1$. We say that $\mtcl P$ is \emph{proper} if it is $1$--proper, and denote $1\mbox{--}\PR$ also by $\PR$. 

The following is a simple but crucial observation:
\begin{fact}\label{fac:sigma2proper}
For any countable indecomposable ordinal $\rho$, the theory 
$\MK$+ `\emph{$\omega_1$ is the first uncountable cardinal}' + `\emph{$\rho$ is a countable indecomposable ordinal'}
 proves that
`\emph{$\RO(P)$ is $\rho$--proper}' is an absolutely $\Sigma_2$ property in parameters $\rho$ and $\omega_1$ and is also a 
$\Pi_2$ property
in the same parameters. The same can be proved for `\emph{$\RO(P)$ is ${<}\omega_1$--proper}' with respect to
$\MK$+`\emph{$\omega_1$ is the first uncountable cardinal}'.
\end{fact}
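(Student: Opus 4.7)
The plan is to exhibit both formulations explicitly and verify their properties. For the $\Sigma_2$ formulation I will take as the outer existential witness a triple $y = (\theta, H_{\theta^+}, D)$, where $\theta$ is a cardinal with $\RO(P) \in H_\theta$ and $D \subseteq [H_\theta]^{\aleph_0}$ is a club. The universal variable $z$ will range over tuples $(\vec{N}, p, q)$ consisting of a continuous $\rho$-chain $\vec{N} = (N_i)_{i\leq \rho}$ of countable elementary submodels of $H_\theta$ lying in $D$ and containing $\RO(P)$, a condition $p \in N_0\cap \RO(P)$, and a candidate $q \in \RO(P)$. The matrix will say: \emph{if $q\leq p$, then for every $i\leq\rho$ and every maximal antichain $A \in N_i$ of $\RO(P)$, every $a \in A$ compatible with $q$ lies in $N_i$}. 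Since $H_{\theta^+}\in y$ transitively contains every object the inner formula refers to, this matrix is $\Sigma_0$ in the parameters $P,\rho,\omega_1,y$.

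The absoluteness half is immediate in one direction: for inaccessible $\delta$, any witness $y \in V_\delta$ has $\theta$ and $H_{\theta^+}$ below $\delta$, and then $H_\theta^V=H_\theta^{V_\delta}$ and $H_{\theta^+}^V=H_{\theta^+}^{V_\delta}$ imply that $y$ works in $V$ iff it works in $V_\delta$. The other direction will rest on a standard lifting lemma: if $\RO(P)$ is $\rho$-proper in $V$ via some $(\theta_0, D_0)$, then for every regular $\theta>\theta_0$ the induced set $D = \{N \in [H_\theta]^{\aleph_0}: N \prec H_\theta,\ N \cap H_{\theta_0}\in D_0\}$ is a club witnessing $\rho$-properness at $\theta$. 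Choosing any regular $\theta\in(\theta_0,\delta)$ then yields a witness inside $V_\delta$.

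For the $\Pi_2$ formulation I will invoke the Remark following the definition together with the same lifting lemma, rephrasing $\rho$-properness as: \emph{for every cardinal $\theta$ with $\RO(P)\in H_\theta$, every continuous $\rho$-chain $(N_i)_{i\leq \rho}$ of countable elementary submodels of $H_\theta$ containing $\RO(P)$, and every $p \in N_0\cap \RO(P)$, there is $q\leq p$ that is $(N_i,\RO(P))$-generic for each $i\leq\rho$}. Absorbing the bounded existential $\exists q \in \RO(P)$ into the $\Sigma_0$ matrix leaves a $\Pi_2$ formula in the parameters $P,\rho,\omega_1$. The ${<}\omega_1$-proper case is then immediate by prepending the bounded quantifier $\forall \rho<\omega_1$ (so $\rho$ is no longer a free parameter).

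The principal obstacle is verifying the lifting lemma for general countable indecomposable $\rho$. For $\rho=1$ this is folklore for proper forcing. For general $\rho$ the argument is routine but deserves care: given a continuous $\rho$-chain $(N_i)$ of countable elementary submodels of $H_\theta$ in the induced club $D$, the chain $(N_i \cap H_{\theta_0})_{i\leq\rho}$ is a continuous $\rho$-chain in $D_0$ (continuity is preserved by intersection with $H_{\theta_0}$ because $\rho<\omega_1$), and any master condition $q$ produced at the lower level is $(N_i,\RO(P))$-generic at the upper level. For the latter, pick any maximal antichain $A \in N_i$ of $\RO(P)$; since $\RO(P)\in H_{\theta_0}$ and (for regular $\theta_0$) $A \in H_{\theta_0}$, elementarity gives $A \in N_i \cap H_{\theta_0}$, so the genericity of $q$ established at the $(N_i\cap H_{\theta_0})$-level immediately transfers. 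Once this lemma is in place, all remaining complexity bookkeeping is routine.
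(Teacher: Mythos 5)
Your $\Sigma_2$ formula does not define $\rho$--properness as written: you put the candidate master condition $q$ into the universally quantified tuple $z$ and take as matrix the implication ``if $q\leq p$ then $q$ is $(N_i,\RO(P))$--generic for all $i\leq\rho$''. Universally quantified over $q$, this asserts that \emph{every} extension of $p$ is generic, which fails for essentially any nontrivial proper forcing (e.g.\ $\Coll(\omega_1,\omega_2)$). What you need is $\forall(\vec N,p)\,\exists q\leq p\,(\dots)$ with the existential bounded by $\RO(P)$ and absorbed into the matrix --- exactly the move you make in the $\Pi_2$ case --- so the slip is repairable, but the formula as stated defines the wrong property. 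A second, more substantive gap is in the $\Pi_2$ half: you silently replace the definition (which only demands master conditions for chains of models lying in \emph{some club} $D$) by the club--free statement that \emph{every} continuous $\rho$--chain of countable elementary submodels of $H_\theta$ containing $\RO(P)$ admits a master condition, for \emph{every} $\theta$ with $\RO(P)\in H_\theta$. Neither the Remark after the definition (which retains the club) nor your lifting lemma (which produces a club $D$ from $D_0$, not the full collection of chains) gives this equivalence; it requires a separate argument (for large $\theta$ one can argue that $N$ contains a witnessing club at a smaller level and transfer genericity from $N\cap H_{\theta_0}$ up to $N$, but for small $\theta$ this is genuinely delicate). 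If instead you keep the club, the naive formula becomes $\forall\theta\,\exists D\,\forall\vec N\dots$, which is not $\Pi_2$ without further work on bounding $\exists D$. Finally, your absoluteness argument only uses the upward direction of the lifting lemma; to produce a witness inside $V_\delta$ from a witness $(\theta_0,D_0)$ with $\theta_0\geq\delta$ you also need the downward projection $N\mapsto N\cap H_\theta$.

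For comparison, the paper sidesteps all of this by reflecting the entire property into set--sized transitive structures: $\RO(P)$ is $\rho$--proper iff there exist a regular $\theta$ and a transitive $X\supseteq H_\theta$ with $(X,\in)\models\mbox{`$\RO(P)$ is $\rho$--proper'}$, and dually for the $\Pi_2$ version. Since satisfaction in a set structure is $\Delta_1$, while ``$\theta$ is regular'' and ``$X\supseteq H_\theta$'' are $\Pi_1$, the $\Sigma_2$ and $\Pi_2$ counts fall out with no hand bookkeeping of the inner quantifiers and no need to decide where the club or the master condition sits. I would adopt that route, at least for the $\Pi_2$ half. Your explicit verification of the lifting lemma (the transfer of genericity along $N\mapsto N\cap H_{\theta_0}$, using that maximal antichains of $\RO(P)$ lie in $H_{\theta_0}$ and that $A\cap(N\cap H_{\theta_0})=A\cap N$) is correct and fills in a detail the paper leaves implicit, so that part is worth keeping.
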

\begin{proof}
Let $\t$ be large enough such that  $\RO(P)\in H_\lambda$ for some $\lambda<\t$.
Then `\emph{$\RO(P)$ is $\rho$--proper}' holds in $V$ if and only if it holds in any (some) transitive set $X\supseteq H_\t$.
Hence,  $\RO(P)$ is $\rho$--proper if and only if there is some regular cardinal $\theta$ and some transitive $X\supseteq H_\t$ such that 
$(X,\in)\models\mbox{`$\RO(P)$ is $\rho$--proper'}$.

Now:
\begin{itemize}
\item
The formulae $(X,\in)\models\text{\emph{$\RO(P)$ is $\rho$--proper}}$ and \emph{$X$ is transitive} are 
$\Delta_1$ in the parameters $X$, $P$, $\rho$, $\omega_1$.
\item 
The formula $\t\text{\emph{ is a regular cardinal}}$ is $\Pi_1$ in parameter $\t$.
\item
The formula $X\supseteq H_\t$ is $\Pi_1$ in parameters $X,\t$ since it can be stated as 
\[
\forall w(|\text{trcl}(w)|<\t\rightarrow w\in X), 
\]
where $\text{trcl}(w)$ is the $\Delta_1$--definable operation assigining to the set $w$ its transitive closure.
\end{itemize}
It is now easy to check that \emph{$\RO(P)$ is $\rho$--proper} is an absolutely $\Sigma_2$ property in parameters 
$\rho$ and $\omega_1$. We leave it to the reader to check that it is also $\Pi_2$ in the same parameters.
\end{proof}
Being $\rho$--proper, for a forcing $\mtcl P$, is equivalent to $\mtcl P$ preserving a certain combinatorial property: 
Given a set $X$, we say that $S\sub\, ^\rho([X]^{\al_0})$ is \emph{$\rho$--stationary} if for every club $D\sub[X]^{\al_0}$ 
there is a continuous $\rho$--chain $\s$ of members of $D$ such that $\s\in S$. 
 
Recalling the standard characterization of properness, the 
following is not difficult to see over the theory 
$\MK$ + `\emph{$\omega_1$ is the first uncountable cardinal}'+`\emph{$\rho$ is a countable indecomposable ordinal}'.
 
 \begin{fact}\label{char-proper} Given an indecomposable ordinal $\rho<\o_1$,  the following are equivalent for every 
 poset $\mtcl P$.
 
 \begin{enumerate}
 
 \item $\mtcl P$ is $\rho$--proper.
 \item For every set $X$,  $\mtcl P$ preserves $\rho$--stationary subsets of $^\rho([X]^{\al_0})$; i.e., if 
 $S\sub\,^\rho([X]^{\al_0})$ is $\rho$--stationary, then $\Vdash_{\mtcl P}S\mbox{ is $\rho$--stationary}$. 
\end{enumerate}
\end{fact}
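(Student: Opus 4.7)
The plan is to extend the classical Shelah equivalence for $\rho=1$ (properness $\Leftrightarrow$ preservation of stationary subsets of $[X]^{\al_0}$) by running the same arguments along continuous $\rho$-chains of countable elementary submodels in place of single models. Throughout, I fix a regular cardinal $\t$ large enough that $\mtcl P$ and all other relevant parameters lie in $H_\t$.

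For $(1)\Rightarrow(2)$, let $S\sub{}^\rho([X]^{\al_0})$ be $\rho$-stationary, $p\in\mtcl P$, and $\dot C$ a $\mtcl P$-name for a club in $[X]^{\al_0}$. Choose a club $D\sub[H_\t]^{\al_0}$ witnessing $\rho$-properness, with $\mtcl P$, $X$, $p$, $\dot C$ in every member of $D$. A standard projection argument gives that the collection of continuous $\rho$-chains of the form $(N_i\cap X)_{i\leq\rho}$, with $(N_i)_{i\leq\rho}$ a continuous $\rho$-chain in $D$, hits every club in $[X]^{\al_0}$ at every level; this lets me pick such a chain with $(N_i\cap X)_{i\leq\rho}\in S$ and $p\in N_0$. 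By $\rho$-properness fix $q\leq p$ that is $(N_i,\mtcl P)$-generic for every $i\leq\rho$; applying the usual density argument at each level (inside $N_i$, the set of conditions deciding a given member of $N_i\cap X$ into or out of $\dot C$ is dense), I conclude that $q$ forces $N_i\cap X\in\dot C$ for every $i\leq\rho$, and hence that $(N_i\cap X)_{i\leq\rho}$ is a continuous $\rho$-chain contained in $\dot C\cap S$ in the extension, proving (2).

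For $(2)\Rightarrow(1)$, I argue by contrapositive. If $\mtcl P$ is not $\rho$-proper, then for every club $D\sub[H_\t]^{\al_0}$ some continuous $\rho$-chain in $D$ admits a ``bad'' $p\in N_0\cap\mtcl P$ with no extension that is $(N_i,\mtcl P)$-generic for every $i\leq\rho$; the set $T$ of all such bad chains is therefore $\rho$-stationary in the natural ambient space $^\rho([H_\t]^{\al_0})$. A Fodor-style pressing-down on the regressive map sending each $(N_i)_{i\leq\rho}\in T$ to a witnessing $p_{(N_i)}\in N_0\cap\mtcl P$ refines $T$ to a $\rho$-stationary $T'$ on which this witness is a fixed $p^*\in\mtcl P$; projecting via $(N_i)_{i\leq\rho}\mapsto(N_i\cap X)_{i\leq\rho}$ for a sufficiently large $X$ transfers this to a $\rho$-stationary subset of $^\rho([X]^{\al_0})$.

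The aim is then to show that $\mtcl P$ below $p^*$ destroys this latter set: in $V[G]$, the collection of continuous $\rho$-chains $(M_i)_{i\leq\rho}$ in $([X]^{\al_0})^V$ which lift to a continuous $\rho$-chain of countable elementary submodels of $H_{\t'}^{V[G]}$ along which $G$ is fully generic at every level is a $\rho$-club, and it is disjoint from the projection of $T'$ by the defining property of $T'$; this contradicts (2). The main obstacle will be this last step, together with the preliminary pressing-down refinement: in both, the indecomposability of $\rho$ is what guarantees that continuity is preserved at limit levels of the recursive constructions, and one has to verify the combinatorics of $\rho$-stationary/$\rho$-club sets carefully in the $V[G]$ construction to ensure $\rho$-clubness (unboundedness together with closure under continuous chains) of the destroying family.
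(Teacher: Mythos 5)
The paper offers no proof of this Fact; it is asserted as a routine generalization of the standard Shelah characterization of properness, which is exactly the route you take, and your outline of both directions is correct. Two places carry the real content, and you only gesture at them. First, the ``standard projection argument'' in $(1)\Rightarrow(2)$ is the chain--lifting lemma: fix a Skolem operator $\Sk$ for $(H_\t,\in,<^*,p,\dot C,\dots)$, restrict to the club of $x\in[X]^{\al_0}$ with $\Sk(x)\cap X=x$ and $\Sk(x)\in D$, and set $N_i=\Sk(x_i)$; continuity at limits comes from $\Sk$ commuting with increasing unions, and the $\in$--requirement at successors from definability of $\Sk$ over $H_\t$. (Also, one decides values of the function generating $\dot C$ on finite subsets of $N_i\cap X$, rather than ``deciding members of $N_i\cap X$ into or out of $\dot C$'', which does not typecheck.) Second, in $(2)\Rightarrow(1)$ the clause ``disjoint from the projection of $T'$ by the defining property of $T'$'' elides a genuine step: the defining property says no single $q\leq p^*$ is $(N_i,\mtcl P)$--generic for all $i\leq\rho$, and to conclude that $G$ itself fails to be $N_i$--generic at some level you need the density argument that below every $q\leq p^*$ some $r$ forces $\dot G\cap D\cap N_i=\emptyset$ for a specific $i$ and dense $D\in N_i$, whence $p^*$ forces this disjunction; the club of traces $M\cap H_\t^V$ for countable $M\elsub H_{\t'}^{V[G]}$ containing $G$ then does the job. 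The pressing--down is fine (a diagonal intersection works since the witness lies in $N_0\sub N_i$ for all $i$), but I do not see indecomposability of $\rho$ entering where you say it does, or indeed anywhere essentially in this equivalence.
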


Using the above fact we can prove:
\begin{fact}\label{fac:Birkhoffproper}
For every countable indecomposable ordinals $\rho$, the theory $\MK$+`\emph{$\omega_1$ is the first uncountable cardinal}'+`\emph{$\rho$ is a countable indecomposable ordinal}'
proves that $\rho$--$\PR$ and ${<}\o_1$--$\PR$ are 
closed under preimages by complete injective homomorphisms, two--step iterations and products,
and contain
all countably closed forcings.
\end{fact}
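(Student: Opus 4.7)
The plan is to handle each closure property separately, leveraging Fact~\ref{char-proper} (the characterization of $\rho$-properness via preservation of $\rho$-stationary sets) where convenient, and falling back to the direct chain-of-models definition where it is cleaner. Since ${<}\omega_1\text{-}\PR = \bigcap_{\rho} \rho\text{-}\PR$ (with $\rho$ ranging over countable indecomposable ordinals), it suffices to check each closure property for an arbitrary fixed $\rho$; closure of the intersection then follows immediately. I will freely assume throughout that we are working in the theory $T = \MK + \text{``}\omega_1$ is the first uncountable cardinal''${} + \text{``}\rho$ is a countable indecomposable ordinal''.

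For closure under preimages by complete injective homomorphisms, I would use the characterization in Fact~\ref{char-proper}. Suppose $i\colon \mathcal{B}\to\mathcal{C}$ is a complete injective embedding and $\mathcal{C}\in\rho\text{-}\PR$. Let $S\subseteq{}^\rho([X]^{\aleph_0})$ be $\rho$-stationary in $V$. Any $\mathcal{B}$-generic extension $V[G]$ is contained in a $\mathcal{C}$-generic extension $V[H]$ (factoring $\mathcal{C}$ through $\mathcal{B}$ via the quotient $\mathcal{C}/i[G]$). Since $\rho$-non-stationarity is upward absolute (a witnessing club survives to any outer model of the same $\omega_1$, and $\mathcal{B},\mathcal{C}\in\rho\text{-}\PR$ trivially need not preserve $\omega_1$ to apply the contrapositive — use rather that a club in $V[G]$ stays a club in $V[H]$), if $S$ failed to be $\rho$-stationary in $V[G]$ it would fail in $V[H]$, contradicting the $\rho$-properness of $\mathcal{C}$. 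Hence $\mathcal{B}$ preserves $S$, and we conclude by Fact~\ref{char-proper}.

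For closure under two-step iterations (and hence under products, which are a special case), I would use the direct chain-of-models definition. Given $\mathcal{P}\in\rho\text{-}\PR$ with $\Vdash_{\mathcal{P}}\dot{\mathcal{Q}}\in\rho\text{-}\PR$, a continuous $\rho$-chain $(N_i)_{i\leq\rho}$ of countable elementary submodels of a sufficiently large $H_\theta$ containing $\mathcal{P}\ast\dot{\mathcal{Q}}$, and $(p,\dot q)\in N_0$: first pick $p'\leq p$ that is $(N_i,\mathcal{P})$-generic for every $i\leq\rho$, using $\mathcal{P}\in\rho\text{-}\PR$. Then $p'$ forces that $(N_i[\dot G])_{i\leq\rho}$ is a continuous $\rho$-chain of countable elementary submodels of $H_\theta^{V[\dot G]}$ containing $\dot{\mathcal{Q}}$, so by $\rho$-properness of $\dot{\mathcal{Q}}$ in $V[\dot G]$ we can find a name $\dot q'$ for an extension of $\dot q$ that is $(N_i[\dot G],\dot{\mathcal{Q}})$-generic for every $i\leq\rho$. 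Then $(p',\dot q')$ is $(N_i,\mathcal{P}\ast\dot{\mathcal{Q}})$-generic for every $i\leq\rho$.

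Finally, to see that countably closed forcings are $\rho$-proper, let $\mathcal{P}$ be countably closed and $(N_i)_{i\leq\rho}$ a continuous $\rho$-chain as above, with $p\in N_0\cap\mathcal{P}$. I would build a descending sequence $(q_i)_{i\leq\rho}$ in $\mathcal{P}$ with $q_0\leq p$, arranging that $q_{i+1}$ meets every dense subset of $\mathcal{P}$ lying in $N_{i+1}$ (by the standard inside-the-model enumeration and descent argument, closing off in countably many steps using countable closure). At limit stages $\sigma\leq\rho$, since $\rho<\omega_1$ we have $\cof(\sigma)=\omega$, so countable closure again supplies a lower bound $q_\sigma$ of $(q_j)_{j<\sigma}$. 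The main subtlety — and the one mildly delicate point — is that the limit bounds need not lie in $N_\sigma$; however, genericity for $N_i$ is already secured at stage $i+1$, so this is harmless. Then $q_\rho$ is $(N_i,\mathcal{P})$-generic for every $i\leq\rho$, as required. No single step here is a serious obstacle; the principal care lies in keeping the absolutely $\Sigma_2$/$\Pi_2$ formulation of Fact~\ref{fac:sigma2proper} straight when applying upward absoluteness in step~1.
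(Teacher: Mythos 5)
Your handling of preimages under complete injective homomorphisms, of two--step iterations, and of the countably closed forcings is the standard argument and is essentially what the paper intends (the Fact is stated without proof, as a routine consequence of Fact~\ref{char-proper} and the chain--of--models definition). The genuine problem is the clause ``and hence under products, which are a special case [of two--step iterations]''. That is not what the Fact asserts: in this paper ``products'' means products of \emph{complete Boolean algebras}, which the authors explicitly identify with lottery sums (see the definition of closure under lottery sums, where $\bigvee_\Gamma A$ is described as ``the product of the Boolean algebras in $A$''); indeed the Fact is invoked in Lemma~\ref{pr-suitable} exactly to verify clause (3) of $\lambda$--suitability, which demands closure under lottery sums. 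That closure is easy and quite different from what you argue: the conditions deciding the coordinate form a maximal antichain, and below any of them the forcing is isomorphic to a restriction of a single factor, so the $\rho$--chain argument for that factor applies.

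Under your reading the claim would moreover be false, and your derivation has a gap even where it is not. Writing $\mtcl P\times\mtcl Q$ as $\mtcl P\ast\check{\mtcl Q}$ reduces to the two--step case only if $\Vdash_{\mtcl P}\check{\mtcl Q}\in\rho\mbox{--}\PR$, and $\rho$--properness of $\mtcl Q$ in $V$ need not survive to $V^{\mtcl P}$. In fact $\PR$ is not closed under forcing products: if $T$ is a Suslin tree and $\mtcl Q$ is the c.c.c.\ finite--condition specialization forcing for $T$, then both $T$ and $\mtcl Q$ are c.c.c.\ (hence proper), but $T\times\mtcl Q$ adds a cofinal branch $b$ of $T$ together with a function $f:T\to\omega$ injective on chains, so $b$ becomes countable and $\omega_1$ is collapsed. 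So this clause must be redone with the intended (lottery--sum) meaning. A smaller point: in your first step, ``a club in $V[G]$ stays a club in $V[H]$'' is false as stated, since cofinality in $[X]^{\aleph_0}$ can fail in the larger model; one should instead pass to the club of sets closed under a generating function $F\in V[G]$, which is a club in $V[H]$, and use that $S\in V$ forces any chain in $S$ to consist of sets of $V$ that are $F$--closed and hence lie in the original club. With that repair the preimage argument is correct.
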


\begin{definition}
A forcing notion $\mtcl P$ is \emph{$\rho$--semiproper} iff there is a cardinal $\t$ such that 
$\mtcl P\in H_\t$ for which there is a club $D\sub [H_\t]^{\al_0}$ 
with the property that for every continuous chain $(N_i)_{i\leq \rho}$ of countable elementary submodels of $H_\t$ 
containing $\mtcl P$ and every $p\in N_0\cap \mtcl P$, there is an extension $q$ of $p$ such that 
$q$ is \emph{$(N_i, \mtcl P)$--semi-generic} for all $i\leq\rho$. This means now that for every $i\leq\rho$ and every 
$\mtcl P$--name $\dot \a\in N_i$ for an ordinal in $\o_1^V$, $q\Vdash_{\mtcl P}\dot \a\in N_i$.
\end{definition}

\begin{remark} $\mtcl P$ is $\rho$--semiproper if and only if for every cardinal $\t$ such that $\mtcl P\in H_\t$ there is a club $D\sub [H_\t]^{\al_0}$ as in the above definition. \end{remark}

$\rho$--$\SP$ denotes the class of $\rho$--semiproper posets. Also, we write ${<}\o_1$--$\SP$ to denote 
 the class of those posets that are in $\rho$--$\SP$ for every indecomposable ordinal $\rho<\o_1$. We say that 
 $\mtcl P$ is \emph{semiproper} if it is $1$--semiproper, and denote $1\mbox{--}\SP$ also by $\SP$. 
 
 As before we have:
 \begin{fact}\label{fac:sigma2semiproper}
$\MK$ + `\emph{$\omega_1$ is the first uncountable cardinal}'+`\emph{$\rho$ is a countable indecomposable ordinal}'
proves that 
`\emph{$\RO(P)$ is $\rho$--semiproper}' is an absolutely $\Sigma_2$ property in parameters $\rho$ and $\omega_1$ and also a 
$\Pi_2$ property
in the same parameters. The same holds for `\emph{$\RO(P)$ is ${<}\omega_1$--semiproper}'.
\end{fact}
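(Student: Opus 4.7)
The plan is to follow the proof of Fact~\ref{fac:sigma2proper} essentially line by line, with ``semiproper'' replacing ``proper'' throughout. The only point requiring verification is that the semi-genericity clause --- ``for every $\RO(P)$-name $\dot{\alpha}\in N_i$ for an ordinal below $\omega_1^V$, $q\Vdash_{\RO(P)}\dot{\alpha}\in N_i$'' --- is no more complex than the genericity clause used in the proper case. This is transparent: evaluating $q\Vdash_{\RO(P)}\dot{\alpha}\in N_i$ amounts to examining the Boolean values $\Qp{\check{\xi}=\dot{\alpha}}_{\RO(P)}$ for $\xi<\omega_1$, all of which are available inside any transitive set containing $H_\theta$ for $\theta$ large enough to contain $\RO(P)$, $N_i$, and $\omega_1$.

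Granted this, I would then argue, exactly as in Fact~\ref{fac:sigma2proper}, that $\rho$-semiproperness of $\RO(P)$ is detected inside any transitive $X\supseteq H_\theta$ for $\theta$ a sufficiently large regular cardinal, so that
\[
\RO(P)\text{ is $\rho$-semiproper}\Longleftrightarrow\exists\theta\,\exists X\bigl(\theta\text{ regular}\wedge X\text{ transitive}\wedge X\supseteq H_\theta\wedge (X,\in)\models\text{``}\RO(P)\text{ is $\rho$-semiproper''}\bigr).
\]
The matrix is a conjunction of a $\Pi_1$ clause ($\theta$ regular), a $\Delta_0$ clause ($X$ transitive), a $\Pi_1$ clause ($X\supseteq H_\theta$, rewritten using $\mathrm{trcl}$ as in the proof of Fact~\ref{fac:sigma2proper}), and a $\Delta_1$ satisfaction clause, all in the parameters $\rho$, $\omega_1$, $P$. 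This yields an absolutely $\Sigma_2$ property in parameters $\rho$ and $\omega_1$; the dual $\Pi_2$ characterization is obtained by asserting that \emph{every} sufficiently large transitive $X\supseteq H_\theta$ witnesses $\rho$-semiproperness, precisely as in the proper case.

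For the ${<}\omega_1$-semiproper case, the extra universal quantifier over countable indecomposable ordinals $\rho$ ranges over a subset of $\omega_1$ and can be absorbed into the internal satisfaction clause evaluated in $X$ (equivalently, bounded inside $H_\theta$), so the complexity is preserved in parameter $\omega_1$ alone. I do not expect any substantive obstacle --- the argument is routine once the remark above about the complexity of the semi-genericity clause is in place.
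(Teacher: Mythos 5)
Your proposal is correct and follows exactly the route the paper intends: the paper gives no separate argument for this fact, introducing it only with ``As before we have,'' i.e., by the same transitive-set reflection argument as in Fact~\ref{fac:sigma2proper}, which is precisely what you carry out. Your added check that the semi-genericity clause (computed from the Boolean values $\Qp{\dot{\alpha}=\check{\xi}}_{\RO(P)}$ for $\xi<\omega_1$, all available in $H_\theta$) is no more complex than the genericity clause is the only point where the two notions differ, and you handle it correctly.
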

 
 Let $X$ be a set such that $\o_1\sub X$. We say that $S\sub\,^\rho([X]^{\al_0})$ is \emph{$\rho$--semi-stationary} if for every club $D\sub [X]^{\al_0}$ there  are continuous $\rho$--chains $\s=(x_i\,:\,i\leq\rho)$ and $\s'=(x_i'\,:\,i\leq\rho)$ such that 
 
 \begin{itemize}
 
 \item $\s\in S$,  
 \item $\range(\s')\sub D$, and
 \item for each $i\leq\rho$, $x_i\sub x_i'$ and $x_i\cap\o_1=x_i'\cap\o_1$.
 \end{itemize}
 
 We have the following characterization of $\rho$--semiproperness (for any given indecomposable $\rho<\o_1$).
 
 \begin{fact}\label{char-semiproper} Given an indecomposable ordinal $\rho<\o_1$,  the following are equivalent for every poset $\mtcl P$.
 
 \begin{enumerate}
 
 \item $\mtcl P$ is $\rho$--semiproper.
 \item  $\mtcl P$ preserves $\rho$--semi-stationary subsets of $^\rho([X]^{\al_0})$ for every set $X$; i.e., if $S\sub\,^\rho([X]^{\al_0})$ is $\rho$--semi-stationary, then $S$ remains $\rho$--semi-stationary after forcing with $\mtcl P$. 
\end{enumerate}
\end{fact}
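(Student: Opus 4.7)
The plan is to adapt, to the context of continuous $\rho$-chains, the standard characterization of semiproperness (the case $\rho=1$) in terms of preservation of semi-stationary subsets of $[X]^{\aleph_0}$. The bridge between the model-theoretic side (continuous $\rho$-chains of countable elementary submodels $(N_i)_{i\leq\rho}$ of some $H_\theta$ containing $\mtcl{P}$) and the combinatorial side (continuous $\rho$-chains in $[X]^{\aleph_0}$) is the map $(N_i)_{i\leq\rho}\mapsto (N_i\cap X)_{i\leq\rho}$; the key observation is that $(N_i,\mtcl{P})$-semigenericity for all $i\leq\rho$ corresponds to the existence in the generic extension of a companion chain $(x'_i)_{i\leq\rho}$ with $x_i\subseteq x'_i$ and $x_i\cap\omega_1=x'_i\cap\omega_1$ at every level.

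For (1)$\Rightarrow$(2), I would fix a $\rho$-semi-stationary set $S\subseteq\,^{\rho}([X]^{\aleph_0})$, a condition $p\in\mtcl{P}$, and a $\mtcl{P}$-name $\dot{D}$ forced by $p$ to be a club in $[X]^{\aleph_0}$ of the extension. Taking $\theta$ large enough to absorb all the relevant parameters, I would use the $\rho$-semi-stationarity of $S$ in $V$ to locate a continuous $\rho$-chain $(N_i)_{i\leq\rho}$ of countable elementary submodels of $H_\theta$ whose induced chain $(N_i\cap X)_{i\leq\rho}$ admits a semi-cover lying in $S$, and then invoke the $\rho$-semiproperness of $\mtcl{P}$ to obtain an extension $q\leq p$ which is $(N_i,\mtcl{P})$-semigeneric for every $i\leq\rho$. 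Standard reflection plus the semigenericity clause on countable ordinals would show that $q$ forces the existence, inside $\dot{D}$, of a continuous $\rho$-chain which semi-covers a member of $S$, verifying $\rho$-semi-stationarity of $S$ in $V[G]$.

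For (2)$\Rightarrow$(1), given a continuous $\rho$-chain $(N_i)_{i\leq\rho}$ as in the definition of $\rho$-semiproperness and $p\in N_0\cap\mtcl{P}$, I would form, with $X=H_\theta$, the set $S$ of $\rho$-chains $(M_i\cap H_\theta)_{i\leq\rho}$ arising from continuous $\rho$-chains of countable elementary submodels $(M_i)_{i\leq\rho}$ of some larger $H_{\theta'}$ that reflect the isomorphism type of the given chain, and show that $S$ is $\rho$-semi-stationary via a standard Skolem-hull argument. Applying (2) to this $S$ and chasing the resulting $\rho$-semi-stationarity statement in $V[G]$ down to $V$ via elementarity of the $N_i$, I would extract below $p$ a condition that is $(N_i,\mtcl{P})$-semigeneric simultaneously at all levels.

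The main obstacle lies in the \emph{continuity} of the semi-covering chain $\s'$ demanded by $\rho$-semi-stationarity: unlike the case $\rho=1$, one must construct $\s'$ so that at every nonzero limit $i\leq\rho$ one has $x'_i=\bigcup_{k<i}x'_k$ while retaining $x_i\cap\omega_1=x'_i\cap\omega_1$ and $x_i\subseteq x'_i$. Managing this requires a coherent level-by-level extension procedure that closes up correctly at limits, exploiting that countable elementary submodels are closed under countable increasing unions, and a bookkeeping argument ensuring that the ordinals added below $\omega_1$ at each successor step do not contaminate the limit stages; the same difficulty surfaces when extracting the semigeneric condition from (2), where one must ensure that the companion chain produced by the preservation of $S$ in $V[G]$ can be taken continuous and not merely termwise correct.
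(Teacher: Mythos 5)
The paper offers no proof of this Fact at all --- it is stated as the evident $\rho$--chain generalization of Shelah's classical characterization of semiproperness via preservation of semi-stationary sets --- so there is no in-paper argument to measure yours against; I can only assess the sketch on its own terms. Your direction (1)$\Rightarrow$(2) is the standard argument and is essentially right: test the $\rho$--semi-stationarity of $S$ in $V$ against the club of traces $N\cap X$ of countable $N\elsub H_\theta$ containing $p$, $\dot D$ and a definable well-order; lift the covering chain of traces to a continuous chain of models $(N_i)_{i\leq\rho}$ via Skolem hulls; obtain a simultaneously semi-generic $q\leq p$; and check that $(N_i[G]\cap X)_{i\leq\rho}$ is a continuous chain with range in $\dot D_G$ semi-covering the chosen member of $S$, continuity at limits being inherited from $N_i=\bigcup_{k<i}N_k$. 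The limit-stage worry you raise is exactly the right one to flag, and it is discharged by this hull construction.

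The genuine gap is in (2)$\Rightarrow$(1). Preservation of the $\rho$--semi-stationarity of your set $S$ yields, in $V[G]$, only that \emph{some} chain $\sigma\in S$ is semi-covered by a chain of traces of models containing $G$, and hence that some $q\leq p$ is simultaneously semi-generic for \emph{that} chain. Semi-genericity is not an invariant of the isomorphism type of a chain: it depends on which actual $\mtcl P$--names for countable ordinals lie in each model, so no elementarity or ``reflection of isomorphism type'' argument transfers the conclusion to the originally prescribed chain $(N_i)_{i\leq\rho}$. The working route is contrapositive: if $\rho$--semiproperness fails, press down to fix a single $p$ for which the set $T$ of (traces of) chains admitting no simultaneous semi-generic extension of $p$ is $\rho$--stationary, hence $\rho$--semi-stationary; then below $p$ the forcing kills the $\rho$--semi-stationarity of $T$, since any chain of $V[G]$--models containing $G$ that semi-covers a member $(x_i)_{i\leq\rho}$ of $T$ forces every name for a countable ordinal in $x_i$ to be evaluated inside $x_i\cap\omega_1$, producing a $q\in G$ below $p$ semi-generic for that member --- a contradiction. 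Note this delivers semi-generic extensions only for a club of chains, which is precisely what the definition of $\rho$--semiproperness (with its club $D$) demands; your direct approach aims at an arbitrary given chain, which is both more than is needed and more than the preservation hypothesis can supply.
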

Again we get that:
\begin{fact}\label{fac:Birkhoffsemiproper}
For all countable indecomposable ordinals $\rho$
$\MK$+`\emph{$\omega_1$ is the first uncountable cardinal}'+ `\emph{$\rho$ is a countable indecomposable ordinal}'
proves that 
$\rho$--$\SP$ and ${<}\o_1$--$\SP$ are
closed under preimages by complete injective homomorphisms, two--step iterations and products, and contain
all countably closed forcings.
\end{fact}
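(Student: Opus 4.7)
The plan is to mirror the proof of Fact~\ref{fac:Birkhoffproper} for the proper case, using the equivalent characterization of $\rho$--semiproperness via $(N_i,\mtcl P)$--semigenericity along $\rho$--chains (or equivalently, via preservation of $\rho$--semi-stationary sets in Fact~\ref{char-semiproper}). Throughout, I work inside the given theory so that $\omega_1$ and $\rho$ may be used as parameters. The claim for ${<}\omega_1$--$\SP$ follows immediately from the corresponding claim for each $\rho$--$\SP$ with $\rho<\omega_1$ indecomposable, since an intersection of classes each satisfying these closure conditions inherits all of them. So from now on I focus on $\rho$--$\SP$.

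For the easy clauses: every countably closed $\mtcl P$ is in $\rho$--$\SP$ because, given a continuous $\rho$--chain $(N_i)_{i\leq\rho}$ of countable elementary submodels of some $H_\theta$ with $\mtcl P\in N_0$ and $p\in N_0\cap\mtcl P$, we enumerate in order type $\omega$ all pairs $(i,\dot\a)$ with $i\leq\rho$ and $\dot\a\in N_i$ a $\mtcl P$--name for a countable ordinal, and use countable closure of $\mtcl P$ to build a descending sequence below $p$ deciding each such $\dot\a$ to a value in $N_i$; any lower bound $q$ is $(N_i,\mtcl P)$--semigeneric for every $i\leq\rho$. For two--step iterations $\bool{B}\ast\dot{\bool{C}}$ with $\bool{B}\in\rho$--$\SP$ and $\Vdash_\bool{B}\dot{\bool{C}}\in\rho$--$\SP$, given a chain $(N_i)_{i\leq\rho}$ and $(p,\dot q)\in N_0$, first find $p'\leq p$ in $\bool{B}$ that is $(N_i,\bool{B})$--semigeneric for all $i\leq\rho$; then, in the extension $V[G_{\bool{B}}]$ with $p'\in G_{\bool{B}}$, the sequence $(N_i[G_{\bool{B}}])_{i\leq\rho}$ is still a continuous $\rho$--chain of countable submodels, and $\rho$--semiproperness of $\dot{\bool{C}}_{G_\bool{B}}$ supplies $\dot q'$ below $\dot q_{G_\bool{B}}$ semigeneric for all of them.

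The main new work is in clause~(1), closure under preimages by complete injective homomorphisms, and in clause~(3), closure under products. For~(1), let $i:\bool{B}\to\bool{C}$ be a complete injective homomorphism with $\bool{C}\in\rho$--$\SP$; given a chain $(N_i)_{i\leq\rho}$ with $\bool{B},\bool{C},i\in N_0$ and $p\in N_0\cap\bool{B}$, apply $\rho$--semiproperness of $\bool{C}$ to obtain $q'\leq_{\bool{C}}i(p)$ which is $(N_i,\bool{C})$--semigeneric for every $i\leq\rho$, and define the Boolean projection $q=\bigwedge\{b\in\bool{B}:i(b)\geq q'\}\in\bool{B}$. For any $\bool{B}$--name $\dot\a\in N_i$ for a countable ordinal, the $\bool{C}$--name $i(\dot\a)$ lies in $N_i$ (since $i\in N_i$), and any $V$--generic $H\ni q$ for $\bool{B}$ embeds into some $V$--generic $K\ni q'$ for $\bool{C}$ with $H=i^{-1}[K]$, so $\dot\a_H=i(\dot\a)_K\in N_i$ by semigenericity of $q'$; hence $q\Vdash_\bool{B}\dot\a\in N_i$ as required. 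For~(3), reduce products $\bool{B}\times\bool{C}$ to the two--step iteration $\bool{B}\ast\check{\bool{C}}$, and show $\Vdash_\bool{B}\check{\bool{C}}\in\rho$--$\SP$: this uses that the defining semistationary witnesses for $\bool{C}$ in $V$ (i.e.\ $\rho$--semi-stationary subsets of $^\rho([H_\theta]^{\aleph_0})$) remain $\rho$--semi-stationary in $V[G_\bool{B}]$ by Fact~\ref{char-semiproper} applied to $\bool{B}\in\rho$--$\SP$, combined with the absolutely $\Sigma_2$ nature of $\rho$--semiproperness (Fact~\ref{fac:sigma2semiproper}) to read off that $\bool{C}$ is still $\rho$--semiproper in $V[G_\bool{B}]$.

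The principal technical point is the projection argument in~(1), and specifically the observation that the Boolean projection $q$ of a semigeneric $q'$ transfers semigenericity across the embedding $i$; this is what fails for generic embeddings that are not complete in the algebraic sense and is the reason we restrict to complete injective homomorphisms. Everything else is routine, following the template already used for the properness analogue.
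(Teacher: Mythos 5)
Your treatment of closure under preimages by complete injective homomorphisms (via the adjoint $\pi_i$) and under two--step iterations is essentially correct; the paper itself derives the whole Fact from the characterization of $\rho$--semiproperness as preservation of $\rho$--semi-stationary sets (Fact~\ref{char-semiproper}), from which these closure properties are immediate, so your direct semigenericity arguments are a legitimate alternative there. The serious problem is your clause~(3). In this paper ``products'' means products \emph{of complete Boolean algebras}, i.e.\ direct products $\prod_i\bool{B}_i$, which are exactly the lottery sums (see the parenthetical remark in the definition of closure under lottery sums; this is also the clause that $\lambda$--suitability actually requires). It does \emph{not} mean product forcing $\bool{B}\times\bool{C}\cong\bool{B}\ast\check{\bool{C}}$. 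The statement you set out to prove is in fact not provable over the stated base theory: if $T$ is a Suslin tree and $\mtcl Q_T$ is the finite--condition specializing poset, both are c.c.c.\ (hence in every class under discussion), yet $\mtcl Q_T\ast\check{T}$ collapses $\o_1$, since a special Aronszajn tree cannot acquire a cofinal branch in an $\o_1$--preserving extension. Your proposed argument for it also has an independent gap: to conclude $\Vdash_{\bool{B}}\check{\bool{C}}\in\rho\mbox{--}\SP$ you would have to show that $\bool{C}$ preserves the $\rho$--semi-stationary sets \emph{of $V[G_{\bool{B}}]$}, not merely the ground--model witnesses, and absolute $\Sigma_2$ definability gives absoluteness between $V$ and $V_\delta$, not between $V$ and a generic extension. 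What is actually required --- closure under lottery sums --- is easy: below any condition of $\prod_i\bool{B}_i$ one may pass to a condition supported on a single coordinate, where the algebra is isomorphic to a restriction of one of the summands.

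A smaller but genuine gap is in your argument that countably closed posets are $\rho$--semiproper. Enumerating all pairs $(i,\dot\a)$ in an arbitrary order of type $\o$ and ``deciding each $\dot\a$ to a value in $N_i$'' does not work: once the working condition $p_n$ is no longer an element of $N_i$, it may already force some name $\dot\a\in N_i$ for a countable ordinal to exceed $N_i\cap\o_1$ (consider $\Add(\o_1,1)$ and a name computing an ordinal from the generic function on a block of coordinates above $N_i\cap\o_1$), after which no extension is $(N_i,\mtcl P)$--semigeneric. The correct argument is a transfinite fusion along the chain that keeps the approximations inside the appropriate models, using the hypothesis $(N_k)_{k\leq i}\in N_{i+1}$ at successor steps and a diagonalization at limits; this is standard, but it is not the construction you describe.
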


\begin{definition}
Given a regular cardinal $\k\geq\o_1$, a poset $\mtcl P$ \emph{preserves stationary subsets of $\k$} if 
every stationary subset of $\k$ remains stationary after forcing with $\mtcl P$.  
\end{definition}
$\SSP$ denotes the class of 
partial orders preserving stationary subsets of $\o_1$. More generally, given a  cardinal $\l$, $\SSP(\l)$ denotes
the class of partial orders preserving stationary subsets of $\k$ for every uncountable regular cardinal $\k\leq\l$.

Recall that a Suslin tree  is an $\o_1$--tree $T$ (i.e., $T$ is a tree of height $\o_1$ all of whose levels are countable) without uncountable chains or antichains (a subset of $T$ is called an \emph{antichain} iff it consists of pairwise incomparable nodes). We will consider the above properties in conjunction with the preservation of some combination of the following two properties.

 \begin{definition}
 A poset $\mtcl P$ \emph{preserves Suslin trees} if $\Vdash_{\mtcl P}T\mbox{ is Suslin}$ for every Suslin tree $T$ in the ground model. 
 \end{definition}
  
 \begin{definition}
 A poset $\mtcl P$ is \emph{$^\o\o$--bounding} iff every function $f:\o\into\o$ added by $\mtcl P$ is bounded by a function 
 $g:\o\into\o$ in the ground model; i.e., iff  for every $\mtcl P$--generic filter $G$ and every $f:\o\into \o$, $f\in V[G]$, there is some $g:\o\into g$, $g\in V$, such that $f(n)<g(n)$ for all $n$. 
 \end{definition}
$\STP$ denotes the class of all posets preserving Suslin trees and
$^\o\o\mbox{--bounding}$ the class of $^\o\o$--bounding posets. 

By the same arguments we gave for $\rho$--properness one gets:
\begin{fact}\label{fac:sigma2SPTomombound}
$\MK$+ `\emph{$\omega_1$ is the first uncountable cardinal}'
proves that 
`\emph{$\RO(P)$ preserves Suslin trees}' and `\emph{$\mtcl P$ is $^\o\o$--bounding}'
are absolutely $\Sigma_2$ properties in parameters $\omega_1$ and $\omega^\omega$, and 
$\Pi_2$ properties
in the same parameters. Moreover, this theory  
proves that 
$\STP$  and
$^\o\o\mbox{--bounding}$ are both closed under preimages by complete injective homomorphisms, two--step iterations and products,
and contain all countably closed forcings.
\end{fact}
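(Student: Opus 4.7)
The plan is to follow the templates of Facts~\ref{fac:sigma2proper} and~\ref{fac:Birkhoffproper}. For the complexity part I would show that each of the two properties is invariant under passage from $V$ to any transitive set $X \supseteq H_\theta$ for $\theta$ regular and sufficiently large, and then read off the absolutely $\Sigma_2$ form (existence of such $\theta$ and $X$ satisfying the internal statement) and the $\Pi_2$ form (universality over such $\theta$ and $X$) in the same style as in the proof of Fact~\ref{fac:sigma2proper}. For ``$\RO(P)$ preserves Suslin trees'' the key observation is that Suslin trees lie in $H_{\omega_2}$ and that $\RO(P)$--names for antichains in them have hereditary cardinality bounded in terms of $|\RO(P)|$, so the assertion is decided inside any $H_\theta$ with $\theta$ regular and $|\RO(P)|^{\aleph_1} \in H_\theta$; the relevant parameters are $\RO(P)$ and $\omega_1$. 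The analysis for ``$\mathcal{P}$ is $^\omega\omega$--bounding'' is entirely analogous, since $\mathcal{P}$--names for functions $\omega \to \omega$ have hereditary cardinality at most $|\mathcal{P}|^{\aleph_0}$, with parameters $\mathcal{P}$ and $\omega^\omega$.

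For closure under preimages by complete injective homomorphisms $i : \bool{B} \to \bool{C}$ with $\bool{C}$ in the class, any $V$--generic $G$ for $\bool{B}$ extends to a $V$--generic $H$ for $\bool{C}$ with $G = i^{-1}[H]$, so $V[G] \subseteq V[H]$. An uncountable antichain of a ground--model Suslin tree $T$ in $V[G]$ would then persist in $V[H]$, contradicting preservation of Suslin trees by $\bool{C}$; and any $f : \omega \to \omega$ in $V[G]$ lies in $V[H]$ and is therefore dominated by a ground--model function via $^\omega\omega$--bounding of $\bool{C}$. For a two--step iteration $\bool{B} \ast \dot{\bool{C}}$ with both factors in the class, a ground--model Suslin tree $T$ remains Suslin in $V^{\bool{B}}$ by the assumption on $\bool{B}$, and in $V^{\bool{B} \ast \dot{\bool{C}}}$ by the forced preservation of Suslin trees by $\dot{\bool{C}}$; the $^\omega\omega$--bounding case follows by composing dominating functions through the two stages. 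Products are handled as the special case of iterations with a constant second factor.

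For the containment of all ${<}\omega_1$--closed forcings, the $^\omega\omega$--bounding conclusion is immediate since such forcings add no new countable sequences of ordinals. For Suslin tree preservation I would argue by contradiction: assuming $p \Vdash_{\mathcal{P}} \dot A$ is an uncountable antichain of a Suslin tree $T$, I would construct in $V$ by recursion on $\alpha < \omega_1$ a descending chain $(q_\alpha)_{\alpha < \omega_1}$ below $p$ in $\mathcal{P}$ together with an increasing chain of injections $f_\alpha : \alpha \to T$ with antichain range such that $q_\alpha \Vdash f_\alpha[\alpha] \subseteq \dot A$. At successor stages I would use that $q_\alpha \Vdash \dot A \setminus f_\alpha[\alpha]$ is uncountable to pick a new element of $\dot A$ incomparable with all elements of $f_\alpha[\alpha]$; at countable limits I would take lower bounds of $(q_\beta)_{\beta < \alpha}$ via ${<}\omega_1$--closure. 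The union $\bigcup_{\alpha < \omega_1} f_\alpha$ is then an injection $\omega_1 \to T$ in $V$ with antichain range, contradicting Suslinness of $T$ in $V$. No step presents any serious obstacle: the complexity analysis and the preimage/iteration closures are routine adaptations of the arguments underlying Facts~\ref{fac:sigma2proper} and~\ref{fac:Birkhoffproper}, and the ${<}\omega_1$--closure preservation of Suslin trees is classical. The only real care needed is in making the recursion above pass through every countable ordinal stage, which is precisely what the ${<}\omega_1$--closure assumption provides.
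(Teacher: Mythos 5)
Your overall strategy is the right one and matches what the paper intends: Fact~\ref{fac:sigma2SPTomombound} is stated with no proof beyond the remark that it follows ``by the same arguments we gave for $\rho$--properness'', i.e., the reflection-to-$H_\theta$ computation of Fact~\ref{fac:sigma2proper} for the $\Sigma_2$/$\Pi_2$ complexity and the routine closure arguments behind Fact~\ref{fac:Birkhoffproper}. Your complexity analysis, your preimage argument (extend a $\bool{B}$--generic $G$ to a $\bool{C}$--generic $H$ with $V[G]\subseteq V[H]$ and push the putative uncountable antichain, or the unbounded real, into $V[H]$), your two--step iteration argument, and your $\omega_1$--recursion for countably closed forcings are all sound.

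One point does need correcting: in this paper ``products'' does not mean forcing products. As the definition of closure under lottery sums makes explicit, the product of a family of complete Boolean algebras is exactly the lottery sum, whose generic simply selects one factor and forces with it, so closure under products in this sense is immediate for both classes. Your reading of products as ``iterations with a constant second factor'' is aimed at the wrong statement, and as an argument about forcing products it also has a gap: to invoke two--step iteration closure for $\bool{B}\ast\check{\bool{C}}$ you would need $\check{\bool{C}}$ to remain in the class in $V^{\bool{B}}$, which is not automatic (and closure of these classes, or of $\PR$ in Fact~\ref{fac:Birkhoffproper}, under forcing products is not what is being claimed and would in fact be problematic). Two smaller remarks: your $\sigma$--closed argument only rules out uncountable antichains; to conclude that $T$ stays Suslin you should also note that a $\sigma$--closed forcing adds no uncountable chain (no new cofinal branch through an $\omega_1$--tree with countable levels, by the usual $2^{<\omega}$--splitting argument). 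The paper itself later records this case as Fact~\ref{suslin-tree-pres-sigma-closed} via the elementary--submodel genericity argument from the last part of the proof of Lemma~\ref{psiAC-measurable}, but your recursion is equally classical and works.
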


In \cite[XI]{SHEPRO} Shelah isolates a property he calls $S$--condition for 
which the following can be proved.
\begin{lemma}\label{S-cond-shelah}
Assume $\mtcl P$ is a forcing notion satisfying the $S$--condition. Then:
\begin{enumerate}
\it $\mtcl P$ preserves  stationary subsets of $\o_1$;\footnote{See \cite[XI--Thm. 3.6]{SHEPRO}. This theorem says that forcing notions with the $S$--condition do not collapse $\o_1$.  However, its proof actually establishes that such forcings in fact preserve stationary subsets of $\o_1$.}
\it if $\CH$ holds, then $\mtcl P$ adds no new reals.
\end{enumerate}
\end{lemma}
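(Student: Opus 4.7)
The plan is to follow the standard template for establishing preservation properties from an abstract ``generic condition'' axiom like the $S$-condition, invoking the strong form of genericity built into the definition. For (1), let $S\subseteq\o_1$ be stationary in $V$, fix a $\mtcl P$-name $\dot C$ for a club in $\o_1$, and let $p\in\mtcl P$. I want to produce a condition $q\leq p$ and an ordinal $\alpha\in S$ such that $q\Vdash_{\mtcl P}\alpha\in\dot C$. Choose a large enough regular $\t$ with $\mtcl P,\dot C,S,p\in H_\t$, and build a continuous $\in$-chain $(N_i)_{i<\o_1}$ of countable elementary submodels of $H_\t$ each containing these parameters. Since $S$ is stationary, the set $\{i<\o_1 : N_i\cap\o_1\in S\}$ is stationary; pick any such $i^*$ with $\alpha:=N_{i^*}\cap\o_1\in S$. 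The $S$-condition is designed precisely to produce, for such $(N_j)_{j\leq i^*}$ and $p\in N_0$, an extension $q\leq p$ that is ``$S$-generic'' over $N_{i^*}$, which in particular forces $\dot C\cap\alpha$ to be unbounded in $\alpha$; since $\dot C$ is a name for a club, this gives $q\Vdash\alpha\in\dot C$. This shows $S$ remains stationary in $V^{\mtcl P}$.

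For (2), assume $\CH$ and let $\dot r$ be a $\mtcl P$-name for an element of $2^\o$, with $p\in\mtcl P$. I want to find $q\leq p$ and $r\in V\cap 2^\o$ with $q\Vdash\dot r=\check r$. Using $\CH$, enumerate the $\aleph_1$-many dense subsets of $\mtcl P$ that decide values of $\dot r$ bit by bit (equivalently, consider the countable dense sets appearing in countable elementary submodels). Choose $\t$ regular with everything relevant in $H_\t$ and pick a countable $N\prec H_\t$ with $\mtcl P,\dot r,p\in N$; the $S$-condition supplies an extension $q\leq p$ which is ``generic enough'' over $N$ to decide every $\mtcl P$-name in $N$ for an integer. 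Hence $q$ decides $\dot r(n)$ consistently with values in $N$ for every $n<\o$, so the resulting function $r:\o\to 2$ lies in $N\subseteq V$ and $q\Vdash\dot r=\check r$. A density argument over $p$ then shows that no new reals are added.

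Both items are, as the authors note, essentially Shelah's Theorem~XI.3.6 of \cite{SHEPRO} (with (1) strengthening the no-$\o_1$-collapse conclusion stated there, as is pointed out in the footnote): the real content is packaged into the definition of the $S$-condition itself, which is precisely calibrated so that the standard elementary-submodel proof of properness-type preservation theorems can be carried out. The main obstacle, which however has already been discharged by Shelah, is designing the $S$-condition to be simultaneously strong enough to yield both conclusions and weak enough to be preserved by the relevant revised countable support iterations; here we simply extract the two individual consequences of that calibration.
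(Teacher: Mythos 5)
The paper does not actually prove this lemma --- it cites Shelah \cite[XI--Thm.\ 3.6]{SHEPRO} --- but your blind argument has a genuine gap: it never engages with the definition of the $S$--condition given in Definition \ref{s-cond-def}, and the step you rely on in both parts is simply not available from that definition. The $S$--condition is a game--theoretic condition on trees of conditions: player II wins $\mtcl G^{\mtcl P}$ if for every full--branching subtree $T'$ of the tree built during the play, some condition forces an $\o$--branch of $T'$ into the generic filter. Nothing in this supplies, for a countable $N\prec H_\t$ and $p\in N\cap\mtcl P$, an extension $q\leq p$ that is ``$S$--generic over $N$'' or that ``decides every $\mtcl P$--name in $N$ for an integer''. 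Indeed no such clause can hold: a condition deciding every integer--valued name in $N$ would decide $\dot r(n)$ for all $n$ and hence force $\dot r$ into the ground model, so your argument for (2) would show that forcings with the $S$--condition never add reals, with no genuine use of $\CH$ (your enumeration of $\al_1$--many dense sets does no work in the argument that follows it). That conclusion is false --- Namba forcing satisfies the $S$--condition, as the paper notes, and consistently adds reals when $\CH$ fails --- which is precisely why part (2) carries the $\CH$ hypothesis.

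The actual proof runs through the game. Given a name (a function $\dot f:\o\to\o_1$, a club name $\dot C$, or a real name $\dot r$), player I extends the conditions at his moves so as to decide successive values of the name while II plays her winning strategy; one then thins the resulting tree $T$ to a subtree $T'$ with $\av\textsf{succ}_{T'}(\eta)\av=\av\textsf{succ}_T(\eta)\av$ on which the decided values depend only on the level of the node, by a pigeonhole argument at each branching node (which has $\k_\eta\geq\al_2$ immediate successors). The winning condition for II then yields a $q$ forcing a branch of $T'$ into the generic filter, hence forcing the name to agree with the homogenized, ground--model, countable object. It is exactly in this thinning step that $\CH$ enters in part (2): the data to be homogenized at a node is essentially a real, and one needs $2^{\al_0}<\al_2\leq\k_\eta$ for the pigeonhole to return a set of successors of full size $\k_\eta$. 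For part (1) one additionally interleaves the run of the game with an increasing chain of countable elementary submodels arranged so that the supremum of the decided elements of $\dot C$ equals $N\cap\o_1$ for some $N$ with $N\cap\o_1\in S$, which is the refinement of Shelah's non--collapsing argument alluded to in the footnote.
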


As shown in  \cite[XI--4]{SHEPRO}, among the forcing notions satisfying the $S$--condition are Namba forcing (and natural variations thereof), all  countably closed forcing notions, and the natural poset which, for a fixed stationary $S\sub \{\a<\o_2\,:\,\cf(\a)=\o\}$, adds an $\o_1$--club through $S$ with countable conditions. 

Given a tree $T$ and a node $\eta$ of $T$, let $\textsf{succ}_T(\eta)$ denote the set of immediate successors of $\eta$ in $T$. It will be convenient to define the following game $\mtcl P^{\mtcl P}_p$ (for a partial order $\mtcl P$ and a $\mtcl P$--condition $p$).

\begin{definition}
Given a partial order $\mtcl P$ such that $\av\mtcl P\av\geq\al_2$, $\mtcl G^{\mtcl P}$ is 
the following game of length $\o$ between players I and II, with player I playing at even stages and player II playing at 
odd stages.

\begin{enumerate}

\it At any given stage $n$ of the game, the corresponding player picks a pair $T^n$, $(p^n_\eta)_{\eta\in T^n}$, 
where $T^n$ is a tree consisting of finite sequences of ordinals in $\av\mtcl P\av$ without infinite branches and 
where $(p^n_\eta)_{\eta\in T^n}$ is a sequence of conditions in $\mtcl P$ extending $p$ such that $p^n_\n$ 
extends $p^n_\eta$ in $\mtcl P$ whenever $\n$ extends $\eta$ in $T^n$.  

\it If $n>0$, then 

\begin{enumerate}
 \it $T^n$ and $(p^n_\eta)_{\eta\in T^n}$ end--extend $T^{n-1}$ and $(p^{n-1}_\eta)_{\eta\in T^{n-1}}$, respectively, 
\it every terminal node in $T^{n-1}$ has a proper extension in $T^n$, and
\it every node in $T^n\setminus T^{n-1}$ extends a unique terminal node in $T^{n-1}$.
\end{enumerate} 

\it Player I starts by playing $T_0=\{\emptyset\}$ and $p^0_\emptyset \in \mtcl P$.

\it At any given even stage $n>0$ of the game, player I picks, for every terminal node $\eta$ of $T^{n-1}$, a finite sequence $\n_\eta$ of ordinals in $\av\mtcl P\av$ such that $\n_\eta$ extends $\eta$ properly.  He then builds $T^n$ as $$T^{n-1}\cup\{\n_\eta\restr k\,:\,k\leq \av\n_\eta\av,\,\eta\mbox{ a terminal node of }T^{n-1}\}.$$ Player I also has to choose of course  $(p^n_\eta)_{\eta\in T^n}$ in such a way that (1) and (2) are satisfied. 

\it At any given odd stage $n$ of the game, player II chooses, for every terminal node $\eta$ of $T^{n-1}$, a regular cardinal $\k^n_\eta\in [\al_2,\,\av\mtcl P\av]$, and builds $T^n$ from $T^{n-1}$ by adding to $T^{n-1}$ a next level where, for each terminal node $\eta$ of $T^{n-1}$, the set of immediate successors of $\eta$ in $T^n$ is $\{\eta^\smallfrown \la\a\ra\,:\,\a<\k^n_\eta\}$. Player II also has to choose of course  $(p^n_\eta)_{\eta\in T^n}$ in such a way that (1) and (2) are satisfied.

\end{enumerate}

After $\o$ moves, the players have naturally built a tree $T=\bigcup_n T^n$ of height $\o$ whose nodes are 
finite sequences of ordinals in $\av\mtcl P\av$, together with a sequence 
$(p_\eta)_{\eta\in T}=\bigcup (p^n_\eta)_{\eta\in T^n}$ of $\mtcl P$--conditions such that for all nodes $\eta$, 
$\nu$ in $T$, if $\nu$ extends $\eta$ in $T$, then $p_{\nu}$ extends $p_{\eta}$ in $\mtcl P$. Finally, player II wins the 
game iff for every subtree $T'$ of $T$ in $V$, 
if $\av\textsf{succ}_{T'}(\eta)\av=\av\textsf{succ}_T(\eta)\av$ for every $\eta\in T'$, 
then there is a condition in $\mtcl P$ forcing that there is an $\o$--branch $b$ through $T'$ such that 
$p_{b\restr n}\in\dot G$ for all $n<\o$. 
 
\end{definition}

The definition of the $S$--condition is the following.\footnote{Shelah's definition is more general, but 
the present form suffices for our purposes.}

\begin{definition}\label{s-cond-def}
A partial order $\mtcl P$ satisfies the $S$--condition if and only if $\av\mtcl P\av\geq\al_2$ and 
player II has a winning strategy $\s$ in the game $\mtcl G^{\mtcl P}$
such that for every partial run of the game, 
the output of $\s$ at any given sequence $\eta\in\,^{{<}\o}\av\mtcl P\av$ depends only on $\eta$, 
$(p_{\eta\restr k})_{k\leq \av\eta\av}$ and $\{k<\av\eta\av\,:\,\av \textsf{succ}_T(\eta\restr k)\av>1\}$, 
where $T$ denotes the tree built by the players up to that point.
\end{definition}

$S\textsf{--cond}$ is the class of complete Boolean algebras $\bool{B}$ satisfying the $S$--condition.

One has:
\begin{fact}\label{fac:sigma2Scond}
$\MK$+`\emph{$\omega_1$ is the first uncountable cardinal}'
proves that 
`\emph{$\RO(P)$ satisfies the $S$--condition}' is an absolutely $\Sigma_2$ property in parameter $\omega_2$ 
and also a 
$\Pi_2$ property
in the same parameter. 

$\MK$+ `\emph{$\omega_1$ is the first uncountable cardinal}'
proves that 
$S\textsf{--cond}$ is closed under preimages by complete injective homomorphisms, two--step iterations and products, and contains
all countably closed forcings.
\end{fact}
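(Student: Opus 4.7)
The plan is to follow the exact template established in Fact~\ref{fac:sigma2proper} and Fact~\ref{fac:sigma2SPTomombound} for the complexity part, and then verify each closure property in turn by direct appeal to the combinatorics established in \cite[XI]{SHEPRO}. Throughout, the key conceptual observation is that the $S$-condition for $\RO(P)$ is witnessed by a set, namely a winning strategy $\sigma$ for player $II$ in $\mathcal{G}^{\RO(P)}$ meeting the dependency constraint of Definition~\ref{s-cond-def}, and that verifying whether a given $\sigma$ is such a strategy is an absolute statement between transitive class models once they contain $H_\theta$ for any regular $\theta$ with $\RO(P),\sigma \in H_\theta$. This is because the game is of length $\omega$, the trees and condition sequences involved are hereditarily of size $|\mtcl P|$, and the winning condition quantifies only over subtrees and $\mtcl P$-conditions, which all lie in $H_\theta$. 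Once this absoluteness is established, the $\Sigma_2$-formulation reads ``there exist a regular cardinal $\theta$ and a transitive $X \supseteq H_\theta$ such that $(X,\in)$ models `$\RO(P)$ satisfies the $S$-condition'\,'', while the $\Pi_2$-form replaces $\exists$ with $\forall$; the parameter $\omega_2$ enters solely through the bound $\aleph_2 \leq \kappa^n_\eta$ on II's moves in the game.

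For the closure properties I proceed as follows. That $S\textsf{--cond}$ contains all countably closed forcings is immediate from \cite[XI]{SHEPRO}: $II$ may play arbitrarily (e.g.\ always $\kappa^n_\eta = \aleph_2$), and along any branch $b$ the descending $\omega$-sequence $(p_{b\restriction n})_{n<\omega}$ has a lower bound which forces $b \in \dot G$. For closure under preimages by complete injective homomorphisms, given $i\colon \RO(P) \hookrightarrow \RO(Q)$ with $\RO(Q) \in S\textsf{--cond}$ via strategy $\sigma$, I would transport each partial play in $\mathcal{G}^{\RO(P)}$ through $i$ to a partial play in $\mathcal{G}^{\RO(Q)}$, let $\sigma$ respond there, and pull back the response via injectivity of $i$; the dependency clause transfers because $i$ is an order embedding and the tree structure is preserved. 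For two-step iterations $\mtcl P \ast \dot{\mtcl Q}$ and for products, I would combine the component strategies coordinatewise, letting $II$'s response at each node $\eta$ be the pair (respectively tuple) of responses given by the component strategies applied to the corresponding projections of the play; since each component's output depends only on the restricted data allowed by Definition~\ref{s-cond-def}, so does the combined output.

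The main technical obstacle I expect is in verifying, for the iteration and preimage cases, that the combined strategy genuinely meets the restricted dependency condition of Definition~\ref{s-cond-def}---that the output at $\eta$ depends only on $\eta$, $(p_{\eta\restriction k})_{k \leq |\eta|}$, and $\{k < |\eta|\,:\,|\textsf{succ}_T(\eta\restriction k)|>1\}$, and not on any auxiliary data accumulated during the simulation. This is a careful bookkeeping matter rather than a new idea, but it is the one place where the proof cannot be read off directly from the analogues for $\rho$-properness; it is essentially the content of the corresponding preservation theorems in \cite[XI]{SHEPRO}, to which I would ultimately defer.
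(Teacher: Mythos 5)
Your treatment of the complexity clause, of countably closed forcings, and (in outline) of products and two--step iterations matches the paper's, which dispatches all of these in a line or two and leaves them to the reader. The one closure property the paper actually proves in detail is closure under preimages by complete injective homomorphisms, and this is exactly where your proposal has a genuine gap. You propose to simulate a play of $\mathcal G^{\RO(P)}$ inside $\mathcal G^{\RO(Q)}$, let the given strategy $\sigma$ respond there, and then ``pull back the response via injectivity of $i$''. But $\sigma$'s responses are conditions in $\RO(Q)$ that in general do not lie in the range of $i$, so injectivity gives you nothing to pull back. The device the paper uses is the adjoint (projection) map $\pi_i:\RO(Q)\to\RO(P)$, $\pi_i(c)=\bigwedge\{b: i(b)\geq c\}$ (Theorem~\ref{eRetrProp}): player $II$'s move at a node $\eta$ of the small game is $b_\eta=\pi_i(c_\eta)$, while player $I$'s moves $b_\eta$ are lifted to the big game by setting $c_\eta=i(b_\eta)\wedge c_{\eta'}$ with $\eta'$ the predecessor node, so that the invariant $\pi_i(c_\eta)=b_\eta$ is maintained. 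This is not cosmetic: without the adjoint there is no candidate for $II$'s move at all.

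Second, your proposal does not address the verification of the winning condition, which contains the other real idea. Given a subtree $T'$ and an $H$ generic for $\RO(P)$, the branch produced by $\sigma$ lives a priori only in $V[G]$ for some $G\supseteq H$ generic for $\RO(Q)$; one uses $i(\pi_i(c))\geq c$ to see that $c_{\eta\restriction n}\in G$ implies $b_{\eta\restriction n}\in H$, and then the absoluteness of ill--foundedness of the tree $\{\eta\in T': b_\eta\in H\}\in V[H]$ to conclude that the required branch already exists in $V[H]$, as the winning condition demands. Finally, deferring the preimage case to the preservation theorems of \cite[XI]{SHEPRO} is not available: those are iteration theorems, whereas closure under complete subalgebras is a separate claim that the paper proves from scratch. (Your worry about the restricted dependency clause of Definition~\ref{s-cond-def} is legitimate but secondary; the paper's derived strategy satisfies it because $b_\eta$ is computed from $c_\eta$, which is itself determined by the data that clause permits.)
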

\begin{proof}
As in the case of all other classes dealt with in this section, if $\RO(P)\in H_\theta$, then 
$H_\theta\models\mbox{`$\RO(P)$ satisfies the $S$--condition}$' if and only if 
$\RO(P)$ satisfies the $S$--condition.

The remaining properties of $S\textsf{--cond}$ other than the closure under complete subalgebras
are left to the reader.

We prove now that $S\textsf{--cond}$ is a class closed under complete subalgebras;
it will be convenient for this to resort to the algebraic properties of complete injective homomorphisms
with adjoints outlined in Theorem~\ref{eRetrProp} of the appendix.

Assume $\bool{B}$ is a complete subalgebra of some $\bool{C}$ satisfying the $S$--condition.
Let $\pi:\bool{C}\to\bool{B}$ be the adjoint map of the inclusion map of $\bool{B}$ into $\bool{C}$.
Let $\sigma$ be the winning strategy for player $II$ in $\mtcl G^{\bool{C}^+}$. 
Define 
$\sigma'$ to be the strategy for player $II$ in 
 $\mtcl G^{\bool{B}^+}$ obtained by the following procedure:
 \begin{itemize}
 \item
 Player $I$ and $II$ build 
 partial plays $\ap{T_{n},\bp{b_\eta:\eta\in T_n}}$  in 
 $\mtcl G^{\bool{B}^+}$ and partial plays $\ap{T_n,\bp{c_\eta:\eta\in T_n}}$ in 
 $\mtcl G^{\bool{C}^+}$ according to these prescriptions:
 \begin{itemize}
 \item
 for all $\eta\in T_n$ and all $n$
 we have that $\pi(c_\eta)=b_\eta$;
 \item
 for all terminal nodes $\eta\in T_{2n}$ 
 with $\eta=\ap{\gamma_0,\dots,\gamma_m}$, we have that 
 \[
 c_\eta=b_\eta\wedge c_{\ap{\gamma_0,\dots,\gamma_{m-1}}};
 \]
  \item 
  $\ap{T_{2n+1},\bp{c_\eta:\eta\in T_{2n+1}}}=\sigma(\ap{T_{2n},\bp{c_\eta:\eta\in T_{2n}}})$;
 \end{itemize}
 \item Player $II$ defines $\sigma'(\ap{T_{2n},\bp{b_\eta:\eta\in T_{2n}}})=\ap{T_{2n+1},\bp{b_\eta:\eta\in T_{2n+1}}}$.
 \end{itemize}
 
 Now assume $\ap{T,\bp{b_\eta:\eta\in T}}$ is built according to a play of $\mtcl G^{\bool{B}^+}$ in which II follows 
 $\sigma'$.
 Fix a subtree $T'\subseteq T$ as given by the winning condition for $II$ in
 $\mtcl G^{\bool{B}^+}$.
 Given  a $V$--generic filter $H$ for $\bool{B}$, we must find some infinite branch $\eta$ of $T'$ in $V[H]$ such that
 $b_{\eta\restriction n}\in H$ for all $n$.
 To find this branch let 
 $\ap{T,\bp{c_\eta:\eta\in T}}$ be the tree built in tandem with $\ap{T,\bp{b_\eta:\eta\in T}}$
 according to the rules we used to define $\sigma'$. 
 Fix $G\supseteq H$ $V$--generic for $\bool{C}$.
 Since $\bool{C}$ satisifes the $S$--condition, we can find some infinite 
 branch $\eta$ of $T'$ in $V[G]$ such that $c_{\eta\restriction n}\in G$ for all $n$.
 This gives that $b_{\eta\restriction n}=\pi(c_{\eta\restriction n})\in H$ for all $n$.
 Hence, in $V[G]$ there is an infinite branch $\eta$ of $T'$ such that
 $b_{\eta\restriction n}\in H$ for all $n$. Therefore the tree $T^*=\bp{\eta\in T': b_\eta\in H}\in V[H]$ 
 is ill--founded in $V[G]$.
But then the same is true in $V[H]$ by absoluteness of ill--foundedness. Finally, any infinite branch of $T^*$ witnesses the winning condition for II 
 using
 $\sigma'$ relative to $T,T',H$.

 Since this can be done for all possible choices of $T\supseteq T'$ in $V$ with $T$ constructed using $\sigma'$, and 
 for all  $V$--generic filters $H$ for $\bool{B}$, we have that 
 $\bool{B}$ satisfies the $S$--condition.
\end{proof}


\subsection{Incompatible category forcing axioms}\label{david:mainresults}

In this section we  isolate $\al_1$--many classes of forcing notions, all of which are $\o_1$--suitable (in some cases modulo the existence of unboundedly many measurable cardinals), and which are pairwise incompatible, provably in $\ZFC$ + $\LC$, where $\LC$ denotes `\emph{There is a proper class of supercompact cardinals}' + `\emph{There is a $2$--superhuge cardinal}'.\footnote{Sometimes, a weaker theory than this suffices for our incompatibility result.} Our main results are the following.

\begin{theorem}\label{main-thm-suitable} 
\begin{enumerate}
\item Each of the following classes is $\o_1$--suitable with respect
to $\MK$+`\emph{$\omega_1$ is the first uncountable cardinal}'.

\begin{enumerate}
\item $\PR$,
\item $\PR\cap \STP$,
\item $\PR\cap\,^\o\o\textsf{--bounding}$,

\item  $\PR\cap\STP\cap\,^\o\o\textsf{--bounding}$. 
\end{enumerate}

\item $\rho\mbox{--}\PR$ is $\o_1$--suitable for the theory
$\MK$+`\emph{$\omega_1$ is the first uncountable cardinal}' +
`\emph{$\rho$ is a countable indecomposable ordinal}'
for every countable indecomposable ordinal $\rho$ such that $1<\rho<\o_1$.

\item ${<}\o_1\mbox{--}\PR$ is $\o_1$--suitable for 
$\MK$+`\emph{$\omega_1$ is the first uncountable cardinal}'.

\item  $S\textsf{--cond}$ is $\o_1$--suitable for 
$\MK$+`\emph{$\omega_1$ is the first uncountable cardinal}'.

\item 
Each of the following classes is $\o_1$--suitable for 
$\MK$+`\emph{$\omega_1$ is the first uncountable cardinal}'+`\emph{$\rho$ is a countable indecomposable ordinal}'
+ `\emph{there is a proper class of measurable cardinals}' for any countable indecomposable ordinal $\rho<\omega_1$:

\begin{enumerate}
\item $\rho\mbox{--}\SP$,
\item $(\rho\mbox{--}\SP)\cap \STP$,
\item $(\rho\mbox{--}\SP)\cap\,^\o\o\textsf{--bounding}$,
\item $(\rho\mbox{--}\SP)\cap\STP\cap\,^\o\o\textsf{--bounding}$.
\end{enumerate}

\item 
Each of the following classes is $\o_1$--suitable for 
$\MK$+ `\emph{$\omega_1$ is the first uncountable cardinal}'+ `\emph{there is a proper class of measurable cardinals}':

\begin{enumerate}
\item ${<}\o_1\mbox{--}\SP$ 
\item $({<}\o_1\mbox{--}\SP)\cap \STP$
\item $({<}\o_1\mbox{--}\SP)\cap\,^\o\o\textsf{--bounding}$
\item $({<}\o_1\mbox{--}\SP)\cap\STP\cap\,^\o\o\textsf{--bounding}$ 
\end{enumerate}

\end{enumerate}
\end{theorem}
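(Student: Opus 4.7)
The plan is to verify, for each of the classes listed in (1)--(6), the six clauses of Definition \ref{def:kappasuit} of $\omega_1$-suitability with respect to the stated $\omega_1$-canonical theory. Clause (1) (absolutely $\Sigma_2$ definability in a parameter in $H_{\omega_2}$) is handled by Facts \ref{fac:sigma2proper}, \ref{fac:sigma2semiproper}, \ref{fac:sigma2SPTomombound} and \ref{fac:sigma2Scond}; since the intersection of two absolutely $\Sigma_2$ classes is absolutely $\Sigma_2$, this extends to all the intersections appearing in the theorem. Clause (3) (closure under preimages by complete injective homomorphisms, two--step iterations, and lottery sums) follows from Facts \ref{fac:Birkhoffproper}, \ref{fac:Birkhoffsemiproper}, \ref{fac:sigma2SPTomombound}, and \ref{fac:sigma2Scond}, together with the observation that closure under lottery sums is an easy consequence of closure under two--step iterations over the trivial partition of the lottery sum. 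Clause (4), containment of all $<\omega_1$--closed posets, is standard for each of the basic classes and passes immediately to intersections. Clause (2), preservation of the axioms of $T$, reduces to preservation of $\omega_1$, enjoyed by every class on the list, together with the remark (Remark~\ref{rmk:canth}) that the axiom asserting the existence of a proper class of measurable cardinals is preserved by all set--forcings, hence by each $\Gamma$ on the list which requires this hypothesis.

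For clause (5), weak $\omega_1$-iterability, the plan is to invoke the standard iteration theorems assembled in Subsection \ref{iterability}: Shelah's countable support iteration theorem for the proper family, the revised countable support iteration theorem for the $\rho$-semiproper and ${<}\omega_1$-semiproper families (which requires the class--many measurable cardinals hypothesis and accounts for that provision in items (5) and (6) of the theorem), Shelah's iteration theorem for the $S$-condition, and the $\rho$-proper and ${<}\omega_1$-proper iteration theorems. For the intersections with $\STP$ and $^\omega\omega$-bounding, one appeals to the further standard preservation theorems stating that the countable support (respectively revised countable support) iteration of forcings preserving Suslin trees (respectively $^\omega\omega$-bounding forcings) in the appropriate base class still preserves the relevant property.

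The main obstacle is clause (6), the $\Gamma$-freezeability property, as flagged at the end of Subsection \ref{subsec:freezetotrig}. The strategy is the one outlined there: given $\mathcal B \in \Gamma$, construct a $\mathcal B$-name $\dot{\mathcal Q}$ for a forcing in $\Gamma$ such that $\mathcal B \ast \dot{\mathcal Q}$ codes the $\mathcal B$-generic filter as a subset $A \subseteq \omega_1$ which is the unique such subset satisfying a specified property in every outer model preserving stationary subsets of $\omega_1$. Since $\Gamma \subseteq \SSP$ for every $\Gamma$ on the list, $\SSP$-freezeability of the coding implies $\Gamma$-freezeability by Fact \ref{fac:keyrmkfreeze}. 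The technical difficulty is that $\dot{\mathcal Q}$ must lie in $\Gamma$, so the coding machinery must simultaneously respect the properness (or semiproperness, $\rho$-properness, ${<}\omega_1$-properness, or $S$-condition) requirement together with, where applicable, preservation of Suslin trees and the $^\omega\omega$-bounding property. The plan is to produce in Subsection \ref{4-freezing-posets} four freezing posets, each tailored to one of the possible combinations of the auxiliary conditions $\STP$ and $^\omega\omega$-bounding; the proper (or semiproper, etc.) layer of $\Gamma$ will be handled uniformly, so that a single construction delivers the required freezing poset for every class on the list once the correct pair of auxiliary preservation properties has been selected. With the four freezing posets in hand, clause (6) follows for every class in (1)--(6), completing the proof.
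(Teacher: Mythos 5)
Your overall checklist is the right one, and clauses (1)--(4) are dispatched essentially as in the paper (via Facts \ref{fac:sigma2proper}, \ref{fac:sigma2semiproper}, \ref{fac:sigma2SPTomombound}, \ref{fac:sigma2Scond}, \ref{fac:Birkhoffproper}, \ref{fac:Birkhoffsemiproper} and the iteration lemmas of Subsection \ref{iterability}). But there are two problems. The smaller one: the proper class of measurable cardinals in items (5) and (6) is not needed for the RCS iteration theorem for $\rho$--$\SP$ (Lemmas \ref{pres-semiproper}--\ref{miyamotosuslinsp} need no such hypothesis); it is needed for the \emph{freezeability} clause, because the freezing poset used for the semiproper classes is the $\psi_{\AC}^\kappa$--poset $\mtcl Q_{\kappa,S,T}$ of Lemma \ref{psiAC-measurable}, which requires a measurable cardinal $\kappa$.

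The serious gap is in your architecture for clause (6). You propose four freezing posets ``each tailored to one of the possible combinations of the auxiliary conditions $\STP$ and $^\omega\omega$--bounding,'' with ``the proper (or semiproper, etc.) layer handled uniformly.'' This is backwards, and as stated it cannot work. The four freezing posets in the paper are keyed to the \emph{base} classes, not to the auxiliary ones: the $\MRP$--poset (Proposition \ref{freeze1}) for $\PR$ and its intersections with $\STP$ and $^\omega\omega$--bounding (it is proper, preserves Suslin trees and adds no reals, so the auxiliary conditions come for free); the c.c.c.\ colouring poset of Proposition \ref{freeze1.5} for $\rho$--$\PR$ with $\rho>1$ and for ${<}\omega_1$--$\PR$; the $\psi_{\AC}^\kappa$--poset of Proposition \ref{freeze2} for the semiproper classes; and the $S$--condition poset of Proposition \ref{freeze3} for $S\textsf{--cond}$. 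There is no uniform treatment of the base layer: the $\MRP$--poset is in general not even $\omega$--proper (see the remark after Lemma \ref{MRP}), so it cannot witness freezeability inside $\rho$--$\PR$ for $\rho>1$; and the c.c.c.\ poset that does work there adds reals and need not preserve Suslin trees, so it cannot serve the classes $(\rho$--$\PR)\cap\STP$ or $(\rho$--$\PR)\cap{}^\omega\omega\textsf{--bounding}$ --- which is precisely why those classes are left as open questions in the paper and are \emph{absent} from the statement of the theorem. A scheme that selects the freezing poset purely from the pair of auxiliary conditions would purport to settle those open cases as well, which is a sign that the decomposition is not the right one.
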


\begin{theorem}\label{main-thm-incompatible} ($\ZFC$+$\LC$)
Suppose $\Gamma$ and $\Gamma'$ are any two different classes of forcing notions mentioned in Theorem 
\ref{main-thm-suitable}. Then $\CFA(\Gamma)$ implies $\lnot\CFA(\Gamma')$.
\end{theorem}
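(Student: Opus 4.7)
The plan is to leverage the generic absoluteness of Theorem \ref{finaltheorem-abs}. Under $\CFA(\Gamma)$, any sentence $\phi$ expressible in $H_{\omega_2}$ that is forced by some $\bool{B}\in\Gamma$ preserving $\CFA(\Gamma)$ already holds in $V$. Therefore, to show $\CFA(\Gamma)\Rightarrow\neg\CFA(\Gamma')$ for each pair $\Gamma\neq\Gamma'$ in the list, it suffices to exhibit an $H_{\omega_2}$-sentence $\phi$ with $\CFA(\Gamma)\vdash\phi$ and $\CFA(\Gamma')\vdash\neg\phi$. The argument for each pair would run uniformly: pick $\bool{B}\in\Gamma$ forcing $\phi$; show $\bool{B}$ preserves $\CFA(\Gamma)$ by absorbing it into a tail of the category forcing $\bool{U}^\Gamma_\delta$ of Theorem \ref{consistency-of-CFA}; then apply Theorem \ref{finaltheorem-abs} to descend $\phi$ to $V$.

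The distinguishing sentences should come from the dimensions along which the classes in the list differ:
\begin{itemize}
\item \emph{Suslin-tree preservation.} When one class includes the $\STP$ restriction and the other does not, take $\phi\equiv$ ``there is a Suslin tree''. The $\STP$-side class contains a countably closed forcing adding a Suslin tree which preserves the corresponding $\CFA$, so generic absoluteness produces Suslin trees in $V$; the other class contains the proper specialising forcings, and $\FA_{\omega_1}$ applied to them eliminates every Suslin tree.
\item \emph{$\omega^\omega$-boundedness.} When one class requires boundedness and the other does not, use an $H_{\omega_2}$-statement equivalent to ``$\mathfrak{b}=\aleph_1$''; bounding classes preserve ground-model dominating families, while non-bounding ones contain Hechler-type forcings that lift $\mathfrak{b}$ to $\aleph_2$ via the corresponding $\FA_{\omega_1}$.
\item \emph{Level of (semi)properness.} For $\rho$-$\PR$ vs $\rho'$-$\PR$ with countable indecomposable $\rho<\rho'$, and similarly for the semiproper and ${<}\omega_1$-variants, use a statement about the non-preservation of certain $\rho$-stationary subsets of ${}^\rho([X]^{\aleph_0})$ that can be arranged by a $\rho$-proper but not $\rho'$-proper forcing, exploiting Fact \ref{char-proper} and its semiproper analogue.
\item \emph{Proper vs semiproper vs $S\textsf{--cond}$.} Use Namba-type forcings (which lie in $\SP$ and in $S\textsf{--cond}$ but not in $\PR$) to force a combinatorial signature in $H_{\omega_2}$, for instance an object of cardinality $\aleph_1$ coding an $\omega$-cofinal sequence in $\omega_2^V$, that no proper forcing can produce.
\end{itemize}

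The main obstacle I expect is the $\rho$-(semi)proper stratum. Since $\rho$ ranges over uncountably many countable indecomposable ordinals, uniformly-definable separating sentences are needed for every pair $\rho<\rho'$, and the required combinatorics at distinct indecomposable levels is delicate. Moreover, each preservation verification -- that a chosen witness forcing preserves the relevant $\CFA$ -- relies on the iteration theorems of Section \ref{iterability} and on the freezeability constructions of Section \ref{4-freezing-posets}; any failure of freezeability to absorb such a witness forcing into the ambient category forcing would block the descent step of Theorem \ref{finaltheorem-abs}.
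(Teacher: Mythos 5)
Your overall framework is the paper's: $\CFA(\Gamma)$ yields both $\FA_{\aleph_1}(\Gamma)$ and generic invariance of the theory of $H_{\omega_2}$ under $\Gamma$-forcings preserving $\CFA(\Gamma)$, so each incompatibility reduces to exhibiting an $H_{\omega_2}$-sentence forced by some $\CFA(\Gamma)$-preserving member of $\Gamma$ (typically a preparatory forcing followed by $\bool{U}^\Gamma_\delta$ itself, which forces $\CFA(\Gamma)$ by Theorem \ref{consistency-of-CFA}) whose negation follows from $\FA_{\aleph_1}(\Gamma')$. Some of your separating sentences also match the paper's (Suslin trees for the $\STP$ coordinate; a cardinal characteristic at the level of $\mathfrak d$ or $\mathfrak b$ for the bounding coordinate). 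But the proposal has genuine gaps exactly where the real work lies. First, your separation of proper from semiproper via Namba-style forcings producing ``an object of cardinality $\aleph_1$ coding an $\omega$-cofinal sequence in $\omega_2^V$'' is not a first-order statement about $H_{\omega_2}$ of the final model -- it refers to the ground model -- so it cannot be transported by the absoluteness theorem. The paper instead uses $\bm{\delta}^1_2<\omega_2$ and the failure of Club Bounding as consequences of $\CFA(\Gamma)$ for $\Gamma\subseteq\PR$ (via Neeman--Zapletal and the preservation by proper forcings of the stationarity of $\{X\in[\alpha]^{\aleph_0}: \ot(X)<f(X\cap\omega_1)\}$), against $\bm{\delta}^1_2=\omega_2$ (via $\SRP$ and saturation of $\NS_{\omega_1}$) and Club Bounding (via a ${<}\omega_1$-semiproper, real-preserving, Suslin-tree-preserving canonical-function forcing at a measurable) on the semiproper side. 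Second, the $\rho$-stratification, which you correctly flag as the main obstacle, is left unresolved; the paper closes it with the thin club-guessing hierarchy: $\CFA(\Gamma)$ gives $\tau$-$\TCG$ when $\Gamma\subseteq\omega^\tau$-$\SP$ contains the $\sigma$-closed forcings, while $\FA_{\aleph_1}((\omega^\tau\mbox{--}\PR)\cap\STP\cap{}^\omega\omega\textsf{--bounding})$ kills $\tau'$-$\TWCG$ for all $\tau'>\tau$ via an $\omega^\tau$-proper club-shooting poset that adds no reals and preserves Suslin trees. Third, you do not separate $S\textsf{--cond}$ from the rest at all; the paper does this with $\CH$ (indeed $\diamondsuit$), which follows from $\CFA(\Gamma)$ for $\Gamma\subseteq S\textsf{--cond}$ because one can first force $\CH$ by $\sigma$-closed forcing and the subsequent category forcing adds no reals, against $2^{\aleph_0}=\aleph_2$ (Veli\v{c}kovi\'{c} coding) or $\neg\CH$ (random forcing) on the other side.

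A smaller but still real defect: to show $\FA_{\aleph_1}(\Gamma')$ kills Suslin trees when $\Gamma'$ includes the ${}^\omega\omega$-bounding restriction, the specializing forcing is unavailable, since it adds reals; the paper applies the forcing axiom to a normal Suslin tree $T$ itself, which is c.c.c., adds no reals, and hence lies in every class in the list without the $\STP$ restriction. Without this (or some substitute), your plan fails for pairs such as $\PR\cap\STP\cap{}^\omega\omega\textsf{--bounding}$ versus $\PR\cap{}^\omega\omega\textsf{--bounding}$, where the cardinal-characteristic invariant cannot distinguish the two either.
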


As will be clear from the proofs, Theorems \ref{main-thm-suitable} and \ref{main-thm-incompatible} are just 
selected samples of a zoo of possibly incompatible instances of $\CFA(\Gamma)$.
 In particular, it should be possible to combine (some of) the classes mentioned in Theorem  \ref{main-thm-suitable} with other classes of forcing notions, besides $\STP$ and $^\o\o\mbox{--bounding}$, so long as these classes have a suitable iteration theory and reasonable closure properties, are absolutely $\Sigma_2$ definable, and the resulting classes contain $\SSP$--freezing posets.


We should point out that the following natural question -- in the present context -- remains open.

\begin{question} Is there, under any reasonable large cardinal, any indecomposable $\rho<\o_1$, $\rho>1$, for which any of the following classes is $\o_1$--suitable?

\begin{enumerate}
\item $(\rho\mbox{--}\PR)\cap \STP$
\item $(\rho\mbox{--}\PR)\cap\,^\o\o\textsf{--bounding}$
\item $(\rho\mbox{--}\PR)\cap\STP\cap\,^\o\o\textsf{--bounding}$ 
\item $({<}\o_1\mbox{--}\PR)\cap \STP$
\item $({<}\o_1\mbox{--}\PR)\cap\,^\o\o\textsf{--bounding}$
\item $({<}\o_1\mbox{--}\PR)\cap\STP\cap\,^\o\o\textsf{--bounding}$ 
\end{enumerate}
\end{question}

The following question, of a more foundational import, addresses the possibility of there being $\l$--suitable classes for 
$\l>\o_1$.

\begin{question} Are there, under some reasonable large cardinal assumption, any cardinal $\l\geq\o_2$ and any class 
$\Gamma$ of forcing notions such that $\Gamma$ is $\l$--suitable? Are there, again  under some reasonable large cardinal 
assumption, any cardinal $\l\geq\o_2$ and any class $\Gamma$ of forcing notions such that $\Gamma$ is $\l$--suitable and 
such that $\CFA_{\l}(\Gamma)$ is compatible with -- or, even, implies -- $\CFA_{\o_1}(\Gamma')$ for any 
$\o_1$--suitable class $\Gamma'$?
\end{question}

\subsection{Proof of theorems ~\ref{main-thm-suitable} and ~\ref{main-thm-incompatible}.}\label{david:proofs}

\subsubsection{Four freezing posets}\label{4-freezing-posets}

In this section we introduce four instances of $\SSP$--freezing posets. We feel free to confuse posets with complete
Boolean algebras, as the context will dictate which is the correct intended meaning of the concept.
When proving $\SSP$--freezability, we will actually be showing the following sufficient condition (for $\lambda=\omega_1$).

\begin{lemma}\label{freezability-sufficient} Let $\lambda\geq\omega_1$ be a cardinal, $\bool{B}$ a forcing notion, and 
$\dot{\bool{C}}$ a $\bool{B}$--name for a forcing notion. 
Suppose that $p$ is a set, and that if $G$ is a $\bool{B}$--generic filter, then $\bool{C}=\dot{\bool{C}}_G$ forces that there is some $A_G\subseteq\lambda$ coding $G$ in an absolute way mod.\ $\SSP(\lambda)$, in the sense that there is some $\Sigma_1$--formula $\varphi(x, y, z)$ such that, if $H$ is a $\bool{B}\ast\dot{\bool{C}}$--generic filter over $V$ such that $H\cap \bool{B}=G$, then

\begin{enumerate}

\item $(H_{\lambda^+}; \in, \NS_\lambda)^{V[H]}\models\varphi(G, A_G, p)$, and

\item  in every outer model $M$ of $V[H]$ such that $\mtcl P(\lambda)^{V[H]}\cap (\NS_\lambda)^M=(\NS_\lambda)^{V[H]}$, if $$(H_{\lambda^+}; \in, \NS_\lambda)^{M}\models\varphi(G_0, A_{G_0}, p)$$ and $$(H_{\lambda^+}; \in, \NS_\lambda)^{M}\models\varphi(G_1, A_{G_1}, p),$$ then $G_0=G_1$.\end{enumerate} 

Then the natural inclusion $$i:\bool{B}\into \bool{B}\ast\dot{\bool{C}}$$ $\SSP(\lambda)$--freezes $\bool{B}$.

\end{lemma}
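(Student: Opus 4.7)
The plan is to invoke the characterization of $\SSP(\lambda)$-freezeability given by Lemma~\ref{lem:eqfrGamma}(2). Fix any $\bool{R}\leq_{\SSP(\lambda)}\bool{B}\ast\dot{\bool{C}}$ and any $V$-generic filter $K$ for $\bool{R}$, and consider two $\SSP(\lambda)$-correct $V$-generic filters $K_0,K_1\in V[K]$ for $\bool{B}\ast\dot{\bool{C}}$. Setting $G_j=i^{-1}[K_j]=K_j\cap\bool{B}$ for $j=0,1$, we must show $G_0=G_1$.

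For each $j$ apply the lemma's hypothesis with $H=K_j$: since $G_j$ is $V$-generic for $\bool{B}$ and $\bool{C}=\dot{\bool{C}}_{G_j}$ is forced over $V[G_j]$ to produce the coding set, there exists $A_{G_j}\in V[K_j]$ such that
\[
(H_{\lambda^+};\in,\NS_\lambda)^{V[K_j]}\models\varphi(G_j,A_{G_j},p).
\]
The $\SSP(\lambda)$-correctness of $K_j$ in $V[K]$ means precisely that the quotient $\bool{R}/K_j$ lies in $\SSP(\lambda)^{V[K_j]}$, so $V[K]$ is an $\SSP(\lambda)$-extension of $V[K_j]$ and hence $\mtcl P(\lambda)^{V[K_j]}\cap \NS_\lambda^{V[K]}=\NS_\lambda^{V[K_j]}$. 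Because $\varphi$ is $\Sigma_1$ in the structure $(H_{\lambda^+};\in,\NS_\lambda)$ and its $\Delta_0$ matrix only interrogates $\in$ (absolute) and the predicate $\NS_\lambda$ on subsets of $\lambda$ that already belong to $V[K_j]$, the witness provided in $V[K_j]$ remains a witness in the larger structure $(H_{\lambda^+};\in,\NS_\lambda)^{V[K]}$. Thus
\[
(H_{\lambda^+};\in,\NS_\lambda)^{V[K]}\models\varphi(G_0,A_{G_0},p)\ \wedge\ \varphi(G_1,A_{G_1},p).
\]

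Now apply clause (2) of the hypothesis, taking $H=K_0$ and $M=V[K]$: the agreement of non-stationary ideals established above gives $\mtcl P(\lambda)^{V[K_0]}\cap\NS_\lambda^{V[K]}=\NS_\lambda^{V[K_0]}$, which is exactly the side condition required by the hypothesis; the uniqueness clause then forces $G_0=G_1$, completing the verification of $\SSP(\lambda)$-freezeability via Lemma~\ref{lem:eqfrGamma}. The one nontrivial point in this outline is the $\Sigma_1$-upward absoluteness of $\varphi$ across $V[K_j]\subseteq V[K]$; this will be the main technical step, and will rely on the observation that the $\Delta_0$-matrix of $\varphi$ can only evaluate the $\NS_\lambda$-predicate on elements of $\mtcl P(\lambda)$ appearing in the transitive closure of the witness produced in $V[K_j]$, all of which sit in $\mtcl P(\lambda)^{V[K_j]}$ where the two ideals coincide.
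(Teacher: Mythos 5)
Your proof is correct and is essentially the paper's argument: the paper runs the same uniqueness argument by contradiction through clause (1) of Lemma~\ref{lem:eqfrGamma} (taking incompatible $b_0,b_1\in\bool{B}$ and two complete homomorphisms of the restrictions into a common $\bool{D}$), whereas you verify clause (2) directly, but the substance — producing the two coded filters in a common $\SSP(\lambda)$--extension and invoking the uniqueness clause of the hypothesis — is identical. Your explicit treatment of the $\Sigma_1$--upward absoluteness step (agreement of $\NS_\lambda$ on $\mtcl P(\lambda)^{V[K_j]}$ making $(H_{\lambda^+};\in,\NS_\lambda)^{V[K_j]}$ a substructure of $(H_{\lambda^+};\in,\NS_\lambda)^{V[K]}$) is a point the paper leaves implicit, and is handled correctly.
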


\begin{proof}
Suppose, towards a contradiction, that $b_0$, $b_1\in \bool{B}$ are incompatible, $\bool{D}$ is a complete Boolean algebra, $k_0:(\bool{B}\ast\dot{\bool{C}})\restr b_0\into\bool{D}$, $k_1:(\bool{B}\ast\dot{\bool{C}})\restr b_1\into \bool{D}$ are complete homomorphisms, $K$ is $\bool{D}$--generic and, for each $\epsilon\in \{0, 1\}$, $H_\epsilon = k_\epsilon^{-1}(K)$ and every stationary subset of $\lambda$ in $V[H_\epsilon]$ remains stationary in $V[K]$. For each $\epsilon\in\{0, 1\}$, let $G_\epsilon$ be the filter on $\bool{B}$ generated by $H_\epsilon\cap (\bool{B}\restr b_\epsilon)$, and let $A_\epsilon\sub \lambda$ be such that $$(H_{\lambda^+}; \in, \NS_\lambda)^{V[H_\epsilon]}\models\varphi(G_\epsilon, A_\epsilon, p)$$ Since $$(H_{\lambda^+}; \in, \NS_\k)^{V[K]}\models\varphi(G_0, A_0, p)\wedge\varphi(G_1, A_1, p),$$ we have that $G_0=G_1$ by (2). But this is impossible since $b_0\in G_0$, $b_1\in G_1$, and since $b_0$ and $b_1$ are incompatible conditions in $\bool{B}$.  
\end{proof}

Our first freezing poset comes essentially from \cite{MOO06}.

Given a set $X$, the Ellentuck topology on $[X]^{\aleph_0}$ is the topology on $[X]^{\aleph_0}$ generated by the sets of the form $[s, Y]$, for $Y\in [X]^{\al_0}$ and $s\in [Y]^{{<}\omega}$, where $[s, Y]=\{Z\in [Y]^{\aleph_0}\,:\, s \subseteq Z\}$.

The following lemma, except for the conclusion that $\mtcl P$ preserves Suslin trees, is due to Moore \cite{MOO06}. The conclusion that $\mtcl P$ preserves Suslin trees is due to Miyamoto  \cite{Miyamoto-Yorioka}. 

\begin{lemma}\label{MRP}
Let  $X$ be a set, $\t$ a cardinal such that $X\in H_{\t}$,  and $\S$ a function with domain $[H_\t]^{\al_0}$ such that for every countable $M\elsub  [H_\t]^{\al_0}$, 

\begin{itemize} 
 
 \item $\S(M)\sub [X]^{\al_0}$ is open in the Ellentuck topology, and 

\item $\S(M)$ is $M$--stationary (meaning that for every function $F:[X]^{{<}\o}\into X$, if $F\in M$, then there is some $Z\in \S(M)\cap M$ such that $F``[Z]^{{<}\o}\sub Z$).\end{itemize}

Let $\mtcl P=\mtcl P_{X, \t, \S}$ be the set, ordered by reverse inclusion, of all countable  $\sub$--continuous $\in$--chains  $p=(M^p_i)_{i\leq\n}$ of countable elementary substructures of $H_\t$ such that for every limit ordinal $i\leq\n$ there is some $i_0<i$ with the property that $M^p_{k}\cap X\in \S(M^p_i)$ for all $k$ such that $i_0<k<i$. 

Then \begin{enumerate}

\item $\mtcl P$ is proper, preserves Suslin trees, and does not add new reals.

\item Whenever $G$ is $\mtcl P$--generic over $V$ and $M^G_i= M^p_i$ for $p\in G$ and $i\in \dom(p)$, 
$(M^G_i)_{i<\o_1}$ is in $V[G]$ the $\sub$--increasing enumeration of a club of $[H_\t^{V}]^{\al_0}$ 
and is such that:
\begin{quote} 
For every limit ordinal 
$i<\o_1$ there is some $i_0<i$ with the property that $M^G_{k}\cap X\in \S(M^G_i)$ for all $k$ such that $i_0<k<i$.   
\end{quote}
\end{enumerate}
\end{lemma}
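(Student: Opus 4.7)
The plan is to prove (1) in three parts --- properness, no new reals, and preservation of Suslin trees --- and then to derive (2) as an almost immediate consequence of properness together with genericity. The two hypotheses on $\S$ enter at different points: the $M$--stationarity of $\S(M)$ drives the diagonal construction of $(N,\mtcl P)$--generic conditions, while the Ellentuck openness of $\S(M)$ is what allows fusion--style refinements without destroying the $\S$--clause. The preservation of Suslin trees is the main obstacle of the lemma and is where the Miyamoto--Yorioka argument does the real work.

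\textbf{Properness and item (2).} Fix a sufficiently large regular $\t^* > \t$, take a countable $N \preceq H_{\t^*}$ with $\mtcl P, X, \t, \S \in N$, and set $M = N \cap H_\t$. Enumerate the dense open subsets of $\mtcl P$ lying in $N$ as $(D_n)_{n<\o}$ and, given $p \in N \cap \mtcl P$, recursively build inside $N$ a descending sequence $p = p_0 \geq p_1 \geq \cdots$ meeting the $D_n$'s, with the extra feature that a cofinal collection of tops $N_k = M^{p_n}_{\mathrm{top}}$ chosen during the construction satisfies $N_k \cap X \in \S(M)$, with $\bigcup_k N_k = M$. At each step the relevant task is coded by a function whose parameters lie in $M$, and $M$--stationarity of $\S(M)$ supplies a witness inside $M$, hence inside $N$. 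Concatenating the $p_n$'s and appending $M$ at the supremum coordinate gives a condition $q \in \mtcl P$, since the $\S$--clause at the new limit coordinate is witnessed by taking $i_0$ to be the length of $p$, and $q$ is $(N,\mtcl P)$--generic by the choice of the $D_n$'s. Once properness is in hand, item (2) drops out: the set of conditions having a given countable $N \cap H_\t$ among the $M^p_i$ is dense by the same construction, so in $V[G]$ the sequence $(M^G_i)_{i<\o_1}$ enumerates a club of $[H_\t^V]^{\al_0}$, and the $\S$--clause is inherited verbatim from the definition of $\mtcl P$.

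\textbf{No new reals and Suslin trees.} For no new reals, given a $\mtcl P$--name $\dot r$ for an element of $\o^\o$, refine the previous construction so that $p_{n+1}$ also decides $\dot r(n)$. The only extra difficulty is forcing $\dot r(n)$ to take a specified value while keeping the new top $N_{n+1} \cap X$ in $\S(M)$; but a basic Ellentuck neighbourhood $[s,Y] \sub \S(M)$ around $N_n \cap X$ supplies a whole Ellentuck--open family of admissible tops, so the extra decision fits inside. For preservation of Suslin trees I would follow Miyamoto--Yorioka: given a ground--model Suslin tree $T$ and a $\mtcl P$--name $\dot A$ for a maximal antichain of $T$, take $T, \dot A \in N$ and build an $(N,\mtcl P)$--generic condition with the additional feature that cofinally many $t \in T \cap N$ of sufficiently large level are handled in the sense that the construction forces some extension of $t$ into $\dot A \cap N$. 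The delicate point, which is the main obstacle, is threading this tree--genericity through the $\S$--clause at each step; the Ellentuck openness of $\S(M)$ is again what makes the interleaving possible, and the outcome is that $\dot A \sub N$ is forced, hence $\dot A$ is countable, so $T$ remains Suslin.
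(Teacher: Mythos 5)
The paper does not actually prove this lemma: everything except the Suslin--tree preservation is attributed to Moore \cite{MOO06}, and the Suslin--tree preservation to Miyamoto \cite{Miyamoto-Yorioka}, so there is no in--paper proof to compare against step by step. Judged on its own terms, your proposal has the right architecture (properness, then no new reals and Suslin trees by refining the same construction, then (2) from density), but the properness step, on which everything else rests, contains a genuine gap.

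A condition of $\mathcal P_{X,\theta,\Sigma}$ requires that below each limit index $i$ there be an $i_0<i$ such that \emph{every} model $M^p_k$ with $i_0<k<i$ satisfies $M^p_k\cap X\in\Sigma(M^p_i)$: this is a tail condition on \emph{all} models, not on cofinally many. You only arrange that a cofinal collection of the \emph{top} models of the $p_n$ lands in $\Sigma(M)$, and you exercise no control over the intermediate models introduced each time $p_n$ is extended into $D_n$; consequently the claim that ``$i_0=$ the length of $p$'' witnesses the $\Sigma$--clause when $M$ is appended at the top is unjustified, and in fact false for the construction as described. The missing idea is Moore's, and it is exactly where Ellentuck openness enters \emph{already for bare properness}, not merely for the fusion and no--new--reals refinements: at stage $n$, apply the $M$--stationarity of $\Sigma(M)$ to a function in $M$ coding Skolem functions for $(H_\theta;\in,p_n,D_n,\dots)$ so as to obtain $Z\in\Sigma(M)\cap M$ of the form $N'\cap X$ for some countable $N'\preccurlyeq H_\theta$ with $N'\in M$ and $p_n,D_n\in N'$; by openness fix a finite $s\subseteq Z$ with $[s,Z]\subseteq\Sigma(M)$; then carry out the entire extension of $p_n$ to $p_{n+1}\in D_n$ \emph{inside} $N'$, insisting that every newly added model contain $s$. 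Since each new model $N''$ is a countable element of $N'$, it satisfies $s\subseteq N''\cap X\subseteq N'\cap X=Z$, hence $N''\cap X\in[s,Z]\subseteq\Sigma(M)$; now \emph{all} models added beyond $p_0$ have trace in $\Sigma(M)$ and appending $M$ really does produce a condition. With this repair, your treatment of item (2), of ``no new reals'' (additionally deciding $\dot r(n)$ at stage $n$), and your sketch of the Miyamoto--Yorioka interleaving for Suslin trees go through along the lines you indicate, though the Suslin--tree paragraph remains a sketch of precisely the step you yourself flag as the main obstacle.
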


\begin{remark} In most interesting cases, the forcing $\mtcl P_{X, \t, \S}$ in the above lemma is 
not $\o$--proper. \end{remark}

In \cite{MOO06}, Moore defines the Mapping Reflection Principle ($\MRP$) as the following statement: Given $X$, 
$\t$, and $\S$ as in the hypothesis of Lemma \ref{MRP}, there is a $\sub$--continuous $\in$--chain $(M_i)_{i<\o_1}$ 
of countable elementary substructures of $H_\t$ such that for every limit ordinal $i<\o_1$ there is some $i_0<i$ with the property 
that $M_{k}\cap X\in \S(M_i)$ for every $k$ such that $i_0<k<i$. 

It follows from Lemma \ref{MRP} that $\MRP$ is a consequence of $\textsc{PFA}$, and of the 
forcing axiom for the class of  forcing notions in $\PR\cap \STP$ not adding new reals. 

We will call a partial order $\mtcl R$ an \emph{$\MRP$--poset} if there are $X$, $\t$ and $\S$ as in the hypothesis of 
Lemma \ref{MRP} such that $\mtcl R=\mtcl P_{X, \t, \S}$.

\begin{proposition}\label{freeze1}
Given a forcing notion $\mtcl P$,  there is $\mtcl P$--name $\dot{\mtcl Q}$ for  a forcing notion such that 
\begin{enumerate} 
\it $\dot{\mtcl Q}$ is forced to be of the form $\Coll(\o_1, \mtcl P)\ast\dot{\mtcl R}$, where $\dot{\mtcl R}$ is a $\Coll(\o_1, \mtcl P)$--name for an $\MRP$--poset, and 
\it $\mtcl P\ast\dot{\mtcl Q}$ $\SSP$--freezes $\mtcl P$, as witnessed by the inclusion map.
  \end{enumerate}
\end{proposition}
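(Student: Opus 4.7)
The plan is to invoke Lemma \ref{freezability-sufficient} at $\lambda = \omega_1$ by arranging for $\mtcl P \ast \dot{\mtcl Q}$ to add a canonical subset $A_G \subseteq \omega_1$ coding the $\mtcl P$-generic $G$ in an absolute-mod-$\SSP(\omega_1)$ manner. The first factor $\Coll(\omega_1, \mtcl P)$ of $\dot{\mtcl Q}$ collapses $|\mtcl P|$ to $\aleph_1$ and produces a generic bijection $\dot f: \omega_1 \to \mtcl P$, which turns the $\mtcl P$-generic $G$ into a subset $A_0 := \dot f^{-1}(G) \subseteq \omega_1$. This intermediate coding is not yet $\SSP$-absolute; the role of $\dot{\mtcl R}$ is to make it so.

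For $\dot{\mtcl R}$ I would use a Moore-style $\MRP$-coding. Working in $V^{\mtcl P \ast \Coll(\omega_1, \mtcl P)}$, choose a cardinal $\theta$ with $\mtcl P, \dot f, A_0 \in H_\theta$, set $X = H_{\omega_2}$, and define an Ellentuck-open, $M$-stationary valued function $\Sigma: [H_\theta]^{\aleph_0} \to \pow{[X]^{\aleph_0}}$ as follows. Fix two canonical open $M$-stationary alternatives $\Sigma^0(M), \Sigma^1(M) \subseteq [X]^{\aleph_0}$, distinguishable by a $\Sigma_1$ stationary-reflection property over $(H_{\omega_2}; \in, \NS_{\omega_1})$, and put $\Sigma(M) = \Sigma^{\chi_{A_0}(M \cap \omega_1)}(M)$. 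Let $\dot{\mtcl R}$ be the $\MRP$-poset $\mtcl P_{X, \theta, \Sigma}$ of Lemma \ref{MRP}; it is proper, preserves Suslin trees and adds no new reals. Set $\dot{\mtcl Q} = \Coll(\omega_1, \mtcl P) \ast \dot{\mtcl R}$.

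After forcing with $\mtcl P \ast \dot{\mtcl Q}$, let $\vec N = (N_i)_{i<\omega_1}$ be the generic $\MRP$-sequence. By the reflection property of $\vec N$ and the design of $\Sigma$, for every $\alpha$ in the canonical club $C_{\vec N} := \{N_i \cap \omega_1 : i<\omega_1\}$ one has $\alpha \in A_0$ if and only if a $\Sigma_1$-formula $\psi(\alpha, \vec N, \NS_{\omega_1})$, expressing which of $\Sigma^0, \Sigma^1$ the tail of $\vec N$ reflects into at the relevant $N_i$, holds. The formula $\psi$ depends on $\vec N$ only via the stationarity of certain induced subsets of $\omega_1$; hence in any outer model $M$ with $\pow{\omega_1}^{V[H]} \cap (\NS_{\omega_1})^M = (\NS_{\omega_1})^{V[H]}$, the set $A_0$ is recovered from $\vec N$ by the same $\Sigma_1$ formula. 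Let $A_G \subseteq \omega_1$ be a canonical code of the triple $(\dot f, A_0, \vec N)$ and let $p$ be a set parameter containing $\mtcl P$ and $\dot{\mtcl Q}$; then the required $\Sigma_1$ formula $\varphi(x, y, z)$ asserts that $y$ decodes to a triple $(f, A_0, \vec N)$, that $A_0 = \{\alpha : \psi(\alpha, \vec N, \NS_{\omega_1})\}$, and that $x$ is the filter on $\mtcl P$ (read off from $z$) generated by $f[A_0]$. Both items of Lemma \ref{freezability-sufficient} then follow: item~(1) by construction, and item~(2) because in any $\NS_{\omega_1}$-preserving outer model the same $\psi$ recovers the same $A_0$, so any two witnesses of $\varphi$ must produce the same $G$.

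The main technical obstacle is the design of the pair $\Sigma^0, \Sigma^1$ and of the formula $\psi$ so that the recovery of $A_0$ from $\vec N$ is genuinely $\NS_{\omega_1}$-absolute; this is essentially Moore's $\MRP$-coding, combined here with the preceding collapse $\Coll(\omega_1, \mtcl P)$ so that the coded set $A_0$ really determines the original $\mtcl P$-generic $G$. Once the coding is in place, the remaining ingredients — the properness and no-new-reals properties of $\dot{\mtcl R}$ from Lemma \ref{MRP}, the extraction of $(f, A_0, \vec N)$ from $A_G$ inside $H_{\omega_2}$, and the $\Sigma_1$-formalization of $\varphi$ — are routine.
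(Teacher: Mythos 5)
Your overall architecture is exactly the paper's: reduce to Lemma \ref{freezability-sufficient}, use $\Coll(\o_1,\mtcl P)$ to turn the $\mtcl P$--generic into a subset $B_G$ of $\o_1$, and then use a Moore--style $\MRP$--poset to make that subset recoverable in any stationary--set--preserving outer model. However, there is a concrete flaw in how you index the coding. You set $\Sigma(M)=\Sigma^{\chi_{A_0}(M\cap\o_1)}(M)$, so the bit ``$\a\in A_0$'' is reflected only at those models $N_i$ of the generic chain with $N_i\cap\o_1=\a$. In the uniqueness argument you then compare your chain $\vec N$ with a competing chain $\vec N'$ in the outer model; these agree only on a club of indices, and the corresponding traces $\{N_i\cap\o_1\}$ and $\{N'_i\cap\o_1\}$ agree only on a club of countable ordinals. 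A fixed $\a$ need not lie in that club, so the two decodings can disagree at $\a$, and you only recover $A_0$ modulo the club filter --- which does not pin down $G$. The paper avoids this by fixing in $V$ a partition $(S_\a)_{\a<\o_1}$ of $\o_1$ into stationary sets and letting the choice between the two alternatives at $N$ be governed by whether the unique $\a$ with $N\cap\o_1\in S_\a$ lies in $B_G$; each bit is then coded at a \emph{stationary} set of models, which necessarily meets the club of agreement (stationarity being preserved by hypothesis), and the contradiction $c(Z_k,Z_i)\wedge\lnot c(Z_k,Z_i)$ goes through.

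Secondly, you leave the pair $\Sigma^0(M),\Sigma^1(M)$ as a black box (``two canonical open $M$--stationary alternatives, distinguishable by a $\Sigma_1$ stationary--reflection property''). This is not a routine detail: the existence of such a pair, open in the Ellentuck topology, $M$--stationary for \emph{every} relevant $M$ regardless of which alternative is selected, and with the selection recoverable from the reflecting chain, is the entire technical content of the proposition. The paper supplies it explicitly: fix a ladder system $\vec C$ in $V$, let $c(X,Y)$ compare $\av C_{\ot(Y)}\cap\sup(\pi_Y``X)\av$ with $\av C_{Y\cap\o_1}\cap X\cap\o_1\av$, and take $\Sigma(N)$ to be the set of $Z\in[\o_2\cap N]^{\al_0}$ for which $c(Z,\o_2\cap N)$ holds iff the relevant bit of $B_G$ is $1$; that this is open and $M$--stationary for either truth value is Moore's lemma. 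Your write--up should either reproduce such a construction or cite it precisely; as it stands the ``main technical obstacle'' you flag is the proof.
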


\begin{proof}
By Lemma \ref{freezability-sufficient}, it suffices to prove that $\mtcl P$ forces that in $V^{\Coll(\o_1, \mtcl P)}$ there is an  $\MRP$--poset $\dot{\mtcl R}$ such that $\Coll(\o_1, \mtcl P)\ast\dot{\mtcl R}$ codes the generic filter for $\mtcl P$ in an absolute way mod.\ $\SSP$ in the sense of that lemma. For this, let us work in $V^{\mtcl P\ast \Coll(\o_1,\,\mtcl P)}$. Let $\dot B_G$ be a subset of $\o_1$ coding the generic filter for $\mtcl P$ in some canonical way, let $\vec C=(C_\delta \,:\, \delta \in \Lim(\omega_1))\in V$ be a ladder system on $\omega_1$ (i.e., every $C_\d$ is a cofinal subset of $\d$ of order type $\o$), and let $(S_\alpha)_{\alpha<\omega_1}\in V$ be a partition of $\omega_1$ into stationary sets. Given $X\sub Y$, countable sets of ordinals, such that  $Y\cap \omega_1$ and $\ot(Y)$ are both limit ordinals and such that $X$ is bounded in $\mbox{sup}(Y)$, let $c(X, Y)$ mean  $$\av C_{\ot(Y)} \cap \mbox{sup}(\pi_Y``X)\av < \av C_{Y\cap \omega_1}\cap X\cap \omega_1\av,$$ where $\pi_Y$ is the collapsing function of $Y$.

  Let  $\t$ be a large enough cardinal and let $\S$ be the function sending a countable $N\elsub H_\t$ to the set of  $Z\in [\o_2\cap N]^{\al_0}$ such that $c(X, \o_2\cap N)$ iff the unique $\a<\o_1$ such that $N\cap \o_1\in S_\a$ is in $B_{\dot G}$. Now,  $X=\o_2$, $\t$ and $\S$ satisfy the hypothesis of Lemma \ref{MRP} (\cite{MOO06}).\footnote{$\S$ is in essence the mapping used by Moore in \cite{MOO06} to prove that $\BPFA$ implies $2^{\al_1}=\al_2$.} Let $\mtcl R=\mtcl P_{\o_2, \t, \S}$. By Lemma \ref{MRP}, $\mtcl R$ adds  $(Z^{\dot G}_i)_{i<\o_1}$, a strictly $\sub$--increasing enumeration of a club of $[\o_2^V]^{\al_0}$, such that for every limit ordinal $i<\o_1$, if $Z^{\dot G}_i\cap\o_1\in S_\a$, then  there is a tail of $k< i$ such that  $c(Z^{\dot G}_{k}, Z^{\dot G}_i)$ if and only if $\a\in B_{\dot G}$.   
  
 Let $\k=\o_2$, let $H$ be $(\mtcl P\ast \Coll(\o_1, \mtcl P))\ast \dot{\mtcl R}$--generic, let $G=H\cap\mtcl P$, and let $A_G$ be a subset of $\omega_1$ which canonically codes $B_G$ and $(Z^G_i)_{i<\omega_1}$. If $M$ is any outer model such that every stationary subset of $\o_1$ in $V[H]$ remains stationary in $M$, then $B_G$ is the unique subset $B$ of $\o_1$ for which there is,  in $M$, a set $A\sub\o_1$ coding $B$ together with an $\sub$--increasing enumeration $(Z_i)_{i<\o_1}$ of  a club of $[\k]^{\al_0}$ with the property that  for every limit ordinal $i<\o_1$, if $Z_i\cap\o_1\in S_\a$, then  there is a tail of  $k< i$ such that  $c(Z_{k}, Z_i)$ if and only if $\a\in B$. Indeed, If $B'\in M$ were another such set, as witnessed by $A'\sub M$, $\a\in B\Delta B'$, and $(Z'_i)_{i<\o_1}\in M$ were an $\sub$--increasing enumeration of a club of $[\k]^{\al_0}$ with the property that for every limit ordinal $i<\o_1$, if $Z'_i\cap\o_1\in S_\a$, then there is a tail of  $k< i$ such that  $c(Z_{k}, Z_i)$ if and only if $\a\in B'$, then we would be able to find some $i$ such that $Z_i=Z'_i$, $Z_i\cap \o_1\in S_\a$, and such that $Z_{k}=Z'_k$ for  all $k$ in some cofinal subset $J$ of $i$. But then we would have that  $c(Z_{k}, Z_i)$, for all $k$ in some final segment of $J$, both holds and fails. 
 
 Finally, it is immediate to see that there is a $\Sigma_1$--formula $\varphi(x, y, z)$ such that $\varphi(G, A_G, p)$ expresses the above property of $G$ and $A_G$ over $(H_{\omega_2}; \in)^{M}$ for any $M$ as above, for $p=(\kappa, \vec C, (S_\a)_{\a<\o_1})$. 
\end{proof}

Using coding techniques from \cite{CAIVEL06}, one can prove the following stronger version of Lemma \ref{freeze1}. However, we do not have any use for this stronger form, so we will not give the proof here. 

\begin{lemma} 
Given a  partial order $\mtcl P$  there is $\mtcl P$--name $\dot{\mtcl Q}$ for  a  partial order with the following properties.
\begin{enumerate} 
\it $\dot{\mtcl Q}$ is forced to be of the form $\Coll(\o_1, \mtcl P)\ast\dot{\mtcl R}$ where $\dot{\mtcl R}$ is a 
$\Coll(\o_1, \mtcl P)$--name for a forcing of the form $\dot{\mtcl R}_0\ast\dot{\mtcl R}_1$, 
where $\dot{\mtcl R}_0$ has the countable chain condition and $\dot{\mtcl R_1}$ is forced to be an $\MRP$--poset. 
\it Suppose $b_0$, $b_1\in \RO(\mtcl P)$ are incompatible, $\bool{B}$ is a complete Boolean algebra, and 
$k_\epsilon:\RO(\mtcl P\ast\dot{\mtcl Q})\restr b_\epsilon\into\bool{B}$ are complete homomorphisms for 
$\epsilon\in\{0, 1\}$. Then $\bool{B}$ collapses $\o_1$.
  \end{enumerate}

\end{lemma}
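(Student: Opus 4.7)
The plan is to refine the construction from Proposition \ref{freeze1}, strengthening the coding of the $\mtcl P$-generic so that it is recovered uniquely in \emph{any} outer model preserving $\omega_1$, not merely in those preserving stationary subsets of $\omega_1$. Once such a coding is in hand, an argument patterned on Lemma \ref{freezability-sufficient} shows that if incompatible conditions $b_0, b_1$ in $\RO(\mtcl P)$ are simultaneously absorbed into a common $\bool{B}$-extension, then $\omega_1$ must be collapsed in that extension.

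First, $\Coll(\omega_1, \mtcl P)$ fixes a surjection $\omega_1 \twoheadrightarrow \mtcl P$ and hence a canonical $B_G \subseteq \omega_1$ coding the $\mtcl P$-generic $G$. The c.c.c.\ component $\dot{\mtcl R}_0$ then carries out a Caicedo--Veli\v{c}kovi\'c style combination of reshaping and almost-disjoint coding: it adds a subset $S \subseteq \omega_1$ reshaping $\omega_1$ (so each $\alpha<\omega_1$ is countable in $L[S \cap \alpha]$), a sequence $\vec A = (A_\xi : \xi < \omega_1)$ of mutually almost-disjoint subsets of $\omega$ recoverable from $S$, and a real $r$ witnessing $\xi \in B_G$ iff $|A_\xi \cap r| = \aleph_0$. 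The key consequence of the reshaping is that the entire triple $(S, \vec A, r)$ can be repackaged as a single subset of $\omega_1$ which, together with a fixed ground-model parameter, admits a $\Sigma_1$ definition over $(H_{\omega_2}; \in)$ --- \emph{without} any appeal to $\NS_{\omega_1}$ as a predicate.

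Next, the $\MRP$-poset $\dot{\mtcl R}_1$ is set up along the lines of Proposition \ref{freeze1}, except that the stationary partition $(S_\alpha)_{\alpha<\omega_1}$ used there is replaced by a decoding mechanism built from the reshaping subset: the mapping $\S$ is chosen so that the sequence $(M_i)_{i<\omega_1}$ generically added by $\dot{\mtcl R}_1$ imprints the repackaged code onto the club structure of $[\omega_2]^{\aleph_0}$. The improvement over Proposition \ref{freeze1} is that in any outer model $M$ with $\omega_1^M = \omega_1^V$, the reshaping subset (and hence $\vec A$ and $r$) is recoverable from the club structure alone, because the pertinent ordinals remain countable in the appropriate $L[\,\cdot\,]$ and the almost-disjoint decoding is absolute. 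This yields a $\Sigma_1$-formula $\varphi(G, A_G, p)$ over $(H_{\omega_2}; \in)$, in a fixed ground-model parameter $p$, whose solution is unique in every $\omega_1$-preserving outer model.

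The freezing argument then follows the template of Lemma \ref{freezability-sufficient}: assume $b_0 \perp b_1$, complete embeddings $k_\epsilon : \RO(\mtcl P \ast \dot{\mtcl Q}) \restr b_\epsilon \to \bool{B}$ for $\epsilon \in \{0,1\}$, and a $\bool{B}$-generic $K$. If $\omega_1^{V[K]} = \omega_1^V$, then both generics $G_0, G_1$ extracted from $k_0^{-1}(K)$ and $k_1^{-1}(K)$ would witness $\varphi$ in $(H_{\omega_2})^{V[K]}$, forcing $G_0 = G_1$ and contradicting $b_0 \perp b_1$; hence $\bool{B}$ collapses $\omega_1$. The main obstacle is the design of $\dot{\mtcl R}_0$: the c.c.c.\ reshaping plus almost-disjoint coding must be engineered so that the decoding genuinely eliminates $\NS_{\omega_1}$ from the parameter list, and simultaneously so that the full composite $\Coll(\omega_1, \mtcl P) \ast \dot{\mtcl R}_0 \ast \dot{\mtcl R}_1$ produces a $\dot{\mtcl Q}$ of the advertised form --- which is precisely the content of the coding technology of \cite{CAIVEL06}.
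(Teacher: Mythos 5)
The paper gives no proof of this lemma (it is explicitly deferred to the coding techniques of \cite{CAIVEL06}), so your proposal can only be measured against the neighbouring results; at that level your overall architecture --- replace the stationary--set--based decoding of Proposition \ref{freeze1} by one that is absolute to all $\o_1$--preserving outer models, then rerun the template of Lemma \ref{freezability-sufficient} --- is the right one. There are, however, two genuine problems. First, the reshaping step cannot be delegated to the c.c.c.\ component: the forcing adding $S\sub\o_1$ with every $\a$ countable in $L[S\cap\a]$ consists of countable approximations ordered by end--extension, so incomparable conditions are incompatible and it has antichains of size continuum already among conditions of length $\o$; it is nowhere near c.c.c., so the $\dot{\mtcl R}_0$ you describe is not of the form the lemma requires. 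The reshaping is also unnecessary for the stated goal: the final code need not be recoverable from a real alone, only from a parameter of size $\al_1$ fixed in the ground model, so the almost--disjoint family $\vec A$ (or a partition sequence) can simply be placed in $V$ as part of $p$, and the c.c.c.\ step need only produce the real $r$. Second, and more importantly, the step carrying the actual content --- how the $\MRP$--poset imprints the code on the club filter of $[\kappa]^{\al_0}$ so that it is recoverable in \emph{every} $\o_1$--preserving outer model, i.e.\ what replaces the stationary partition $(S_\a)_{\a<\o_1}$ whose survival is exactly what confines Proposition \ref{freeze1} to stationary--set--preserving extensions --- is asserted but never supplied; ``a decoding mechanism built from the reshaping subset'' is precisely where the oscillation machinery of \cite{CAIVEL06} would have to enter, and it is left as a black box. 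Your closing freezing argument is correct in shape, but it inherits this gap: without a concretely specified $\o_1$--absolute decoding, the conclusion $G_0=G_1$ in $V[K]$ does not follow.

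Note also that the paper already contains the needed $\o_1$--absolute c.c.c.\ coding. By Lemma \ref{lemma-velickovic} and Proposition \ref{freeze1.5}, the partition--homogeneity coding yields uniqueness of the decoded generic in every outer model in which $\o_1$ is preserved (two decompositions of $\kappa$ into countably many homogeneous pieces must share a piece--intersection with at least two elements), and Proposition \ref{freeze1.5}(2) states the ``collapses $\o_1$'' conclusion verbatim. So the most economical route to the present lemma is to take $\dot{\mtcl R}_0$ to be that c.c.c.\ forcing (arranging the collapse so that $\cf(2^{\al_0})$ acquires size $\al_1$, as in Proposition \ref{freeze1.5}) and $\dot{\mtcl R}_1$ to be the $\MRP$--poset of Proposition \ref{freeze1}; conclusion (2) then follows from the $\dot{\mtcl R}_0$--part alone, since any complete homomorphism on $\RO(\mtcl P\ast\dot{\mtcl Q})\restr b_\epsilon$ restricts to one on the complete subalgebra $\RO(\mtcl P\ast\Coll(\o_1,\mtcl P)\ast\dot{\mtcl R}_0)\restr b_\epsilon$. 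I would recommend either adopting that route or filling in the Caicedo--Veli\v{c}kovi\'{c} oscillation coding explicitly in place of your reshaping step.
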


Our second freezing poset comes from \cite[Section 1]{Velickovic}, where the following is proved, 
using a result of Todor\v{c}evi\'{c} from \cite{Todor}.

\begin{lemma}\label{lemma-velickovic}
There is a sequence $((K^\x_0, K^\x_1)\,:\,\x<\o_1)$ of colourings of $[\k]^2$, for $\kappa=\cf(2^{\al_0})$, 
with the property that in any $\o_1$--preserving outer model in which $\av\k\av=\al_1$, if $B\sub\o_1$, then 
there is a c.c.c.\ partial order $\mtcl R$ forcing the existence of $\al_1$--many decompositions 
$\k=\bigcup_{n<\o}X^\x_n$, for $\x<\o_1$, such that
for all $\x <\o_1$:
\begin{itemize} 
\item
for some fixed $i_\x=0,1$, $X^\x_n$ is $K^\x_{i_\x}$--homogeneous for all $n<\o$;
\item
$\x\in B$ if and only if $i_\x=0$.
\end{itemize}
\end{lemma}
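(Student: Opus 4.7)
The plan is to derive the lemma by combining a coloring theorem of Todor\v{c}evi\'{c} from \cite{Todor} with the product-coding construction of Veli\v{c}kovi\'{c} from \cite{Velickovic}. The first ingredient provides, for $\kappa = \cf(2^{\aleph_0})$, a pair of colorings $K_0, K_1 : [\kappa]^2 \to 2$ such that, for each $i \in \{0,1\}$, the natural poset $\mathcal{R}(K_i)$ of finite partial maps $p : \kappa \to \omega$ whose fibers $p^{-1}(n)$ are $K_i$--homogeneous is c.c.c., and, more importantly, this c.c.c.\ property is \emph{robust}, in the sense that it survives to any $\omega_1$--preserving outer model in which $|\kappa| = \aleph_1$. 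The robustness is what permits the argument to go through even when $\kappa$ is much larger than $\aleph_1$ in $V$.

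Granted this base ingredient, I would set up the sequence $((K^\xi_0, K^\xi_1) : \xi < \omega_1)$ as $\omega_1$--many sufficiently independent copies of Todor\v{c}evi\'{c}'s colorings. A convenient packaging is to partition $\kappa$ into $\omega_1$--many pairwise disjoint cofinal subsets $A_\xi$, each of cofinality $2^{\aleph_0}$, and apply Todor\v{c}evi\'{c}'s construction within each $A_\xi$, extending to all of $[\kappa]^2$ by some fixed default outside. The independence then guarantees that for \emph{any} selector $\bar{i} = (i_\xi : \xi < \omega_1) \in 2^{\omega_1}$, the finite--support product $\prod_{\xi < \omega_1} \mathcal{R}(K^\xi_{i_\xi})$ is c.c.c.\ by a coordinatewise $\Delta$--system argument.

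Next, given $B \subseteq \omega_1$ in an $\omega_1$--preserving outer model $W$ with $|\kappa| = \aleph_1$, I define the selector by $i_\xi = 0$ if $\xi \in B$ and $i_\xi = 1$ otherwise, and let $\mathcal{R} = \prod_{\xi < \omega_1} \mathcal{R}(K^\xi_{i_\xi})$ computed in $W$ with finite support. By the robustness of the c.c.c., $\mathcal{R}$ is c.c.c.\ in $W$. An $\mathcal{R}$--generic filter then produces, simultaneously for all $\xi < \omega_1$, decompositions $\kappa = \bigcup_{n < \omega} X^\xi_n$ with each $X^\xi_n$ being $K^\xi_{i_\xi}$--homogeneous, while the equivalence $\xi \in B \Leftrightarrow i_\xi = 0$ is built into the choice of selector.

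The main obstacle is justifying the c.c.c.\ of $\mathcal{R}$ in $W$ rather than merely in $V$: one cannot simply appeal to c.c.c.\ in $V$ and upward absoluteness, because $W$ can contain antichains not present in $V$. The resolution, which is the combinatorial core of Todor\v{c}evi\'{c}'s theorem, is to show that (in)compatibility of conditions in each factor $\mathcal{R}(K^\xi_{i_\xi})$ is governed by a finitary rule depending only on finite data about the colorings. Any putative $\aleph_1$--sized antichain in $W$ can then be refined, via a $\Delta$--system in $W$, to a family whose pairwise incompatibility would violate a finitary consequence of the coloring property that already fails in $V$; this absoluteness of the c.c.c.\ witness is the substantive point and the place where the specific combinatorics of the Todor\v{c}evi\'{c} colorings enters.
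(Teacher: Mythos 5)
The paper does not actually prove this lemma: it is imported verbatim from Section 1 of Veli\v{c}kovi\'{c}'s paper \cite{Velickovic}, which in turn rests on a colouring theorem of Todor\v{c}evi\'{c} \cite{Todor}. So your decision to treat Todor\v{c}evi\'{c}'s colourings as the base ingredient, form a finite--support product selected by $B$, and locate the whole difficulty in verifying the c.c.c.\ in the outer model $W$ rather than in $V$ is the right top--level picture, and matches the route the paper implicitly takes.

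However, your construction of the sequence $((K^\xi_0,K^\xi_1) : \xi<\omega_1)$ has a genuine gap. If $K^\xi_0,K^\xi_1$ are obtained by running Todor\v{c}evi\'{c}'s construction inside a piece $A_\xi$ of a partition of $\kappa$ and extending to all of $[\kappa]^2$ ``by some fixed default outside'', then the colour that loses the default cannot satisfy the conclusion of the lemma: if, say, every pair not contained in $A_\xi$ is placed in $K^\xi_0$, then a $K^\xi_1$--homogeneous set meets $\kappa\setminus A_\xi$ in at most one point, so no countable family of $K^\xi_1$--homogeneous sets can cover $\kappa$ --- and $\kappa\setminus A_\xi$ has size $\kappa$, being the union of the remaining pieces. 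Hence for every $\xi$ with $i_\xi=1$ the required decomposition fails for your colourings no matter which $\mathcal{R}$ you force with; the defect is in the colourings themselves, not in the forcing. (A smaller slip: a cofinal subset of the regular cardinal $\kappa=\cf(2^{\aleph_0})$ has cofinality $\kappa$, not $2^{\aleph_0}$.) In Veli\v{c}kovi\'{c}'s construction all $\omega_1$ pairs of colourings live on the full square $[\kappa]^2$: one fixes a single sequence of reals indexed by $\kappa$ and obtains the different pairs by varying a parameter (a subset of $\omega$ through which the splitting function of the reals is filtered), and it is this shared underlying structure --- not disjointness of domains in $\kappa$ --- that yields both the existence of the decompositions for either colour and the absolute c.c.c.\ of the finite--support product. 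Your closing paragraph correctly describes the shape of the $\Delta$--system argument, but since the compatibility criterion it must exploit is a property of the colourings, the argument cannot be completed from the disjoint--pieces packaging you propose.
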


\begin{proposition}\label{freeze1.5}
Given a forcing notion $\mtcl P$,  there is $\mtcl P$--name $\dot{\mtcl Q}$ for  a forcing notion with the following properties.
\begin{enumerate} 
\it Letting $\mu = \av\mtcl P\av+\cf(2^{\al_0})$, $\dot{\mtcl Q}$ is forced to be a forcing of the form 
$\Coll(\o_1,\, \mu)\ast\dot{\mtcl R}$, where $\dot{\mtcl R}$ is a $\Coll(\o_1,\, \mu)$--name for a c.c.c.\  forcing.
\it $\mtcl P\ast\dot{\mtcl Q}$ $\SSP$--freezes $\mtcl P$, as witnessed by the inclusion map. In fact, if 
$b_0$, $b_1\in \RO(\mtcl P)$ are incompatible, $\bool{B}$ is a complete Boolean algebra, and 
$k_\epsilon:\RO(\mtcl P\ast\dot{\mtcl Q})\restr b_\epsilon\into\bool{B}$ is a complete homomorphism for 
$\epsilon\in\{0, 1\}$, then $\bool{B}$ collapses $\o_1$.
  \end{enumerate}
\end{proposition}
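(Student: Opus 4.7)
The plan is to define $\dot{\mtcl Q}$ as $\Coll(\omega_1,\mu)\ast\dot{\mtcl R}$, where $\dot{\mtcl R}$ is a name (computed after the collapse) for the c.c.c.\ forcing provided by Lemma \ref{lemma-velickovic} applied to a canonical set $B_{\dot G}\subseteq\omega_1$ coding the $\mtcl P$-generic filter. The freezability is then obtained via Lemma \ref{freezability-sufficient}, and the stronger conclusion is obtained by exploiting the rigidity of Todor\v cevi\'c--Veli\v ckovi\'c colorings in $\omega_1$-preserving outer models.

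First I would work inside $V^{\mtcl P\ast\Coll(\omega_1,\mu)}$. Here $|\mtcl P|=|\kappa|=\aleph_1$ where $\kappa=\cf(2^{\aleph_0})^V$, so fix a canonical bijection between $\mtcl P$ and $\omega_1$ and let $B_{\dot G}\subseteq\omega_1$ be the resulting coding of the $\mtcl P$-generic $G$. Apply Lemma \ref{lemma-velickovic} (with the coloring sequence $\langle(K^\xi_0,K^\xi_1):\xi<\omega_1\rangle\in V$ as parameter) to $B_{\dot G}$ to obtain a c.c.c.\ forcing $\mtcl R$ adding $\aleph_1$-many decompositions $\kappa=\bigcup_{n<\omega}X^\xi_n$, $\xi<\omega_1$, with each $X^\xi_n$ being $K^\xi_{i_\xi}$-homogeneous and with $i_\xi=0$ iff $\xi\in B_{\dot G}$. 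Let $\dot{\mtcl Q}$ name the two-step iteration $\Coll(\omega_1,\mu)\ast\dot{\mtcl R}$; this handles clause (1) of the statement.

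To verify $\SSP$-freezability via Lemma \ref{freezability-sufficient}, given a $V$-generic filter $H$ for $\mtcl P\ast\dot{\mtcl Q}$ with $G=H\cap\mtcl P$, let $A_G\subseteq\omega_1$ canonically code the triple consisting of $B_G$, the collapse generic, and the sequence $\langle X^\xi_n:\xi<\omega_1,\,n<\omega\rangle$ of homogeneous sets (viewed as subsets of $\omega_1$ through the collapse). Let $p$ be the parameter comprising $\kappa$ and the $V$-coloring sequence, and let $\varphi(G,A,p)$ be the $\Sigma_1$ assertion over $(H_{\omega_2};\in,\NS_{\omega_1})$ that $A$ codes a subset $B\subseteq\omega_1$ and decompositions $\kappa=\bigcup_n X^\xi_n$ such that each $X^\xi_n$ is $K^\xi_{i_\xi}$-homogeneous with $\xi\in B\Leftrightarrow i_\xi=0$, and that $B$ codes a $V$-generic filter for $\mtcl P$ equal to $G$. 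In any outer model $M$ of $V[H]$ preserving stationary subsets of $\omega_1$, $\omega_1$ is preserved, so $|\kappa|=\aleph_1$ in $M$ (as $\kappa\leq\mu$ and $\mu$ is collapsed in $V[H]\subseteq M$); and the Todor\v cevi\'c/Veli\v ckovi\'c property of the fixed colorings says that no $\xi<\omega_1$ can simultaneously admit a countable $K^\xi_0$-homogeneous covering and a countable $K^\xi_1$-homogeneous covering of such a $\kappa$. Hence at most one $B\subseteq\omega_1$ can satisfy $\varphi$, and uniqueness of the decoded $G$ follows, establishing clause (2) in its weaker $\SSP$-freezing form through Lemma \ref{freezability-sufficient}.

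For the sharper conclusion, suppose $b_0,b_1\in\RO(\mtcl P)$ are incompatible and $k_\epsilon:\RO(\mtcl P\ast\dot{\mtcl Q})\restr b_\epsilon\to\bool{B}$ are complete homomorphisms. Let $K$ be $\bool{B}$-generic; then in $V[K]$ both $H_\epsilon=k_\epsilon^{-1}(K)$ are $V$-generic for $\mtcl P\ast\dot{\mtcl Q}$, so $G_\epsilon=H_\epsilon\cap\mtcl P$ contain $b_\epsilon$ respectively and are distinct. Each carries its coded $B_{G_\epsilon}\subseteq\omega_1$ and accompanying homogeneous decompositions of $\kappa$, all sitting in $V[K]$. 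If $\omega_1$ were preserved by $\bool{B}$, then $\kappa$ would still have cardinality $\aleph_1$ in $V[K]$, and the rigidity argument from the previous paragraph would force $B_{G_0}=B_{G_1}$, hence $G_0=G_1$, contradicting $b_0\perp b_1$. Thus $\bool{B}$ must collapse $\omega_1$.

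The main obstacle I expect is ensuring that the decoding property used in the last paragraph---``no $\xi$ admits both a countable $K^\xi_0$-covering and a countable $K^\xi_1$-covering of $\kappa$ in any $\omega_1$-preserving outer model''---is exactly the strong rigidity property furnished by the construction of the Todor\v cevi\'c/Veli\v ckovi\'c colorings underlying Lemma \ref{lemma-velickovic}; granting that, the rest of the proof is a clean adaptation of the template of Proposition \ref{freeze1}, with the Veli\v ckovi\'c c.c.c.\ coding replacing the $\MRP$-poset coding.
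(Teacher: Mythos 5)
Your proposal follows essentially the same route as the paper's proof: collapse $\mu$ to $\omega_1$, code the $\mtcl P$--generic as $B_{\dot G}\subseteq\omega_1$, let $\dot{\mtcl R}$ be the Veli\v{c}kovi\'{c} c.c.c.\ forcing of Lemma \ref{lemma-velickovic} tying $B_{\dot G}$ to homogeneous decompositions of $\kappa=\cf^V(2^{\aleph_0})$, and decode via Lemma \ref{freezability-sufficient}. The one step you flag as a possible obstacle --- that no $\xi$ can admit both a countable $K^\xi_0$--homogeneous covering and a countable $K^\xi_1$--homogeneous covering of $\kappa$ --- needs no special rigidity of the Todor\v{c}evi\'{c}--Veli\v{c}kovi\'{c} colourings beyond their being a partition of $[\kappa]^2$: if $\kappa=\bigcup_n X_n=\bigcup_m Y_m$ with the $X_n$ $K^\xi_0$--homogeneous and the $Y_m$ $K^\xi_1$--homogeneous, then some $X_n\cap Y_m$ is uncountable, and any pair $s\in[X_n\cap Y_m]^2$ would lie in $K^\xi_0\cap K^\xi_1=\emptyset$, a contradiction; this is exactly how the paper closes the argument, and since it only uses that $\omega_1^V$ is not collapsed (not stationary set preservation), it yields the stronger conclusion in (2) just as you indicate.
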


\begin{proof}
 Let us work in $V^{\mtcl P\ast \Coll(\o_1,\,\mu)}$. Let $\dot B_G$ be a subset of $\o_1$ coding the generic filter for 
 $\mtcl P$ in some canonical way, let $\vec K=((K^\x_0, K^\x_1)\,:\,\x<\o_1)$ be a sequence of colourings of $[\k]^2$, for 
 $\kappa=\cf^V(2^{\al_0})$, as given by Lemma \ref{lemma-velickovic}, and let $\mtcl R$  be a c.c.c.\ partial order forcing 
 the existence of $\al_1$--many decompositions $\k=\bigcup_{n<\o}X^\x_n$ such that for all 
 $\x <\o_1$,
 \begin{itemize}
 \item there is $i_\x$ such that
 $[X^\x_n]^2\sub K^\x_{i_\x}$ for all $n<\o$
and 
\item $\x\in B$ if and only if for all $n<\o$, 
 $[X^\x_n]^2\sub K^\x_0$.
 \end{itemize}
 
Let $A_G$ be a subset of $\omega_1$ which canonically codes $B_G$ and $\bp{(X^\x_n)_{n<\o}:\x<\o_1}$. 
If $M$ is any outer model in which $\o_1^V$ has not been collapsed, then $B_G$ is the unique $B\sub\o_1$ 
for which there is,  in $M$, a set $A\sub\o_1$ coding $B$ together with $\al_1$--many decompositions  
$\bp{(X^\x_n)_{n<\o}:\x<\o_1}$ of $\k$ such that for all $n<\o$ and $\x <\o_1$, $\x\in B$ if and only if 
$[X^\x_n]^2\sub K^\x_0$. Indeed, if $B'\in M$ were another such set, as witnessed by $A'\sub M$, 
$\x\in B\Delta B'$, and $\al_1$--many decompositions  
$\bp{(Y^\x_n)_{n<\o}:\x<\o_1}\in M$ 
of $\k$,
then there would be some $n$ and $m$ such that $X^\x_n\cap Y^\x_m$ 
has more than one element, and is in fact uncountable. But then, for every 
$s\in[X^\x_n\cap Y^\x_m]^2$, we would have that $s$ is both in $K^\x_0$ and $K^\x_1$, which is impossible.

 Finally, it is immediate to see that there is a $\Sigma_1$--formula $\varphi(x, y, z)$ such that $\varphi(G, A_G, p)$ expresses the above property of $G$ and $A_G$ over $(H_{\omega_2}; \in)^{M}$ for any $M$ as above, for $p=\vec K$. 
\end{proof}

The following principle, as well as Lemma \ref{psiAC-measurable}, are due to Woodin (\cite{WoodinBOOK}). 

\begin{definition} $\psi_{\AC}$ is the following statement: Suppose $S$ and $T$ are stationary and co-stationary subsets of $\o_1$. Then there are $\a<\o_2$ and a club $C$ of $[\a]^{\al_0}$ such that for every $X\in C$, $X\cap\o_1\in S$ if and only if $\ot(X)\in T$.
\end{definition}  

The $\AC$--subscript in the above definition hints at the fact that $\psi_{\AC}$ implies $L(\mtcl P(\o_1))\models\AC$ 
(which comes from an argument similar to the one in the proof of Lemma \ref{freeze2}).

Our third freezing poset is essentially the following forcing for adding a suitable instance of $\psi_{AC}$ by initial segments, using a measurable cardinal $\kappa$ (i.e., turning $\kappa$ into an ordinal $\alpha$ as required by the conclusion of $\psi_{AC}$).  

\begin{lemma}\label{psiAC-measurable}
Let $\kappa$ be a measurable cardinal and let $S$ and $T$ be stationary and co-stationary subsets of $\o_1$. 
Let $\mtcl Q=\mtcl Q_{\k, S, T}$ be the set, ordered by reverse inclusion, of all countable  $\sub$--continuous 
$\in$--chains  $p=(M^p_i)_{i\leq\n}$ of countable elementary substructures of $H_\k$ such that for 
every $i\leq\n$, $M^p_i\cap\o_1\in S$ if and only if $\ot(M^p_i\cap\k)\in T$. 
 
 \begin{enumerate}

\item $\mtcl Q$ is ${<}\omega_1$--semiproper, preserves Suslin trees, and does not add new reals.

\item if $G$ is $\mtcl Q$--generic over $V$ and $M^G_i= M^p_i$ whenever $p\in G$ and $i\in \dom(p)$, then 
$(M^G_i)_{i<\o_1}$ is the $\sub$--increasing enumeration of a club of $[H_\k^V]^{\al_0}$ such that for every limit ordinal $i<\o_1$, $M^G_i\cap\o_1\in S$ if and only if $\ot(M^G_i\cap\k)\in T$. 
\end{enumerate}
\end{lemma}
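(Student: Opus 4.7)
The plan is to derive part (2) from part (1) by standard density arguments and to focus the real work on part (1), where the measurability of $\kappa$ enters essentially.

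\textbf{Part (2).} Once part (1) gives preservation of $\omega_1$, the sets $E_\alpha=\{p\in\mtcl Q:\alpha\in\dom(p)\}$ for $\alpha<\omega_1$ and $D_x=\{p\in\mtcl Q:x\in M^p_i\text{ for some }i\in\dom(p)\}$ for $x\in H_\kappa$ are dense in $\mtcl Q$. Density of $E_{\alpha+1}$ is the successor-extension step, where one must adjoin one more countable elementary submodel with prescribed $\omega_1$-part and correct $T/(\omega_1\setminus T)$ membership of the $\kappa$-order-type; this is supplied by the key lemma below. At limit stages the top model is forced by continuity. Hence the generic chain $(M^G_i)_{i<\omega_1}$ enumerates a club of $[H_\kappa^V]^{\al_0}$ with the desired $S\leftrightarrow T$ correlation at every index, by the very definition of $\mtcl Q$.

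\textbf{Part (1), ${<}\omega_1$-semiproperness.} Fix an indecomposable $\rho<\omega_1$, a $\sub$-continuous $\in$-chain $(N_i)_{i\leq\rho}$ of countable elementary submodels of a large $H_\theta$ with $\mtcl Q,\kappa,S,T\in N_0$, and $p\in N_0\cap\mtcl Q$. I build $q\leq p$ by recursion on $i\leq\rho$, producing conditions $p_i\in N_{i+1}$ (with $p_0=p$) whose domains reach $\delta_i:=N_i\cap\omega_1$, such that $p_i$ is $(N_j,\mtcl Q)$-semigeneric for every $j\leq i$ and the top model $M^{p_i}_{\dom(p_i)}$ is a countable elementary submodel of $H_\kappa$ with $\omega_1$-part $\delta_i$ and $\kappa$-order-type in $T$ iff $\delta_i\in S$. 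Successor stage $i+1$ is a standard $\omega$-step semigenericity recursion inside $N_{i+2}$, enumerating $\mtcl Q$-names in $N_{i+1}$ for countable ordinals and meeting the corresponding deciding dense sets, followed by a final choice of top model produced by the key lemma. Limit stages glue via unions, the $S\leftrightarrow T$ clause at the new top model following from the inductive hypothesis together with the indecomposability of $\rho$. At $i=\rho$ one sets $q=p_\rho$.

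\textbf{Main obstacle: the key lemma.} The essential claim is: given a countable $N\prec H_\theta$ with $\kappa,S,T\in N$, a countable $X\prec H_\kappa$ with $X\in N$, and $\delta:=N\cap\omega_1$, there is a countable $M\prec H_\kappa$ with $X\in M$, $M\cap\omega_1=\delta$ and $\ot(M\cap\kappa)\in T\Leftrightarrow\delta\in S$. To prove it, fix a normal measure $U$ on $\kappa$; by $\omega_1$-completeness, for every $f:\kappa\to\omega_1$ with $f\in N$ there is a unique $\xi_f<\omega_1$ with $\{\alpha:f(\alpha)=\xi_f\}\in U$, and necessarily $\xi_f\in N\cap\omega_1=\delta$. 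The intersection $B$ of these measure-one sets over the countably many such $f\in N$ is in $U$ by $\omega_1$-completeness, and every $\alpha\in B$ satisfies $\Sk^{H_\theta}(N\cup\{\alpha\})\cap\omega_1=\delta$. Iterating the Skolem hull operation $\omega$ times along a carefully chosen sequence of elements of $B$ yields countable models $M\prec H_\kappa$ with $X\in M$, $M\cap\omega_1=\delta$, and $\ot(M\cap\kappa)$ running over a club of $\omega_1$ as the parameters of the iteration vary; since $T$ and $\omega_1\setminus T$ are both stationary they both meet this club, producing the required $M$. Finally, no-new-reals is obtained by enriching the list of names decided during the semigeneric construction with the evaluations of a fixed name for a real (a classical Shelah-style completeness argument), and Suslin-tree preservation follows by adding the Suslin tree to the parameters and invoking a Miyamoto-type refinement of the semigeneric-condition construction, precisely as done for $\MRP$-posets.
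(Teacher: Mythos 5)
Your overall route is the paper's: use a normal measure on $\kappa$ to end-extend a countable model without moving its trace on $\omega_1$, use stationarity of $T$ and of $\o_1\setminus T$ to hit the required order type, and then run a standard semigeneric recursion, with Suslin-tree preservation obtained by a Miyamoto-style refinement (this is exactly what the paper does, and the paper is equally brief about part (2) and about not adding reals). However, there is a genuine gap at the heart of your key lemma. You derive from $\omega_1$-completeness only that $\Sk^{H_\theta}(N\cup\{\alpha\})\cap\o_1=\delta$ for $\alpha$ in your set $B$, and then assert that iterating the hull operation $\o$ times ``along a carefully chosen sequence'' makes $\ot(M\cap\kappa)$ ``run over a club as the parameters vary.'' Nothing you have established supports this: a finite or $\o$-fold Skolem hull can scatter new ordinals arbitrarily inside $N\cap\kappa$'s gaps, and there is no reason the set of achievable order types should be closed or even unbounded in any controlled way. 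The missing ingredient is \emph{normality}: for $\eta\in\bigcap(\mtcl U\cap N)$, every $\gamma\in\Sk^{H_\theta}(N\cup\{\eta\})\cap\eta$ has the form $f(\eta)$ for a regressive $f\in N$, which by normality is constant on a set in $\mtcl U\cap N$, so the hull \emph{end-extends} $N\cap\kappa$. One then iterates this $\o_1$ (not $\o$) times, taking unions at limits, obtaining a $\sub$-continuous sequence $(N_\nu)_{\nu<\o_1}$ with $N_\nu\cap\o_1=\delta$ fixed and $\nu\mapsto\ot(N_\nu\cap\kappa)$ strictly increasing and continuous; its range is then genuinely a club of $\o_1$, and stationarity of $T$ and its complement finishes the key lemma. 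Without the normality/end-extension step your club claim is unjustified and the argument does not close.

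A secondary point: in your ${<}\o_1$-semiproperness recursion you dismiss the limit stages with ``unions plus indecomposability of $\rho$.'' Indecomposability is not the operative issue there. At a limit stage the top model of the amalgamated condition is \emph{forced} to be the union of all models chosen so far, so there is no room to invoke the key lemma at that point; one must coordinate the choices of the end-extensions $\tilde N^\zeta$ across earlier stages so that the forced union already satisfies the $S\leftrightarrow T$ correlation (the paper handles this only with the phrase ``run the above construction for each $N^\xi$ working inside $N^{\xi+1}$,'' so your sketch is no less detailed than the paper's, but the reason you give for why limits work is not the right one).
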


\begin{proof} The proofs of all assertions, except the fact that $\mtcl Q$ preserves Suslin trees, are standard. 
For the reader's convenience,  we sketch the proof that $\mtcl Q$ is ${<}\o_1$--semiproper,  though. We 
also prove that $\mtcl Q$ preserves Suslin trees. 

We get the ${<}\o_1$--semiproperness of $\mtcl Q$ as follows: the main point is that if $\mtcl U$ is a normal 
measure on $\kappa$, $N$ is an elementary submodel of some $H_\theta$ such that $\mtcl U\in N$ and 
$\av N\av<\k$, and $\eta\in \bigcap(\mtcl U\cap N)$, then $$N[\eta]:=\{f(\eta)\,:\, f\in N,\,f\mbox{ a function with 
domain }\kappa\}$$ is an elementary submodel of $H_\theta$ such that $N\cap\kappa$ is a proper initial segment 
of $N[\eta]\cap\kappa$ ($\eta\in N[\eta]$ is above every ordinal in $N\cap\kappa$, and any $\gamma\in N[\eta]\cap\eta$ 
is of the form $f(\eta)$, for some regressive function $f:\kappa\into\kappa$ in $N$ which, by normality of $\mtcl U$, 
is constant on some set in $\mtcl U\cap N$). If $N$ is countable, then by iterated applications of this construction, 
taking unions at nonzero limit stages, one  obtains a $\sub$--continuous and $\sub$--increasing sequence 
$(N_\nu)_{\nu<\omega_1}$ of elementary submodels of $H_\theta$ such that $N_0=N$ and such that 
$N_{\n'}\cap\kappa$ is a proper end--extension of $N_\n\cap\kappa$ for all $\n<\n'<\o_1$. 
Since $(\ot(N_\n\cap\kappa)\,:\,\n<\omega_1)$ is then a strictly increasing and continuous sequence 
of countable ordinals, we may find, by stationarity of $S$ and $\o_1\setminus S$, some $\n<\o_1$ 
such that $N\cap\omega_1\in S$ if and only if $\ot(N_\nu)\in T$. 

This observation yields the ${<}\omega_1$--semiproperness of $\mtcl Q$ since, given $\a<\o_1$ and a $\in$--chain $(N^\xi)_{\xi<\alpha}$ of countable elementary submodels of some $H_\theta$ such that $\mtcl U$, $S$, $T\in N^0$, one can run the above  construction for each $N^\xi$ by working inside $N^{\xi+1}$. 

The preservation of Suslin trees can be proved by the following version of the argument in \cite{Miyamoto-Yorioka} for showing that $\MRP$--forcings preserve Suslin trees. Suppose $U$ is a Suslin tree, $\dot A$ is a $\mtcl Q$--name for a maximal antichain of $U$, and $N$ is a countable elementary submodel of some large enough $H_\t$ containing $U$, $\dot A$, and all other relevant objects. By moving to an $\o_1$--end-extension of $N$ if necessary as in the proof of ${<}\o_1$--semiproperness, we may assume that $N\cap\o_1\in S$ if and only if $\ot(N\cap\k)\in T$.  Let $(u_n)_{n<\o}$ enumerate all nodes in  $U$ of height $N\cap\o_1$. Given a condition $p\in\mtcl Q$ in $N$, we may build an $(N, \mtcl Q)$--generic sequence $(p_n)_{n<\o}$ of conditions in $N$ extending $p$ and such that for every $n$ there is some $v\in U$ below $u_n$ such that $p_{n+1}$ forces $v\in \dot A$. By the choice of $N$, we have in the end that $p^\ast=\bigcup_n p_n\cup\{(N\cap\o_1, N\cap \k)\}$ is a condition in $\mtcl Q$ extending $p$. But, by construction of $(p_n)_{n<\o}$, $p^\ast$ forces $\dot A$ to be contained in the countable set $U\cap N$: If $u\in \dot A\setminus N$, and $u_n$ is the unique node of height $N\cap\o_1$ such that $u_n\leq_U u$, then $u_n$ is forced to extend some node in $\dot A$ of height less than $N\cap\o_1$, which is a contradiction since $\dot A$ was supposed to be a name for an antichain. 

It remains to show how to find $p_{n+1}$ given $p_n$. Working in $N$, we first extend $p_n$ to some $p_n'$ in some suitable dense subset $D\in N$ of $\mtcl Q$. 
Since $U$ is a Suslin tree, we have that $u_n$ is totally $(U, N)$--generic, in the sense that for every antichain $B$ of $U$ in $N$, $u_n$ extends a unique node in $B$. Also, the set $E\in N$ of $u\in U$ for which there is some $v\in U$ below $u$ and some $q\in\mtcl Q$ extending $p_n'$ and forcing that $v\in\dot A$ is dense in $U$. It follows that we may find  some $u\in E\cap N$ below $u_n$, as witnessed by some $q\in \mtcl Q\cap N$ and some $v\in U\cap N$. But then we may let $p_{n+1}=q$. 
\end{proof}

Given a measurable cardinal $\kappa$, we will call a partial order $\mtcl R$ a \emph{$\psi_{\AC}^\kappa$--poset} if $\mtcl R$ is of the form $\mtcl Q_{\kappa, S, T}$ for stationary and co-stationary subsets $S$, $T$ of $\omega_1$.

\begin{proposition}\label{freeze2}

Given a forcing notion $\mtcl P$  and a measurable cardinal $\kappa$, there is $\mtcl P$--name $\dot{\mtcl Q}$ for  a forcing notion such that 
\begin{enumerate} 
\it $\dot{\mtcl Q}$ is forced to be of the form $\Coll(\o_1, \mtcl P)\ast\dot{\mtcl R}$, where $\dot{\mtcl R}$ is a $\Coll(\o_1, \mtcl P)$--name for a $\psi^\kappa_{AC}$--poset, and 
\it $\mtcl P\ast\dot{\mtcl Q}$ $\SSP$--freezes $\mtcl P$, as witnessed by the inclusion map.
  \end{enumerate}
\end{proposition}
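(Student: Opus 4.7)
The plan is to apply Lemma \ref{freezability-sufficient} with $\lambda=\omega_1$, following the template of the proofs of Propositions \ref{freeze1} and \ref{freeze1.5}, but using a $\psi_{\AC}^\kappa$-poset as the coding device in place of an $\MRP$-forcing or a c.c.c.\ coloring forcing. Replacing $\kappa$ by a larger measurable if necessary, I assume by Levy--Solovay that $\kappa$ remains measurable in $V^{\mtcl P\ast\Coll(\omega_1,\mtcl P)}$, so that Lemma \ref{psiAC-measurable} is available there.

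Fix a partition $(S_\xi)_{\xi<\omega_1}$ of $\omega_1$ into stationary sets and a stationary co-stationary $T\subseteq\omega_1$. Working in $V^{\mtcl P\ast\Coll(\omega_1,\mtcl P)}$, let $B_G\subseteq\omega_1$ canonically code the $\mtcl P$-generic $G$, arranged so that both $B_G$ and $\omega_1\setminus B_G$ are nonempty, and set $S^*=\bigsqcup_{\xi\in B_G}S_\xi$, which is then stationary and co-stationary. Let $\dot{\mtcl R}=\mtcl Q_{\kappa,S^*,T}$; by Lemma \ref{psiAC-measurable} this is a $\psi_{\AC}^\kappa$-poset, which handles clause (1). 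Let $H$ be generic for $\mtcl P\ast\Coll(\omega_1,\mtcl P)\ast\dot{\mtcl R}$; the lemma yields a $\subseteq$-continuous $\in$-chain $(M_i)_{i<\omega_1}$ of countable elementary submodels of $H_\kappa^V$ enumerating a club of $[H_\kappa^V]^{\aleph_0}$, with $M_i\cap\omega_1\in S^*$ iff $\ot(M_i\cap\kappa)\in T$ for every $i$. Let $A_G\subseteq\omega_1$ canonically code $(B_G,(M_i)_{i<\omega_1})$, take $p=(\kappa,(S_\xi)_{\xi<\omega_1},T)$, and let $\varphi(x,y,z)$ be the natural $\Sigma_1$-formula over $(H_{\omega_2};\in,\NS_{\omega_1})$ asserting that $y$ codes a pair $(B,(N_i)_{i<\omega_1})$ with $x$ corresponding to $B$ under the coding, $(N_i)$ a $\subseteq$-continuous $\in$-enumeration of a club of $[H_\kappa^V]^{\aleph_0}$, and $N_i\cap\omega_1\in\bigsqcup_{\xi\in B}S_\xi$ iff $\ot(N_i\cap\kappa)\in T$ for all $i$. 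Condition (1) of Lemma \ref{freezability-sufficient} is immediate.

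For condition (2), let $M\supseteq V[H]$ share its nonstationary ideal on $\omega_1$ with $V[H]$, and suppose $A_0,A_1\in M$ both witness $\varphi$, with corresponding codings $(B_0,(M^0_i))$ and $(B_1,(M^1_i))$. The core step is to produce, in $M$, a club $D\subseteq\omega_1$ such that $M^0_i=M^1_i$ for all $i\in D$: since both sequences enumerate clubs of $[H_\kappa^V]^{\aleph_0}$ in $M$, these clubs share a common refinement club, which I enumerate $\subseteq$-continuously as $(N_i)_{i<\omega_1}$; the resulting strictly increasing continuous maps $\alpha,\beta:\omega_1\to\omega_1$ defined by $N_i=M^0_{\alpha(i)}=M^1_{\beta(i)}$ are normal, so they share a club of common fixed points, which is the desired $D$. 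Granted $D$, assume toward contradiction that $\xi\in B_0\setminus B_1$. The set $\{i\in D:M^0_i\cap\omega_1\in S_\xi\}$ is stationary in $M$: $S_\xi$ is stationary in $V[H]$ (hence in $M$), and $i\mapsto M^0_i\cap\omega_1$ is strictly increasing and continuous, hence enumerates a club of $\omega_1$. For any such $i$, the $B_0$-clause of $\varphi$ forces $\ot(M^0_i\cap\kappa)\in T$ since $S_\xi\subseteq\bigsqcup_{\eta\in B_0}S_\eta$, while the $B_1$-clause forces $\ot(M^1_i\cap\kappa)\notin T$ since $S_\xi\cap\bigsqcup_{\eta\in B_1}S_\eta=\emptyset$ by disjointness of the partition and $\xi\notin B_1$. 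But $M^0_i=M^1_i$ on $D$, so these two ordinals coincide, yielding the desired contradiction.

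I expect the chief technical point to be the back-and-forth producing $D$, which is where the geometry of continuous $\in$-chains of elementary submodels enters. Everything else follows the pattern of Propositions \ref{freeze1} and \ref{freeze1.5}, with the role played there by the $\MRP$-generic sequence (resp.\ the c.c.c.\ homogeneous decomposition) now played by the $\psi_{\AC}^\kappa$-generic chain granted by measurability of $\kappa$.
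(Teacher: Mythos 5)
Your proof is correct and follows essentially the same route as the paper's: set $S=\bigcup_{\xi\in B_G}S_\xi$, force with the $\psi_{\AC}^\kappa$--poset $\mtcl Q_{\kappa,S,T}$ of Lemma \ref{psiAC-measurable}, and derive uniqueness in Lemma \ref{freezability-sufficient} by finding a common member $X$ of the two witnessing clubs with $X\cap\omega_1\in S_\xi$ for $\xi\in B_0\Delta B_1$, so that $\ot(X)$ would have to be both in $T$ and in its complement. The paper phrases this last step by directly intersecting the two clubs of $[\kappa]^{\aleph_0}$ and using stationarity of $S_\xi$ in the outer model, whereas you pass through a club of indices on which the two enumerations agree; this is only a cosmetic difference, and your explicit Levy--Solovay remark about keeping $\kappa$ measurable in $V^{\mtcl P\ast\Coll(\omega_1,\mtcl P)}$ is a reasonable (implicit in the paper) precaution.
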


\begin{proof} Let $(S_\a\,:\,\a<\o_1)$ be a partition of $\omega_1$ into stationary sets and let $T$ be a stationary 
and co-stationary subset of $\o_1$. Working in $V^{\mtcl P\ast\Coll(\o_1,\,\mtcl P)}$, let $B_{\dot G}\neq\emptyset$ 
be a subset of $\omega_1$ coding $\dot G$ in  a canonical way. We may assume that $B_{\dot G}\neq\omega_1$. 
Let $\dot{\mtcl R}$ be $\mtcl Q_{\kappa, S, T}$ for $S= \bigcup_{\alpha\in B_{\dot G}}S_\alpha$. 
By Lemma \ref{psiAC-measurable}, $\dot{\mtcl R}$ adds a club $C_{\dot G}$ of $[\kappa]^{\al_0}$ 
with the property that for each $X\in C_{\dot G}$, $X\cap \o_1\in \bigcup_{\alpha\in B_{\dot G}}S_\alpha$ if and only 
if $\ot(X)\in T$.  Now it is easy to see that $\mtcl P\ast\dot{\mtcl R}$ codes $\dot G$ in an absolute way in the sense 
of Lemma \ref{freezability-sufficient}. The main point is that if $H$ is a 
$(\mtcl P\ast\Coll(\o_1,\,\mtcl P))\ast\dot{\mtcl R}$--generic filter, $G=H\cap\mtcl P$, and $M$ is an outer model 
such that every stationary subset of $\o_1$ in $V[H]$ remains stationary in $M$, then in $M$ there is no $B'\sub\o_1$ 
such that $B'\neq B_G$ and such that there is a club  $C$ of $[\kappa]^{\al_0}$ with the property that for all 
$X\in C$, $X\cap\o_1\in\bigcup_{\alpha\in B'}S_\alpha$ if and only if $\ot(X)\in T$. Otherwise, if $\a\in B'\Delta B_G$, 
then there would be some $X\in C\cap C_G$ such that $X\cap \omega_1\in S_\alpha$. 
But then we would have that $\ot(X)$ is both in $T$ and in $\omega_1\setminus T$. 
\end{proof}

Let us move on now to our fourth freezing poset.   

Given cardinals $\m<\l$ with $\m$ regular, let $$S^\l_\m=\{\x<\l\,:\,\cf(\x)=\m\}$$ Let $\vec S = (S_\a)_{\a<\o_1}$ 
be a sequence of pairwise disjoint stationary subsets of $S^{\o_2}_\o$,  let $U\sub S^{\o_3}_\o$ be such that both 
$U$ and  $S^{\o_3}_\o\setminus U$ are stationary, and let $B\sub\o_1$. Then $\mtcl S_{\vec S, U, B}$ is 
the partial order, ordered by end--extension, consisting of all strictly $\sub$--increasing and $\sub$--continuous 
sequences $(Z_\n)_{\n\leq\n_0}$, for some $\n_0<\o_1$, such that for all $\n\leq\n_0$ and all $\a<\o_1$,  

\begin{itemize}

\it $Z_\n\in[\o_3]^{\al_0}$, and \it if $\mbox{sup}(Z_\n\cap\o_2)\in S_\a$, then $\mbox{sup}(Z_\n)\in U$ if and only if $\a\in B$. \end{itemize}

The proof of the following lemma appears in \cite{FOREMAGISHELAH} essentially.\footnote{See also the argument in the proof of Proposition \ref{freeze3} that $\dot{\mtcl Q}$ satisfies the $S$--condition.} 

\begin{lemma}\label{freezing-poset-3}
Let $\vec S= (S_\a)_{\a<\o_1}$ be a sequence of pairwise disjoint stationary subsets of $S^{\o_2}_\o$, let $U\sub S^{\o_3}_\o$ be such that both $U$ and  $S^{\o_3}_\o\setminus U$ are stationary, and let $B\sub\o_1$. Then $\mtcl S_{\vec S, U, B}$ preserves stationary subsets of $\o_1$, as well as the stationarity of all $S_\a$, and forces the existence of strictly $\sub$--increasing and $\sub$--continuous enumeration $(Z_\n\,:\,\n<\o_1)$ of a club of $[\o_3^V]^{\al_0}$ such that for all $\n$, $\a<\o_1$,  if $\mbox{sup}(Z_\n\cap\o_2^V)\in S_\a$, then $\mbox{sup}(Z_\n)\in U$ if and only if $\a\in B$.
\end{lemma}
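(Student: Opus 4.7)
The plan is to follow the approach of Foreman–Magidor–Shelah~\cite{FOREMAGISHELAH} for forcings that add, by countable approximations, a club of $[\kappa]^{\aleph_0}$ subject to coherence constraints with respect to stationary partitions. I begin with density. For the generic to form a strictly $\subseteq$-increasing $\subseteq$-continuous enumeration of a club in $[\omega_3^V]^{\aleph_0}$, it suffices to show that for every $p \in \mathcal{S}_{\vec S, U, B}$, every $\beta < \omega_3$, and every $\nu^* < \omega_1$, there exists $q \leq p$ of length $\geq \nu^*$ with $\beta \in Z^q_{\mathrm{length}(q)}$. Successor extensions are easy since, given the top level $Z^p_{\nu_0}$, one freely chooses $Z_{\nu_0+1} \supsetneq Z^p_{\nu_0} \cup \{\beta\}$ countable, either with $\sup(Z_{\nu_0+1} \cap \omega_2)$ outside $\bigcup_\alpha S_\alpha$ or, if inside some $S_{\alpha^*}$, with $\sup(Z_{\nu_0+1})$ chosen in the appropriate piece of the $U$-partition via the stationarity of both $U$ and $S^{\omega_3}_\omega \setminus U$ and the full freedom to pick sups of countable sets. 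Limit extensions will be delivered by the elementary submodel construction of the next paragraph.

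The core of the proof is preservation of stationary subsets of $\omega_1$. Call a countable $N \prec H_\theta$ (containing the relevant parameters) \emph{good} if either $\sup(N \cap \omega_2) \notin \bigcup_\alpha S_\alpha$, or $\sup(N \cap \omega_2) \in S_{\alpha^*}$ for a unique $\alpha^* < \omega_1$ with $\sup(N \cap \omega_3) \in U \Leftrightarrow \alpha^* \in B$. If $T \subseteq \omega_1$ is stationary, $p \in \mathcal{S}_{\vec S, U, B}$, and $\dot C$ is a name for a club of $\omega_1$, then given a good $N$ with $N \cap \omega_1 = \delta \in T$ containing $p, \dot C, \vec S, U, B$, a standard construction produces a decreasing $(N, \mathcal{S}_{\vec S, U, B})$-generic sequence $(p_n)_{n < \omega} \subseteq N$ below $p$. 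The limit $q = \bigcup_n p_n \cup \{(\delta, N \cap \omega_3)\}$ has length $\delta$, is a valid condition precisely because of the goodness of $N$, and can be arranged to force $\delta \in \dot C$.

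The main obstacle, therefore, is the existence of stationarily many good $N$ with $N \cap \omega_1 \in T$. Starting from a continuous $\in$-chain $(N_\xi)_{\xi < \omega_1}$ of countable elementary submodels of $H_\theta$ containing the parameters, the functions $\beta_\xi := \sup(N_\xi \cap \omega_2)$ and $\gamma_\xi := \sup(N_\xi \cap \omega_3)$ are increasing and continuous, with images at limit $\xi$ forming clubs in $\omega_2$ and $\omega_3$ respectively. If stationarily many $\xi$ with $N_\xi \cap \omega_1 \in T$ satisfy $\beta_\xi \notin \bigcup_\alpha S_\alpha$ we are done with those $N_\xi$. Otherwise, pressing down the map $\xi \mapsto \alpha_\xi$ (defined on a stationary set by $\beta_\xi \in S_{\alpha_\xi}$) yields a stationary $T' \subseteq T$ on which $\alpha_\xi$ takes a constant value $\alpha^*$; I then refine the chain along $T'$ to enrich each $N_\xi$ (keeping $N_\xi \cap \omega_1$ and, with more care, $\sup(N_\xi \cap \omega_2)$ unchanged) so that $\sup(N_\xi \cap \omega_3)$ is steered into $U$ or into $S^{\omega_3}_\omega \setminus U$ as dictated by whether $\alpha^* \in B$. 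The dual stationarity hypothesis on $U$ is the essential ingredient for this last step. This simultaneous reflection at three distinct cardinal levels $\omega_1, \omega_2, \omega_3$ is where the technical difficulty lies and is what distinguishes this forcing from more standard proper forcings. Finally, preservation of each $S_\alpha$ as a stationary subset of $\omega_2$ follows by a parallel argument, applied to countable elementary submodels $N$ with $\sup(N \cap \omega_2) \in S_\alpha$, using the same goodness construction to process names for clubs of $\omega_2$.
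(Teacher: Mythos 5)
Your overall architecture --- master conditions over ``good'' countable $N\prec H_\theta$, where goodness means that $\sup(N\cap\omega_2)$ and $\sup(N\cap\omega_3)$ jointly satisfy the constraint defining the conditions --- is the right one, and it is essentially the Foreman--Magidor--Shelah argument the paper itself points to (the paper gives no proof of this lemma beyond citing \cite{FOREMAGISHELAH} and the model construction spelled out inside the proof of Proposition \ref{freeze3}). The gap is in your third paragraph, i.e.\ exactly at the step you correctly identify as the crux. A minor point first: the map $\xi\mapsto\alpha_\xi$ is not regressive, so Fodor's lemma does not apply to it; in fact no pressing down is needed at all. The serious problem is the claim that, having fixed $N_\xi\cap\omega_1$ and $\sup(N_\xi\cap\omega_2)$, you can ``enrich'' $N_\xi$ so as to steer $\sup(N_\xi\cap\omega_3)$ into $U$ or its complement. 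Enlarging a countable elementary submodel so as to move its supremum below $\omega_3$ into a prescribed stationary subset of $S^{\omega_3}_\omega$ while freezing both its trace on $\omega_1$ and its supremum below $\omega_2$ is not a ZFC fact and does not follow from the dual stationarity of $U$: as soon as you add new ordinals below $\omega_3$ and close under Skolem functions, new ordinals below $\omega_2$ appear and $\sup(N\cap\omega_2)$ moves. Nothing in your argument controls this.

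The standard way around this is to build the good model top-down rather than bottom-up, choosing the suprema in decreasing order of cardinal and using at each step the stationarity of the target set at that level. Fix $F:[H_\theta]^{<\omega}\to H_\theta$ generating a club of countable elementary submodels containing all parameters, and fix any single index, say $\alpha^*=0$. By stationarity of $U$ (if $0\in B$) or of $S^{\omega_3}_\omega\setminus U$ (if $0\notin B$), pick $\gamma>\omega_2$ in the relevant set with $\mathrm{cl}_F([\gamma]^{<\omega})\cap\omega_3=\gamma$ and a countable cofinal $Y\subseteq\gamma$; by stationarity of $S_0$ in $\omega_2$, pick $\beta\in S_0$ with $\mathrm{cl}_F([\beta\cup Y]^{<\omega})\cap\omega_2=\beta$ (this closure stays below $\gamma$ at the $\omega_3$ level because $\beta\cup Y\subseteq\gamma$, and contains $Y$, so its supremum below $\omega_3$ is still $\gamma$) and a countable cofinal $Z\subseteq\beta$; finally, since $\{\delta<\omega_1:\mathrm{cl}_F([\delta\cup Y\cup Z]^{<\omega})\cap\omega_1=\delta\}$ is a club in $\omega_1$, pick such a $\delta$ in your stationary set $T$. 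Then $N=\mathrm{cl}_F([\delta\cup Y\cup Z]^{<\omega})$ is good, lies in the prescribed club, and satisfies $N\cap\omega_1=\delta\in T$, which is all your master-condition argument needs; the analogous construction with $\beta$ targeted into a given $S_\alpha$ handles the preservation of the $S_\alpha$. With your third paragraph replaced by this, the rest of your proof goes through.
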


\begin{proposition}\label{freeze3} For every partial order $\mtcl P$ and for all cardinals $\k^1>\k^0\geq\d\geq \av\mtcl P\av$, if $\Vdash_{\mtcl P\ast\Coll(\o_1,\,\d)}\k^0=\o_2$, $\Vdash_{\mtcl P\ast\Coll(\o_1,\,\d)}\k^1=\o_3$, $(S_\a)_{\a<\o_1}\in V$  is a  sequence of pairwise disjoint stationary subsets of $S^{\k^0}_\o$, and $U\sub S^{\k^1}_\o$ is a a stationary set in $V$ such that $S^{\k^1}_\o\setminus U$ is also stationary in $V$, then there is a $\mtcl P\ast \Coll(\o_1,\,\d)$--name $\dot B$  for a subset of $\o_1$ such that 

\begin{enumerate}
\item $\mtcl P$ forces $\Coll(\o_1,\, \d)\ast\dot{\mtcl S}_{\vec S, U, \dot B}$ to have the $S$--condition, and such that

\item $\mtcl P\ast (\Coll(\o_1,\, \d)\ast\dot{\mtcl S}_{\vec S, U, \dot B})$ $\SSP$--freezes $\mtcl P$, as witnessed by the inclusion map $$i:\mtcl P\into \mtcl P\ast (\Coll(\o_1,\,\d)\ast\dot{\mtcl S}_{\vec S, U, \dot B})$$
\end{enumerate}
\end{proposition}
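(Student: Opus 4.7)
The plan is to follow the template of Propositions~\ref{freeze1}, \ref{freeze1.5} and \ref{freeze2}, applying Lemma~\ref{freezability-sufficient} with $\lambda=\o_1$ to the two-step iteration $\Coll(\o_1,\,\d)\ast\dot{\mtcl S}_{\vec S, U, \dot B}$. First I would define $\dot B$: working in $V^{\mtcl P\ast\Coll(\o_1,\,\d)}$, the hypothesis $\d\geq\av\mtcl P\av$ lets one canonically encode the $\mtcl P$-generic $\dot G$ as a subset $B_{\dot G}\sub\o_1$ via the generic surjection from $\o_1$ onto $\mtcl P$ supplied by $\Coll(\o_1,\,\d)$, and $\dot B$ is the resulting name; without loss of generality $B_{\dot G}\notin\{\emptyset,\o_1\}$.

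For clause~(1), $\Coll(\o_1,\,\d)$ is countably closed and so satisfies the $S$-condition by Fact~\ref{fac:sigma2Scond}; by the closure of $S\textsf{--cond}$ under two-step iterations it therefore suffices to check that $\mtcl S_{\vec S, U, B}$ itself satisfies the $S$-condition in $V^{\mtcl P\ast\Coll(\o_1,\,\d)}$ for every $B\sub\o_1$. This is a variation on Shelah's argument in~\cite[XI, \S4]{SHEPRO} that the natural countable-condition poset adding an $\o_1$-club through a stationary subset of $\{\a<\o_2:\cf\a=\o\}$ satisfies the $S$-condition. Player II extends the play one countable condition at a time inside a coherent tower of countable elementary submodels of a large $H_\t$, the only additional task being to choose, at every node of the game tree, the next countable set $Z_\n$ so that $\sup(Z_\n)$ lies in $U$ or in $S^{\k^1}_\o\setminus U$ according as $\a\in B$ or $\a\notin B$, whenever $\sup(Z_\n\cap\k^0)\in S_\a$; both choices are always available because both $U$ and $S^{\k^1}_\o\setminus U$ are stationary in $\k^1$, and since $\vec S, U, B$ are fixed throughout the game the restricted-dependence clause of Definition~\ref{s-cond-def} is met.

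For clause~(2) I would invoke Lemma~\ref{freezability-sufficient} with $\l=\o_1$. Let $H$ be $V$-generic for the full iteration and $G=H\cap\mtcl P$. By Lemma~\ref{freezing-poset-3}, $\mtcl S_{\vec S, U, B_G}$ generically produces a strictly $\sub$-increasing $\sub$-continuous sequence $(Z_\n)_{\n<\o_1}$ of countable subsets of $\k^1$ with $\bigcup_\n Z_\n=\k^1$, satisfying $\sup(Z_\n\cap\k^0)\in S_\a\Rightarrow(\sup(Z_\n)\in U\Lrao\a\in B_G)$, while preserving the stationarity of each $S_\a$ in $\k^0$; moreover $\mtcl S$ collapses $|\k^1|$ to $\o_1$, so $\k^0, \k^1, (Z_\n)_{\n<\o_1}, \vec S, U$ all lie in $H_{\o_2}^{V[H]}$. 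Let $A_G\sub\o_1$ canonically code $(B_G,(Z_\n)_{\n<\o_1})$, and let $p\in H_{\o_2}^{V[H]}$ encode $\vec S, U, \k^0, \k^1, \mtcl P, \d$ together with the chosen coding scheme. Define $\varphi(G,A,p)$ as the $\Sigma_1$ statement, over $(H_{\o_2};\in,\NS_{\o_1})$, asserting the existence of $B\sub\o_1$ and a sequence $(Y_\n)_{\n<\o_1}$ coded by $A$ such that $B$ canonically codes $G$ via $p$, $(Y_\n)_{\n<\o_1}$ is a strictly $\sub$-increasing $\sub$-continuous chain of countable subsets of $\k^1$ with $\bigcup_\n Y_\n=\k^1$, for every $\a<\o_1$ the set $T^Y_\a=\{\n<\o_1\,:\,\sup(Y_\n\cap\k^0)\in S_\a\}$ is stationary in $\o_1$, and for every $\a<\o_1$, $\a\in B$ iff $\sup(Y_\n)\in U$ for every $\n\in T^Y_\a$.

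That $\varphi(G,A_G,p)$ holds in $V[H]$ follows from Lemma~\ref{freezing-poset-3}, since each $T^Z_\a$ is the preimage of the stationary $S_\a$ under the continuous $\pi:\n\mapsto\sup(Z_\n\cap\k^0)$ whose range is a club in $\k^0$, hence is stationary in $\o_1$. For the uniqueness clause of Lemma~\ref{freezability-sufficient}, in any $M\supseteq V[H]$ preserving $\NS_{\o_1}^{V[H]}$, two witnesses $(B_i,(Y^i_\n))$ for $i=0,1$ give two clubs $\{Y^i_\n:\n<\o_1\}$ in $[\k^1]^{\al_0}$ whose intersection is a club; the standard club-comparison argument for continuous $\sub$-increasing enumerations yields a club $C\sub\o_1$ in $M$ on which $Y^0_\n=Y^1_\n$, and if $\a\in B_0\Delta B_1$ then stationarity of $T^{Y^0}_\a=T^{Y^1}_\a$ inside $C$ (preserved into $M$) produces some $\n\in C$ with $\sup(Y^0_\n\cap\k^0)\in S_\a$ and $\sup(Y^0_\n)=\sup(Y^1_\n)$, which is forced simultaneously into $U$ and into its complement — a contradiction. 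Hence $B_0=B_1$ and so $G_0=G_1$. The main obstacle I expect is the verification of the $S$-condition in clause~(1): Shelah's strategy must be adapted to maintain the $U$-coding at every node of the game tree; the freezeability argument in clause~(2) is then a routine adaptation of the scheme used in Propositions~\ref{freeze1}--\ref{freeze2}, where using the $\NS_{\o_1}$ predicate to enforce stationarity of the $T^Y_\a$ in $\varphi$ provides the rigidity needed for uniqueness.
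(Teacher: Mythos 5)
Your reduction for clause (1) -- $\Coll(\o_1,\d)$ is countably closed, hence has the $S$--condition, and $S\textsf{--cond}$ is closed under two--step iterations, so it suffices to check $\mtcl S_{\vec S,U,B}$ -- is the same as the paper's, but your sketch of why $\mtcl S_{\vec S,U,B}$ satisfies the $S$--condition has a genuine gap. The difficulty is not extending conditions node by node so that each new $Z_\n$ has its suprema in the right place: every condition of $\mtcl S_{\vec S,U,B}$ satisfies the coding requirement automatically, and extendability is never the issue. The difficulty is the winning condition of the game: the subtree $T'$ with $\av\textsf{succ}_{T'}(\eta)\av=\av\textsf{succ}_T(\eta)\av$ is handed to player II \emph{after} the play, so she cannot know in advance which branch $b$ will need to be completed, and the union $\bigcup_n p_{b\restr n}$ must itself be completable to a condition; that is, the \emph{limit} ordinals $\sup\bigl(\bigcup_n(\cup\range(p_{b\restr n})\cap\k^0)\bigr)$ and $\sup\bigl(\bigcup_n\cup\range(p_{b\restr n})\bigr)$ must land in $S_\a$ versus $U$ coherently. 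The paper's strategy achieves this robustness by having player II alternate between splitting each terminal node into $\k^0$--many and $\k^1$--many immediate successors, with the index $\a$ placed into $\bigcup\range(p_{\eta^\smallfrown\la\a\ra})$. Given $T'$, one then chooses a countable $N\prec H_\t$ containing all relevant objects with $\sup(N\cap\k^0)\in S_0$ and with $\sup(N\cap\k^1)\in U$ exactly when $0\in B_{\dot G}$ (a two--step closure argument using first the stationarity of $U$, resp.\ of $S^{\k^1}_\o\setminus U$, and then of $S_0$), and steers a branch of $T'$ inside $N$ so that both suprema of the union equal $\sup(N\cap\k^0)$ and $\sup(N\cap\k^1)$; a full--splitting subtree has, by elementarity, successors cofinal in $N\cap\k^i$ at the appropriate levels, which is what makes the steering possible. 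The union of the conditions along the branch together with one further top set is then a legitimate condition. None of this mechanism -- the alternating choice of $\k^n_\eta\in\{\k^0,\k^1\}$, the choice of $N$, the steering of the branch -- appears in your sketch, and ``both choices are always available by stationarity'' does not substitute for it.

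Clause (2) follows the paper's template (Lemma \ref{freezability-sufficient} plus the club--intersection argument using the preserved stationarity of $T_\a=\{\n:\sup(Z_\n\cap\k^0)\in S_\a\}$), but there is a slip in your formula $\varphi$: the clause ``$\a\in B$ iff $\sup(Y_\n)\in U$ for every $\n\in T^Y_\a$'' gives, when $\a\notin B$, only \emph{one} $\n\in T^Y_\a$ with $\sup(Y_\n)\notin U$, and that single witness need not lie in the agreement club $C$, so your contradiction does not go through as stated. You need the pointwise form ``for all $\n\in T^Y_\a$: $\sup(Y_\n)\in U\lrao\a\in B$'', which is what the forcing actually guarantees; with that repair the uniqueness argument is exactly the paper's.
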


\begin{proof}
Working in $V^{\mtcl P\ast \Coll(\o_1,\,\d)}$,  let $(S_\a)_{\a<\o_1}\in V$ and $U\in V$ be as stated, and let $B_{\dot G}$ be a subset of $\o_1$ coding the generic filter $G$ for $\mtcl P$ in a canonical way. Let $\dot{\mtcl R}$ be a $\mtcl P$--name for $\Coll(\o_1,\,\d)\ast \dot{\mtcl S}_{\vec S, U, B_{\dot G}}$.

\begin{claim}
$\mtcl P$ forces that $\dot{\mtcl R}$ has the $S$--condition. 
\end{claim}

\begin{proof} Since $\Coll(\o_1,\,\d)$ has the $S$--condition, it suffices to prove that $\mtcl P\ast\Coll(\o_1,\,\d)$ 
forces $\dot{\mtcl S}_{\vec S, U, \dot B}$ to have the $S$--condition. Let us work in $V^{\mtcl P\ast \Coll(\o_1,\,\d)}$. 
Let $\s$ be the following strategy for player II in $\mtcl G^{\dot{\mtcl R}}$: 
Whenever it is her turn to play, player II will alternate between the following courses of action (a), (b) 
(i.e., she will opt for (a) or (b) depending on the parity of the finite set 
$\{k<\av\eta\av\,:\,\av \textsf{succ}_T(\eta\restr k)\av>1\}$, with the notation used in Definition \ref{s-cond-def}).

\begin{itemize}

\item[(a)] Player II chooses $\k_\eta=\k^0$, $\textsf{succ}_T(\eta)=\{\eta^\smallfrown\la\a\ra\,:\,\a<\k^0\}$, and 
$(p_{\eta^\smallfrown\la\a\ra})_{\a<\k^0}$ where, for each $\a<\k^0$, $p_{\eta^\smallfrown\la\a\ra}$ is a condition extending 
$p_\eta$ and such that $\a\in \bigcup\range(p_{\eta^\smallfrown\la\a\ra})$.

\item[(b)]  Player II chooses $\k_\eta=\k^1$, $\textsf{succ}_T(\eta)=\{\eta^\smallfrown\la\a\ra\,:\,\a<\k^1\}$, and 
$(p_{\eta^\smallfrown\la\a\ra})_{\a<\k^1}$ where, for each $\a<\k^1$, $p_{\eta^\smallfrown\la\a\ra}$ is a condition 
extending $p_\eta$ and such that $\a\in \bigcup\range(p_{\eta^\smallfrown\la\a\ra})$.

\end{itemize}

Let now $T$ be the tree built along a run of $\mtcl G^{\dot{\mtcl R}}$ in which player II has played according to $\s$, 
let $T'$ be a subtree of $T$ such that $\av\textsf{succ}_{T'}(\eta)\av=\av\textsf{succ}_T(\eta)\av$ for every $\eta\in T'$, 
and let $N$ be a countable elementary substructure of some large enough $H_\t$ containing all relevant objects 
(which includes our run of $\mtcl G^{\dot{\mtcl R}}$ and $T'$), such that $\mbox{sup}(N\cap\k^0)\in S_0$, and such that 
$\mbox{sup}(N\cap \k^1)\in U$ if $0\in B_{\dot G}$ and $\mbox{sup}(N\cap \k^1)\notin U$ if $0\notin B_{\dot G}$. 

Such an $N$ can be easily found (s.\ \cite{FOREMAGISHELAH}):
Indeed, suppose, for concreteness, that $0\in B_{\dot G}$. Then,  letting $F:[H_\t]^{{<}\omega}\into H_\t$ 
be a function generating the club of countable elementary submodels of $H_\t$ containing all relevant objects, 
we  may find, using the stationarity of $U$, an ordinal $\a\in U$ such that the closure $X_0$ of $[\a]^{{<}\omega}$ 
under $F$ is such that $X_0\cap \k^1 = \alpha$. We may of course assume that $\a>\k^0$. Since $\cf(\a)=\omega$, 
we may pick a countable cofinal subset $Y$ of $\a$.  Using now the stationarity of $S_0$, we may find $\b\in S_0$ 
with the property that the closure $X_1$ of $[\b\cup Y]^{{<}\omega}$ under $F$ is such that $X_1\cap\kappa^0=\beta$. 
Since $\cf(\b)=\o$, we may now pick a countable subset $Z$ of $\b$. But then, letting $N$ be the closure of
 $Y\cup Z$ under $F$, we have that $\mbox{sup}(N\cap\k^0)=\b$ and $\mbox{sup}(N\cap\k^1)=\alpha$, and so 
 $N$ is as desired.

Letting $(p_\eta)_{\eta\in T'}$ be the tree of $\dot{\mtcl R}$--conditions corresponding to $T'$, it is now easy to find a 
cofinal branch $b$ through $T'$ such that for all $n<\o$, $b\restr n\in N$, and such that 
\[
\mbox{sup}(\bigcup_{n<\o}(\cup\range(p_{b\restr n})\cap\k^0))=  \mbox{sup}(N\cap\k^0)
\]
and 
\[
\mbox{sup}(\bigcup_{n<\o}(\cup\range(p_{b\restr n})))=  \mbox{sup}(N\cap\k^1).
\] 
Let 
\[
\n=\mbox{sup}\{\dom(p_{b\restr n})\,:\,n<\o\}\footnote{Incidentally, note that we cannot guarantee that $\n= N\cap\o_1$.}
\]
and 
\[
X=\bigcup_{n<\o}(\cup\range(p_{b\restr n})\cap\k^1).
\] 
It follows now that, letting $p_b=\bigcup_n p_{b\restr n}\cup\{\la\n, X\ra\}$, $p_b$ is a condition in 
$\dot{\mtcl R}$ forcing that $p_{b\restr n}$ is in the generic filter for all $n$.

\end{proof}

Going back to $V$, the proof that $\mtcl P\ast (\Coll(\o_1,\, \d)\ast\dot{\mtcl S}_{\vec S, U, \dot B})$ $
\SSP$--freezes $\mtcl P$ (as witnessed by the inclusion map) is very much like the proofs of Propositions \ref{freeze1} 
and \ref{freeze2}. Suppose $H$ is a generic filter for  $\mtcl P\ast (\Coll(\o_1,\, \d)\ast\dot{\mtcl S}_{\vec S, U, \dot B})$, 
$G=H\cap\mtcl P$, and $M$ is any outer model such that every stationary subset of $\o_1$ in $V[H]$ remains stationary 
in $M$. Suppose, towards a contradiction, that in $M$ there is some subset $B'\neq B_G$ of $\omega_1$ 
for which there is an $\sub$--increasing and $\sub$-continuous enumeration $(Z'_\n\,:\,\n<\o_1)$ of a club of 
$[\k^1]^{\al_0}$ such that for all $\n$, $\a<\o_1$,  if $\mbox{sup}(Z'_\n\cap\k^0)\in S_\a$, then $\mbox{sup}(Z'_\n)\in U$ 
if and only if $\a\in B'$. If $\a\in B'\Delta B_G$, there is some $\n$ such that $Z_\n=Z'_\n$ and 
$\mbox{sup}(Z_\n\cap \k^0)\in S_\a$. But then we have both $\mbox{sup}(Z_\n)\in U$ and $\mbox{sup}(Z_\n)\notin U$.

Finally, the existence of a $\S_1$ definition -- with $p=(\l, \vec S, U)$ as parameter  -- as required by 
Lemma \ref{freezability-sufficient} is easy. 
 \end{proof}

\subsubsection{Iteration Lemmas}\label{iterability}


We need the following preservation lemmas, due to Shelah (\cite[III, resp. VI]{SHEPRO}, 
see also \cite{Goldstern}).

\begin{lemma}\label{presproper} Suppose $\rho<\o_1$ is an indecomposable ordinal and $$(\mtcl P_\a, \dot{\mtcl Q}_\b\,:\,\a\leq\g,\,\b<\g)$$ is a countable support iteration such that for all $\b<\g$, $$\Vdash_{\mtcl P_\b}\dot{\mtcl Q}_\b\in \rho\mbox{--}\PR$$ Then $\mtcl P_\g\in \rho\mbox{--}\PR$.

\end{lemma}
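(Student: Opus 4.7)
The plan is to prove the lemma by transfinite induction on $\gamma$, establishing the stronger statement that for every sufficiently large cardinal $\theta$ with $\mtcl P_\gamma \in H_\theta$, every continuous $\in$-chain $(N_i)_{i\leq\rho}$ of countable elementary substructures of $H_\theta$ with $\mtcl P_\gamma\in N_0$, and every $p\in N_0\cap\mtcl P_\gamma$, there is an extension $q\leq p$ in $\mtcl P_\gamma$ that is $(N_i,\mtcl P_\gamma)$-generic for all $i\leq\rho$. The base case $\gamma=0$ is trivial, and three inductive cases remain: $\gamma=\delta+1$ a successor, $\gamma$ a limit with $\cof(\gamma)>\omega$, and $\gamma$ a limit with $\cof(\gamma)=\omega$.

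For the successor case, I would first apply the induction hypothesis to $\mtcl P_\delta$ and the same chain $(N_i)_{i\leq\rho}$ to obtain $q'\leq p\restr\delta$ in $\mtcl P_\delta$ which is $(N_i,\mtcl P_\delta)$-generic for every $i\leq\rho$. Below $q'$, each $N_i[\dot G_\delta]$ is an elementary substructure of $H_\theta^{V[\dot G_\delta]}$ with $N_i[\dot G_\delta]\cap V=N_i$, so $(N_i[\dot G_\delta])_{i\leq\rho}$ is forced to be a continuous $\in$-chain in the generic extension containing $\dot{\mtcl Q}_\delta$. Applying the forced $\rho$-properness of $\dot{\mtcl Q}_\delta$ to $p(\delta)\in N_0[\dot G_\delta]$ yields a $\mtcl P_\delta$-name $\dot r$ for an extension of $p(\delta)$ that is $(N_i[\dot G_\delta],\dot{\mtcl Q}_\delta)$-generic for every $i\leq\rho$; then $q=q'{}^\smallfrown\dot r$ works.

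For the limit case with $\cof(\gamma)>\omega$, countable support together with elementarity of $N_0$ gives $\supp(p)\subseteq\delta$ for some $\delta\in N_0$ with $\delta<\gamma$. The induction hypothesis at $\delta$ produces the required $q\in\mtcl P_\delta$; viewed in $\mtcl P_\gamma$ with trivial support on $[\delta,\gamma)$, it remains $(N_i,\mtcl P_\gamma)$-generic for every $i\leq\rho$, since every $\mtcl P_\gamma$-name in $N_i$ for a dense open set can be reduced, below $q$, to an equivalent $\mtcl P_\delta$-name in $N_i$ by bounding its support inside $N_i$.

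The genuine obstacle is the case $\cof(\gamma)=\omega$, which requires a fusion argument. I would fix a cofinal $\omega$-sequence $(\gamma_n)_{n<\omega}$ in $\gamma$ contained in $N_0$ with $\gamma_0=0$, and, using the fact that $\rho$ and each $N_i$ are countable, enumerate with infinite repetitions all pairs $(i_n,D_n)$ where $i_n\leq\rho$ and $D_n\in N_{i_n}$ is predense in $\mtcl P_\gamma$. Then I would recursively construct a decreasing sequence $(q_n)_{n<\omega}$ with $q_n\in\mtcl P_{\gamma_n}$, each $q_n$ being $(N_i,\mtcl P_{\gamma_n})$-generic for all $i\leq\rho$, and with $q_{n+1}$ forcing that $D_n\cap N_{i_n}$ meets the $\mtcl P_\gamma$-generic filter: the induction hypothesis at $\gamma_{n+1}$ furnishes a candidate extending $q_n{}^\smallfrown(p\restr[\gamma_n,\gamma_{n+1}))$, after which a density step inside $N_{i_n}$ locates an appropriate element of $D_n\cap N_{i_n}$. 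The union $q=\bigcup_n q_n$ has countable support and therefore belongs to $\mtcl P_\gamma$. The hard part will be verifying that $q$ is indeed $(N_i,\mtcl P_\gamma)$-generic for every $i\leq\rho$; the delicacy is that simultaneous genericity for all $i\leq\rho$ must propagate through every finite stage, which uses both the indecomposability of $\rho$ (so that the whole chain $(N_i)_{i\leq\rho}$ remains a valid input to the induction hypothesis at each $\gamma_{n+1}$ and so that tail segments of the chain are controlled uniformly) and the fact that every predense $D\in N_i$ appears cofinally in the enumeration, guaranteeing that an element of $D\cap N_i$ is absorbed into the generic filter by some $q_{n+1}$.
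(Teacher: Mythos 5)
The paper gives no proof of this lemma --- it is imported from Shelah's book --- so I can only assess your plan on its own terms. The skeleton (induction on $\gamma$, composition at successors, fusion at limits of countable cofinality) is the right one, and your successor step is essentially correct. But the fusion step, which carries the entire content of the theorem, does not work as you describe it, and the difficulty you defer to ``the hard part'' is a flaw in the construction rather than in its verification. First, there is no tail-catching mechanism. A condition $q_{n+1}\in\mtcl P_{\gamma_{n+1}}$ cannot force that $D_n\cap N_{i_n}$ meets $\dot G_\gamma$: a witnessing condition $r\in D_n\cap N_{i_n}$ will in general have support cofinal in $N_{i_n}\cap\gamma$ (hence cofinal in $\gamma$ in this case), and $q_{n+1}$, being trivial above $\gamma_{n+1}$, neither extends $r$ there nor controls whether $r\restr[\gamma_{n+1},\gamma)$ ends up in the generic filter. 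One must instead carry along at stage $n$ a $\mtcl P_{\gamma_n}$--name $\dot p_n$ for a condition of $\mtcl P_\gamma$ lying in the dense sets treated so far, strengthen the inductive statement to ``extend a generic condition \emph{while forcing a prescribed name for a condition into the generic filter}'', and have every later $q_m$ force $\dot p_n\restr\gamma_m\in\dot G_{\gamma_m}$. Your $q_{n+1}$ only ever extends $q_n{}^\smallfrown(p\restr[\gamma_n,\gamma_{n+1}))$, so only the original $p$ is caught.

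Second --- and this is where the single-model argument genuinely breaks for chains --- once such a promise has been updated inside $D_n\cap N_{i_n}$ it lives in $N_{i_n}$ and, in general, in no smaller model; when the enumeration later returns to a task $(i_m,D_m)$ with $i_m<i_n$, as it must infinitely often since each $N_i$ carries infinitely many dense sets, the ``density step inside $N_{i_m}$'' cannot be performed below the current promise, which is not an element of $N_{i_m}$. Cofinal repetition of the tasks in the enumeration does nothing to repair this. The standard proofs restructure the fusion: one fixes $\xi_n\nearrow\rho$, applies the inductive hypothesis at $\gamma_{n+1}$ only to the tail chain $(N_i)_{\xi_n\leq i\leq\rho}$ --- this is precisely where indecomposability enters, guaranteeing that these tails still have order type $\rho+1$ --- and recovers genericity over $\mtcl P_\gamma$ for the models of small index by an argument other than meeting their dense sets one at a time. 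None of this machinery appears in your sketch, so the key case is missing its key idea. (A smaller, fixable issue: in the uncountable-cofinality case a $\delta\in N_0$ bounding $\supp(p)$ is not enough, since dense subsets of $\mtcl P_\gamma$ lying in $N_i$ contain conditions with support up to $\mbox{sup}(N_i\cap\gamma)>\delta$ and are not reducible to $\mtcl P_\delta$--names; one has to work above $\mbox{sup}(N_\rho\cap\gamma)$, or run the same fusion along a sequence cofinal in $N_\rho\cap\gamma$.)
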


\begin{lemma}\label{presomegaomegabounding} Suppose $\rho<\o_1$ is an indecomposable ordinal and $$(\mtcl P_\a, \dot{\mtcl Q}_\b\,:\,\a\leq\g,\,\b<\g)$$ is a countable support iteration such that for all $\b<\g$, $$\Vdash_{\mtcl P_\b}\dot{\mtcl Q}_\b\in (\rho\mbox{--}\PR)\cap\,^\o\o\textsf{--bounding}$$ Then $\mtcl P_\g\in (\rho\mbox{--}\PR)\cap\,^\o\o\textsf{--bounding}$.
\end{lemma}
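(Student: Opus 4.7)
The proof is by induction on $\gamma$, following Shelah's general framework for preservation theorems on countable support iterations. Since $\rho$-properness of $\mathcal{P}_\gamma$ is already provided by Lemma \ref{presproper}, only the $\omega^\omega$-bounding clause needs attention; nevertheless, it is convenient to establish both properties simultaneously via a strengthened genericity predicate. Call $q \in \mathcal{P}_\gamma$ a \emph{$(\vec{N}, g)$-generic condition}---for $\vec N = (N_i)_{i \leq \rho}$ a continuous $\in$-chain of countable elementary submodels of some large $H_\theta$ with $\mathcal{P}_\gamma, \rho \in N_0$, and $g \in \omega^\omega$ eventually dominating every element of $\omega^\omega \cap N_\rho$---if $q$ is $(N_i, \mathcal{P}_\gamma)$-generic for each $i \leq \rho$ and forces every $\mathcal{P}_\gamma$-name $\dot{f} \in N_\rho$ for a function in $\omega^\omega$ to be dominated pointwise by $g$. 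The inductive claim to prove is that every $p \in N_0 \cap \mathcal{P}_\gamma$ admits such a $q \leq p$. This yields both $\rho$-properness and $\omega^\omega$-bounding of $\mathcal{P}_\gamma$: given a name $\dot{f}$ for an element of $\omega^\omega$, choose $\vec{N}$ with $\dot{f} \in N_0$ and $g$ dominating $\omega^\omega \cap N_\rho$.

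The successor step is routine. If $\gamma = \beta + 1$, first invoke the inductive hypothesis at $\mathcal{P}_\beta$ to obtain a $(\vec N \restriction \beta, g)$-generic $q' \leq p \restriction \beta$; then, working in $V^{\mathcal{P}_\beta}$ below $q'$, use $\dot{\mathcal{Q}}_\beta \in (\rho\mbox{--}\PR)\cap\,^\omega\omega\textsf{--bounding}$ to produce a condition in $\dot{\mathcal{Q}}_\beta$ generic over each $N_i[\dot{G}_\beta]$ that pointwise bounds by $g$ all names in $N_\rho[\dot{G}_\beta]$ for new functions in $\omega^\omega$; concatenate.

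The limit case is the main obstacle. I would organize a fusion inside $N_\rho$: enumerate a cofinal sequence $\beta_n \nearrow \gamma$ (using $\cf(\gamma) = \omega$; when $\cf(\gamma) > \omega$ one reduces to a single $\beta < \gamma$ by observing that names for functions in $\omega^\omega$ in a countable support iteration have support bounded below $\gamma$) together with an enumeration of all dense subsets $D_n$ of $\mathcal{P}_\gamma$ lying in $\bigcup_{i \leq \rho} N_i$ and all $\mathcal{P}_\gamma$-names $\dot{f}_n \in N_\rho$ for functions in $\omega^\omega$. Build recursively a decreasing sequence $p_n \in N_\rho \cap \mathcal{P}_\gamma$ by applying the inductive hypothesis to $\mathcal{P}_{\beta_n}$ (equipped with a suitable sub-chain of $\vec{N}$) so as to (i) refine $p_n$ into $D_n$, (ii) decide $\dot{f}_n(k)$ below $g(k)$ for all $k \leq n$, and (iii) keep the support of $p_{n+1} \setminus p_n$ within $N_\rho$. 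The countable support hypothesis guarantees that the limit $q = \bigcup_n p_n$ lies in $\mathcal{P}_\gamma$, and the genericity-plus-bounding data secured at each stage ensures that $q$ is $(\vec{N}, g)$-generic.

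The technical crux---and what makes the limit case work---is task (ii): deciding $\dot{f}_n(k)$ below $g(k)$ must be achievable at the intermediate level $\mathcal{P}_{\beta_n}$ rather than all of $\mathcal{P}_\gamma$. This relies on the fact that the tail $\mathcal{P}_\gamma / \dot{G}_{\beta_n}$, viewed in $V^{\mathcal{P}_{\beta_n}}$, is itself (forced to be) a countable support iteration of $\rho$-proper $\omega^\omega$-bounding forcings (of length $<\gamma$, so that the inductive hypothesis relativized to the tail applies), and is therefore itself $\omega^\omega$-bounding. Consequently, the $\mathcal{P}_\gamma$-value of $\dot{f}_n(k)$ is majorized by a $\mathcal{P}_{\beta_n}$-name for a natural number, which can be driven below $g(k)$ by an appropriate choice of $p_{n+1} \restriction \beta_n$ (using that $g$ dominates $\omega^\omega \cap N_\rho$). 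Assembling these ingredients gives $q$ as required. This is essentially the form of Shelah's preservation theorem for countable support iterations as developed in~\cite[III, VI]{SHEPRO}, adapted to the $\rho$-proper setting using $\rho$-chains throughout.
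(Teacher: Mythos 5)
The paper does not prove this lemma at all: it is quoted as a known preservation theorem of Shelah, with a pointer to \cite[III, VI]{SHEPRO} and to Goldstern's exposition, so there is no in-paper argument to compare yours against. Your proposal is a reconstruction of exactly the standard proof from those sources -- the strengthened genericity predicate combining $(N_i,\mtcl P_\g)$--genericity with domination by a fixed $g$, the successor step via the two--step analysis, and the limit step via a fusion along $\b_n\nearrow\g$ using that each tail $\mtcl P_\g/\dot G_{\b_n}$ is again a countable support iteration of the same kind and hence bounding, so that values of $\dot f_n$ can be majorized by $\mtcl P_{\b_n}$--names lying in $N_\rho$, to which the inductive hypothesis (with the same $g$) applies. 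That is the right architecture, and the adaptation from $1$--proper to $\rho$--proper chains is routine.

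Two points of bookkeeping in your statement of the inductive predicate would break if taken literally. First, you require $g$ only to \emph{eventually} dominate $\o^\o\cap N_\rho$ but ask $q$ to force \emph{pointwise} domination of every name $\dot f\in N_\rho$; the successor step already fails for this version (finitely many values of $\dot f$ may be forced above $g$ before you get to act), and the correct predicate uses eventual domination ($\leq^*$) on both sides, which is what closes under the induction. Second, $q=\bigcup_n p_n$ is not literally a condition in a countable support iteration; the limit must be defined coordinatewise, using the genericity accumulated at each coordinate to produce $q(\xi)$ as a lower bound of the $p_n(\xi)$. Both are standard repairs present in Shelah's and Goldstern's write-ups, so I would count your proposal as a correct sketch of the intended (cited) proof rather than as containing a genuine gap.
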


We will also use the following preservation result due to Miyamoto.

\begin{lemma}\label{pressuslintreepres} Suppose $\rho<\o_1$ is an indecomposable ordinal and $$(\mtcl P_\a, \dot{\mtcl Q}_\b\,:\,\a\leq\g,\,\b<\g)$$ is a countable support iteration such that for all $\b<\g$, $$\Vdash_{\mtcl P_\b}\dot{\mtcl Q}_\b\in (\rho\mbox{--}\PR)\cap\STP$$ Then $\mtcl P_\l\in (\rho\mbox{--}\PR)\cap\STP$.

\end{lemma}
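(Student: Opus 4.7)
The plan is to reduce the lemma to two tasks: the $\rho$--properness of $\mtcl P_\gamma$ is already given by Lemma~\ref{presproper}, so the only real content is the preservation of Suslin trees along countable support iterations of $(\rho\mbox{--}\PR)\cap\STP$ forcings. Fix a Suslin tree $T$ in $V$ and a large regular $\theta$ with $T,\mtcl P_\gamma\in H_\theta$. Following Miyamoto's strategy, I would introduce the auxiliary notion of a condition $q\leq p$ being \emph{$T$--generic over a countable $N\prec H_\theta$ with $T,\mtcl P_\gamma\in N$}, meaning that $q$ is $(N,\mtcl P_\gamma)$--generic and, in addition, for every $\mtcl P_\gamma$--name $\dot A\in N$ for a maximal antichain of $T$ and every $u\in T$ of height $N\cap\omega_1$, $q\Vdash_{\mtcl P_\gamma}\text{``$u$ extends some node in $\dot A\cap N$''}$. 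Given this notion, Suslin--ness of $T$ in $V^{\mtcl P_\gamma}$ follows by the usual argument: any $\mtcl P_\gamma$--name $\dot A$ for a maximal antichain of $T$ is forced, by a $T$--generic condition over a model $N$ containing $\dot A$, to be contained in $N$, hence countable.

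The main technical claim I would prove by induction on $\gamma$ is the following strengthened form of $\rho$--properness: for every continuous $\rho$--chain $(N_i)_{i\leq\rho}$ of countable elementary submodels of $H_\theta$ containing $\mtcl P_\gamma$, $T$ and the iteration data, and every $p\in N_0\cap\mtcl P_\gamma$, there exists $q\leq p$ which is simultaneously $T$--generic over every $N_i$, $i\leq\rho$. Call such $q$ simply \emph{$(T,\bar N)$--generic}. Note that the conclusion immediately yields the $(\rho\mbox{--}\PR)\cap\STP$ status of $\mtcl P_\gamma$: the first component gives $\rho$--properness, while applied to any one $N$ containing $\dot A$ we obtain Suslin--ness of $T$ in the extension.

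For the successor step $\gamma=\gamma'+1$ I would use the inductive hypothesis to obtain a $(T,\bar N)$--generic $q'\in\mtcl P_{\gamma'}$ extending $p\restriction\gamma'$, then, working in $V^{\mtcl P_{\gamma'}}$ below $q'$, apply the analogous statement for the single--step iterand $\dot{\mtcl Q}_{\gamma'}$: since $\Vdash_{\mtcl P_{\gamma'}}\dot{\mtcl Q}_{\gamma'}\in(\rho\mbox{--}\PR)\cap\STP$, the one--step version of the statement for $\dot{\mtcl Q}_{\gamma'}$ (which for a single Suslin--tree--preserving $\rho$--proper forcing is proved by a straightforward $\rho$--chain construction along the style of the proof that $\MRP$--forcings preserve Suslin trees in Lemma~\ref{MRP}) produces a name $\dot r$ for a $(T,\bar N[G_{\gamma'}])$--generic extension of $p(\gamma')$ inside each model, yielding $q=q'{}^\smallfrown\dot r$.

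The hard part will be the limit case, particularly when $\cf(\delta)=\omega$ (the countable support case), because one must combine three simultaneous genericity demands: genericity over all the $N_i$'s along the $\rho$--chain, preservation of $T$--genericity for every node $u$ at every height $N_i\cap\omega_1$, and countable support commitment at the limit. I would choose an increasing sequence $(\delta_n)_{n<\omega}\in N_0$ cofinal in $\delta$, fix in $N_0$ enumerations of the relevant dense sets and antichain--names from all the $N_i$'s, and build a decreasing fusion sequence $(q_n)$ with $q_n\in\mtcl P_{\delta_n}\cap N_0$ (with parameters in the $N_i$'s handled by a standard bookkeeping), each $q_n$ obtained from the successor and shorter--limit cases of the induction, arranging both the usual $(N_i,\mtcl P_{\delta_n})$--genericity requirements and the new $T$--genericity requirements. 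The countable support limit $q\in\mtcl P_\delta$ of this sequence is then $(T,\bar N)$--generic. The book--keeping must interleave, for each $n$, one dense set (for $\mtcl P_{\delta_n}$--genericity of some $N_i$) with one name for a maximal antichain of $T$ (for $T$--genericity over some $N_i$); at this step one crucially uses that at stage $n$ the relevant iterand at some $\delta_m<\delta_n$ is, in $V^{\mtcl P_{\delta_m}}$, Suslin--tree--preserving and $\rho$--proper, so that the one--step extension can realize both demands simultaneously. This is the only place where the two hypotheses on the iterands interact non--trivially, and is the technical core of Miyamoto's theorem.
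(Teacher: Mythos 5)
The paper itself gives no proof of this lemma -- it is quoted as a preservation theorem due to Miyamoto -- so there is no in-paper argument to compare yours against; I can only assess the sketch on its own merits. Your overall decomposition is sound: $\rho$--properness of $\mtcl P_\g$ does come for free from Lemma \ref{presproper}, the derivation of Suslin-ness of $T$ in the extension from the existence of your ``$T$--generic'' conditions is correct, and the successor step is in fact even easier than you make it. If $\mtcl Q\in\PR\cap\STP$, then \emph{every} $(N,\mtcl Q)$--generic $q$ is automatically $T$--generic over $N$ in your sense: any maximal antichain $A$ of $T$ lying in $N[G]$ is countable (Suslin-ness is preserved), hence $A\sub N[G]\cap V=N$, hence every node of $T$ at level $N\cap\o_1$ extends a member of $A$. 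No MRP-style construction is needed there; the argument behind Lemma \ref{MRP} serves to establish membership in $\STP$ for a particular poset, not to upgrade genericity once membership is known.

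The genuine gap is in the limit step, which is the entire content of the theorem, and your sketch stops exactly where the work begins. The obstruction is this: the objects to be handled in the fusion are $\mtcl P_\delta$--names $\dot A\in N_i$ for maximal antichains of $T$ (equivalently, dense subsets of $\mtcl P_\delta\times T$ lying in $N_i$), whereas the node $u$ at level $N_i\cap\o_1$ that must be steered into $\dot A$ is \emph{not} an element of $N_i$. So at stage $n$ one cannot simply ``meet a dense set inside $N_i$'': no bookkeeping of dense subsets of the approximations $\mtcl P_{\delta_n}$ produces, inside $N_i$, a condition deciding a node of $\dot A$ below $u$, and the demand concerning $\dot A$ is not a demand about any single iterand, so the phrase ``the one-step extension can realize both demands simultaneously'' has no content as stated. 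The missing mechanism is the heart of Miyamoto's proof: at stage $n$, working in $V^{\mtcl P_{\delta_n}}$ below $q_n$, form the set $E\in N_i[G_{\delta_n}]$ of nodes $v$ of $T$ for which some condition in the quotient, compatible with the current promised tail, forces a node of $\dot A$ below $v$; check that $E$ is dense in $T$; use the \emph{inductive hypothesis that $\mtcl P_{\delta_n}$ preserves the Suslin-ness of $T$} to conclude that a maximal antichain $A'\sub E$ chosen in $N_i[G_{\delta_n}]$ is countable, hence contained in $N_i[G_{\delta_n}]\cap V=N_i$, hence met by $u$; and only then update the promise inside $N_i[G_{\delta_n}]$. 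This is the one place where preservation of $T$ by the initial segments of the iteration is genuinely used, and it is absent from your sketch. (A smaller slip: the fusion conditions $q_n$ are generic over $N_0$ and so cannot lie in $\mtcl P_{\delta_n}\cap N_0$; what lies in $N_0[G_{\delta_n}]$ is the name for the promised tail condition.)
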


Shelah defines a certain variant of the notion of countable support iteration, which he calls \emph{revised countable 
support (RCS) iteration}. Variants of the notion of RCS iteration have been proposed by Miyamoto and others 
(for example a detailed account of RCS-iterations in line with Donder and Fuchs' approach is given 
in~\cite{VIAAUDSTEBOOK}).\footnote{It is not always clear whether these notions are equivalent 
in any reasonable sense.}
In the following, any mention of revised countable support iteration will refer to either Shelah's or Miyamoto's version. 

The first preservation result involving RCS iterations we will need is the following lemma, proved in \cite[XI]{SHEPRO}. 

\begin{lemma}\label{preservation-S-cond}
Suppose $\la\mtcl P_\a, \dot{\mtcl Q}_\b\,:\,\a\leq\g,\,\b<\g\ra$ is an RCS iteration such that the following holds for all $\b<\g$.

\begin{enumerate}

\item If $\b$ is even, $\Vdash_{\mtcl P_\b}\dot{\mtcl Q}_\b=\textsf{Coll}(2^{\av\mtcl P_\b\av},\,\o_1)$.
\item If $\b$ is odd, $\Vdash_{\mtcl P_\b}\dot{\mtcl Q}_\b\mbox{ has the $S$--condition}$.
\end{enumerate}

Then $\mtcl P_\g$ has the $S$--condition. \end{lemma}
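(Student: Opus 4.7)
The plan is to proceed by induction on the iteration length $\gamma$. The base case $\gamma=0$ is trivial (the trivial forcing can be viewed as vacuously satisfying the relevant strategic property, or we may take $\gamma\geq 1$ as our actual base), and the successor case is essentially contained in what has already been proved in Fact~\ref{fac:sigma2Scond}. Indeed, if $\gamma=\beta+1$, then by induction $\mtcl P_\beta$ has the $S$-condition, and
\begin{itemize}
\item if $\beta$ is even, then $\dot{\mtcl Q}_\beta$ is forced to be $\Coll(2^{|\mtcl P_\beta|},\omega_1)$, which is countably closed and hence trivially satisfies the $S$-condition (player II simply plays $\kappa=\omega_2$ and coherent descending sequences are met by countable closure);
\item if $\beta$ is odd, then by hypothesis $\dot{\mtcl Q}_\beta$ is forced to satisfy the $S$-condition.
\end{itemize}
In either case, two-step iteration preservation of $S\textsf{--cond}$ (Fact~\ref{fac:sigma2Scond}) gives that $\mtcl P_\gamma$ has the $S$-condition.

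For the limit case of uncountable cofinality, the point is that in any reasonable version of RCS iteration (Shelah's or Miyamoto's), an individual condition $p\in\mtcl P_\gamma$ has bounded support below $\gamma$, so $p\in\mtcl P_\beta$ for some $\beta<\gamma$. Hence the game $\mtcl G^{\mtcl P_\gamma}$, played below $p$, can be answered by player II using her winning strategy for $\mtcl G^{\mtcl P_\beta}$ furnished by the induction hypothesis, together with the observation that the winning condition relativized to $\mtcl P_\beta$ transfers upward (any $\mtcl P_\beta$-generic branch witnessed by a condition in $\mtcl P_\beta$ is automatically witnessed by the same condition in $\mtcl P_\gamma$). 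The syntactic uniformity clause of Definition~\ref{s-cond-def} transfers verbatim.

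The genuine obstacle is the limit case $\cf(\gamma)=\omega$. Fix a strictly increasing cofinal sequence $(\beta_n)_{n<\omega}$ in $\gamma$ with $\beta_0=0$, and for each $n$ let $\sigma_n$ be a winning strategy for player II in $\mtcl G^{\mtcl P_{\beta_n}}$ of the uniform kind described in Definition~\ref{s-cond-def}, obtained from the inductive hypothesis. The plan is to define a winning strategy $\sigma$ for player II in $\mtcl G^{\mtcl P_\gamma}$ whose action at stage $n$ is a ``lifted'' version of $\sigma_n$: when it is her turn at node $\eta$ with condition $p_\eta$, player II chooses $\kappa^n_\eta$ dictated by $\sigma_n$ applied to the projection of the partial play to $\mtcl P_{\beta_n}$, and then lifts each successor-labeling $p^n_{\eta^\frown\langle\alpha\rangle}\in\mtcl P_{\beta_n}$ to an extension in $\mtcl P_\gamma$ by leaving coordinates above $\beta_n$ untouched (and using the countable closure of the even-stage collapses to do the necessary bookkeeping). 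The use of the even-step collapse is essential here: it ensures that in $V^{\mtcl P_{\beta_n+1}}$ the forcing so far has size $\omega_1$, so that only countably many $\mtcl P_{\beta_n+1}$-names need be reflected into the conditions picked along the play, and it converts the $2^{|\mtcl P_{\beta_n}|}$-worth of potential incompatibilities into a single $\omega_1$-chunk that fits into the uniform book-keeping required by Definition~\ref{s-cond-def}.

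The verification that $\sigma$ wins is where the RCS structure is used: given a candidate subtree $T'\subseteq T$ as in the winning condition, apply $\sigma_n$'s winning property at each level $\beta_n$ to find, for each $n$, a condition $q_n\in\mtcl P_{\beta_n}$ forcing the existence of an $\omega$-branch $b_n$ through the projection $T'_n$ of $T'$ to $\mtcl P_{\beta_n}$, with $p_{b_n\restriction k}\in\dot G_{\mtcl P_{\beta_n}}$ for all $k$. The coherence of the constructions — ensured by the fact that the action of $\sigma$ at stage $n$ refines the action at stage $n-1$ — allows one to thread these branches into a single branch $b$ of $T'$ and assemble the $q_n$ into a condition $q\in\mtcl P_\gamma$ with revised countable support, using the standard RCS limit construction (Shelah's or Miyamoto's) applied to the countable ascending sequence $(q_n\restriction\beta_n)_n$. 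This $q$ then forces $p_{b\restriction k}\in\dot G_{\mtcl P_\gamma}$ for all $k$. The hardest point — and the reason a direct countable-support iteration would not suffice — is precisely this assembly step: one must guarantee that the countably many threads of conditions at the coordinates $\beta_n$ actually have a common lower bound in $\mtcl P_\gamma$, and it is for exactly this reason that the even collapses were interleaved into the iteration, making the RCS limit work without collapsing $\omega_1$.
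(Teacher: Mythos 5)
The paper does not prove this lemma at all: it is imported verbatim from Shelah, \cite[XI]{SHEPRO}, and used as a black box. So there is no in-paper argument to compare yours against; what can be assessed is whether your sketch would stand on its own, and it would not.

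The successor step and the appeal to Fact~\ref{fac:sigma2Scond} are fine, but both limit cases have genuine gaps. For uncountable cofinality, the reduction to a bounded stage fails: although a single condition of the direct limit has bounded support, the game $\mtcl G^{\mtcl P_\gamma}$ lets player I label the nodes of $T^n$ with arbitrary conditions of $\mtcl P_\gamma$ extending $p$, and since each level of the tree has size at least $\aleph_2$, the supports of the conditions appearing in a single run can be cofinal in $\gamma$ even when $\cf(\gamma)>\omega$. So player II cannot simply answer with her strategy for some fixed $\mtcl G^{\mtcl P_\beta}$. For cofinality $\omega$ --- which you correctly identify as the heart of the matter --- the verification is asserted rather than proved, and as described it does not go through. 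First, the projection of a play of $\mtcl G^{\mtcl P_\gamma}$ according to your lifted strategy $\sigma$ down to $\mtcl P_{\beta_n}$ is a play in which player II followed $\sigma_n$ only at stage $n$; it is not a run of $\mtcl G^{\mtcl P_{\beta_n}}$ according to $\sigma_n$, so the winning property of $\sigma_n$ cannot be invoked to produce $q_n$. Second, even granting the $q_n$, they are obtained from separate applications of separate winning conditions and need not be pairwise compatible, let alone form a descending thread admitting an RCS lower bound. Third, the ``branches'' $b_n$ are $\mtcl P_{\beta_n}$-names for generic objects forced to exist by the respective $q_n$; they live in different (possibly mutually incompatible) generic extensions, and there is no mechanism by which they ``thread'' into a single branch $b$ of $T'$ forced by a single $q\in\mtcl P_\gamma$. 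This assembly step is precisely the content of Shelah's theorem, and his actual argument is a delicate fusion-type construction quite unlike the level-by-level lifting you propose. In short, the sketch correctly locates where the difficulty is but does not supply the idea that resolves it.
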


The following is a well--known result of Shelah.

\begin{lemma}\label{pres-semiproper} Suppose  $\rho<\o_1$ is an indecomposable ordinal and $$(\mtcl P_\a, \dot{\mtcl Q}_\b\,:\,\a\leq\g,\,\b<\g)$$ is a revised countable support iteration such that for all $\b<\g$, $$\Vdash_{\mtcl P_\b}\dot{\mtcl Q}_\b\in \rho\mbox{--}\SP$$ Then $\mtcl P_\g\in \rho\mbox{--}\SP$.
\end{lemma}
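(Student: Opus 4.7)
The plan is to proceed by induction on $\g$, using Fact~\ref{char-semiproper} throughout: it suffices to show that given any sufficiently large regular $\theta$ and any continuous $\rho$-chain $(N_i)_{i\leq\rho}$ of countable elementary submodels of $H_\theta$ containing $\mtcl P_\g$ and the iteration data, every $p\in N_0\cap\mtcl P_\g$ admits an extension $q$ which is $(N_i,\mtcl P_\g)$-semi-generic for all $i\leq\rho$. For $\g=\b+1$ a successor, the standard two-step argument applies: the induction hypothesis yields $p'\leq p\restr\b$ which is $(N_i,\mtcl P_\b)$-semi-generic for all $i\leq\rho$; since such a $p'$ forces that $(N_i[\dot G_\b])_{i\leq\rho}$ is a continuous $\rho$-chain of countable elementary submodels of $H_\theta^{V[\dot G_\b]}$ with $N_i[\dot G_\b]\cap\omega_1=N_i\cap\omega_1$, the forced $\rho$-semiproperness of $\dot{\mtcl Q}_\b$ produces a $\mtcl P_\b$-name $\dot q'\leq p(\b)$ that is $(N_i[\dot G_\b],\dot{\mtcl Q}_\b)$-semi-generic for all $i\leq\rho$, and $(p',\dot q')$ is as required. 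For $\g$ a limit of uncountable cofinality, a basic structural property of RCS limits yields that every condition in $\mtcl P_\g$ has support bounded below $\g$, so any $p\in N_0\cap\mtcl P_\g$ already lies in $\mtcl P_\b$ for some $\b<\g$, and the induction hypothesis applies directly, after verifying that $(N_i,\mtcl P_\b)$-semi-genericity lifts to $(N_i,\mtcl P_\g)$-semi-genericity (which is immediate since $\mtcl P_\b$ is a complete subforcing of $\mtcl P_\g$).

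The main obstacle is the case $\cof(\g)=\omega$, which is precisely where revised countable support differs from ordinary countable support and where the iteration lemma becomes delicate. After possibly replacing $(N_i)_{i\leq\rho}$ by an elementary extension containing $\g$, we may assume $\g\in N_0$, and we fix a strictly increasing cofinal sequence $(\g_n)_{n<\omega}\in N_0$ in $\g$, together with, for each $i\leq\rho$, an enumeration $(\dot\alpha^i_k)_{k<\omega}\in N_i$ of all $\mtcl P_\g$-nice names for countable ordinals occurring in $N_i$. We construct recursively a $\leq$-descending sequence $(p_n)_{n<\omega}$ with $p_n\in\mtcl P_{\g_n}$, where at each stage the induction hypothesis applied to $\mtcl P_{\g_{n+1}}$ together with bookkeeping using maximal antichains lying in the $N_i$ secures that $p_{n+1}$ is $(N_i,\mtcl P_{\g_{n+1}})$-semi-generic for all $i\leq\rho$ and forces $\dot\alpha^i_k$ to lie in $N_i$ for all $i\leq\rho$ and all $k\leq n$. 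The defining feature of RCS iterations, by contrast with plain countable support, is that such a descending $\omega$-sequence with compatible supports admits a canonical limit $p_\omega\in\mtcl P_\g$ extending each $p_n$, obtained via a $\mtcl P_\g$-name encoding the behavior of the generic past the $\g_n$'s; verifying that $p_\omega$ is a legitimate condition in $\mtcl P_\g$ is the technical heart of the RCS formalism. The condition $p_\omega$ is then $(N_i,\mtcl P_\g)$-semi-generic for every $i\leq\rho$: every $\mtcl P_\g$-name in $N_i$ for a countable ordinal is identified with some $\dot\alpha^i_k$, and by construction $p_{k+1}$, hence $p_\omega$, already forces its value to lie in $N_i$. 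This is Shelah's original argument, the details of which are worked out in~\cite[X]{SHEPRO}.
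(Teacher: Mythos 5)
The paper offers no proof of this lemma: it is stated as ``a well--known result of Shelah'' and used as a black box, so there is nothing internal to compare your argument against. Judged on its own terms, your sketch has the right overall shape (induction on $\g$, a two--step argument at successors, fusion at limits of cofinality $\o$), but it contains two genuine gaps. First, the uncountable--cofinality case is wrong as stated. It is not a ``basic structural property of RCS limits'' that every condition has bounded support: by the very definition of $\rcslim$ recalled in the paper's appendix, the RCS limit at a stage $\g$ of uncountable cofinality also contains non--constant threads $f$ with $f(\a)\Vdash\cof(\check{\g})=\check{\o}$ for some $\a<\g$, and this is exactly the situation RCS is designed for, since semiproper iterands (Namba forcing, for instance) can collapse $\cof(\g)$ to $\o$ when $\cof(\g)\geq\o_2$. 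Moreover, even for a condition lying in some $\mtcl P_\b$, being $(N_i,\mtcl P_\b)$--semi--generic does not lift ``immediately'' to $(N_i,\mtcl P_\g)$--semi--genericity: $N_i$ contains $\mtcl P_\g$--names for countable ordinals that are not $\mtcl P_\b$--names, and taming these requires extending the condition on the coordinates in $[\b,\g)$, i.e., the same fusion work as in the countable--cofinality case. The standard remedy is to treat all limit stages uniformly by fusing along an $\o$--sequence cofinal in $\sup(N_0\cap\g)$ --- which always has cofinality $\o$ because $N_0$ is countable --- rather than cofinal in $\g$ itself.

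Second, the recursion in your limit case does not close as written. Applying the plain induction hypothesis to $\mtcl P_{\g_{n+1}}$ only yields \emph{some} semi--generic condition below $p_n$, with no guarantee that it is compatible with the tail $p\restr[\g_{n+1},\g)$ of the original condition, nor that the $p_n$ cohere into a thread. One needs the strengthened inductive statement (for every $\b\in N_0\cap\g$ and every $q\leq p\restr\b$ in $\mtcl P_\b$ which is $(N_i,\mtcl P_\b)$--semi--generic for all $i\leq\rho$, there is $r\leq p$ in $\mtcl P_\g$ with $r\restr\b=q$ which is $(N_i,\mtcl P_\g)$--semi--generic for all $i\leq\rho$), and, for semiproperness specifically, the verification that the fusion limit is a legitimate RCS condition together with the control of the sets $N_i[\dot G_{\g_n}]\cap\g$ --- which may acquire new cofinal $\o$--sequences as the iteration proceeds --- is the actual content of Shelah's theorem. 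Your proposal explicitly defers precisely these points to \cite{SHEPRO}, which is a legitimate citation (and is all the paper itself does), but it means what you have written is an outline of the strategy with an incorrect uncountable--cofinality case, not a proof.
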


We will also need the following lemmas due to Miyamoto \cite{Miyamoto,miyamoto-suslin-sp}.

\begin{lemma}\label{pres-semiproper-bounding}  Suppose $\CH$ holds, $\rho<\o_1$ is an indecomposable ordinal, and $$(\mtcl P_\a, \dot{\mtcl Q}_\b\,:\,\a\leq\g,\,\b<\g)$$ is a revised countable support iteration such that for all $\b<\g$, $$\Vdash_{\mtcl P_\b}\dot{\mtcl Q}_\b\in (\rho\mbox{--}\SP)\cap\,^\o\o\textsf{--bounding}$$ Then $\mtcl P_\g\in (\rho\mbox{--}\SP)\cap\,^\o\o\textsf{--bounding}$.
\end{lemma}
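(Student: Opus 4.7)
The proof proceeds by induction on $\g$. The $\rho$-semiproperness of $\mtcl P_\g$ is already granted by Lemma \ref{pres-semiproper} applied to the same iteration, so the entire task is to show that $\mtcl P_\g$ is additionally $\o^\o$-bounding, and more delicately that the two properties can be witnessed jointly: given a continuous $\in$-chain $(N_i)_{i\leq\rho}$ of countable elementary submodels of some large $H_\t$ containing $\mtcl P_\g$ and all relevant parameters, given $p\in\mtcl P_\g\cap N_0$, and given a $\mtcl P_\g$-name $\dot f$ for a function $\o\to\o$ lying in $\bigcup_{i\leq\rho}N_i$, one must produce an extension $q\leq p$ which is $(N_i,\mtcl P_\g)$-semi-generic for every $i\leq\rho$ together with a ground-model function $g:\o\to\o$ such that $q\Vdash_{\mtcl P_\g}\dot f(n)\leq g(n)$ for all $n<\o$.

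At successor stages the statement is immediate from the closure of both $\rho\mbox{--}\SP$ and $^\o\o\mbox{--bounding}$ under two-step iterations, as recorded in Facts \ref{fac:Birkhoffsemiproper} and \ref{fac:sigma2SPTomombound}. The substance of the lemma is therefore the limit case, which splits according to $\cf(\g)$. When $\cf(\g)=\o$ the strategy mirrors Shelah's classical preservation of $\o^\o$-bounding for countable-support proper iterations: fix a cofinal $\o$-sequence $(\g_n)_{n<\o}\in N_0$ in $\g$ and an enumeration in $N_0$ of the countably many names for elements of $\o^\o$ in $\bigcup_{i\leq\rho}N_i$, and construct a fusion $(p_n)_{n<\o}$ of conditions in $\mtcl P_\g\cap N_0$ with supports converging to $\g$, arranging inductively that each $p_n\restr\g_n$ is $(N_i,\mtcl P_{\g_n})$-semi-generic for every $i\leq\rho$ and that $p_{n+1}$ pins down the value of the $n$-th name at the argument $n$ below a finite bound $g(n)$. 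The inductive hypothesis applied to $\mtcl P_{\g_n}$ delivers both the semi-genericity and the finite bound at each stage, and the RCS limit of the fusion furnishes the required $q$ together with the function $g=\bigcup_n g(n)$.

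The delicate case is $\cf(\g)=\o_1$, where the hypothesis of $\CH$ enters essentially. Using $\CH$ one enumerates within $N_0$ the $\aleph_1$-many hereditarily countable $\mtcl P_\g$-names for reals, and for each such name $\dot f$ one exploits the inductive $\o^\o$-boundedness of the intermediate iterands to reduce $\dot f$ to an equivalent $\mtcl P_\a$-name for some $\a<\g$ lying in $N_0$. Miyamoto's RCS limit construction then allows one to splice the semi-generic conditions inductively produced at successively larger $\a\in N_0\cap\g$ into a single condition of $\mtcl P_\g$ that is $(N_i,\mtcl P_\g)$-semi-generic for all $i\leq\rho$ and forces the required bounds. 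The principal obstacle is precisely this $\o_1$-cofinality limit: without $\CH$ the family of hereditarily countable $\mtcl P_\g$-names for reals present in $N_0$ may fail to be stratifiable along the iteration, a given $\dot f$ need not be equivalent to any $\mtcl P_\a$-name for $\a<\g$, and the fusion argument cannot be completed inside $N_0$. The role of $\CH$ is thus to make the space of relevant names small enough, relative to the RCS iteration structure, for the reduction-and-glue procedure to go through.
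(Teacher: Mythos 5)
The paper does not prove this lemma at all: it is stated as a black box and attributed to Miyamoto, with a citation in place of an argument. So there is no in-paper proof to compare yours against; the relevant comparison is with the cited literature, and against that standard your sketch has a genuine gap at exactly the point where the theorem is hard.

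The gap is in both limit cases, and it is the same gap. You say the $\cf(\g)=\o$ case ``mirrors Shelah's classical preservation of $^\o\o$--bounding for countable-support proper iterations.'' But that classical fusion argument depends essentially on \emph{properness}: a $(N,\mtcl P)$--generic condition forces $N[\dot G]\cap V=N$, which is what lets you catch and decide, inside $N$, the values of a name for a real lying in $N$, and hence extract a ground-model bound. In the semiproper setting a semi-generic condition only controls $N[\dot G]\cap\o_1$; it gives you no handle on which conditions of $N$ enter the generic filter, so the step ``$p_{n+1}$ pins down the value of the $n$-th name at $n$ below a finite bound $g(n)$'' does not follow from the inductive hypothesis --- deciding $\dot f(n)$ below a condition in $N_0$ need not be possible while simultaneously preserving semi-genericity for all the $N_i$. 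This is precisely why preservation of ``$^\o\o$--bounding'' (and of ``no new reals'') along semiproper RCS iterations is a delicate theorem (Shelah's Chapter XVIII--style arguments, Miyamoto's work) rather than a routine fusion, and it is where $\CH$ actually does its work, via a completeness/bookkeeping apparatus that your sketch does not supply. The same problem undermines the $\cf(\g)=\o_1$ case: the claim that every relevant name for a real ``reduces to an equivalent $\mtcl P_\a$--name for some $\a<\g$'' is proved, in the proper case, by exactly the $N[\dot G]\cap V=N$ argument that is unavailable here; asserting it for semiproper RCS iterations is assuming the crux of the theorem. In short, your successor step and your appeal to Lemma \ref{pres-semiproper} for $\rho$--semiproperness alone are fine, but the two limit cases are not proofs --- they restate the theorem in the places where it is nontrivial. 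Given that the paper itself treats this as an external result, the honest options are either to cite Miyamoto as the paper does or to reproduce his limit-stage construction in full; the proper-iteration fusion template cannot simply be transplanted.
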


\begin{lemma}\label{miyamotosuslinsp}  Suppose $\rho<\o_1$ is an indecomposable ordinal and $$(\mtcl P_\a, \dot{\mtcl Q}_\b\,:\,\a\leq\g,\,\b<\g)$$ is a revised countable support  iteration such that for all $\b<\g$, $$\Vdash_{\mtcl P_\b}\dot{\mtcl Q}_\b\in(\rho\mbox{--}\SP)\cap\STP$$ Then $\mtcl P_\g\in (\rho\mbox{--}\SP)\cap\STP$.
\end{lemma}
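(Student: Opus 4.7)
The statement fuses two preservation phenomena: preservation of $\rho$-semiproperness under RCS iterations (Lemma~\ref{pres-semiproper}) and preservation of Suslinness of $\omega_1$-trees. The $\rho$-semiproperness of $\mtcl P_\g$ is immediate from Lemma~\ref{pres-semiproper}, so the whole work lies in showing that $\mtcl P_\g$ preserves each Suslin tree $T\in V$. I fix such a $T$ and proceed by induction on $\g$, proving simultaneously $\rho$-semiproperness and Suslin preservation, since both properties will need to be engaged together at limits.

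The actual induction hypothesis I would carry is stronger than preservation of $T$: it is a \emph{Suslin-preserving $\rho$-semigenericity} clause, namely that for a club of continuous chains $(N_i)_{i\le\rho}$ of countable elementary submodels of some $H_\t$ containing $\mtcl P_\g$, $T$ and the iteration data, and for every $p\in N_0\cap\mtcl P_\g$, there is $q\le p$ which is $(N_i,\mtcl P_\g)$-semigeneric for all $i\le\rho$ and additionally forces that every node of $T$ at height $N_i\cap\omega_1$ is totally $(T,N_i)$-generic (equivalently, forces that every maximal antichain of $T$ lying in some $N_i$ is already contained in $N_i$). This formulation is exactly what marries the standard elementary-submodel characterization of $\rho$-semiproperness with the Miyamoto-style argument used in Lemma~\ref{MRP}(1) and in Lemma~\ref{psiAC-measurable}(1): one builds the generic sequence $(p_n)_{n<\o}$ inside $N_i$ in such a way that for each node $u$ of $T$ at height $N_i\cap\omega_1$ and each $\mtcl P_\g$-name $\dot A\in N_i$ for a maximal antichain of $T$, some $p_n$ forces a predecessor of $u$ below $N_i\cap\omega_1$ to lie in $\dot A$. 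The key auxiliary fact is that a Suslin tree makes every node at height $N_i\cap\omega_1$ totally $(T,N_i)$-generic, so one can shepherd the construction through antichains for $T$ in the usual density argument without extra cost.

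The successor case is routine: $\mtcl P_{\b+1}\cong \mtcl P_\b\ast\dot{\mtcl Q}_\b$ with $\mtcl P_\b$ having the strengthened property by induction and $\dot{\mtcl Q}_\b$ being forced to lie in $(\rho\mbox{--}\SP)\cap\STP$, which is exactly what is needed to reproduce the two-step step argument of Lemma~\ref{MRP}(1) above the intermediate extension. The main obstacle is the \emph{limit case} of the RCS iteration, which is always where such preservation theorems are delicate. Here I would run a simultaneous recursion on the chain $(N_i)_{i\le\rho}$, producing at each $i$ an $(N_i,\mtcl P_\g)$-semigeneric condition together with the side-condition on $T$, exactly as in Shelah's proof of Lemma~\ref{pres-semiproper} but with the antichain-absorption for $T$ added into the bookkeeping at each of the countably many density requirements inside $N_i$; at limit stages $j\le\rho$ one uses the revised countable support structure to take a limit condition, invoking Shelah's argument to see that $\o_1$ is not collapsed, and hence that the node at height $N_j\cap\o_1$ is still available to witness total $(T,N_j)$-genericity in the extension.

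Thus the plan reduces to reproving Lemma~\ref{pres-semiproper} while carrying a parallel Suslin-antichain side condition, the only new ingredient being that at each successor step of the internal fusion one does a simultaneous density extension for a $\mtcl P_\g$-name for a maximal antichain of $T$ lying in the current model. The principal technical burden---and the point where any real work occurs---is checking that the RCS limit of the resulting sequence of conditions still forces the relevant nodes of $T$ to be generic over the antichains coded in $\bigcup_i N_i$, which is the combinatorial heart of Miyamoto's iteration theorem.
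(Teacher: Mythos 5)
The paper does not prove this lemma at all: it is stated as a known result and attributed to Miyamoto (the citations \cite{Miyamoto,miyamoto-suslin-sp}), so there is no internal argument to compare yours against. Your sketch does correctly identify the standard architecture of such a proof --- strengthen the inductive hypothesis from ``$\mtcl P_\b$ is $\rho$--semiproper and preserves $T$'' to a simultaneous $\rho$--semigenericity-plus-Suslin-side-condition clause, dispose of successor stages by the two-step argument (which is exactly the single-step Suslin-preservation argument the paper does carry out in Lemma \ref{MRP} and Lemma \ref{psiAC-measurable}), and push the side condition through the fusion at limits. The observation that nodes of a Suslin tree at height $N\cap\o_1$ are totally $(T,N)$-generic, so that antichain-absorption can be folded into the usual density bookkeeping at no extra cost, is the right key fact and is the same device the paper uses in its single-forcing Suslin-preservation proofs.

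However, as a proof the proposal has a genuine gap, and it is precisely the one you name yourself: the limit case. Saying that at limit stages one ``takes a limit condition, invoking Shelah's argument'' and that the remaining burden is ``the combinatorial heart of Miyamoto's iteration theorem'' is not a proof --- it is a citation of the theorem being proved. The difficulty is not cosmetic: in an RCS iteration the thread obtained at a limit of cofinality $\o$ must simultaneously (i) remain a condition (which in the RCS setting requires the cofinality-collapsing bookkeeping built into the definition of $\rcslim$), (ii) be semigeneric for each $N_i$, and (iii) still decide, for each name for a maximal antichain of $T$ appearing in some $N_i$, that the relevant node of $T$ at height $N_i\cap\o_1$ sits above an element of that antichain of height below $N_i\cap\o_1$. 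Point (iii) is delicate because the names for antichains live at the limit stage, not at any single coordinate, so the single-step absorption argument does not apply verbatim; one needs the fusion to interleave the antichain requirements with the semigenericity requirements across coordinates, and to verify that the $\rcslim$ thread so produced is nonzero. None of this is carried out. If the intent is to reduce the lemma to Miyamoto's published iteration theorem, that is legitimate (and is what the paper does), but then the inductive apparatus you set up is unnecessary; if the intent is a self-contained proof, the limit-stage fusion must actually be executed.
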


\subsubsection{$\omega_1$--suitable classes}\label{omega1suitability}

\begin{lemma}\label{pr-suitable} The following classes are $\o_1$--suitable with respect to the theory
$\MK$+`\emph{$\omega_1$ is the least uncountable cardinal}' + `\emph{$\rho$ is a countable indecomposable ordinal}' for every indecomposable ordinal $\rho<\o_1$.

\begin{enumerate}

\item $\rho\mbox{--}\PR$

\item  ${<}\o_1\mbox{--}\PR$
\item $\PR\cap \STP$ 
\item $\PR\cap\,^\o\o\textsf{--bounding}$ 
\item $\PR\cap\STP\cap\,^\o\o\textsf{--bounding}$ 
\end{enumerate}
\end{lemma}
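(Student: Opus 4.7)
The plan is to verify each of the six defining clauses of Definition \ref{def:kappasuit} for every class $\Gamma$ on the list. Most of the clauses will be immediate from the results in Section~\ref{subsec:defdavid}; the content lies in clause (\ref{def:kappasuit-4}), the $\Gamma$--freezeability property.

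First I would observe that clauses (\ref{def:kappasuit-1})--(\ref{def:kappasuit-3}) and the absolute $\Sigma_2$ requirement follow directly from the facts collected earlier. The defining property of each constituent class ($\rho$-$\PR$, ${<}\omega_1$-$\PR$, $\STP$, $^\omega\omega\text{-bounding}$) is absolutely $\Sigma_2$ and $\Pi_2$ in the relevant $H_{\omega_2}$-parameters by Facts \ref{fac:sigma2proper} and \ref{fac:sigma2SPTomombound}; a finite conjunction of such formulas is again absolutely $\Sigma_2$. For preservation of $T$ (clause (\ref{def:kappasuit-1})), the only non-$\MK$ axioms are ``$\omega_1$ is the first uncountable cardinal'' (preserved by every proper forcing, since proper forcings preserve $\omega_1$) and ``$\rho$ is a countable indecomposable ordinal'' (preserved trivially, as $\rho$ is a fixed countable ordinal and its order-theoretic type is absolute). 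For clauses (\ref{def:kappasuit-2}) and (\ref{def:kappasuit-3}), Facts \ref{fac:Birkhoffproper} and \ref{fac:sigma2SPTomombound} give closure under preimages by complete injective homomorphisms, two-step iterations, and lottery sums, as well as containment of all countably closed forcings, for each constituent class; these closure properties pass to finite intersections.

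Next I would handle weak $\omega_1$-iterability (clause (\ref{def:kappasuit-5})) by invoking the iteration lemmas of Subsection \ref{iterability}. Player II's winning strategy in $\mathcal{G}_\alpha(\Gamma)$ is to take countable-support limits at the stages where the rules of the game demand a direct limit, and trivial moves otherwise. For $\rho$-$\PR$ this is exactly Shelah's Lemma \ref{presproper}; ${<}\omega_1$-$\PR$ follows by applying the lemma to every indecomposable $\rho<\omega_1$. For $\PR\cap\STP$ use Miyamoto's Lemma \ref{pressuslintreepres}; for $\PR\cap\,^\omega\omega\text{-bounding}$ use Lemma \ref{presomegaomegabounding}; and for $\PR\cap\STP\cap\,^\omega\omega\text{-bounding}$ a straightforward combination of the arguments for the latter two lemmas yields that countable-support iterations of forcings in the class remain in the class.

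The substantive step is clause (\ref{def:kappasuit-4}), freezeability. My plan is to split into two cases depending on whether the class is sensitive to added reals. For $\rho$-$\PR$ (any indecomposable $\rho$) and for ${<}\omega_1$-$\PR$, I would apply Proposition \ref{freeze1.5}: given $\mathcal{P}\in\Gamma$, the poset $\mathcal{P}\ast\dot{\mathcal{Q}}$ of that proposition $\SSP$-freezes $\mathcal{P}$, and has the form $\mathcal{P}\ast(\Coll(\omega_1,\mu)\ast\dot{\mathcal{R}})$ where $\Coll(\omega_1,\mu)$ is countably closed and $\dot{\mathcal{R}}$ is forced to be c.c.c. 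Both countably closed posets (by standard diagonalisation using countable closure) and c.c.c.\ posets (trivially) are in $\rho$-$\PR$ for every indecomposable $\rho<\omega_1$, so by the two-step iteration closure the composed forcing lies in $\Gamma$; Fact \ref{fac:keyrmkfreeze} then upgrades $\SSP$-freezing to $\Gamma$-freezing. For $\PR\cap\STP$, $\PR\cap\,^\omega\omega\text{-bounding}$, and $\PR\cap\STP\cap\,^\omega\omega\text{-bounding}$, I would instead apply Proposition \ref{freeze1}, whose freezing poset $\Coll(\omega_1,\mathcal{P})\ast\dot{\mathcal{R}}$ uses an $\MRP$-poset $\dot{\mathcal{R}}$: by Lemma \ref{MRP}(1), $\dot{\mathcal{R}}$ is proper, preserves Suslin trees, and adds no new reals (hence is $^\omega\omega\text{-bounding}$), and $\Coll(\omega_1,\mathcal{P})$ has all three properties by countable closure, so the composition stays in the class.

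The main obstacle is precisely the freezeability clause for $\rho$-$\PR$ with $\rho>1$ and for ${<}\omega_1$-$\PR$: the natural $\MRP$-based freezing of Proposition \ref{freeze1} cannot be used, since by the remark following Lemma \ref{MRP} an $\MRP$-poset is typically not even $\omega$-proper. Thus one is forced to use a different freezing mechanism, and this is the reason we need the c.c.c.\ freezing provided by Proposition \ref{freeze1.5} in these cases. Once both freezing posets are in hand, all five classes can be treated in a uniform manner.
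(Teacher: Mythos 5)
Your proposal is correct and follows essentially the same route as the paper's proof: the routine clauses are discharged via Facts \ref{fac:sigma2proper}, \ref{fac:Birkhoffproper} and \ref{fac:sigma2SPTomombound}, weak iterability via the countable-support preservation lemmas \ref{presproper}, \ref{presomegaomegabounding} and \ref{pressuslintreepres}, and freezeability via Propositions \ref{freeze1} and \ref{freeze1.5} with the same case split (the $\MRP$--based freezing for the classes refining $\STP$ and ${}^{\omega}\omega$--bounding, the c.c.c.--based freezing for $\rho$--$\PR$ with $\rho>1$ and for ${<}\omega_1$--$\PR$, upgraded from $\SSP$--freezing via Fact \ref{fac:keyrmkfreeze}). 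The only cosmetic difference is that you route $\PR$ itself through Proposition \ref{freeze1.5}, whereas the paper uses Proposition \ref{freeze1} for $\rho=1$; both work.
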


\begin{proof}
Given any of these  classes $\Gamma$, all conditions in the definition of $\o_1$--suitable class -- 
except for the fact that $\Gamma$ has the $\Gamma$--freezability property -- are clearly satisfied for 
$\Gamma$. In particular, $\Gamma$ is defined by an absolutely $\Sigma_2$ property by Fact~\ref{fac:sigma2proper},
and 
is closed under preimages by complete injective homomorphisms, two--step iterations and products, and contains all countably closed forcings 
by Fact~\ref{fac:Birkhoffproper}.
The weak iterability property follows from Lemmas \ref{presproper}, \ref{presomegaomegabounding} 
and \ref{pressuslintreepres}: the winning strategey for player II is to play the countable support limit at all limit stages
(notice that at stages of cofinality $\omega_1$ this limit is the direct limit).  
 As to the freezability property, it turns out that $\Gamma$ has in fact the $\SSP$--freezability property. This 
 follows immediately from Proposition \ref{freeze1} together with Lemma \ref{MRP} for $\PR$,  as well as for the classes 
 in (3), (4) and (5), and from Proposition \ref{freeze1.5} for $\rho\mbox{--}\PR$, for any given indecomposable 
 ordinal $\rho<\o_1$ such that $\rho>1$, and for ${<}\o_1\mbox{--}\PR$.
\end{proof}

We move on now to our first class not contained in $\PR$.

\begin{lemma}  $S\textsf{--cond}$ is $\o_1$--suitable with respect to 
$\MK$+ `\emph{$\omega_1$ is the least uncountable cardinal}'.
\end{lemma}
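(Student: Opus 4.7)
The plan is to verify each clause of Definition \ref{def:kappasuit} for $\Gamma = S\textsf{--cond}$ with $\lambda=\omega_1$, using the preparatory facts that have already been set up. Clauses (1)--(4) plus the structural closures of Definition \ref{def:kappasuit}(\ref{def:kappasuit-2})--(\ref{def:kappasuit-3}) all follow immediately from Fact \ref{fac:sigma2Scond}: the $S$--condition is an absolutely $\Sigma_2$ and $\Pi_2$ property in parameter $\omega_2$ (which is $\emptyset$--definable and, after rephrasing via its definition from $\omega_1$, has parameter in $H_{\omega_2}$); the class is closed under preimages by complete injective homomorphisms, under two--step iterations, and under products (whence under lottery sums, since these are computed as products of complete Boolean algebras); and it contains every $({<}\omega_1)$--closed poset. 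Clause (\ref{def:kappasuit-1}) is supplied by Lemma \ref{S-cond-shelah}(1): every $\bool{B}\in S\textsf{--cond}$ preserves stationary subsets of $\omega_1$, hence preserves $\omega_1$, which is exactly what keeps the axiom ``$\omega_1$ is the least uncountable cardinal'' true in generic extensions.

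For weak $\omega_1$--iterability (clause (\ref{def:kappasuit-5})), Player II's winning strategy in $\mathcal{G}_\alpha(S\textsf{--cond})$ is to interleave collapses in the style of Lemma \ref{preservation-S-cond}: at each even successor stage she plays the complete injective $S\textsf{--cond}$--correct homomorphism witnessing $\bool{B}_{\beta+1}=\bool{B}_{\beta}\ast\Coll(2^{|\bool{B}_\beta|},\omega_1)^{\bool{B}_\beta}$; at limit stages of cofinality $\omega_1$ she plays the direct limit, as required by the rules of the game; at other limit stages she plays the RCS limit. Lemma \ref{preservation-S-cond} then certifies that $\bool{B}_\alpha\in S\textsf{--cond}$ at every stage (the interleaved collapses are what guarantee the direct and RCS limits agree at stages of cofinality $\omega_1$ here, via the usual chain--condition bookkeeping on the algebras produced so far), so Player II can play at all stages up to $\alpha$.

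The $S\textsf{--cond}$--freezeability property (clause (\ref{def:kappasuit-4})) is the delicate point, but it is essentially already supplied by Proposition \ref{freeze3}. Given $\bool{B}\in S\textsf{--cond}$, choose cardinals $\delta\geq|\bool{B}|$ and $\kappa^1>\kappa^0\geq\delta$ together with a partition $(S_\alpha)_{\alpha<\omega_1}\in V$ of $S^{\kappa^0}_\omega$ into stationary sets and a bi--stationary $U\subseteq S^{\kappa^1}_\omega$ as in the hypothesis of that proposition. The resulting $\bool{B}$--name $\dot{\mtcl Q}=\Coll(\omega_1,\delta)\ast\dot{\mtcl S}_{\vec S,U,\dot B}$ is forced to have the $S$--condition; thus, by closure of $S\textsf{--cond}$ under two--step iterations, $\bool{C}:=\bool{B}\ast\dot{\mtcl Q}\in S\textsf{--cond}$, and the natural inclusion $k:\bool{B}\to\bool{C}$ is $S\textsf{--cond}$--correct. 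Proposition \ref{freeze3}(2) tells us that $k$ $\SSP$--freezes $\bool{B}$, and since $S\textsf{--cond}\subseteq\SSP$, Fact \ref{fac:keyrmkfreeze} upgrades this at once to the conclusion that $k$ $S\textsf{--cond}$--freezes $\bool{B}$, which is what the freezeability property demands.

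The main obstacles are precisely these two last ingredients: the interleaved--collapse iteration argument needed to certify weak $\omega_1$--iterability (which depends on Lemma \ref{preservation-S-cond} being quoted correctly and on the direct/RCS limits coinciding at cofinality $\omega_1$), and the descent from $\SSP$--freezing to $S\textsf{--cond}$--freezing, for which Fact \ref{fac:keyrmkfreeze} combined with the $S$--condition of the freezing tail $\dot{\mtcl Q}$ produced by Proposition \ref{freeze3} is decisive. Everything else is a direct appeal to Fact \ref{fac:sigma2Scond} and Lemma \ref{S-cond-shelah}.
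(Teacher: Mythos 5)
Your proof is correct and takes essentially the same route as the paper's: Fact~\ref{fac:sigma2Scond} for the definability and closure clauses, Lemma~\ref{preservation-S-cond} with interleaved collapses $\Coll(\o_1,2^{|\bool{B}_\beta|})$ (and RCS limits coinciding with direct limits at cofinality $\o_1$) for weak $\o_1$--iterability, and Proposition~\ref{freeze3} together with $S\textsf{--cond}\subseteq\SSP$ (Lemma~\ref{S-cond-shelah}(1)) and Fact~\ref{fac:keyrmkfreeze} for the freezeability property. If anything, you are slightly more explicit than the paper about why the freezing map is $S\textsf{--cond}$--correct (closure under two--step iterations applied to the $S$--condition tail $\dot{\mtcl Q}$) and about the parameter $\o_2$ being recoverable from $\o_1\in H_{\o_2}$.
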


\begin{proof}
Except for the freezability condition, all conditions in the definition of $\o_1$--suitable class are clearly satisfied by 
$S\textsf{--cond}$:
$S\textsf{--cond}$ is defined by an absolutely $\Sigma_2$ property
and 
is closed under preimages by complete injective homomorphisms, two--step iterations and products, and contains all countably closed forcings 
by Fact~\ref{fac:sigma2Scond}.
The iterability condition follows immediately from Lemma \ref{preservation-S-cond}: 
the winning strategy for player II is to play the revised countable support limit at all limit stages
(notice that at stages of cofinality $\omega_1$ this limit is the direct limit), and to play at all non-limit stages
$\alpha+2n$ the algebra $\bool{B}_{\alpha+2n}=\bool{B}_{\alpha+2n-1}\ast\dot{\bool{C}}$, where 
$\dot{\bool{C}}$ is a $\bool{B}_{\alpha+2n-1}$--name for the Boolean completion of 
$\Coll(\omega_1,2^{|\bool{B}_{\alpha+2n-1}|})$. 
As to the freezability condition, 
we have that $S\textsf{--cond}$ has in fact,  by Proposition \ref{freeze3}, the $\SSP$--freezability condition  -- 
which implies the $S\textsf{--cond}$--freezability condition by Lemma \ref{S-cond-shelah} (1).
\end{proof}

\begin{lemma}\label{ssp-and-restrictions}
 Given any indecomposable ordinal $\rho<\omega_1$, 
 each of the following classes is $\o_1$--suitable with respect to
 $\MK$+ `\emph{$\omega_1$ is the least uncountable cardinal}'+`\emph{$\rho$ is a countable indecomposable ordinal}'+ `\emph{there are class many measurable cardinals}'.

\begin{enumerate}
\item $\rho\mbox{--}\SP$
\item $(\rho\mbox{--}\SP)\cap \STP$
\item $(\rho\mbox{--}\SP)\cap\,^\o\o\textsf{--bounding}$
\item $(\rho\mbox{--}\SP)\cap\STP\cap\,^\o\o\textsf{--bounding}$ 
\end{enumerate}

Also, each of the following classes is $\o_1$--suitable with respect to the same theory.

\begin{enumerate}
\item ${<}\o_1\mbox{--}\SP$
\item $({<}\o_1\mbox{--}\SP)\cap \STP$
\item $({<}\o_1\mbox{--}\SP)\cap\,^\o\o\textsf{--bounding}$
\item $({<}\o_1\mbox{--}\SP)\cap\STP\cap\,^\o\o\textsf{--bounding}$ 
\end{enumerate}
\end{lemma}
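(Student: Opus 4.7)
The plan is to verify, for each of the eight listed classes $\Gamma$, the six conditions of Definition \ref{def:kappasuit}. Conditions (1)--(4)---absolute $\Sigma_2$-definability in the relevant parameters, preservation of the background theory $T$ by every forcing in $\Gamma$, closure under preimages by complete injective homomorphisms, two-step iterations and lottery sums, and containment of all countably closed forcings---follow essentially from Facts \ref{fac:sigma2semiproper}, \ref{fac:Birkhoffsemiproper} and \ref{fac:sigma2SPTomombound}, combined with the observation that any set-sized forcing preserves ``there is a proper class of measurable cardinals'' (since any measurable above $|\bool{B}|$ remains measurable in $V^{\bool{B}}$), and with the fact that countably closed posets are ${<}\omega_1$-semiproper, preserve Suslin trees, and add no reals.

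For weak $\omega_1$-iterability (condition (5)), player II's strategy in $\mathcal{G}_\alpha(\Gamma)$ is as follows: at stage $0$ she plays a countably closed collapse of $2^{\aleph_0}$ to $\omega_1$, forcing $\CH$; at every subsequent odd stage $\beta+1$ she plays $\bool{B}_{\beta+1}=\bool{B}_{\beta}\ast\dot{\Coll}(\omega_1,2^{|\bool{B}_{\beta}|})$ (which is countably closed over $\bool{B}_\beta$, hence in $\Gamma$, and ensures $\CH$ in the next intermediate model); at every limit stage she plays the RCS-limit of the iteration system built so far, which coincides with the required direct limit in the cases mandated by the definition of the game. The resulting limits lie in $\Gamma$ by Lemma \ref{pres-semiproper} (for $\rho\mbox{--}\SP$ and ${<}\omega_1\mbox{--}\SP$), by Lemma \ref{miyamotosuslinsp} (for the intersections with $\STP$), by Lemma \ref{pres-semiproper-bounding} (for the intersections with $^\o\o\mbox{--bounding}$, whose $\CH$ hypothesis is supplied by the interleaved collapses), and by a parallel combination of the latter two arguments for the triple intersection.

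The crux is the $\Gamma$-freezability condition (6). Given $\bool{B}\in\Gamma$, the background theory supplies a measurable cardinal $\kappa>|\bool{B}|$, and Proposition \ref{freeze2} produces a $\bool{B}$-name $\dot{\bool{Q}}=\Coll(\omega_1,\bool{B})\ast\dot{\mtcl R}$, with $\dot{\mtcl R}$ forced to be a $\psi_{\AC}^{\kappa}$-poset, such that the inclusion $k\colon\bool{B}\into\bool{B}\ast\dot{\bool{Q}}$ $\SSP$-freezes $\bool{B}$. By Lemma \ref{psiAC-measurable}, $\dot{\mtcl R}$ is forced to be ${<}\omega_1$-semiproper, to preserve Suslin trees, and to add no reals (hence to be $^\o\o$-bounding), while $\Coll(\omega_1,\bool{B})$ is countably closed; consequently both belong to every one of the eight classes. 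Closure of $\Gamma$ under two-step iterations then places $\bool{B}\ast\dot{\bool{Q}}$ in $\Gamma$, so $k$ is $\Gamma$-correct, and since $\Gamma\subseteq\SSP$, Fact \ref{fac:keyrmkfreeze} upgrades the $\SSP$-freezing to $\Gamma$-freezing. The main anticipated obstacle is the ``triple-intersection'' iteration lemma needed in condition (5), which is not recorded as a standalone statement; one must verify that Miyamoto's preservation of Suslin trees and Shelah--Miyamoto's preservation of $^\o\o$-bounding under $\CH$ can be run in tandem along an RCS iteration, but this is standard once each is phrased in the Shelah-style ``preservation of a two-coloured game'' framework and the interleaved collapses maintain $\CH$ throughout.
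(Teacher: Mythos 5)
Your proposal is correct and follows essentially the same route as the paper: the $\Sigma_2$-definability and closure properties come from Facts \ref{fac:sigma2semiproper}, \ref{fac:Birkhoffsemiproper} and \ref{fac:sigma2SPTomombound}, weak iterability from player II playing RCS-limits (using Lemmas \ref{pres-semiproper}, \ref{pres-semiproper-bounding} and \ref{miyamotosuslinsp}), and freezability from Proposition \ref{freeze2} together with Lemma \ref{psiAC-measurable} and the upgrade via Fact \ref{fac:keyrmkfreeze}. You are in fact somewhat more careful than the paper's terse proof in explicitly interleaving collapses to supply the $\CH$ hypothesis of Lemma \ref{pres-semiproper-bounding} and in flagging the unstated triple-intersection iteration lemma (your only slip is that player II plays at even, not odd, successor stages).
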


\begin{proof}
Each of these $\Gamma$ is defined by an absolutely $\Sigma_2$ property by Fact~\ref{fac:sigma2semiproper},
and 
is closed under preimages by complete injective homomorphisms, two--step iterations and products, and contains all countably closed forcings 
by Fact~\ref{fac:Birkhoffsemiproper}.
The iterability condition for each of these classes follows from (some combination of) 
Lemmas \ref{pres-semiproper}, \ref{pres-semiproper-bounding}, and \ref{miyamotosuslinsp}: 
the winning strategey for player II is to play the revised countable support limit at all limit stages
(notice that at stages of cofinality $\omega_1$ this limit is the direct limit).
The freezability condition follows  from Lemma \ref{psiAC-measurable}, together with Proposition \ref{freeze2} 
(for the case $\rho=1$, one could as well invoke Lemma \ref{MRP} together with Proposition \ref{freeze1} instead).  
\end{proof}

The standard proof, due to Shelah  (\cite{SPFA=MM}), that $\SPFA$ implies $\SSP=\SP$ actually shows the following.

\begin{proposition} $\textsf{FA}(({<}\o_1\mbox{--}\SP)\cap\STP\cap\,^\o\o\mbox{--bounding})$ implies 
$\SSP=\SP$. \end{proposition}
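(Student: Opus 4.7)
The plan is to adapt Shelah's argument that $\SPFA$ implies $\SSP=\SP$, verifying that the auxiliary semiproper forcings appearing in that argument in fact lie in the narrower class $\Gamma=({<}\o_1\mbox{--}\SP)\cap\STP\cap\,^\o\o\mbox{--bounding}$. Fix $\mtcl P\in\SSP$; the goal is to show $\mtcl P\in\SP$, i.e., that for every sufficiently large regular $\theta$, every countable $N\prec H_\theta$ with $\mtcl P\in N$, and every $p\in N\cap\mtcl P$, there is an extension $q$ of $p$ that is $(N,\mtcl P)$--semi-generic. It suffices, by the usual characterization of semiproperness, to prove this for a single generic choice of $(N,p)$.

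Following Shelah, I would construct from $\mtcl P$ (and the parameters $\theta$, $N$, $p$) a forcing $\mtcl R$ together with a family $\{\mathcal{E}_\alpha:\alpha<\omega_1\}$ of $\omega_1$--many dense subsets of $\mtcl R$ with the property that any filter on $\mtcl R$ meeting every $\mathcal{E}_\alpha$ produces a semi-generic extension of $p$ for $N$. Shelah's $\mtcl R$ is built in two stages: first a countably closed collapse such as $\textsf{Coll}(\omega_1,2^{|\mtcl P|})$, which reduces the search for a semi-generic condition to a countable combinatorial problem and in particular secures $\CH$ for the second stage; and then an $\RCS$ iteration of Namba--like, respectively club--shooting, components that exploits $\mtcl P\in\SSP$ to actually witness, in a generic way, a semi-generic extension of $p$ over $N$. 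Applying $\textsf{FA}(\Gamma)$ to $\mtcl R$ and this family of dense sets yields the desired filter, and hence the desired semi-generic condition $q\leq p$; since $(N,p)$ was arbitrary, $\mtcl P$ is semiproper.

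The main verification is that this $\mtcl R$ lies in $\Gamma$. The initial collapse is countably closed, hence trivially in $\Gamma$. Each stage of the subsequent $\RCS$ iteration can be chosen to be ${<}\o_1$--semiproper, to preserve Suslin trees (by Miyamoto's analysis of Namba--like and club--shooting forcings), and to be $^\o\o$--bounding (using Namba--style variants with stationary splitting, whose generic branches are dominated by ground--model reals). Closure of the iteration under $\Gamma$ then follows from Lemmas~\ref{pres-semiproper-bounding} and~\ref{miyamotosuslinsp}, the $\CH$ hypothesis of the former being provided by the initial collapse. The main obstacle is precisely this verification: the most familiar form of Namba forcing is neither $^\o\o$--bounding nor Suslin--tree preserving, so one must work with the refined variants due to Shelah and Miyamoto, and check that Shelah's semi-generic--producing construction can be executed using these variants rather than arbitrary semiproper building blocks.
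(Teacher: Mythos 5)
Your overall strategy -- rerun Shelah's argument that $\SPFA$ implies $\SSP=\SP$ and check that the auxiliary forcing lands in the smaller class -- is exactly the paper's, but you have misidentified the auxiliary forcing, and this is where the gap lies. The forcing $\mtcl Q_{\mtcl P}$ that Shelah's argument actually applies the forcing axiom to is a \emph{single} club--shooting forcing with countable conditions: its conditions are countable continuous $\in$--chains of countable elementary submodels $N\prec H_\lambda$ admitting semi--generic conditions below every $p\in N\cap\mtcl P$ (a set which is projective stationary whenever $\mtcl P\in\SSP$). It is not a revised countable support iteration, and no Namba--like components occur in it; RCS iterations of Namba forcing and collapses belong to the \emph{consistency} proof of $\MM$ from a supercompact, not to the implication $\SPFA\Rightarrow\SSP=\SP$. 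Building the argument out of Namba--style iterands, as you propose, introduces obstacles that are not merely technical: semiproperness of Namba forcing (in any variant) is not a theorem of $\ZFC$ but is equivalent to a strong Chang's--conjecture--type principle, so such iterands cannot be certified to lie in ${<}\o_1\mbox{--}\SP$; their preservation of Suslin trees is not established anywhere; and the iteration lemmas you invoke (Lemmas \ref{pres-semiproper-bounding} and \ref{miyamotosuslinsp}) only transport properties that each iterand is already \emph{forced} to have, so they cannot repair this. In effect you defer precisely the verification that constitutes the whole content of the proposition to building blocks for which it fails or is unknown.

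The paper's proof avoids all of this: one checks directly that the single forcing $\mtcl Q_{\mtcl P}$ is ${<}\o_1$--semiproper and adds no new reals (both straightforward, the first by iterating the standard one--model argument along a continuous chain, the second because a generic descending $\o$--sequence of conditions below a suitable model can be completed to a condition), and that it preserves Suslin trees by the Miyamoto--style argument given in the final part of the proof of Lemma \ref{psiAC-measurable}. Since a forcing adding no new reals is trivially $^\o\o$--bounding, $\mtcl Q_{\mtcl P}$ lies in $({<}\o_1\mbox{--}\SP)\cap\STP\cap\,^\o\o\mbox{--bounding}$, and a single application of $\FA_{\al_1}(\{\mtcl Q_{\mtcl P}\})$ yields the semiproperness of $\mtcl P$. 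If you want to salvage your write--up, replace the collapse--plus--RCS--iteration description with this single club--shooting poset and carry out the three verifications for it.
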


\begin{proof} This follows from the fact that  the natural semiproper forcing $\mtcl Q_{\mtcl P}$  
(s.\ \cite{SPFA=MM}) such that an application of $\FA_{\al_1}(\{\mtcl Q_{\mtcl P}\})$ 
yields the semiproperness of a given $\SSP$ $\mtcl P$ is in fact ${<}\o_1$--semiproper, does not add new reals, 
and preserves Suslin trees. The proof of the first two assertions is straightforward, and the preservation of 
Suslin trees follows by an argument as in the final part of the proof of Lemma \ref{psiAC-measurable}.
\end{proof} 
 
\begin{corollary} 
The following holds:
\begin{enumerate}
\item $\CFA(\SSP)$ ($=\mathsf{MM}^{+++}$) and $\CFA(\SP)$ are equivalent statements.
\item $\CFA(\SSP\cap \STP)$ and $\CFA(\SP\cap \STP)$ are equivalent statements.
\item $\CFA(\SSP\cap\,^\o\o\textsf{--bounding})$ and $\CFA(\SP\cap\,^\o\o\textsf{--bounding})$ are equivalent statements.
\item $\CFA(\SSP\cap\STP\cap\,^\o\o\textsf{--bounding})$  and $\CFA(\SP\cap\STP\cap\,^\o\o\textsf{--bounding})$ are equivalent statements.
\end{enumerate}
\end{corollary}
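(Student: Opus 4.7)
The plan is to exploit the fact that each of the eight classes appearing in the corollary sits between $({<}\omega_1\mbox{-}\SP)\cap\STP\cap{}^\omega\omega\mbox{-bounding}$ and $\SSP$, together with the preceding Proposition and the second clause of Theorem~\ref{finaltheorem-abs} asserting that $\CFA(\Gamma)$ implies $\FA_{\omega_1}(\bool{B})$ for every $\bool{B}\in\Gamma$. Writing $\Gamma=\SSP\cap X$ and $\Gamma'=\SP\cap X$ for $X$ one of $\top$, $\STP$, ${}^\omega\omega\mbox{-bounding}$, or $\STP\cap{}^\omega\omega\mbox{-bounding}$, the key observation is that
\[
({<}\omega_1\mbox{-}\SP)\cap\STP\cap{}^\omega\omega\mbox{-bounding}\ \subseteq\ \Gamma'\ \subseteq\ \Gamma,
\]
since ${<}\omega_1\mbox{-}\SP\subseteq\SP\subseteq\SSP$ and the remaining conjuncts are contained in $X$.

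Given this, I would first argue that each of $\CFA(\Gamma)$ and $\CFA(\Gamma')$ individually implies $\SSP=\SP$. Indeed, by Theorem~\ref{finaltheorem-abs} the axiom $\CFA(\Gamma)$ (respectively $\CFA(\Gamma')$) entails $\FA_{\omega_1}(\bool{B})$ for every $\bool{B}\in\Gamma$ (respectively $\Gamma'$); in particular the forcing axiom holds for every member of $({<}\omega_1\mbox{-}\SP)\cap\STP\cap{}^\omega\omega\mbox{-bounding}$, which by the preceding Proposition forces $\SSP=\SP$. Intersecting with $X$ on both sides gives $\Gamma=\SSP\cap X=\SP\cap X=\Gamma'$.

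Once $\Gamma=\Gamma'$ as classes, the order relations $\leq_\Gamma$ and $\leq_{\Gamma'}$ coincide (both being defined by the existence of a complete homomorphism with generic quotient in the same class), and the notion of $\Gamma$-super rigid pre-saturated $\omega_2$-tower coincides with the notion of $\Gamma'$-super rigid pre-saturated $\omega_2$-tower. Therefore the density statement in Definition~\ref{definition-of-cfa(gamma)} formalising $\CFA(\Gamma)$ is literally the same statement as the one formalising $\CFA(\Gamma')$, yielding the desired equivalence in each of the four items.

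There is no real obstacle here: the argument is essentially a one-step deduction from the preceding Proposition together with the second clause of Theorem~\ref{finaltheorem-abs}. The only mild point of care is to verify the inclusion $({<}\omega_1\mbox{-}\SP)\cap\STP\cap{}^\omega\omega\mbox{-bounding}\subseteq\Gamma'$ case by case, which is trivial, and to note that the conclusion $\Gamma=\Gamma'$ is enough to make $\CFA(\Gamma)$ and $\CFA(\Gamma')$ coincide as formalised statements, rather than merely being mutually derivable in some nontrivial way.
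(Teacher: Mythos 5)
Your argument is correct and coincides with the paper's intended derivation: the corollary carries no separate proof precisely because it is meant to follow, exactly as you say, from the preceding Proposition together with the second clause of Theorem~\ref{finaltheorem-abs}, which gives $\FA_{\omega_1}(\bool{B})$ for every $\bool{B}$ in the relevant class and in particular for every member of $({<}\omega_1\mbox{--}\SP)\cap\STP\cap{}^\omega\omega\textsf{--bounding}$, whence $\SSP=\SP$ and the two classes, their orders, and the corresponding density statements coincide. No discrepancy with the paper.
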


\subsubsection{Pairwise incompatibility of $\CFA(\Gamma)$ for $\omega_1$--suitable $\Gamma$}
\label{incompatible-category-forcing-axioms}

Each one of the incompatibilities contained in Theorem \ref{main-thm-incompatible} follows from two or more of 
the lemmas in this subsection put together.

Recall that $\bm{\delta}^1_2$ is the supremum of the the set of lengths of $\bm{\Delta}^1_2$--definable  
pre--well-orderings on $\mtbb R$. 

Also, given an ordinal $\a<\o_2$,  a function $g:\o_1\into \o_1$ is a \emph{canonical function for $\a$} if 
there is a surjection $\p:\o_1\into\a$ and a club $C\sub\o_1$ such that for all $\nu\in C$, $g(\nu)=\ot(\pi``\n)$. 
Let \emph{Club Bounding} denote the following statement: For every function $f:\o_1\into \o_1$ there is 
some $\a<\o_2$ such that  $\{\n<\o_1\,:\, f(\n)<g(\n)\}$ contains a club  whenever $g$ is a canonical function for $\a$.

\begin{lemma}\label{consespfa+++} ($\ZFC+\LC$) Let $\Gamma$ be any $\o_1$--suitable class such that 
$\Gamma\sub \PR$. If $\CFA(\Gamma)$ holds, then 

\begin{enumerate}
\item $\bm{\delta}^1_2<\o_2$ and 
\item Club Bounding fails.
\end{enumerate}
\end{lemma}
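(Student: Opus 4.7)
The plan is to invoke the generic absoluteness of Theorem~\ref{finaltheorem-abs}: under $\CFA(\Gamma)$, the first-order theory of $L(\Ord^{\omega_1})$ (with parameters in $\mathcal{P}(\omega_1)^V$) is invariant under $\bool{B}\in\Gamma$ preserving $\CFA(\Gamma)$. Both ``$\bm{\delta}^1_2<\omega_2$'' and ``Club Bounding fails'' are first-order sentences over $H_{\omega_2}\subseteq L(\Ord^{\omega_1})$, so for each conclusion it suffices to exhibit a forcing $\bool{B}\in\Gamma$ preserving $\CFA(\Gamma)$ in whose extension the conclusion holds.

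The natural starting point is $\bool{B}_0=\Coll(\omega_1,\omega_2)$, the countably closed collapse, which lies in $\Gamma$ by clause~(\ref{def:kappasuit-3}) of $\omega_1$-suitability. First I would verify, via a persistence argument, that $\bool{B}_0$ preserves $\CFA(\Gamma)$: given any $\Gamma^{V^{\bool{B}_0}}$-condition $\dot{\bool C}$, closure of $\Gamma$ under two-step iterations (clause~(\ref{def:kappasuit-2})) gives $\bool{B}_0\ast\dot{\bool C}\in\Gamma^V$, and a $\Gamma$-super rigid tower below $\bool{B}_0\ast\dot{\bool C}$ produced by $\CFA(\Gamma)$ in $V$ projects down, via Theorem~\ref{thm:quo-univ}, to a suitable object below $\dot{\bool C}$ in $V^{\bool{B}_0}$.

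For~(1), the conclusion is then essentially immediate: $\bool{B}_0$ forces $\CH$, since it adds no reals and collapses $\omega_2^V$ to $\omega_1^{V^{\bool{B}_0}}$; hence $\bm{\delta}^1_2\leq\omega_1<\omega_2$ in $V^{\bool{B}_0}$, and generic absoluteness transports this statement to $V$. For~(2), the plan is to replace $\bool{B}_0$ by $\bool{B}=\bool{B}_0\ast\dot{\mtcl R}$ for a further countably closed $\dot{\mtcl R}\in\Gamma^{V^{\bool{B}_0}}$ designed so that in $V^{\bool{B}}$ the collapse bijection $\pi:\omega_1\to\omega_2^V$, together with an auxiliary generic function, produces $f:\omega_1\to\omega_1$ not dominated modulo club by any canonical function of $V^{\bool{B}}$. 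Closure of $\Gamma$ under two-step iterations keeps $\bool{B}\in\Gamma$, the same persistence argument preserves $\CFA(\Gamma)$, and generic absoluteness transfers $\neg$Club Bounding down to $V$.

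\textbf{Main obstacle.} The hardest step will be carrying out part~(2): designing $\dot{\mtcl R}$ so that the diagonal function $f$ truly escapes every canonical function for $\alpha<\omega_2^{V^{\bool{B}}}$, including those for the many new ordinals in $[\omega_2^V,\omega_2^{V^{\bool{B}}})$ generated by the collapse, while the composed forcing remains in $\Gamma$ and preserves $\CFA(\Gamma)$ via the persistence argument above.
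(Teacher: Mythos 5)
There are genuine gaps, and the overall strategy diverges from the paper's in a way that matters. The paper does not try to show that small forcings like $\Coll(\o_1,\o_2)$ preserve $\CFA(\Gamma)$; instead it forces with $\bool{U}^{\Gamma}_\delta$ for $\delta$ a $2$--superhuge cardinal, which lies in $\Gamma$, collapses $\o_2^V$ to $\al_1$, and \emph{provably} forces $\CFA(\Gamma)$ by Theorem \ref{consistency-of-CFA} --- this is precisely why $\LC$ includes a $2$--superhuge cardinal. Your ``persistence argument'' is the weak link: Theorem \ref{thm:quo-univ} identifies the quotient of the category forcing with the category forcing of the extension, but it does not say that the quotient of a $\Gamma$--super rigid presaturated tower by a $\Coll(\o_1,\o_2)$--generic is again such a tower, so the claim that $\Coll(\o_1,\o_2)$ preserves $\CFA(\Gamma)$ is unestablished (and is not needed if one uses $\bool{U}^{\Gamma}_\delta$). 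Separately, your derivation of (1) is incorrect: even granting $\CH$ in the extension, $\bm{\delta}^1_2\leq\o_1$ is provably false (there are $\bm{\Delta}^1_2$ prewellorderings of length $>\o_1$, so $\bm{\delta}^1_2>\o_1$ always), and $\CH$ alone does not yield $\bm{\delta}^1_2<\o_2$, since a prewellordering of the reals may have any length of cardinality $\leq 2^{\al_0}$. The paper instead invokes the Neeman--Zapletal theorem that proper forcing does not change the theory of $L(\mtbb R)$ (with the identity elementary), combined with the fact that $\bool{U}^{\Gamma}_\delta\in\Gamma\sub\PR$, to conclude $\bm{\delta}^1_2<\o_2$ in the extension before transferring it down by generic absoluteness.

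For (2) you correctly identify the obstacle --- canonical functions for the many ordinals that only fall below $\o_2$ in the final extension --- but you do not resolve it, and your order of operations (collapse first, then add $f$) makes it harder. The paper's trick is to reverse the order: first add $f:\o_1\to\o_1$ by initial segments with a countably closed forcing (which is in $\Gamma$ by clause (\ref{def:kappasuit-3})), observe that for \emph{every} ordinal $\a$ the set $\{X\in[\a]^{\al_0}: X\cap\o_1\in\o_1,\ \ot(X)<f(X\cap\o_1)\}$ is stationary in $[\a]^{\al_0}$, and then force with $\bool{U}^{\Gamma}_\delta$. Since properness is exactly preservation of stationary subsets of $[\a]^{\al_0}$ for all $\a$ (Fact \ref{char-proper}), these witnessing sets survive for every ordinal, in particular for every $\a<\o_2$ of the final model; hence $f$ is dominated on a club by no canonical function there, Club Bounding fails in the extension, and generic absoluteness pulls this back to $V$. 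If you adopt the paper's choice of forcing, both the preservation issue and the ``new ordinals'' issue disappear, but as written your argument for (1) is false and your argument for (2) is incomplete.
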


\begin{proof}  We know that $\bool{U}^{\Gamma}_\delta$, for any $2$--superhuge cardinal $\d$, is in $\Gamma$, collapses $\o_2^{V}$ to $\al_1$ and, by Theorem \ref{consistency-of-CFA}, forces $\CFA(\Gamma)$ . Also, using our background large cardinal assumption (in fact a  proper class of 
Woodin cardinals suffices), by a result of Neeman and Zapletal \cite{Neeman-Zapletal} we have that if $\mtcl P$ 
is a proper poset and $G$ is $\mtcl P$--generic over $V$, then the identity on $L(\mtbb R)^{V}$ is an elementary 
embedding between $L(\mtbb R)^{V}$ and  $L(\mtbb R)^{V[G]}$. It follows from these two facts together that  
$V^{\bool{U}^{\Gamma}_\delta}\models\bm{\delta}^1_2<\o_2$. Since `$\bm{\delta}^1_2<\o_2$'  is expressible 
over $H_{\o_2}$, it follows now from the absoluteness theorem that $V\models\bm{\delta}^1_2<\o_2$. 

To see that Club Bounding fails in $V$, we first add  generically a function $f:\o_1\into \o_1$ by initial segments and 
then force with $\bool{U}^{\Gamma}_\delta$. It is immediate to check that, after  adding $f$, 
$$\{X\in [\a]^{\al_0}\,:\, X\cap\o_1\in\o_1,\,\ot(X)<f(X\cap\o_1)\}$$ is a stationary subset of $[\a]^{\al_0}$ for every ordinal 
$\a$. Since every proper forcing will preserve the stationarity of these sets, it follows that Club Bounding fails in 
$V^{\bool{U}^{\Gamma}_\delta}$. But then it also has to fail in $V$ by the absoluteness theorem.
\end{proof}

\begin{lemma} $\FA_{\al_1}(({<}\o_1\mbox{--}\PR)\cap\,^\o\o\textsf{--bounding})$ implies that there are no Suslin trees. \end{lemma}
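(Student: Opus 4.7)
The plan is to show that if $T$ is a Suslin tree, then $T$, viewed as a notion of forcing (with nodes ordered by reverse extension), lies in the class $({<}\o_1\mbox{--}\PR)\cap\,{}^\o\o\textsf{--bounding}$, and then apply the forcing axiom to obtain a cofinal branch through $T$.

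First I would verify the three relevant properties of $T$ as a forcing. The ccc of $T$ follows directly from the fact that antichains in $T$ (as a poset) coincide with antichains in $T$ (as a tree), and Suslin trees have no uncountable antichains. From ccc I then obtain ${<}\o_1$--properness: for any countable $N\prec H_\theta$ with $T\in N$ and any maximal antichain $A\in N$ of $T$, ccc forces $|A|\le\al_0$, and since $A\in N$ is countable we get $A\subseteq N$; hence every condition is $(N,T)$--generic. Given any continuous $\in$--chain $(N_i)_{i\le\rho}$ of countable elementary submodels of $H_\theta$ with $\rho<\o_1$, every $p\in T$ is then $(N_i,T)$--generic for every $i\le\rho$ simultaneously, witnessing ${<}\o_1$--properness.

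Next I would show that $T$ is $\o$--distributive, whence $^\o\o$--bounding. Let $\dot f$ be a $T$--name for a function $\o\to V$ and fix $t\in T$. For each $n<\o$, the set $D_n$ of $s\le_T t$ deciding $\dot f(n)$ is open dense below $t$, so the set $A_n$ of minimal elements of $D_n$ below $t$ is a maximal antichain below $t$, hence countable by the ccc. Therefore $A=\bigcup_n A_n$ is countable, and some ordinal $\alpha<\o_1$ bounds the heights of all nodes in $A$. Every $s\le_T t$ of height $\ge\alpha$ extends a node of each $A_n$, and so decides $\dot f(n)$ for every $n$. Consequently $\dot f$ is forced to lie in $V$, so $T$ adds no new $\o$--sequences of ordinals, and in particular it is $^\o\o$--bounding.

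Finally I would apply the hypothesis. For each $\alpha<\o_1$, the set $E_\alpha=\{t\in T:\height_T(t)\ge\alpha\}$ is dense in $T$ because $T$ has height $\o_1$ and countable levels. Thus $\{E_\alpha:\alpha<\o_1\}$ is a family of $\aleph_1$--many dense subsets of $T\in({<}\o_1\mbox{--}\PR)\cap\,{}^\o\o\textsf{--bounding}$. By $\FA_{\al_1}$ applied to this class there is a filter $G\subseteq T$ meeting each $E_\alpha$. Such a $G$ is a linearly ordered subset of $T$ of height $\o_1$, i.e.\ a cofinal branch through $T$, contradicting the fact that $T$, being Suslin, has no uncountable chains. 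There is no genuine obstacle here; the only subtle point is the verification that Suslin trees do not add reals (hence are $^\o\o$--bounding), but this is the standard distributivity argument just outlined.
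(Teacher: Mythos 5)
Your argument is essentially the paper's: the paper's proof is exactly the observation that a Suslin tree, as a forcing, is c.c.c.\ and adds no new reals (hence is ${<}\o_1$--proper and $^\o\o$--bounding), while an application of the forcing axiom to the dense sets $E_\alpha$ yields an uncountable branch; your write-up just fills in the standard verifications. The one point you elide, and which the paper handles explicitly with the phrase ``without loss of generality we may assume that $T$ is a normal Suslin tree,'' is that for an arbitrary Suslin tree the sets $E_\alpha=\{t\in T:\height_T(t)\ge\alpha\}$ need \emph{not} be dense (a node with no extensions above some countable level is a counterexample), and likewise your distributivity argument needs, below any condition $t$, the existence of extensions of arbitrarily large height. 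Both issues disappear once you first pass to a normal (well-pruned) Suslin tree, whose existence follows from the existence of any Suslin tree by the usual pruning argument using the regularity of $\o_1$ and the countability of levels; you should add this reduction, after which everything you wrote goes through.
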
 

\begin{proof}
Suppose, towards a contradiction, that $T$ is a Suslin tree and the forcing axiom $\FA_{\al_1}(({<}\o_1\mbox{--}\PR)\cap\,^\o\o\textsf{--bounding})$ 
holds. Without loss of generality we may assume that $T$ is a normal Suslin tree. We have that $T$ is a c.c.c.\ forcing 
which is $^\o\o$--bounding as in fact it does not add new reals. But forcing with $T$ adds an $\o_1$--branch through 
$T$. Hence, by $\FA_{\al_1}(T)$, $T$ has an $\o_1$--branch and so it is not Suslin, which is a contradiction. 
\end{proof}

Recall that $\mathfrak d$ is the minimal cardinality of  a family $\mtcl F\sub\,^\o\o$ with the property that for every 
$f:\o\into \o$ there is some $f\in\mtcl F$ such that $g(n)<f(n)$ for a tail of $n<\o$.

\begin{lemma}\label{consespfa+++omegaomega-bounding} ($\ZFC$+$\LC$) Let $\Gamma$ be any 
$\o_1$--suitable class such that $\Gamma\sub\,^\o\o\mbox{--bounding}$.  If $\CFA(\Gamma)$ holds, 
then $\mathfrak d = \omega_1$.
\end{lemma}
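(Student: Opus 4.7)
The plan is to follow the template of Lemma~\ref{consespfa+++} essentially verbatim, using the generic extension by the category forcing $\bool{U}^\Gamma_\delta$ for a $2$-superhuge cardinal $\delta$. The idea is to exploit the ${}^\omega\omega$-bounding property of $\bool{U}^\Gamma_\delta$ to make the set of ground model reals a dominating family in $V[G]$, combine this with the collapsing behaviour of $\bool{U}^\Gamma_\delta$ to conclude $\mathfrak{d}=\omega_1$ in $V[G]$, and then pull this statement back to $V$ via generic absoluteness.

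More concretely, I will first use the large cardinal hypothesis to fix a $2$-superhuge cardinal $\delta$. The $\omega_1$-suitability of $\Gamma$, together with the first theorem stated in Subsection~\ref{subsec:mainresultsM}, ensures that $\bool{U}^\Gamma_\delta\in\Gamma$ and makes $\delta$ equal to the $\omega_2$ of its generic extension, while Theorem~\ref{consistency-of-CFA} ensures that $\bool{U}^\Gamma_\delta$ forces $\CFA(\Gamma)$. Letting $G$ be $\bool{U}^\Gamma_\delta$-generic over $V$, the inclusion $\Gamma\subseteq{}^\omega\omega\mbox{--bounding}$ gives that every $f\colon\omega\to\omega$ in $V[G]$ is dominated by some element of $({}^\omega\omega)^V$, so $({}^\omega\omega)^V$ is a dominating family in $V[G]$. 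Moreover, since $\delta$ is (strongly) inaccessible in $V$ and is collapsed to $\omega_2^{V[G]}$, every uncountable $V$-cardinal strictly below $\delta$ is collapsed to $\omega_1$ in $V[G]$; in particular $(2^{\aleph_0})^V<\delta$, whence $|({}^\omega\omega)^V|^{V[G]}\leq\omega_1^{V[G]}$. Taken together these yield $V[G]\models\mathfrak{d}=\omega_1$.

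The final step will be to invoke generic absoluteness (Theorem~\ref{finaltheorem-abs}): since both $V$ and $V[G]$ satisfy $\CFA(\Gamma)$ and $\bool{U}^\Gamma_\delta\in\Gamma$, the first order theory of $L(\Ord^{\omega_1})$, and in particular that of its definable inner class $H_{\omega_2}$, coincides in $V$ and $V[G]$, so $V\models\mathfrak{d}=\omega_1$ will follow. The only point that requires a moment's attention --- and this is the closest thing to an obstacle --- is verifying that $\mathfrak{d}=\omega_1$ is genuinely expressible over $H_{\omega_2}$ with parameter $\omega_1$. But this is immediate: the statement asserts the existence of an $\omega_1$-indexed family of reals that dominates every real, and both every real and every $\omega_1$-sequence of reals lies in $H_{\omega_2}$. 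Overall the plot is identical to that of Lemma~\ref{consespfa+++}, with $\mathfrak{d}=\omega_1$ playing the role that $\bm{\delta}^1_2<\omega_2$ and the failure of Club Bounding played there.
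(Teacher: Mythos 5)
Your proposal is correct and follows essentially the same route as the paper's proof: force with $\bool{U}^\Gamma_\delta$ for $\delta$ $2$--superhuge, use that it lies in $\Gamma\subseteq{}^\o\o\mbox{--bounding}$ and collapses $(2^{\al_0})^V$ to $\al_1$ to get $\mathfrak d=\o_1$ in the extension, and pull this back via the absoluteness theorem since ``$\mathfrak d=\o_1$'' is expressible over $H_{\o_2}$. The only difference is that you spell out the intermediate steps (the ground model reals forming a dominating family, the collapse of everything below $\delta$ to $\al_1$) which the paper leaves implicit.
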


\begin{proof}
We have that $\bool{U}^{\Gamma}_\delta$, for any $2$--superhuge cardinal $\d$, is in $\Gamma$, 
forces $\CFA(\Gamma)$, and collapses $(2^{\al_0})^{V}$ to $\al_1$. Hence we have that 
$V^{\bool{U}^{\Gamma}_\delta}\models \mathfrak d=\o_1$ since $\Gamma\sub\,^\o\o\mbox{--bounding}$. 
But `$\mathfrak d=\o_1$' is expressible over $H_{\o_2}$, and therefore $V\models\mathfrak d =\o_1$ by 
the absoluteness theorem.
\end{proof}

\begin{lemma}\label{FAPRcapSTPvsd}
$\FA_{\al_1}(({<}\o_1\mbox{--}\PR)\cap\STP)$ implies $\mathfrak d>\omega_1$.
\end{lemma}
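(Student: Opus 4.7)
The plan is to apply the forcing axiom to a restricted Hechler-type poset tailored to a given $\omega_1$-sized candidate dominating family. Fix an arbitrary family $\mathcal{F}=\{f_\alpha:\alpha<\omega_1\}\subseteq{}^\omega\omega$; I aim to produce $g\in{}^\omega\omega$ which is not eventually dominated by any $f_\alpha$ (in fact, which eventually dominates each $f_\alpha$). Define
\[
\mathcal{P}_{\mathcal{F}}=\{(s,F):s\in\omega^{<\omega},\ F\in[\omega_1]^{<\omega}\},
\]
ordered by $(s',F')\leq(s,F)$ iff $s\subseteq s'$, $F\subseteq F'$, and $s'(k)>f_\alpha(k)$ for every $\alpha\in F$ and every $k\in\dom(s')\setminus\dom(s)$. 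This is the standard Hechler poset relative to $\mathcal{F}$.

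The key observations are that $\mathcal{P}_{\mathcal{F}}$ is $\sigma$-centered (any two conditions sharing the same first coordinate are compatible: take the union of their second coordinates), and that $\sigma$-centered forcings belong to $({<}\omega_1\text{--}\PR)\cap\STP$. Indeed, $\sigma$-centered forcings are c.c.c., and a c.c.c.\ forcing $\mathcal{P}$ is $\rho$-proper for every countable indecomposable ordinal $\rho$, because if $(N_i)_{i\leq\rho}$ is a continuous chain of countable elementary submodels of some $H_\theta\ni\mathcal{P}$, then every maximal antichain $A$ of $\mathcal{P}$ is countable and, being enumerable by some bijection in any $N_i$ containing $\mathcal{P}$, is in fact a subset of $N_i$; hence every condition is trivially $(N_i,\mathcal{P})$-generic for all $i\leq\rho$. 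For the preservation of Suslin trees, the standard pigeonhole argument works: if $T$ were a Suslin tree and $\dot{A}$ a name for an uncountable antichain through $T$ forced by some condition, pick witnesses $(p_\alpha,t_\alpha)_{\alpha<\omega_1}$ with distinct $t_\alpha\in T$ and $p_\alpha\Vdash t_\alpha\in\dot{A}$; using $\sigma$-centeredness, refine to an uncountable set of $\alpha$'s whose $p_\alpha$'s lie in a single centered piece, which produces an uncountable antichain $\{t_\alpha\}$ in $T$ already in $V$, a contradiction. Thus $\mathcal{P}_{\mathcal{F}}\in({<}\omega_1\text{--}\PR)\cap\STP$.

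Now apply $\FA_{\aleph_1}(({<}\omega_1\text{--}\PR)\cap\STP)$ to $\mathcal{P}_{\mathcal{F}}$ against the $\aleph_1$-many dense sets
\[
E_n=\{(s,F):n\in\dom(s)\}\quad(n<\omega),\qquad D_\alpha=\{(s,F):\alpha\in F\}\quad(\alpha<\omega_1).
\]
Each $E_n$ is dense since we may always extend $s$ arbitrarily (taking $F=\emptyset$ imposes no constraint on the new values), and each $D_\alpha$ is dense since we may always enlarge $F$ while keeping $s$ fixed. Let $H$ be a filter meeting every $E_n$ and every $D_\alpha$, and define $g=\bigcup\{s:(s,F)\in H\text{ for some }F\}$. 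The $E_n$'s guarantee that $g:\omega\to\omega$ is total. Given $\alpha<\omega_1$, choose $(s,F)\in H$ with $\alpha\in F$; then for every $(s',F')\in H$ below $(s,F)$ we have $s'(k)>f_\alpha(k)$ on $\dom(s')\setminus\dom(s)$, whence $g(k)>f_\alpha(k)$ for all $k\geq|s|$. Therefore $g$ is not eventually dominated by any $f_\alpha$, so $\mathcal{F}$ is not a dominating family. Since $\mathcal{F}$ was arbitrary, $\mathfrak{d}>\omega_1$.

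The only potentially delicate step is the verification that $\mathcal{P}_{\mathcal{F}}\in\STP$, but as noted this is a standard consequence of $\sigma$-centeredness; the remaining steps are routine density arguments.
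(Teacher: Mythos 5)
Your argument is correct, but it takes a different (and heavier) route than the paper's. The paper's proof is a one-liner: Cohen forcing is a countable poset, hence trivially in $({<}\o_1\mbox{--}\PR)\cap\STP$ (countable posets preserve Suslin trees and, being c.c.c., are $\rho$--proper for every $\rho$), and an application of $\FA_{\al_1}$ to Cohen forcing with the dense sets $\{s:\exists k>n\ (k\in\dom(s)\wedge s(k)>f_\a(k))\}$ already yields a real not eventually dominated by any member of a given $\al_1$--sized family. You instead use the Hechler poset relativized to $\mathcal F$, which is $\sigma$--centered rather than countable, and you therefore have to verify membership in $\STP$ by the pigeonhole argument ($\sigma$--centered implies Knaster implies Suslin-tree-preserving, a fact the paper itself invokes just before Lemma \ref{random}). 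What your approach buys is a stronger conclusion: the generic real \emph{dominates} every $f_\a$, so you in fact get $\mathfrak b>\o_1$, from which $\mathfrak d\geq\mathfrak b>\o_1$ follows; the paper's version only produces an unbounded real, which is all that is needed. Both posets add unbounded reals and so neither argument would survive the addition of $^\o\o$--bounding to the class, which is exactly why this lemma is paired against Lemma \ref{consespfa+++omegaomega-bounding} in the incompatibility results.

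Two small inaccuracies worth fixing, neither of which affects the proof. First, in the density of $E_n$ the constraint on the new values of $s'$ comes from the finite set $F$ of the condition being extended (one takes $s'(k)>\max\{f_\a(k):\a\in F\}$); the parenthetical ``taking $F=\emptyset$'' is not how one extends a condition $(s,F)$, since the second coordinate must grow, not shrink. Second, in the $\rho$--properness argument the correct statement is that every maximal antichain \emph{belonging to} $N_i$ is, being countable, a subset of $N_i$; and one should pass from a dense set $D\in N_i$ to a maximal antichain $A\sub D$ with $A\in N_i$ by elementarity before concluding that every condition is $(N_i,\mtcl P)$--generic. As written, the claim that \emph{every} maximal antichain of $\mtcl P$ is a subset of $N_i$ is false.
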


\begin{proof} This is immediate since Cohen forcing, being countable, preserves Suslin trees.\end{proof}

Recall that the Strong Reflection Principle ($\SRP$) is the following assertion: For every set $X$ such that 
$\o_1\sub X$ and every $S\sub [X]^{\al_0}$ there is a strong reflecting sequence $(x_i)_{i<\o_1}$ for $S$, i.e., 
$x_i\in [X]^{\al_0}$, $(x_i)_{i<\o_1}$ is strictly $\sub$--increasing and $\sub$--continuous, and for all $i$, $x_i\notin S$ if 
and only if there is no $y\in S$ such that $x_i\sub y$ and $y\cap\o_1 =x_i\cap\o_1$. 

\begin{lemma}\label{sspomegaomegaboundingvsdelta12} 
 $\FA_{\al_1}(\SP\cap \STP\cap\,^\o\o\mbox{--bounding})$ implies $\bm{\delta}^1_2=\o_2$. 
\end{lemma}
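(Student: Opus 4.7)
\medskip

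The plan is to deduce $\bm{\delta}^1_2 = \omega_2$ by first distilling $\psi_{\AC}$ from the forcing axiom and then invoking Woodin's classical derivation of $\bm{\delta}^1_2 = \omega_2$ from $\psi_{\AC}$ plus the large cardinal assumptions coming from $\LC$.

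First, I would observe that $({<}\o_1\mbox{-}\SP)\cap \STP \cap\,^\omega\omega\mbox{-bounding} \subseteq \SP\cap \STP\cap\,^\omega\omega\mbox{-bounding}$, so the hypothesis entails $\FA_{\omega_1}$ for the smaller class. By the Proposition immediately preceding this lemma, this yields $\SSP = \SP$. Hence our axiom may be restated as $\FA_{\omega_1}(\SSP \cap \STP \cap\,^\omega\omega\mbox{-bounding})$; in particular the forcing axiom applies to any $\SSP$-forcing that is additionally Suslin-tree preserving and $^\omega\omega$-bounding, regardless of whether it is a priori known to be semiproper.

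Second, I would establish the principle $\psi_{\AC}$. Fix stationary, co-stationary $S, T \subseteq \o_1$ and a measurable cardinal $\kappa$ (supplied by $\LC$). By Lemma \ref{psiAC-measurable}, the forcing $\mtcl Q_{\kappa, S, T}$ is ${<}\o_1$-semiproper, preserves Suslin trees, and adds no reals, so it lies in our class. The natural $\o_1$-many dense sets -- those forcing the chain length to exceed each $\n<\o_1$, and those forcing $\xi\in M_\nu$ for some $\nu$ for each $\xi<\o_1$ -- can all be met simultaneously by $\FA_{\o_1}$, yielding a $\subseteq$-continuous, $\subseteq$-increasing chain $(M_\nu)_{\nu<\o_1}$ of countable elementary submodels of $H_\kappa$ with $\o_1 \subseteq \bigcup_\nu M_\nu$ and $M_\nu\cap\o_1\in S$ iff $\ot(M_\nu\cap\kappa)\in T$ for all $\nu$. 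Taking an elementary $N \prec H_{\kappa^+}$ of size $\o_1$ containing the chain and letting $\alpha := \ot(N\cap\kappa) < \o_2$, the Mostowski collapse of $N\cap\kappa$ fixes $\o_1$ and sends $(M_\nu\cap\kappa)_\nu$ to a club of $[\alpha]^{\al_0}$ witnessing $\psi_{\AC}$ for $(S, T)$.

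Finally, I would invoke Woodin's theorem (see Larson's book, Chapter 3) that $\psi_{\AC}$ together with a proper class of Woodin cardinals (available from $\LC$) implies $\bm{\delta}^1_2 = \o_2$. Woodin's proof produces, for every $\alpha < \o_2$, a $\bm{\Delta}^1_2$ pre-well-ordering of the reals of length $\alpha$ by exploiting the canonical coding of ordinals below $\o_2$ via stationary patterns on $\o_1$ afforded by $\psi_{\AC}$, together with the $\Sigma^2_1$-generic absoluteness for $\SSP$-forcings provided by Woodin cardinals. The hard part will be the third step: confirming that $\psi_{\AC}$ in conjunction with $\LC$ suffices for Woodin's coding argument, and that the specific auxiliary forcings used in the argument can either be taken in our restricted class $\SP \cap \STP \cap\,^\omega\omega\mbox{-bounding}$ (where the available forcing axiom applies directly) or invoked through the ambient large-cardinal absoluteness rather than through the full strength of $\mathsf{MM}$.
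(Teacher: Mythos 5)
Your first two steps are sound, although the detour through $\SSP=\SP$ is unnecessary: Lemma \ref{psiAC-measurable} already places $\mtcl Q_{\kappa,S,T}$ in $\SP\cap\STP\cap\,^\o\o$--bounding directly (it is ${<}\o_1$--semiproper, hence semiproper, preserves Suslin trees, and adds no reals), so the forcing axiom applies to it without invoking the preceding proposition. The genuine gap is your third step. There is no theorem to the effect that $\psi_{\AC}$ together with a proper class of Woodin cardinals yields $\bm{\delta}^1_2=\o_2$, and the sketch you offer does not go through: $\psi_{\AC}$ codes ordinals below $\o_2$ by pairs of stationary subsets of $\o_1$, which is information living at the level of $\mathcal P(\o_1)$ and has no a priori connection with $\bm{\Delta}^1_2$ prewellorderings of the reals. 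Woodin's argument for $\bm{\delta}^1_2=\o_2$ requires a generic elementary embedding with critical point $\o_1$ whose target model is sufficiently correct about reals; what supplies this is the saturation (or presaturation) of $\NS_{\o_1}$, together with the existence of $\mathcal P(\o_1)^{\#}$, and $\psi_{\AC}$ provides no such embeddings. (In this paper $\psi_{\AC}$ plays an entirely different role: it is a freezing device, and its only advertised consequence is $L(\mathcal P(\o_1))\models\AC$.) You yourself flag this step as "the hard part", and it is precisely where the argument is missing.

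The paper's proof routes instead through $\SRP$: the standard forcing adding a strong reflecting sequence for a given $S\sub[X]^{\al_0}$ is semiproper, adds no reals, and preserves Suslin trees (by the argument from the end of the proof of Lemma \ref{psiAC-measurable}), so $\FA_{\al_1}(\SP\cap\STP\cap\,^\o\o\mbox{--bounding})$ implies $\SRP$. Then $\SRP$ gives $\lnot\Box_\k$ for all $\k\geq\o_1$ (hence closure under sharps) as well as the saturation of $\NS_{\o_1}$, and Woodin's classical theorem finishes the argument. Note also that this route uses no large cardinal hypotheses beyond the forcing axiom itself, whereas yours consumes a measurable cardinal and a proper class of Woodin cardinals from $\LC$, even though the lemma as stated carries no $\LC$ hypothesis. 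If you want to salvage your strategy, replace $\psi_{\AC}$ by $\SRP$ (or directly by the conjunction of $\NS_{\o_1}$ saturated and closure under sharps) as the combinatorial principle you extract from the forcing axiom.
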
 

\begin{proof} 
We have that  $\FA_{\al_1}((\SP\cap \STP\cap\,^\o\o\mbox{--bounding})$ implies $\SRP$ since, given 
$S\sub [X]^{\al_0}$, the standard forcing for adding a strong reflecting sequence for $S$ is semiproper, does not add reals, 
and preserves Suslin trees, where the last fact follows from an argument as in the final part of the proof of 
Lemma \ref{psiAC-measurable}. Also, $\SRP$ implies $\lnot\Box_\kappa$, for every cardinal $\k\geq\o_1$, 
and hence implies that the universe is closed under sharps. Since it also implies the saturation of $\NS_{\o_1}$, 
by a classical result of Woodin (\cite{WoodinBOOK}) it implies $\bm{\delta}^1_2=\o_2$. 
\end{proof}

\begin{question} Does  $\FA_{\al_1}(\o\mbox{--}\SP)$ imply $\bm{\delta}^1_2=\o_2$?
\end{question}

\begin{lemma} Suppose  $\FA_{\al_1}(({<}\o_1\mbox{--}\SP)\cap \STP\cap\,^\o\o\mbox{--bounding})$ holds and 
there is a measurable cardinal. Then Club Bounding holds.
\end{lemma}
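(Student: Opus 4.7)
The plan is to adapt the argument of Lemma \ref{psiAC-measurable} and, given $f:\o_1\to\o_1$ and a measurable cardinal $\kappa$, to design a single forcing $\mtcl Q_f\in({<}\o_1\mbox{--}\SP)\cap\STP\cap\,^\o\o\mbox{--bounding}$ to which one instance of $\FA_{\aleph_1}$ can be applied in order to directly produce a witness $\alpha<\o_2$ to Club Bounding for $f$.

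For fixed sufficiently large $\theta$, I would define $\mtcl Q_f$ as the set, ordered by end-extension, of all countable $\sub$-continuous $\in$-chains $p=(M^p_i)_{i\leq\nu^p}$ of countable elementary submodels of $H_\theta$ containing $\kappa$ and $f$ and such that
\[
f(M^p_i\cap\o_1)<\ot(M^p_i\cap\kappa)\text{ for every }i\leq\nu^p.
\]
The main technical step is to verify that $\mtcl Q_f$ belongs to the target class; this is a near-verbatim adaptation of the proof of Lemma \ref{psiAC-measurable}, with the role of the condition `$M\cap\o_1\in S\Leftrightarrow\ot(M\cap\kappa)\in T$' now played by the numerical condition `$f(M\cap\o_1)<\ot(M\cap\kappa)$'. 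Starting from a countable $N\prec H_\theta$ containing all relevant objects together with a normal measure $\mtcl U$ on $\kappa$, iteratively forming end-extensions $N[\eta]$ with $\eta\in\bigcap(\mtcl U\cap N)$ preserves $N\cap\o_1$ (by normality of $\mtcl U$) while strictly increasing $\ot(N\cap\kappa)$, producing a $\sub$-continuous chain $(N_\mu)_{\mu<\o_1}$ of countable elementary submodels of $H_\theta$ along which $\ot(N_\mu\cap\kappa)$ eventually surpasses any prescribed countable ordinal; in particular, it exceeds $f(N\cap\o_1)$ at a tail of $\mu$, so that $(N,\mtcl Q_f)$-semi-generic extensions of any condition in $N$ along an arbitrary countable $\in$-chain of such extensions exist. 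The ${<}\o_1$-semiproperness of $\mtcl Q_f$ then follows exactly as in Lemma \ref{psiAC-measurable}. The no-new-reals property (and hence $^\o\o$-boundedness) is immediate from the countably closed flavour of the construction, and Suslin tree preservation is by the Miyamoto--Yorioka-style argument reproduced in the proof of Lemma \ref{psiAC-measurable}. This verification is the main obstacle.

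With $\mtcl Q_f$ in the target class, applying $\FA_{\aleph_1}$ to the $\aleph_1$--many dense sets $D_\beta=\{p\in\mtcl Q_f:\beta\in\dom(p)\}$ for $\beta<\o_1$ yields a strictly $\sub$-increasing and $\sub$-continuous sequence $(M_i)_{i<\o_1}$ of countable elementary submodels of $H_\theta$ satisfying $f(M_i\cap\o_1)<\ot(M_i\cap\kappa)$ for every $i<\o_1$. Setting $\alpha=\bigcup_{i<\o_1}(M_i\cap\kappa)$, a union of $\aleph_1$--many countable sets, we have $|\alpha|=\aleph_1$ and hence $\alpha<\o_2$. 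Given any surjection $\pi:\o_1\to\alpha$, the set $C=\{\nu<\o_1:M_\nu\cap\o_1=\nu\text{ and }\pi[\nu]=M_\nu\cap\kappa\}$ is a club of $\o_1$, and for $\nu\in C$ the canonical function $g$ induced by $\pi$ satisfies $g(\nu)=\ot(\pi[\nu])=\ot(M_\nu\cap\kappa)>f(\nu)$; since every canonical function for $\alpha$ agrees with $g$ on a club, this establishes Club Bounding for $f$.
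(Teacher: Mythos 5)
Your proof is correct and follows essentially the same route as the paper, which defines the analogous forcing $\mtcl Q^\k_f$ of countable continuous chains of countable subsets of $\kappa$ satisfying $\ot(x_i)>f(x_i\cap\o_1)$ and verifies its membership in $({<}\o_1\mbox{--}\SP)\cap\STP\cap\,^\o\o\textsf{--bounding}$ by the same variation of the proof of Lemma \ref{psiAC-measurable}. The only nit is that your $\alpha=\bigcup_{i<\o_1}(M_i\cap\kappa)$ is a set of ordinals rather than an ordinal below $\o_2$; you should take its order type (equivalently, compose $\pi$ with the transitive collapse), after which the club computation goes through verbatim.
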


\begin{proof} Given a function $f:\o_1\into\o_1$ and a measurable cardinal $\k$, let $\mtcl Q^\k_f$ be the set, 
ordered by reverse inclusion, of all strictly $\sub$--increasing and $\sub$--continuous sequences $(x_i)_{i\leq\a}$, 
for $\a<\o_1$, of countable subsets of $\k$ such that for all $i$, $x_i\cap\o_1\in \o_1$ and $\ot(x_i)>f(x_i\cap\o_1)$. 
Then $\mtcl Q^\k_f$ is a ${<}\o_1$--semiproper forcing not adding reals, preserving Suslin trees, and adding a 
canonical function for $\k$ dominating $f$ on a club. All this can be proved by a straightforward variation of 
the proof of Lemma \ref{psiAC-measurable}.
\end{proof}

The following lemma is proved in \cite{Velickovic} (s.\ Proposition \ref{freeze1.5}).

\begin{lemma} $\FA_{\al_1}({<}\o_1\mbox{--}\PR)$ implies $2^{\al_0}=2^{\al_1}=\al_2$ 
\end{lemma}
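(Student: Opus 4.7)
The plan is to invoke the coding apparatus used in the proof of Proposition \ref{freeze1.5}, due to Veli\v{c}kovi\'c (\cite{Velickovic}), and to apply $\FA_{\al_1}({<}\o_1\mbox{--}\PR)$ to the relevant forcings in order to realize this coding already in $V$.

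First, I would verify that every forcing of the form $\Coll(\o_1, \mu) \ast \dot{\mtcl R}$ appearing in the proof of Proposition \ref{freeze1.5} lies in ${<}\o_1\mbox{--}\PR$: the collapse $\Coll(\o_1, \mu)$ is countably closed and hence ${<}\o_1$--proper, while the c.c.c.\ forcing $\dot{\mtcl R}$ is also ${<}\o_1$--proper, and by Fact \ref{fac:Birkhoffproper} the class ${<}\o_1\mbox{--}\PR$ is closed under two--step iterations. A separate, standard argument shows that $\FA_{\al_1}({<}\o_1\mbox{--}\PR)$ implies $\neg\CH$ (being stronger than $\MA_{\al_1}$), which gives $2^{\al_0} \geq \al_2$, and hence also $2^{\al_1} \geq \al_2$.

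To establish the upper bound $2^{\al_1} \leq \al_2$, I would fix the sequence of colourings $((K^\xi_0, K^\xi_1))_{\xi < \o_1}$ of $[\k]^2$ from Lemma \ref{lemma-velickovic}, where $\k = \cf(2^{\al_0})$. For each $B \subseteq \o_1$, I would apply $\FA_{\al_1}({<}\o_1\mbox{--}\PR)$ to the ${<}\o_1$--proper forcing $\Coll(\o_1, \k) \ast \dot{\mtcl R}_B$ against $\al_1$ many dense sets chosen so that the resulting filter returns, inside $V$, an $\al_1$--sequence of decompositions $(X^{B,\xi}_n : n < \o)_{\xi < \o_1}$ of $\k$ with the property that, for each $\xi < \o_1$, $\xi \in B$ iff all the sets $X^{B,\xi}_n$ are $K^\xi_0$--homogeneous. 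The combinatorial rigidity of the colouring sequence makes the assignment $B \mapsto (\text{such a coding sequence})$ injective once the coding sequence is picked canonically in $V$.

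The main obstacle will be the cardinal count: one must argue that the set of possible coding sequences, viewed inside a suitable parameter set of cardinality $\al_2$, has itself cardinality at most $\al_2$. This is where the specific choice $\k = \cf(2^{\al_0})$ and the fine combinatorial features of the colourings from Lemma \ref{lemma-velickovic} come into play. The complete execution of the counting argument appears in \cite{Velickovic}, and transfers directly to our setting since all forcings involved are seen above to lie in ${<}\o_1\mbox{--}\PR$.
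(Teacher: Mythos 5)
Your proposal follows essentially the same route as the paper, which in fact offers no proof beyond citing Veli\v{c}kovi\'c and pointing to the coding apparatus of Proposition \ref{freeze1.5}: your verification that the relevant forcings $\Coll(\o_1,\mu)\ast\dot{\mtcl R}$ lie in ${<}\o_1\mbox{--}\PR$, together with the use of $\MA_{\o_1}$ for the lower bound, is exactly what is needed to transfer Veli\v{c}kovi\'c's theorem to this class. One small imprecision worth noting: meeting only $\al_1$ many dense sets cannot produce full decompositions of $\k=\cf(2^{\al_0})\geq\al_2$, only $\al_1$--sized approximations of them, and this is precisely the subtlety that the counting argument in \cite{Velickovic} (to which you rightly defer) is designed to handle.
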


$2^{\al_0}=2^{\al_1}$ follows of course already from $\MA_{\o_1}$.

A partial order $\mtbb P$ is said to have the \emph{$\sigma$--bounded chain condition} if 
$\mtbb P=\bigcup_{n<\o}\mtbb P_n$ and for each $n$ there is some $k_n<\o$ such that for every 
$X\in [\mtbb P_n]^{k_n}$ there are distinct $p$, $p'\in\mtbb P_n$ which are compatible in $\mtbb P$. Also, 
a partial order $\mtbb P$ is \emph{Knaster} if every uncountable subset of $\mtbb P$ contains an uncountable 
subset consisting of pairwise compatible conditions in $\mtbb P$.  

It is easy to see, and a well--known fact, that random forcing preserves Suslin trees. This follows from the fact that 
random forcing has the $\s$--bounded chain condition, that every forcing with the $\s$--bounded chain condition is 
Knaster, and that every Knaster forcing preserves Suslin trees.

Lemma \ref{random} follows from the above, together with the fact that random forcing is $^\o\o$--bounding and 
adds  a new real. 

\begin{lemma}\label{random} $\FA_{\al_1}(({<}\o_1\mbox{--}\PR)\cap \STP\cap\,^\o\o\textsf{--bounding})$ implies $\lnot\CH$.
\end{lemma}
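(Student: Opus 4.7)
The plan is to show that random forcing $\mtbb B$ (the complete Boolean algebra of Borel subsets of $2^\omega$ modulo null sets) belongs to $({<}\o_1\mbox{--}\PR)\cap \STP\cap\,^\o\o\textsf{--bounding}$, and then to combine $\FA_{\al_1}(\mtbb B)$ with the fact that $\mtbb B$ adds a new real to refute $\CH$.

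For class membership, the preservation of Suslin trees and the $^\o\o$--bounding property of $\mtbb B$ are precisely what is asserted in the paragraphs immediately preceding the lemma (the former via $\sigma$--bounded c.c.\,$\Rightarrow$\,Knaster\,$\Rightarrow$\,Suslin tree preserving, the latter a classical fact). What is left is to check that $\mtbb B\in{<}\o_1\mbox{--}\PR$, which I would deduce from the c.c.c.: given any countable indecomposable $\rho$ and any continuous $\in$--chain $(N_i)_{i\leq\rho}$ of countable elementary substructures of a suitable $H_\t$ containing $\mtbb B$, every maximal antichain $A\in N_i$ is countable and hence $A\sub N_i$, so every condition in $\mtbb B$ is automatically $(N_i,\mtbb B)$--generic for every $i\leq\rho$.

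Assuming $\CH$ towards a contradiction, I would fix an enumeration $\{r_\alpha:\alpha<\o_1\}$ of $2^\omega$ and, for each $\alpha$, introduce the set
\[
D_\alpha=\{B\in\mtbb B^+:\exists s\in 2^{<\omega}\,(B\sub[s]\text{ and }r_\alpha\notin[s])\}.
\]
Density of $D_\alpha$ follows by choosing $n$ with $\mu(B)>2^{-n}$ (so $B\not\sub[r_\alpha\restr n]$) and refining $B$ to $B\cap[s]\in\mtbb B^+$ for some $s\in 2^n\setminus\{r_\alpha\restr n\}$. I would also use, for each $n$, the obviously dense set $E_n=\{B\in\mtbb B^+:\exists s\in 2^n\,\,B\sub[s]\}$. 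Applying $\FA_{\al_1}(\mtbb B)$ to $\{D_\alpha:\alpha<\o_1\}\cup\{E_n:n<\omega\}$ yields a filter $G\sub\mtbb B^+$ meeting all of them. By the disjointness of distinct $[s],[t]$ in $2^n$, the $E_n$'s force $G$ to select a unique $s_n\in 2^n$ for each $n$, and filter--compatibility makes $(s_n)_{n<\omega}$ a branch of $2^{<\omega}$, yielding a real $r_G=\bigcup_n s_n\in V$; membership in $D_\alpha$ produces some $k$ with $s_k$ already separating $r_G$ from $r_\alpha$, so $r_G\neq r_\alpha$ for every $\alpha<\o_1$, contradicting the enumeration.

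The argument involves no real obstacle beyond the standard bookkeeping of dense sets; this is essentially the classical observation that $\FA_{\al_1}(\mtbb B)$ implies $\text{cov}(\mathcal N)>\al_1$, which is incompatible with $\CH$.
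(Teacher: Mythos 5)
Your proof is correct and follows essentially the same route as the paper's: the paper's proof of this lemma is exactly the observation that random forcing lies in $({<}\o_1\mbox{--}\PR)\cap \STP\cap\,{}^\o\o\textsf{--bounding}$ (Suslin tree preservation via the $\sigma$--bounded chain condition and Knasterness, $^\o\o$--bounding being classical) and adds a new real, whence the forcing axiom refutes $\CH$. You merely supply the standard details the paper leaves implicit, namely that c.c.c.\ implies ${<}\o_1$--properness and the dense--set bookkeeping showing that $\FA_{\al_1}$ of a forcing adding a new real contradicts $\CH$.
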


\begin{lemma}\label{scnd-conses} ($\ZFC$+$\LC$)  Let $\Gamma$ be any $\o_1$--suitable class such that  
$\Gamma \sub S\textsf{--cond}$. If $\CFA(\Gamma)$ holds, then so does $\CH$.
\end{lemma}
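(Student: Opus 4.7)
The strategy is to apply the generic absoluteness of Theorem \ref{finaltheorem-abs} to a forcing $\bool{B}\in\Gamma$ which forces both $\CFA(\Gamma)$ and $\CH$. Since $\CH$ is a $\Sigma_1$ statement about $H_{\omega_2}\subseteq L(\Ord^{\omega_1})$ with parameter $\omega_1\in \pow{\omega_1}$, the existence of such a $\bool{B}$ will immediately yield $V\models\CH$, as every relevant statement over $\pow{\omega_1}^V$ is invariant.

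To build $\bool{B}$, fix a $2$--superhuge cardinal $\delta$ above $2^{\aleph_0}$, available by $\LC$, and set $P_0=\Coll(\omega_1,2^{\aleph_0})^V$. Clause (\ref{def:kappasuit-3}) of Definition \ref{def:kappasuit} gives $P_0\in\Gamma$, and of course $P_0$ forces $\CH$. By a standard Levy--Solovay-type preservation of large cardinals under small forcing, $\delta$ remains $2$--superhuge in $V^{P_0}$, so Theorem \ref{consistency-of-CFA} applied inside $V^{P_0}$ shows that $\dot{\bool{U}}=(\bool{U}^{\Gamma}_\delta)^{V^{P_0}}$ forces $\CFA(\Gamma)$ over $V^{P_0}$. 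Put $\bool{B}=P_0\ast\dot{\bool{U}}$; by clause (\ref{def:kappasuit-2}) of Definition \ref{def:kappasuit}, $\bool{B}\in\Gamma$, and it forces $\CFA(\Gamma)$.

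The crucial point is that $\bool{B}$ also forces $\CH$. Indeed, $V^{P_0}\models\CH$, and $\dot{\bool{U}}$ lies in $\Gamma^{V^{P_0}}$; combining the hypothesis $\Gamma\subseteq S\textsf{--cond}$ with the absoluteness of the respective defining formulae (Fact \ref{fac:sigma2Scond} for $S\textsf{--cond}$ together with the analogous property of $\phi_\Gamma$ guaranteed by $\omega_1$--suitability), $\dot{\bool{U}}$ satisfies the $S$--condition in $V^{P_0}$. Lemma \ref{S-cond-shelah}(2) applied inside $V^{P_0}$ then shows that $\dot{\bool{U}}$ adds no new reals over $V^{P_0}$, so $\CH$ persists: $V^{\bool{B}}\models\CH$.

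Since $\bool{B}\in\Gamma$ and $\bool{B}$ forces (hence preserves) $\CFA(\Gamma)$, Theorem \ref{finaltheorem-abs} yields, for $H$ a $\bool{B}$--generic filter over $V$, that
\[
\ap{L(\Ord^{\omega_1})^V;\in,r}_{r\in\pow{\omega_1}^V}\equiv\ap{L(\Ord^{\omega_1})^{V[H]};\in,r}_{r\in\pow{\omega_1}^V};
\]
in particular, $\CH$ transfers from $V[H]$ back to $V$, completing the proof. The only mildly delicate point in the plan is the preservation of $2$--superhugeness by the small forcing $P_0$; this is standard, and if one wishes to be extra cautious one can instead pick $\delta$ from a tail of $2$--superhuge cardinals sitting well above $|P_0|$.
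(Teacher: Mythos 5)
Your proof is correct and follows essentially the same route as the paper's: force $\CH$ with a $\sigma$--closed (hence $\Gamma$) forcing, follow with $\bool{U}^\Gamma_\delta$ to restore $\CFA(\Gamma)$, observe that no new reals are added since the second factor lies in $S\textsf{--cond}$ (Lemma \ref{S-cond-shelah}(2)), and pull $\CH$ back to $V$ by the absoluteness theorem. Your added remarks on the preservation of $2$--superhugeness under the small collapse and on the closure clauses of Definition \ref{def:kappasuit} only make explicit details the paper leaves implicit.
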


\begin{proof} We force $\CH$ with $\s$--closed forcing, and then force $\CFA(S\textsf{--cond})$ via 
$\bool{U}^{\Gamma}_\delta$, for some $2$--superhuge cardinal $\d$. Let $V_1$ be the resulting model. 
Since $\bool{B}$ is a complete subalgebra of a poset with the $S$--condition, forcing with $\bool{B}$ over 
the $\CH$--model did not add new reals thanks to Lemma \ref{S-cond-shelah} (2). In particular, 
$V_1\models\CH$. But then $\CH$ holds in $V$ by the absoluteness theorem.
\end{proof}

It tuns out that $\CFA(\Gamma)$, where $\Gamma$ is any $\o_1$--suitable class contained in 
$\CFA(S\textsf{--cond})$, actually implies $\diamondsuit$. The proof is essentially the same as above, 
using the following recent result due to Magidor, together with the fact that if $V\sub V_1\sub W$ are models 
with the same $\o_1$ and $\vec X\in V$ is a $\diamondsuit$--sequence in $W$, then $\vec X$ is also a 
$\diamondsuit$--sequence in $V_1$. 

\begin{theorem} (Magidor) Suppose $\diamondsuit$ holds. Then there is a $\diamondsuit$--sequence 
that remains a $\diamondsuit$--sequence after any forcing with the $S$--condition.
\end{theorem}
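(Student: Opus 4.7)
The plan is to adapt Shelah-style preservation arguments for $\diamondsuit$ under proper forcing, with the game $\mathcal{G}^{\mathcal{P}}$ from the definition of the $S$-condition playing the role that countable elementary submodels play in the proper case. Two consequences of $\diamondsuit$ will be essential throughout: it implies $\CH$, so by Lemma \ref{S-cond-shelah}(2) any $S$-condition forcing $\mathcal{P} \in V$ adds no new reals; and by Lemma \ref{S-cond-shelah}(1) such $\mathcal{P}$ preserves stationary subsets of $\omega_1$. Together these imply that subsets of $\omega_1$ in an $S$-condition extension are determined, level by level, by countable-in-$V$ data, which is exactly what $\diamondsuit$ in $V$ can guess.

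First I would establish a \emph{tree-schema extraction}: given $\mathcal{P} \in V$ with the $S$-condition, a winning strategy $\sigma$ for player II in $\mathcal{G}^{\mathcal{P}}$, a condition $p$, and $\mathcal{P}$-names $\dot{X}, \dot{C}$ for a subset and a club of $\omega_1$, one can produce in $V$ a countable tree-schema $\mathcal{T} = (T, \langle p_\eta, x_\eta, c_\eta\rangle_{\eta \in T})$ below $p$, where $T$ is produced by a run of $\mathcal{G}^{\mathcal{P}}$ in which player II uses $\sigma$ while player I is chosen by bookkeeping in $V$ so that each $p_\eta$ decides a countable initial segment $x_\eta$ of $\dot{X}$ and forces a countable initial segment $c_\eta$ of $\dot{C}$. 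The key consequence of the winning condition for $\sigma$ is that for any subtree $T' \subseteq T$ preserving the full branching pattern at each node, some condition forces $p_{b \restriction n} \in \dot{G}$ for all $n$ along some infinite branch $b$ of $T'$; in $V[G]$ such a branch delivers $\bigcup_n x_{b \restriction n}$ as an initial segment of $\dot{X}_G$ and $\bigcup_n c_{b \restriction n}$ as a cofinal subset of some $\alpha \in \dot{C}_G$.

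Using $\CH$ to code countable tree-schemas together with a subset $Y \subseteq \alpha$ as subsets of $\alpha$, I would define $\vec{B} = \langle B_\alpha : \alpha < \omega_1\rangle$ in $V$ by $\diamondsuit$-driven recursion on a fixed $\diamondsuit$-sequence $\vec{A}$: if $A_\alpha$ codes a pair $(\mathcal{T} \restriction \alpha, Y)$ admitting a full-branching subtree and a branch $b$ with $\bigcup_n x_{b \restriction n} = Y$ and $\bigcup_n c_{b \restriction n}$ cofinal in $\alpha$, set $B_\alpha := Y$; otherwise set $B_\alpha := \emptyset$. To check that $\vec{B}$ remains a $\diamondsuit$-sequence in $V[G]$, suppose $p \in \mathcal{P}$ forces that $\dot{X} \cap \alpha \ne B_\alpha$ for all $\alpha$ in some club $\dot{C}$. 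Extracting $\mathcal{T}$ from $(p, \dot{X}, \dot{C})$, for each $\alpha$ at which some branch $b$ through a fat subtree of $\mathcal{T}$ has all its conditions in $G$, $\dot{X}_G \cap \alpha$ equals $\bigcup_n x_{b \restriction n}$. The ground-model $\diamondsuit$-sequence $\vec{A}$ guesses the corresponding pair $(\mathcal{T} \restriction \alpha, \dot{X}_G \cap \alpha)$ at a set $S$ that is stationary in $V$, and by Lemma \ref{S-cond-shelah}(1), $S$ remains stationary in $V[G]$. Picking $\alpha \in S \cap \dot{C}_G$ with $\bigcup_n c_{b \restriction n}$ cofinal in $\alpha$ along the actual generic branch yields $B_\alpha = \dot{X}_G \cap \alpha$, contradicting the choice of $p$.

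The main obstacle will be the coordination between $V$-side and $V[G]$-side combinatorics: the names $\dot{X}$ and $\dot{C}$ depend on $G$, yet $\vec{B}$ must be chosen once and for all in $V$ and work for \emph{every} such pair in \emph{every} $S$-condition extension. The subtlety lies in arranging the tree-schema extraction canonically enough that the countable object $\mathcal{T} \restriction \alpha$ together with the intended $Y$ can be coded as a subset of $\alpha$ guessable by the single $\diamondsuit$-sequence $\vec{A}$, while ensuring that the set of ``correctly guessed'' $\alpha$ on which the intended branch is generic enough is stationary in $V$ (whence also in $V[G]$, by Lemma \ref{S-cond-shelah}(1)). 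This is precisely where the $S$-condition is needed in full strength: it forces every $V[G]$-subset of $\omega_1$ to be read off, at each $\alpha$, from a branch through a ground-model tree-schema, reducing $V[G]$-guessing to $V$-guessing.
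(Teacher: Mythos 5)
The paper offers no proof of this statement --- it is quoted as a recent result of Magidor --- so there is nothing to compare your argument against; it has to stand on its own, and as written it has a genuine gap. The central problem is your use of the ground-model $\diamondsuit$--sequence. The tree $T$ produced by a run of $\mathcal G^{\mathcal P}$ is not countable: at every odd stage player II splits each terminal node into $\kappa^n_\eta\geq\aleph_2$ immediate successors, so $|T|\geq\aleph_2$, and the ``tree-schema'' $\mathcal T$ cannot be coded as a subset of $\omega_1$, let alone have $\mathcal T\restriction\alpha$ guessed by a $\diamondsuit$--sequence at $\alpha$. Worse, the object you ask $\vec A$ to guess at $\alpha$ is the pair $(\mathcal T\restriction\alpha,\dot X_G\cap\alpha)$; since $\dot X_G\notin V$, these traces are not the initial segments of any single ground-model subset of $\omega_1$, so $\diamondsuit$ in $V$ gives you no stationary set $S$ of correct guesses --- the quantifier order in $\diamondsuit$ ($\vec A$ is fixed first, and the set to be guessed must lie in $V$) is violated. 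Finally, even granting such an $S$, the winning condition for player II only produces \emph{some} condition forcing \emph{some} branch of a chosen full-branching subtree into $\dot G$; it does not let you steer the generic branch so that its associated ordinal lands in $S\cap\dot C_G$.

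The missing idea --- and the reason the statement is true for the $S$--condition specifically --- is a pigeonhole homogenization over the full-branching subtrees, which your proposal quantifies over but never exploits. Under $\CH$ (which $\diamondsuit$ gives you) there are only $\aleph_1$ many candidates for a decided value of $\dot X\cap\alpha$ with $\alpha<\omega_1$, while every branching node of $T$ has a \emph{regular} number $\geq\aleph_2$ of immediate successors; hence one can pass to a subtree $T'$ with $|\textsf{succ}_{T'}(\eta)|=|\textsf{succ}_T(\eta)|$ for all $\eta$ on which the decided initial segments of $\dot X$ and $\dot C$ depend only on the level and not on the branch. This collapses the tree of possibilities to a single ground-model set $Y$ and ordinal to which $\diamondsuit$ can legitimately be applied, and it is exactly why the winning condition is stated for \emph{every} full-branching subtree in $V$. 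One must then still organize an $\omega_1$--length construction (or a suitable density argument) so that correctly guessed ordinals can be forced into $\dot C$ below an arbitrary condition; your length-$\omega$ run of the game by itself controls only one countable ordinal per branch. Your general instinct --- reduce $V[G]$--guessing to $V$--guessing using Lemma \ref{S-cond-shelah} --- is the right one, but without the homogenization step the reduction does not go through.
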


The following well--known fact can be proved by an argument as in the final part of the proof of 
Lemma \ref{psiAC-measurable}.

\begin{fact}\label{suslin-tree-pres-sigma-closed} If $\mtcl P$ is $\s$--closed, then $\mtcl P$ preserves Suslin trees.
\end{fact}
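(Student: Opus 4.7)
The plan is to mimic the final portion of the proof of Lemma~\ref{psiAC-measurable}, exploiting the total $(U, N)$-genericity of nodes of any Suslin tree $U$ at height $N \cap \o_1$ whenever $N$ is a countable elementary submodel of some large enough $H_\t$ with $U \in N$. The role of $\s$-closure here is solely to simplify the closing-off step: a countable descending sequence of conditions in a $\s$-closed forcing automatically has a lower bound, so there is no need to build any kind of semi-generic condition by hand.

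Given a $\s$-closed $\mtcl P$, a Suslin tree $U$, a condition $p \in \mtcl P$, and a $\mtcl P$-name $\dot A$ for a maximal antichain of $U$, I would first fix a countable $N \prec H_\t$ for large enough $\t$ containing all relevant parameters, enumerate $(D_n)_{n<\o}$ the dense subsets of $\mtcl P$ in $N$ and $(u_n)_{n<\o}$ the nodes of $U$ of height $N \cap \o_1$, and then recursively construct a descending sequence $p_0 \geq_{\mtcl P} p_1 \geq_{\mtcl P} \ldots$ of conditions in $\mtcl P \cap N$, with $p_0 \leq_{\mtcl P} p$, so that $p_n \in D_n$ and $p_{n+1}$ forces some $v \in U \cap N$ with $v \leq_U u_n$ into $\dot A$.

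The key inductive step is the usual one: given $p_n$, $\s$-closure inside $N$ extends it into $D_{n+1}$ to produce some $p_n' \in N$; the set $E \in N$ consisting of those $u \in U$ for which some $v \leq_U u$ and some $q \leq_{\mtcl P} p_n'$ force $v \in \dot A$ is a dense, upward-closed subset of $U$ (using the forced maximality of $\dot A$), so its minimal elements form a maximal antichain of $U$, which is countable since $U$ is Suslin, and lies inside $N$ by elementarity; thus $u_n$ extends some $u \in E \cap N$, and I pick witnesses $v, q \in N$ for $u$ and set $p_{n+1} = q$. Taking $p^* \in \mtcl P$ as a lower bound of $(p_n)_{n<\o}$ via $\s$-closure, one verifies that $p^* \Vdash_{\mtcl P} \dot A \sub U \cap N$, hence countable: any putative $u^* \in \dot A_G \setminus N$ would lie above the unique $u_n$ of height $N \cap \o_1$ below it, and thus above the $v \in \dot A_G \cap N$ placed below $u_n$ by the construction, contradicting that $\dot A_G$ is an antichain.

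The main conceptual point --- there is no real technical obstacle here --- is recognising that the Suslin property of $U$ supplies exactly the total $(U, N)$-genericity needed for $u_n$ to extend an element of every dense subset of $U$ lying in $N$, so that the inductive step carries through; $\s$-closure does all the actual forcing work of assembling $p^*$ and of interleaving the meeting of the $D_n$'s along the way.
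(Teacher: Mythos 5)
Your proof is correct and is exactly the argument the paper has in mind: the paper gives no separate proof of this Fact but points to the final part of the proof of Lemma \ref{psiAC-measurable} (the Miyamoto--Yorioka-style argument using total $(U,N)$--genericity of the nodes at height $N\cap\o_1$), which you reproduce faithfully, with $\s$--closure replacing the semi-genericity bookkeeping in the closing-off step. No gaps.
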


The proof of the following lemma is like the proofs of Lemmas 
\ref{consespfa+++}, \ref{consespfa+++omegaomega-bounding}, and \ref{scnd-conses}, 
using the well--known fact that $\Add(\o_1, 1)$ adds a Suslin tree $T$. 

\begin{lemma}\label{consesmm+++pressuslintrees} ($\ZFC$ + $\LC$) Let $\Gamma$ be an $\o_1$--suitable 
class such that $\Add(\o_1, 1)\in\Gamma$ and $\Gamma\sub\STP$. If $\CFA(\Gamma)$ holds, then 
there is a Suslin tree. 
\end{lemma}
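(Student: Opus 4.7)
The plan is to follow the pattern established in the proofs of Lemmas \ref{consespfa+++}, \ref{consespfa+++omegaomega-bounding} and \ref{scnd-conses}: we produce, in $V$, a forcing $\mtcl P \in \Gamma$ whose extension both satisfies $\CFA(\Gamma)$ and contains a Suslin tree, and then pull the conclusion back via the generic absoluteness theorem (Theorem \ref{finaltheorem-abs}).

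First I would set up the forcing. Let $\delta$ be a $2$--superhuge cardinal (provided by $\LC$) and consider in $V$ the two--step iteration
\[
\mtcl P \;=\; \Add(\o_1,1) \,\ast\, \dot{\bool{U}}^\Gamma_{\check\delta},
\]
where the second factor is the canonical $\Add(\o_1,1)$--name for the category forcing $\bool{U}^\Gamma_\delta$ as computed in the intermediate model. By hypothesis $\Add(\o_1,1) \in \Gamma$ (it is countably closed, so this is automatic from $\o_1$--suitability as well). Since $\Add(\o_1,1)$ is countably closed and $\delta$ is $2$--superhuge in $V$, $\delta$ remains $2$--superhuge in $V^{\Add(\o_1,1)}$, and so Theorem~\ref{consistency-of-CFA} guarantees that $\bool{U}^\Gamma_\delta$ is forced by $\Add(\o_1,1)$ to be a forcing in $\Gamma$ which itself forces $\CFA(\Gamma)$. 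Closure of $\Gamma$ under two--step iterations then yields $\mtcl P \in \Gamma^V$, and clearly $V^{\mtcl P} \models \CFA(\Gamma)$.

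Next I would ensure that the generic Suslin tree survives. It is a classical fact that $\Add(\o_1,1)$ adds a Suslin tree $T$; in $V[T]$ the forcing $\bool{U}^\Gamma_\delta$ belongs to $\Gamma \subseteq \STP$, so it preserves $T$ as a Suslin tree. Consequently, if $G$ is $V$--generic for $\mtcl P$, decomposed as $G = T \ast H$ with $T$ generic for $\Add(\o_1,1)$ and $H$ generic over $V[T]$ for $\bool{U}^\Gamma_\delta$, then $V[G]$ contains a Suslin tree, namely $T$ itself.

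Finally I would invoke generic absoluteness. Since both $V \models \CFA(\Gamma)$ and $V[G]\models \CFA(\Gamma)$, and $\mtcl P\in\Gamma$, Theorem~\ref{finaltheorem-abs} gives that $V$ and $V[G]$ have the same theory of $L(\Ord^{\o_1})$ with parameters in $\pow{\o_1}$; in particular their first--order theories of $H_{\o_2}$ coincide. The sentence ``there exists a Suslin tree'' is expressible over $H_{\o_2}$, so from its truth in $V[G]$ we conclude its truth in $V$. There is no substantial obstacle here; the only checks are the routine verifications that $\mtcl P\in\Gamma$ (closure under two--step iterations together with the hypothesis $\Add(\o_1,1)\in\Gamma$) and that Suslin trees are preserved by $\bool{U}^\Gamma_\delta$ (which is immediate from $\Gamma\subseteq\STP$).
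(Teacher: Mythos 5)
Your proposal is correct and follows exactly the route the paper intends: the paper's own ``proof'' is just the remark that the argument is like those of Lemmas \ref{consespfa+++}, \ref{consespfa+++omegaomega-bounding} and \ref{scnd-conses}, using that $\Add(\o_1,1)$ adds a Suslin tree, which is precisely your two--step iteration $\Add(\o_1,1)\ast\dot{\bool{U}}^\Gamma_\delta$ followed by preservation via $\Gamma\sub\STP$ and pull--back through the generic absoluteness theorem. The only cosmetic point is that the preservation of $2$--superhugeness of $\delta$ under $\Add(\o_1,1)$ is a L\'evy--Solovay--type smallness argument rather than a consequence of countable closure, but this does not affect the proof.
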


It will be convenient to consider the following families of Club--Guessing principles on $\o_1$ (s.\ \cite{AFMS}).

\begin{definition}
Let $\tau<\o_1$ be a nonzero ordinal. 

\begin{enumerate}

\item $\tau$--$\TWCG$ denotes the following statement: There is a a sequence 
$$\vec C=(C_\d\,:\,\d=\o^\tau\cdot\eta\mbox{ for some nonzero }\eta<\o_1)$$ such that 
$\av\{C_\d\cap\g\,:\,\g<\o_1\}\av\leq\al_0$ for every $\d\in\dom(\vec C)$, and such that for every club $C\sub\o_1$ 
there is some $\d\in\dom(\vec C)$ with $\ot(C_\d\cap C)=\o^\tau$. 

\item $\tau$--$\TCG$ denotes the following statement: There is a a sequence 
$$\vec C=(C_\d\,:\,\d=\o^\tau\cdot\eta\mbox{ for some nonzero }\eta<\o_1)$$ 
such that $\av\{C_\d\cap\g\,:\,\g<\o_1\}\av\leq\al_0$ for every $\d\in\dom(\vec C)$, and such that for every club 
$C\sub\o_1$ there is some $\d\in\dom(\vec C)$ with $C_\d\sub C$. 
\end{enumerate}
\end{definition}

In the above definition, $\TWCG$ and $\TCG$ stand for \emph{thin weak club--guessing} 
and \emph{thin club--guessing}, respectively.

\begin{lemma} 
Let $\tau<\o_1$ be a nonzero ordinal. Then $$\FA_{\al_1}((\o^\tau\mbox{--}\PR)\cap\STP\cap\,^\o\o\textsf{--bounding})$$ 
implies the failure of $\tau'$--$\TWCG$ for every $\tau'$ such that $\tau<\tau'<\o_1$. 
\end{lemma}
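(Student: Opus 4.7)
I will argue by contradiction. Fix $\tau<\tau'<\o_1$ and assume that $\tau'$--$\TWCG$ holds, witnessed by a sequence $\vec{C}=(C_\d:\d=\o^{\tau'}\cdot\eta,\, 0<\eta<\o_1)$. The plan is to produce, via the forcing axiom $\FA_{\al_1}((\o^\tau\mbox{--}\PR)\cap \STP\cap\,^\o\o\textsf{--bounding})$, a club $D\subseteq\o_1$ such that $\ot(C_\d\cap D)<\o^{\tau'}$ for every $\d\in\dom(\vec{C})$, directly contradicting the guessing property of $\vec{C}$.

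The club $D$ will be added by a forcing notion $\mtcl P$ whose conditions are, roughly, closed bounded initial segments $c$ of $D$ satisfying $\ot(C_\d\cap c)<\o^{\tau'}$ for every $\d\in\dom(\vec{C})$, together with (countable) side commitments $f(\d)<\o^{\tau'}$ committing that $\ot(C_\d\cap D)\leq f(\d)$, ordered by end-extension of $c$ and enlargement of $f$. The side commitments are crucial: the raw constraint $\ot(C_\d\cap c)<\o^{\tau'}$ alone is not preserved under limits of our construction, whereas the commitments freeze this bound in place once $\d$ has been ``enrolled''. The $\al_1$--many dense subsets we use of $\mtcl P$ are: for each $\g<\o_1$, the set of $(c,f)$ with $\max c>\g$ (ensuring $D$ is unbounded, hence a club by the closure of conditions); and for each $\d\in\dom(\vec{C})$, the set of $(c,f)$ with $\d\in\dom(f)$ and $\max c>\d$, which forces $C_\d\cap D=C_\d\cap c$ and thus $\ot(C_\d\cap D)<\o^{\tau'}$. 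Density of these sets is verified by standard bookkeeping, using that each $C_\d$ is countable.

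The heart of the argument is verifying $\mtcl P\in (\o^\tau\mbox{--}\PR)\cap\STP\cap\,^\o\o\textsf{--bounding}$. For $\o^\tau$--properness, given a continuous $\in$--chain $(N_i)_{i\leq\o^\tau}$ of countable elementary submodels of a sufficiently large $H_\t$ containing $\vec{C}$ and a condition $p_0=(c_0,f_0)\in N_0\cap\mtcl P$, one recursively builds a descending sequence $(p_i)_{i\leq\o^\tau}$ with $p_i=(c_i,f_i)\in N_{i+1}$ for $i<\o^\tau$, each $p_{i+1}$ meeting an enumeration of the dense open subsets of $\mtcl P$ in $N_i$ and having $\sup(c_i)=N_i\cap\o_1$ for $i>0$; at limit $i$, set $c_i=\bigcup_{j<i}c_j\cup\{N_i\cap\o_1\}$ and $f_i=\bigcup_{j<i}f_j$, updating $f_{i}(\d)$ as needed at preceding successor stages to keep the commitments valid. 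The critical arithmetic observation is that the total number of new points added to each $C_\d\cap c$ across $\o^\tau+1$ many stages is at most $\o^\tau+1$; hence, since $\o^\tau+\o^\tau+1<\o^\tau\cdot\o=\o^{\tau+1}\leq\o^{\tau'}$ for every $\tau<\tau'$, one has $\ot(C_\d\cap c_{\o^\tau})<\o^{\tau'}$ for each enrolled $\d$ provided the commitments were calibrated with this slack. This is precisely where the hypothesis $\tau<\tau'$ is essential.

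For the two remaining properties, $\mtcl P$ does not add new reals --- a standard consequence of its countable-condition structure together with $\o^\tau$--properness, allowing any $\mtcl P$--name for a real to be decided via an $(N,\mtcl P)$--master condition for a countable $N$ --- and hence is $^\o\o$--bounding. Suslin tree preservation is established by a Miyamoto--Yorioka-style argument as in the proof of Lemma~\ref{MRP}: given a Suslin tree $T$ and a $\mtcl P$--name $\dot A$ for a maximal antichain of $T$, one builds an $(N,\mtcl P)$--master condition above a totally $(T,N)$--generic node at height $N\cap\o_1$ forcing $\dot A\sub T\cap N$, hence countable. The main technical obstacle throughout is the calibration of the side commitments in the $\o^\tau$--properness and tree-preservation arguments so that the final master condition remains a valid element of $\mtcl P$; once the appropriate bookkeeping is set up, the proof proceeds by routine verification and the desired club $D$ is produced by applying $\FA_{\al_1}(\mtcl P)$ to the $\al_1$--many dense subsets listed above.
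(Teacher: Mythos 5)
Your overall strategy coincides with the paper's at the top level: kill the given $\tau'$--$\TWCG$--sequence $\vec C$ by shooting, via the forcing axiom, a club $D$ with $\ot(C_\d\cap D)<\o^{\tau'}$ for all $\d\in\dom(\vec C)$, using a poset that must be shown to lie in $(\o^\tau\mbox{--}\PR)\cap\STP\cap\,^\o\o\textsf{--bounding}$. The paper uses the plain poset $\mtcl P_{\vec C}$ of countable closed sets $c$ with $\ot(C_\d\cap c)<\o^{\tau'}$ for all $\d$, ordered by end--extension, cites \cite{AFMS} for $\o^\tau$--properness and the no-new-reals property, and proves Suslin tree preservation in detail. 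Your proposal replaces this by a poset with side commitments and attempts to prove $\o^\tau$--properness directly, and this is where there is a genuine gap.

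The only problematic ordinals $\d$ in the properness argument are those of the form $\d_i=N_i\cap\o_1$ for limit $i\leq\o^\tau$: for $\d\leq\max(c_j)$ the set $C_\d\cap D$ is already frozen equal to $C_\d\cap c_j$, and for $\d$ above the top model the intersection is a bounded, hence short, initial segment of $C_\d$. For $\d=\d_i$ your ``critical arithmetic observation'' --- that at most $\o^\tau+1$ new points enter $C_{\d_i}\cap c$ over the whole construction --- is unjustified and is not delivered by the construction you describe. At each stage $j<i$ the condition $c_{j+1}$ must be a fusion of an $\o$--sequence of conditions of $N_{j+1}$ meeting all dense subsets of $\mtcl P$ lying in $N_{j+1}$, so its closed part is cofinal in $\d_{j+1}=N_{j+1}\cap\o_1$; nothing in your argument prevents it from containing a subset of $C_{\d_i}\cap(\max(c_j),\d_{j+1}]$ of large order type, and summing such contributions over $\o^\tau$ stages can reach $\o^{\tau'}$, so the limit $c_i$ need not be a condition. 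The side commitments $f$ cannot repair this: since $\d_i\notin N_{j+1}$ for $j+1<i$, no condition constructed inside $N_{j+1}$ can enroll $\d_i$, and a condition whose commitment part mentions $\d_i$ is no longer an element of $N_{j+1}$, so it cannot serve as the base point for the genericity argument over $N_{j+1}$. The mechanism that actually works (the content of the result quoted from \cite{AFMS}, and the one the paper spells out in the single-model case when proving Suslin tree preservation) is to use $\ot(C_\d)=\o^{\tau'}<\d$ to select, at each step of the fusion, an intermediate model $M$ from a continuous chain lying in the relevant $N$ so that the interval $(\max(c_n),\d_M]$ is disjoint from $C_\d$, and to confine all new points to such intervals, so that $C_\d\cap\bigcup_n c_n\sub c_0$. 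This is precisely the step your Suslin-tree-preservation sketch dismisses as ``routine bookkeeping''; it is the entire content of that part of the proof, and its absence, together with the unsupported counting claim in the properness argument, leaves the proposal incomplete at the points where the lemma is actually hard.
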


\begin{proof}
Let us consider the following natural forcing $\mtcl P_{\vec C}$ for killing an instance 
$$\vec C=(C_\d\,:\,\d=\o^\tau\cdot\eta\mbox{ for some nonzero }\eta<\o_1)$$ of $\tau'$--$\TWCG$: 
$\mtcl P_{\vec C}$ is the set, ordered by reverse end--extension, of countable closed subset $c$ of $\o_1$ 
such that $\ot(C_\d\cap c)<\o^{\tau'}$ for every $\d\in \dom(\vec C)$. It is proved in \cite{AFMS} that 
$\mtcl P_{\vec C}$ is $\o^\tau$--proper, does not add new reals, and adds a club $C\sub\o_1$ such that 
$\ot(C_\d\cap C)<\o^{\tau'}$ for every $\d\in\dom(\vec C)$. Hence, it only remains to prove that $\mtcl P_{\vec C}$ 
preserves Suslin trees. This can be shown by an argument similar to the main argument 
in the proof in \cite{Miyamoto-Yorioka} that $\MRP$--posets preserve Suslin tree. 
We present the argument here for the reader's convenience.

Suppose $U$ is a Suslin tree, $\dot A$ is a $\mtcl Q$--name for a maximal antichain of $U$, and $N$ is a 
countable elementary submodel of some large enough $H_\t$ containing $U$, $\dot A$, and all other relevant objects. 
Let $\d=N\cap\o_1$. As in  the last part of the proof of Lemma \ref{psiAC-measurable}, let $(u_n)_{n<\o}$ enumerate 
all nodes in  $U$ of height $\d$. Given a condition $c\in\mtcl P_{\vec C}\cap N$, we aim to build an 
$(N, \mtcl P_{\vec C})$--generic sequence $(c_n)_{n<\o}$ of conditions in $N$ extending $c$  such that for every $n$ 
there is some $v\in U$ below $u_n$ such that $c_{n+1}$ forces $v\in \dot A$. We will make sure that 
$C_\d\cap\bigcup_{n<\o}c_b\sub c$, which will guarantee that $c^\ast = \bigcup_{N<\o}c_n\cup\{\d\}\in\mtcl P_{\vec C}$. 
But this will be enough, as then $c^\ast$ will be an extension of $c$ in $\mtcl P_{\vec C}$ forcing $\dot A\sub U\cap N$. 

It thus remains to show how to find $c_{n+1}$ given $c_n$. Working in $N$, we may first fix some countable 
$M\preccurlyeq H_{\chi}$ (for some large enough $\chi$) containing $c_n$ and all other relevant objects 
(including some relevant dense set $D\sub\mtcl P_{\vec C}$ that we need to meet), and such that 
$[\eta,\,\d_M]\cap C_\d=\emptyset$, where $\d_M=M\cap\o_1$. In order to find $M$, we first consider a strictly 
$\sub$--increasing and continuous sequence $(M_\n)_{\n<\o_1}\in N$  of elementary submodels containing 
all relevant objects. Since $(M_\nu\cap\o_1)_{\n<\d}$ is a club of $\d$ of order type $\d$ and 
$\ot(C_\d)=\o^{\tau'}<\d$, we can then find some $\n<\d$ such that $M=M_\n$ is as desired. 
Now, working in $M$, we may, first,  extend $c_n$  to a condition $c_n'$ such that $\max(c_n')>\eta$ and 
$[\max(c_n),\,\eta]\cap c_n'=\emptyset$, and then extend $c_n'$ to a condition $c_n''$ in $D$.  
Let now $\bar u$ be the unique node in $U$  below $u_n$ of height $\d_M$. 
Since $U$ is a Suslin tree, we have that $u_n$ is totally $(U, M)$--generic. Also, the set $E\in N$ of $u\in U$ for which there 
is some $v\in U$ below $u$ and some $\bar c\in\mtcl P_{\vec C}$ extending $c_n''$ and forcing that $v\in\dot A$ is 
dense in $U$. It follows that we may find  some $u\in E\cap M$ below $u_n$, as witnessed by some 
$\bar c\in \mtcl P_{\vec C}\cap N$ and some $v\in U\cap M$. But then we may let $c_{n+1}=\bar c$. 
\end{proof}

\begin{lemma} ($\ZFC$+$\LC$) Let $\tau<\o_1$ be a nonzero ordinal. Suppose $\Gamma$ is an 
$\o_1$--suitable class containing all $\s$--closed forcing notions and such that $\Gamma\sub \o^\tau\mbox{--}\SP$. 
If $\CFA(\Gamma)$ holds, then so does $\tau$--$\TCG$.
\end{lemma}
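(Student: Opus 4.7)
The plan is to follow the template established in Lemma~\ref{scnd-conses} and related results: namely, to force a $\tau$--$\TCG$ sequence into existence via a $\sigma$--closed preparatory forcing, then force $\CFA(\Gamma)$ on top via $\bool{U}^\Gamma_\delta$ for $\delta$ a $2$--superhuge cardinal, and finally use the generic absoluteness theorem (Theorem~\ref{finaltheorem-abs}) to pull the existence of such a sequence back to $V$. Concretely, fix $\delta$ a $2$--superhuge cardinal in $V$, and consider the two--step iteration $\mtcl P_0 \ast \dot{\bool{U}}^\Gamma_\delta$, where $\mtcl P_0$ is the natural $\sigma$--closed forcing whose conditions are countable approximations $(C_{\omega^\tau\cdot\xi})_{0<\xi\leq\eta}$, with each $C_{\omega^\tau\cdot\xi}$ a cofinal subset of $\omega^\tau\cdot\xi$ of order type $\omega^\tau$, ordered by end--extension. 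Since $\Gamma$ contains all $\sigma$--closed forcings and is closed under two--step iterations, this iteration lies in $\Gamma$, and by Theorem~\ref{consistency-of-CFA} it forces $\CFA(\Gamma)$ over $V$.

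Writing $V_1=V[G_0\ast H]$ for the corresponding generic extension, the key task is to verify that $V_1\models\tau$--$\TCG$, with $\vec C=\bigcup G_0$ as the witnessing sequence. This splits into two steps. First, in $V[G_0]$, $\vec C$ is a $\tau$--$\TCG$ sequence: for every $\mtcl P_0$--name $\dot C$ forced to be a club of $\omega_1$, the set of conditions $\vec c$ admitting some $\d \in \dom(\vec c)$ with $C_\d\subseteq\dot C$ is dense, since $\sigma$--closedness lets us construct, below any given $\vec c$, a cofinal subset of some future $\omega^\tau\cdot\eta$ of length $\omega^\tau$ built from ordinals forced into $\dot C$; hence genericity ensures $\tau$--$\TCG$. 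Second, $\vec C$ must remain $\tau$--$\TCG$ after forcing with $\bool{U}^\Gamma_\delta$, which lies in $\omega^\tau$--$\SP$. Here we invoke Fact~\ref{char-semiproper}: the club--guessing property of $\vec C$ can be reformulated as the $\omega^\tau$--semi--stationarity of the set, in $^{\omega^\tau}([H_\theta]^{\aleph_0})$ for a large $\theta$, of continuous $\omega^\tau$--chains of countable elementary submodels whose union intersects $\omega_1$ in some $\omega^\tau\cdot\eta$ and contains $C_{\omega^\tau\cdot\eta}$ cofinally; preservation of this $\omega^\tau$--semi--stationary set by $\bool{U}^\Gamma_\delta$ is precisely the preservation of $\tau$--$\TCG$ for $\vec C$.

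With $V_1\models\tau$--$\TCG$ established, the conclusion is immediate from generic absoluteness: $V_1$ is obtained from $V$ by a forcing in $\Gamma$ that preserves $\CFA(\Gamma)$, so by Theorem~\ref{finaltheorem-abs} the first--order theory of $L(\Ord^{\omega_1})$ with parameters in $\mathcal P(\omega_1)^V$ is invariant; as $\tau$--$\TCG$ is a $\Sigma_1$ assertion over $H_{\omega_2}\subseteq L(\Ord^{\omega_1})$ in the parameter $\tau\in\omega_1$, it transfers back, giving $V\models\tau$--$\TCG$. The main obstacle is the preservation step in the second paragraph: coding the club--guessing property of $\vec C$ as the $\omega^\tau$--semi--stationarity of a canonical subset of $^{\omega^\tau}([H_\theta]^{\aleph_0})$ must be done carefully enough that the preservation afforded by $\omega^\tau$--$\SP$ really translates into preservation of the guessing condition against all new clubs in $V_1$.
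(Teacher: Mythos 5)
Your proof is correct and takes essentially the same route as the paper's: force a $\tau$--$\TCG$ sequence into existence with a $\sigma$--closed poset (the paper simply cites a result of Zapletal for this rather than building the poset explicitly), then force $\CFA(\Gamma)$ with $\bool{U}^\Gamma_\delta$ and pull the statement back to $V$ by the generic absoluteness theorem, the crucial ingredient being that $\o^\tau$--semiproper forcings preserve $\tau$--$\TCG$ sequences. The paper dispatches that last point as ``a completely standard fact''; your semi--stationarity reformulation is a correct way to fill it in, since the guessing set you describe depends only on the traces $x_i\cap\o_1$ and is $\o^\tau$--stationary (hence $\o^\tau$--semi--stationary) in the intermediate model by thinness of $\vec C$.
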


\begin{proof}
By a result in \cite{Zapletal}, there is a $\s$--closed forcing notion adding a $\tau$--$\TCG$--sequence. 
The rest of the argument is as in the proof of Lemma \ref{consespfa+++} (and subsequent lemmas), using 
the preservation of $\tau$--$\TCG$--sequences by any $\o^\tau$--semiproper forcing, which is a completely  standard fact.
\end{proof}

\section{Appendix}\label{appendix}

We collect here a few results translating the approach to forcing and iterations via posets to that
done via complete Boolean algebras; for details see the forthcoming~\cite{VIAAUDSTEBOOK}
or \cite{VIAAUDSTE13} (the latter is available on ArXiv).

Given a Boolean algebra $\bool{B}$ and a prefilter $G$  on $\bool{B}$
(i.e., a family such that $\bigwedge F>0_\bool{B}$ for all finite $F\subseteq G$), we 
denote by $\bool{B}/_G$ the quotient algebra obtained using the ideal 
$J=\bp{b:b\wedge c=0_{\bool{B}}\text{ for all $c\in G$}}$. $\bool{B}^+$ denotes the positive elements
of $\bool{B}$ and $\dot{G}_{\bool{B}}=\bp{\ap{\check{b},b}:b\in\bool{B}}$ is the canonical $\bool{B}$--name for the $V$--generic filter.

\begin{theorem}
Assume $(P,\leq)$ is a partial order. Let $\RO(P)$ denote the family of regular open sets
for the topology $\tau_P$ whose open sets are the downward closed subsets of $P$. The function $i_p:P\to \RO(P)$ given by
$i_P:p\mapsto \Reg{\downarrow p}$ (where $\downarrow p=\bp{q\in P:q\leq p}$ and $\Reg{A}$ is the interior of the closure of $A$ for the topology generated by the sets $\downarrow p$) is
an order and incompatibility preserving map of $(P,\leq)$ into $\RO(P)^+$ with image dense in $\RO(P)^+$,
and is such that
$p\Vdash_P\phi(\tau_1,\ldots, \tau_n)$ if and only if $i_P(p)\leq\Qp{\phi(\tau_1, \ldots, \tau_n)}_{\RO(P)}$.
\end{theorem}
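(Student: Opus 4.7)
\medskip

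\noindent The plan is to proceed in four stages, following the standard development of $\RO(P)$ as the Boolean completion of $P$.

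First, I would analyze the topology $\tau_P$ and compute $\Cl{\cdot}$ and $\Reg{\cdot}$ concretely in terms of the order. A downward closed set is open, so for each $q\in P$ the smallest open set containing $q$ is $\downarrow q$. Hence $q\in\Cl{A}$ iff $\downarrow q \cap A \neq \emptyset$, and by downward closure of $A$ one reduces this to: $q\in\Cl{\downarrow p}$ iff $p$ and $q$ are compatible in $P$. Therefore $\Reg{\downarrow p} = \{q\in P : \text{every } r\leq q \text{ is compatible with } p\}$, which contains $\downarrow p$ and is downward closed. With these formulas in hand, the fact that $\RO(P)$ is a complete Boolean algebra under inclusion (with meets given by intersection and joins given by regularization of unions) is a purely topological lemma that I would cite.

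Second, I would verify the three properties of $i_P$. \emph{Order preservation}: if $p\leq q$, then $\downarrow p \subseteq \downarrow q$, and since $\Reg{\cdot}$ is monotone we get $i_P(p)\leq i_P(q)$. \emph{Incompatibility preservation}: if $p\bot q$ and $r\in i_P(p)\cap i_P(q)$, then compatibility of $r$ with $p$ gives some $s\leq r,p$; since $r\in i_P(q)$ and $s\leq r$, $s$ is compatible with $q$, yielding $t\leq s,q$, so $t\leq p,q$, contradicting $p\bot q$. \emph{Density}: given $A\in\RO(P)^+$, pick any $p\in A$; since $A$ is open (hence downward closed), $\downarrow p\subseteq A$, whence $i_P(p)=\Reg{\downarrow p}\subseteq \Reg{A}=A$, as $A$ is regular open.

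Third, I would establish the correspondence of generic filters: if $G\subseteq P$ is $V$-generic, then the upward closure $\widehat{G}=\{A\in\RO(P)^+ : \exists p\in G, i_P(p)\leq A\}$ is a $V$-generic ultrafilter on $\RO(P)$, and conversely $i_P^{-1}[H]$ is $V$-generic on $P$ for any $V$-generic ultrafilter $H$ on $\RO(P)$; the two operations are mutually inverse, and $V[G]=V[\widehat{G}]$. This uses only order/incompatibility preservation plus density of the image.

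Finally, and this is the step I expect to require the most care, I would prove the forcing equivalence $p\Vdash_P\phi(\tau_1,\dots,\tau_n)$ iff $i_P(p)\leq \Qp{\phi(\tau_1,\dots,\tau_n)}_{\RO(P)}$ by induction on the complexity of $\phi$. The atomic case reduces to comparing the recursive definitions of $\Qp{\check a\in\check b}$ and $\Qp{\check a=\check b}$ computed in $P$ versus $\RO(P)$, together with density of $i_P[P]$ in $\RO(P)^+$ to identify $P$-names with $\RO(P)$-names up to equivalence. The Boolean connectives are straightforward once one knows that suprema and infima in $\RO(P)$ correctly encode open-dense sets in $P$ via the density of $i_P[P]$. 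The quantifier step uses that a maximal antichain of $P$ below $p$ maps through $i_P$ to a set whose supremum in $\RO(P)$ is $i_P(p)$, again a consequence of density and incompatibility preservation. The main technical delicacy is managing the translation between $P$-names and $\RO(P)$-names so that the inductive hypothesis applies cleanly; this is essentially the content of the standard identification $V^P = V^{\RO(P)}$ and I would invoke it (or verify it by a parallel recursion) to close the induction.
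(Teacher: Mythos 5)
The paper states this theorem in the appendix without proof, deferring to the cited references \cite{VIAAUDSTEBOOK} and \cite{VIAAUDSTE13}, so there is no in-paper argument to compare against. Your proposal is the standard and correct development: the explicit computation $\Reg{\downarrow p}=\{q: \text{every }r\leq q\text{ is compatible with }p\}$ is right and does all the work for order/incompatibility preservation and density, and the remaining steps (generic-filter correspondence, induction on formula complexity with the identification of $P$--names and $\RO(P)$--names) are exactly what the cited sources carry out; I see no gap.
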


\begin{theorem}\label{qIso}
	If $i: \bool{B} \rightarrow \bool{C}$ is an injective complete homomorphism of complete Boolean algebras, then 
	$$\bool{B} * (\bool{C}/_{i[\dot{G}_{\bool{B}}]}) \cong \bool{C}.$$
	
	Conversely, if $\dot{\bool{Q}}\in V^{\bool{B}}$ is a $\bool{B}$-name for a 
	complete Boolean algebra and $G$ is $V$-generic for $\bool{B}$, then
	\[
	(\bool{B}*\dot{\bool{Q}})/_{i_{\bool{B}\ast\dot{\bool{C}}}[G]}\cong\dot{\bool{Q}}_G.
	\]
\end{theorem}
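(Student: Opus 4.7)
The plan is to prove the forward direction by exhibiting an explicit isomorphism, and then deduce the converse. Write $\dot{\bool{Q}}$ for $\bool{C}/i[\dot{G}_{\bool{B}}]$. First I would observe that since $i:\bool{B}\to\bool{C}$ is a complete injective homomorphism, it admits an adjoint $\pi:\bool{C}\to\bool{B}$ (see Theorem~\ref{eRetrProp}), given by $\pi(c)=\bigwedge\{b\in\bool{B}: c\leq i(b)\}$ and satisfying $\pi\circ i=\id_{\bool{B}}$. Using $\pi$ one computes, for each $c\in\bool{C}$, the key identity
\[
\Qp{\check c/i[\dot{G}_{\bool{B}}]>0_{\dot{\bool{Q}}}}_{\bool{B}}=\pi(c),
\]
which in turn certifies that $\dot{\bool{Q}}$ is a $\bool{B}$-name for a complete Boolean algebra whose positive elements are canonically represented by the names $\check c/i[\dot{G}_{\bool{B}}]$ for $c\in\bool{C}^+$.

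Next I would consider the subset $D\subseteq\bool{B}\ast\dot{\bool{Q}}$ consisting of pairs $(b,\check c/i[\dot{G}_{\bool{B}}])$ with $b\in\bool{B}^+$, $c\in\bool{C}^+$ and $b\leq\pi(c)$ (equivalently $i(b)\wedge c>0_{\bool{C}}$). Density of $D$ in $\bool{B}\ast\dot{\bool{Q}}$ is a standard mixing argument: any $\bool{B}$-name for a positive element of $\dot{\bool{Q}}$ can be refined, along a maximal antichain in $\bool{B}$, to a standard name of the form $\check c/i[\dot{G}_{\bool{B}}]$. Define $\varphi:D\to\bool{C}$ by
\[
\varphi(b,\check c/i[\dot{G}_{\bool{B}}])=i(b)\wedge c.
\]
The main verifications are then (i) $\varphi$ preserves the order and the incompatibility relation (the identity $\Qp{\check c/i[\dot{G}_{\bool{B}}]>0}_{\bool{B}}=\pi(c)$ reduces both checks to Boolean computations in $\bool{C}$), and (ii) $\varphi[D]$ is dense in $\bool{C}^+$: given any $c\in\bool{C}^+$, the pair $(\pi(c),\check c/i[\dot{G}_{\bool{B}}])$ lies in $D$ and is mapped by $\varphi$ to $i(\pi(c))\wedge c=c$. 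From (i) and (ii) one concludes that $\varphi$ induces a Boolean isomorphism between the regular-open completions of $D$ and of $\bool{C}^+$, i.e.\ between $\bool{B}\ast\dot{\bool{Q}}$ and $\bool{C}$.

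For the converse, write $i=i_{\bool{B}\ast\dot{\bool{Q}}}:\bool{B}\to\bool{B}\ast\dot{\bool{Q}}$ for the canonical complete embedding $b\mapsto (b,\check 1_{\dot{\bool{Q}}})$; this $i$ is injective, hence the first part applies. Fix a $V$-generic filter $G$ for $\bool{B}$ and define
\[
\psi:(\bool{B}\ast\dot{\bool{Q}})/i[G]\to \dot{\bool{Q}}_G
\]
by sending the equivalence class of $(b,\dot q)$, with $b\in G$ (which may be assumed without loss of generality by restricting to the dense subset $D$ above), to $\dot q_G$. The required checks are that $\psi$ is well defined (two conditions of $\bool{B}\ast\dot{\bool{Q}}$ lie in the same class modulo the ideal generated by $i[\breve G]$ iff their second coordinates are interpreted as the same element of $\dot{\bool{Q}}_G$), preserves the Boolean operations, and is surjective (each $q\in\dot{\bool{Q}}_G$ equals $\dot q_G$ for some $\bool{B}$-name $\dot q$, whence $(1_{\bool{B}},\dot q)$ is a preimage).

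The main obstacle is bookkeeping rather than conceptual: ensuring that the quotient $\bool{C}/i[\dot{G}_{\bool{B}}]$ really is a $\bool{B}$-name for a complete Boolean algebra, and that $\varphi$ extends from the dense subset $D$ to the Boolean completion while preserving arbitrary suprema. Once the adjoint $\pi$ is in hand and the behaviour of the canonical names $\check c/i[\dot{G}_{\bool{B}}]$ is controlled via the identity $\Qp{\check c/i[\dot{G}_{\bool{B}}]>0}_{\bool{B}}=\pi(c)$, the remaining verifications reduce to routine Boolean-algebra manipulations.
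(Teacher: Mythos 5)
The paper itself gives no proof of Theorem~\ref{qIso}: it is stated in the appendix with the proof deferred to~\cite{VIAAUDSTEBOOK,VIAAUDSTE13}. Your argument is precisely the standard one from those sources --- compute the adjoint $\pi_i$, establish $\Qp{\check c/i[\dot G_{\bool B}]>0}_{\bool B}=\pi_i(c)$, and read off the dense embedding $(b,\check c/i[\dot G_{\bool B}])\mapsto i(b)\wedge c$ --- and it is correct in outline. Two caveats. First, your parenthetical ``equivalently $i(b)\wedge c>0_{\bool C}$'' is wrong: that condition is equivalent to the compatibility $b\wedge\pi_i(c)>0_{\bool B}$, not to $b\leq\pi_i(c)$; since the condition you actually use throughout is $b\leq\pi_i(c)$ (which is exactly what makes $(b,\check c/i[\dot G_{\bool B}])$ a condition of the two--step iteration), nothing breaks. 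Second, the completeness of $\bool C/_{i[G]}$ in $V[G]$ is a genuine lemma rather than bookkeeping: $V[G]$ contains new subsets of the quotient, and their suprema must be produced by a mixing argument over $\bool B$ using the adjoint identities; you flag this but do not carry it out. Note finally that your key identity presupposes that the quotient is taken modulo the ideal \emph{generated by} $\bp{\neg i(b):b\in G}$, which is the reading the theorem requires (the paper's displayed definition of $\bool B/_G$, with ``for all $c\in G$'', would yield a different and too small ideal).
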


\begin{theorem}  \label{eRetrProp}
	Assume $i: \bool{B} \to \bool{C}$ is a complete injective homomorphism of complete Boolean algebra.
	Then the following holds:
	\begin{enumerate}
	\item
	$i$ has an adjoint $\pi_i:\bool{C} \to \bool{B}$ defined by $\pi_i(c)=\bigwedge\bp{b\in\bool{B}:i(b)\geq c}$
	\item
	For any $b \in \bool{B}$ and $c,d \in \bool{C}$, we have that:
	\begin{enumerate}[(a)]
		\item $\pi_i$ is order preserving; 
		\item $(\pi_i\circ i)(b)= b$, hence $\pi_i$ is surjective;
		\item \label{eRPComp} 
		$(i\circ\pi_i)(c)\geq c$; in particular, 
		$\pi_i$ maps $\bool{C} ^+$ to $\bool{B}^+$;
		\item \label{eRPJoins} $\pi_i$ preserves joins, i.e., $\pi_i (\bigvee X)= \bigvee \pi_i[X]$ for all 
		$X \subseteq \bool{C}$ for which the supremum $\bigvee X$ exists in $\bool{C}$;
		\item $i(b)= \bigvee \{e : \pi_i (e) \leq b \}$;
		\item \label{eRPHomo} $\pi_i (c \wedge i(b)) = \pi_i(c)\wedge b = \bigvee \{ \pi_i(e): e \leq c, \pi_i(e) \leq b\}$;
		\item \label{eRPMeets} $\pi_i$ preserves neither meets nor complements whenever 
		$i$ is not surjective, but $\pi_i(d \wedge c) \leq \pi_i(d) \wedge \pi_i(c)$ and 
		$\pi_i(\neg c) \geq \neg \pi_i(c)$.\qedhere
	\end{enumerate}
	\end{enumerate}
\end{theorem}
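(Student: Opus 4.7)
The plan is to prove that $\pi_i$ is the left adjoint of $i$ in the order-theoretic sense, that is, to establish the Galois adjunction
\[
\pi_i(c)\leq b \iff c\leq i(b)
\]
for all $b\in\bool{B}$ and $c\in\bool{C}$, and then to derive all the listed properties as formal consequences of this adjunction together with the fact that $i$ is an injective complete Boolean homomorphism. The forward direction of the adjunction is immediate from the definition $\pi_i(c)=\bigwedge\{b:i(b)\geq c\}$, since $c\leq i(b)$ places $b$ in the set being intersected. The backward direction uses that $i$ preserves arbitrary meets: from $\pi_i(c)\leq b$ one gets $i(\pi_i(c))\leq i(b)$, and
\[
i(\pi_i(c))\;=\;i\bigl(\bigwedge\{b':i(b')\geq c\}\bigr)\;=\;\bigwedge\{i(b'):i(b')\geq c\}\;\geq\; c,
\]
so $c\leq i(b)$. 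This computation simultaneously yields property~(\ref{eRPComp}), with $\pi_i(c)>0_\bool{B}$ for $c>0_\bool{C}$ because $i(0_\bool{B})=0_\bool{C}$.

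Properties~(a)-(e) follow routinely. Monotonicity~(a) is immediate from the definition. For~(b), injectivity of $i$ gives $i(b')\geq i(b)\iff b'\geq b$, hence $\pi_i(i(b))=\bigwedge\{b':b'\geq b\}=b$; surjectivity of $\pi_i$ is then automatic. Preservation of joins~(d) is the standard left-adjoint argument: $\pi_i(\bigvee X)\leq b \iff \bigvee X\leq i(b) \iff (\forall x\in X)(\pi_i(x)\leq b) \iff \bigvee\pi_i[X]\leq b$. For~(e), note that $i(b)$ belongs to $\{e:\pi_i(e)\leq b\}$ by~(b), while any $e$ in this set satisfies $e\leq i(b)$ by the adjunction.

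For the Frobenius-type identity~(f), the $\leq$ direction is immediate from monotonicity. For the reverse $\pi_i(c)\wedge b\leq \pi_i(c\wedge i(b))$, I would argue as follows: given any $b'\in\bool{B}$ with $i(b')\geq c\wedge i(b)$, applying the distributive identity $c=(c\wedge i(b))\vee(c\wedge i(\neg b))$ and noting $c\wedge i(\neg b)\leq i(\neg b)$, one obtains $i(b'\vee\neg b)\geq c$, whence $\pi_i(c)\leq b'\vee\neg b$, i.e.\ $\pi_i(c)\wedge b\leq b'$; taking the meet over all such $b'$ yields the claim. The second equality in~(f) then follows by observing that $c\wedge i(b)$ itself lies in $\{e:e\leq c,\,\pi_i(e)\leq b\}$, while monotonicity bounds $\pi_i(e)$ for any $e$ in that set above by $\pi_i(c)\wedge b$.

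Finally, for~(g), the inequality $\pi_i(d\wedge c)\leq \pi_i(d)\wedge \pi_i(c)$ is just monotonicity. For $\pi_i(\neg c)\geq\neg \pi_i(c)$, one computes $\pi_i(1_\bool{C})=1_\bool{B}$ using injectivity, and then~(d) gives $1_\bool{B}=\pi_i(c\vee\neg c)=\pi_i(c)\vee\pi_i(\neg c)$, which is equivalent to the desired inequality. That equality can actually fail when $i$ is not surjective is standard: if $\pi_i$ preserved complements (equivalently meets, given~(d)), then combined with~(b) it would exhibit $\bool{B}$ as isomorphic to $\bool{C}$ via $i$, contradicting non-surjectivity. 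The only mild technical point I expect in this proof is isolating the precise distributive manipulation needed for~(f); everything else is formal consequence of the adjunction and the fact that $i$ is an injective complete Boolean homomorphism.
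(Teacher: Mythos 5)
The paper does not actually prove this theorem: it is stated in the appendix with the proof deferred to \cite{VIAAUDSTEBOOK} and \cite{VIAAUDSTE13}, so there is no in-paper argument to compare against. Your proof is correct and complete. Organizing everything around the Galois adjunction $\pi_i(c)\leq b \iff c\leq i(b)$ is exactly the right move: both directions are verified properly (the nontrivial one using that a complete injective homomorphism preserves arbitrary meets, so that $i(\pi_i(c))\geq c$), items (a)--(e) then fall out formally, the distributive manipulation $c=(c\wedge i(b))\vee(c\wedge i(\neg b))$ correctly delivers the Frobenius identity (f), and the observation that a join-preserving, unit-preserving map which also preserved meets or complements would force $i\circ\pi_i=\mathrm{id}$ and hence surjectivity of $i$ correctly settles (g). The only cosmetic issue is the swapped labelling of the ``forward'' and ``backward'' directions of the adjunction, which does not affect the argument.
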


\begin{definition} 
 $\FFF=\{i_{\alpha \beta}:\bool{B}_\alpha\to \bool{B}_\beta\,\mid\,\alpha \leq \beta < \lambda\}$ is a \emph{complete iteration system} of complete Boolean algebras iff for all $\alpha \leq \beta \leq \gamma < \lambda$:
	\begin{enumerate}
		\item $\bool{B}_\alpha$ is a complete Boolean algebra and $i_{\alpha \alpha}$ is the identity on it;
		\item $i_{\alpha \beta}$ is a complete injective homomorphism and 
		$\pi_{\alpha\beta}:\bool{B}_\beta\to\bool{B}_\alpha$, given by 
		$c\mapsto\bigwedge\bp{b\in\bool{B}:i_{\alpha\beta}(b)\geq c}$, is its associated 
		adjoint\footnote{See Theorem~\ref{eRetrProp} for the relevant properties of the adjoint map.};
		\item $i_{\beta \gamma}\circ i_{\alpha \beta}=i_{\alpha \gamma}$.
 \end{enumerate}

Let $\FFF$ be a complete iteration system of length $\lambda$. Then:

\begin{itemize}
\item
 The \emph{inverse limit} of the iteration is
	\[
	\varprojlim(\FFF) = \bp{ f \in \prod_{\alpha < \lambda} \bool{B}_\alpha : ~ \forall \alpha \forall \beta > \alpha\, ~ (\pi_{\alpha \beta}\circ f)(\beta) = f(\alpha) }
	\]
	and its elements are called \emph{threads}. 
\item
	The \emph{direct limit} is
	\[
	\varinjlim(\FFF) = \bp{ f \in \varprojlim(\FFF) : ~ \exists \alpha \forall \beta > \alpha ~ f(\beta) = i_{\alpha\beta}(f(\alpha)) }
	\]
	and its elements are called \emph{constant threads}. 
	The support of a constant thread, $\supp(f)$, is 
	the least $\alpha$ such that $(i_{\alpha\beta}\circ f)(\alpha)=f(\beta)$ for all $\beta\geq\alpha$. 
\item
	The \emph{revised countable support limit} 
	is\footnote{This definition of revised contable support limit is due to Donder and Fuchs; it is not clear
	whether it is in any sense equivalent to Miyamoto or Shelah's definition of rcs-limit. It is nonetheless effective
	to prove the main results about semiproper iterations: specifically, the 
	forthcoming~\cite{VIAAUDSTEBOOK} or \cite{VIAAUDSTE13} (which is available on ArXiv) give complete
	proofs of the preservation of semiproperness through RCS--iterations. However, in this paper
	we also want to preserve properties stronger than semiproperness. The proofs of 
	such preservation results are not given in
	in~\cite{VIAAUDSTEBOOK} or in~\cite{VIAAUDSTE13}; hence we refer the reader to Miyamoto's or Shelah's
	iteration theorems for the preservation through RCS--limits (according to their definitions) of these stronger properties.}
	\[
	\rcslim(\FFF) = \bp{ f \in \varprojlim(\FFF) : ~ f \in \varinjlim(\FFF) \vee \exists \alpha ~ f(\alpha) \Vdash_{\bool{B}_\alpha} \cof(\check{\lambda}) = \check{\omega} }.
	\]
\end{itemize}	
\end{definition}

\begin{theorem}
Assume $\bp{P_\alpha,\dot{Q}_\beta:\alpha\leq\lambda,\beta<\lambda}$ is an iteration of posets.
Let $i_{\alpha\beta}:\RO(P_\alpha)\to\RO(P_\beta)$ be the complete homomorphism induced
by the natural inclusion of $P_\alpha$ into $P_\beta$. Then:
\begin{itemize}
\item 
$\FFF=\bp{i_{\alpha,\beta}:\alpha\leq\beta<\lambda}$ is an iteration system of  complete injective homomorphisms
of complete Boolean algebras.
\item
If $\lambda=\omega$,  $\invlim(\FFF)$ is isomorphic to the Boolean completion of the full limit of
$\bp{P_\alpha,\dot{Q}_\beta:\alpha\leq\lambda,\beta<\lambda}$.
\item For any regular $\lambda$,
$\dirlim(\FFF)$ is isomorphic to the Boolean completion of the direct limit of
$\bp{P_\alpha,\dot{Q}_\beta:\alpha\leq\lambda,\beta<\lambda}$.
\end{itemize}
\end{theorem}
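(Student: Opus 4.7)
The plan is to prove each of the three items separately, in each case by writing down an explicit translation map between the poset data and the Boolean data and then verifying that the relevant structural properties transfer under this translation.

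For the first item, I would observe that for each $\alpha\leq\beta$ the natural inclusion $\iota_{\alpha\beta}\colon P_\alpha\hookrightarrow P_\beta$ is order- and incompatibility-preserving: this is precisely what it means for $(P_\alpha,\dot{Q}_\beta)$ to be an iteration of posets, conditions in $P_\alpha$ being read as the trivial condition on coordinates in $[\alpha,\beta)$. Composing with the canonical dense embedding $i_{P_\beta}\colon P_\beta\to\RO(P_\beta)$ (Theorem~1 of the appendix) yields an order- and incompatibility-preserving map $P_\alpha\to\RO(P_\beta)$, which by the universal property of Boolean completions extends uniquely to a complete homomorphism $i_{\alpha\beta}\colon\RO(P_\alpha)\to\RO(P_\beta)$; the preservation of incompatibility forces $i_{\alpha\beta}$ to be injective. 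Compositionality $i_{\beta\gamma}\circ i_{\alpha\beta}=i_{\alpha\gamma}$ follows from the corresponding identity at the level of the inclusions together with uniqueness of the extension. So $\FFF$ is a complete iteration system of complete injective homomorphisms.

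For the second item, with $\lambda=\omega$, let $P$ denote the full (countable) limit of the iteration. I would define $\Phi\colon P\to\varprojlim(\FFF)$ by setting $\Phi(p)(\alpha)=i_{P_\alpha}(p\restriction\alpha)$; using the relation $i_{\alpha\beta}\circ i_{P_\alpha}=i_{P_\beta}\circ\iota_{\alpha\beta}$ together with the defining property of the adjoints $\pi_{\alpha\beta}$ (Theorem~\ref{eRetrProp}), one checks $\Phi(p)$ is indeed a thread. The map $\Phi$ is clearly order- and incompatibility-preserving on $P^+$. To see that the image is dense in $\varprojlim(\FFF)^+$, one argues by induction on $n<\omega$: given a positive thread $f$, one uses the density of $i_{P_n}[P_n]$ inside $\RO(P_n)^+$ together with the translation between $\bool{B}_n\ast\dot{Q}_n$ and $\bool{B}_{n+1}$ afforded by Theorem~\ref{qIso} to select a condition $p\in P$ with $\Phi(p)\leq f$. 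Since $\varprojlim(\FFF)$ is a complete Boolean algebra in which $\Phi[P]$ is dense and incompatibility-preserving, it must be (isomorphic to) $\RO(P)$ by uniqueness of the Boolean completion.

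For the third item, let $P$ now denote the direct limit of the poset iteration, consisting of sequences whose support is bounded below $\lambda$. The same map $\Phi$, restricted to $P$, lands in $\varinjlim(\FFF)$, since if $p$ has support $\leq\alpha$ then $\Phi(p)(\beta)=i_{\alpha\beta}(\Phi(p)(\alpha))$ for all $\beta\geq\alpha$. The density argument from the second item adapts routinely using the regularity of $\lambda$ to absorb the countable inductive construction inside a single bounded support; conversely, every positive constant thread has some fixed support $\alpha<\lambda$ and is therefore refined by an element of $P$ coming from $P_\alpha$ via the density of $i_{P_\alpha}[P_\alpha]$ in $\RO(P_\alpha)^+$. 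Again one concludes $\varinjlim(\FFF)\cong\RO(P)$.

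The main technical obstacle throughout is the correct handling of successor stages, where one must show that the Boolean value of a statement about the full iteration's generic agrees with the value computed step by step in the successive Boolean completions. This is exactly the content of Theorem~\ref{qIso}, which identifies $\bool{B}_\alpha\ast\dot{Q}_\alpha$ with $\bool{B}_{\alpha+1}$ and so allows the inductive density arguments to proceed. Once this translation is in place, all the verifications are routine checks about order, incompatibility, and completeness, matching the standard correspondence between posets and their regular-open completions.
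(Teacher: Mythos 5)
The paper itself gives no proof of this theorem: it is stated in the appendix with the proof deferred to \cite{VIAAUDSTEBOOK} and \cite{VIAAUDSTE13}, so there is nothing internal to compare against. Your sketch is the standard argument one would expect to find there, and its overall structure --- dense embedding of the poset limit into the corresponding Boolean limit via $\Phi(p)(\alpha)=i_{P_\alpha}(p\restriction\alpha)$, with Theorem~\ref{qIso} handling successor stages and the adjoint identities of Theorem~\ref{eRetrProp} verifying the thread condition --- is correct. In particular your successor-stage density argument for $\lambda=\omega$ is sound: given a positive thread $f$, the identity $\pi_i(c\wedge i(b))=\pi_i(c)\wedge b$ guarantees compatibility of $f(n+1)$ with the part of the condition already built, and the identification $\RO(P_{n+1})\cong\RO(P_n)\ast\RO(\dot Q_n)$ lets you choose $p(n)$ below the ``tail part'' of $f(n+1)$ without revisiting earlier coordinates, which is essential (an argument that kept refining earlier coordinates would only produce a decreasing $\omega$-sequence, not a condition).

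Two points need tightening. First, in item 1 you derive completeness of $i_{\alpha\beta}$ from order- and incompatibility-preservation of the inclusion $P_\alpha\hookrightarrow P_\beta$; that only yields an injective homomorphism. To get a \emph{complete} homomorphism you must also use that the inclusion maps maximal antichains of $P_\alpha$ to maximal antichains of $P_\beta$ (equivalently, preserves predensity) --- this is part of what it means for the $P_\alpha$'s to form an iteration, so the fix is only to cite the right property. Second, $\varprojlim(\FFF)$ is not literally a complete Boolean algebra: by Theorem~\ref{eRetrProp} the adjoints preserve joins but not meets or complements, so the set of threads is closed under coordinatewise suprema but not under the Boolean operations of the product. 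The correct conclusion of your density argument is that the full limit embeds densely (preserving order and incompatibility) into the \emph{partial order} $\varprojlim(\FFF)^+$, whence the two have the same Boolean completion; this matches the paper's stated convention of identifying a poset with $\RO$ of it. The same remark applies to $\varinjlim(\FFF)$ in item 3, where, as you say, density is immediate from the bounded support of constant threads and no inductive construction is needed.
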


These results suffice to prove all the needed equivalences between results on forcing and iterations proved in the language of 
partial orders and their corresponding formulation in terms of complete Boolean algebras.

	\bibliographystyle{amsplain}
	\bibliography{Biblio}
\end{document}